\newcommand{\removed}[1]{}
\begin{document}

\title{An $\ell_p$ theory of PCA and spectral clustering}\blfootnote{Author names are sorted alphabetically.}

\author{Emmanuel Abbe\thanks{Institute of Mathematics, EPFL. Email: \texttt{emmanuel.abbe@epfl.ch}.}
	\and Jianqing Fan\thanks{Department of ORFE, Princeton University. Email: \texttt{jqfan@princeton.edu}.}
	\and Kaizheng Wang\thanks{Department of IEOR and Data Science Institute, Columbia University. Email: \texttt{kaizheng.wang@columbia.edu}.}
}

\date{This version: April 2022}

\maketitle

\begin{abstract}
Principal Component Analysis (PCA) is a powerful tool in statistics and machine learning.
While existing study of PCA focuses on the recovery of principal components and their associated eigenvalues, there are few precise characterizations of individual principal component scores that yield low-dimensional embedding of samples. That hinders the analysis of various spectral methods. In this paper, we  first develop an $\ell_p$ perturbation theory for a  hollowed version of PCA in Hilbert spaces which provably improves upon the vanilla PCA in the presence of heteroscedastic noises. Through a novel $\ell_p$ analysis of eigenvectors, we investigate entrywise behaviors of principal component score vectors and show that they can be approximated by linear functionals of the Gram matrix in $\ell_p$ norm, which includes $\ell_2$ and $\ell_\infty$ as special cases. For sub-Gaussian mixture models, the choice of $p$ giving optimal bounds depends on the signal-to-noise ratio, which further yields optimality guarantees for spectral clustering. For contextual community detection, the $\ell_p$ theory leads to simple spectral algorithms that achieve the information threshold for exact recovery and the optimal misclassification rate.
\end{abstract}

\noindent \textbf{Keywords:} Principal component analysis, eigenvector perturbation, spectral clustering, mixture models, community detection, contextual network models, phase transitions.

\section{Introduction}

\subsection{Overview}

Modern technologies generate enormous volumes of data that present new statistical and computational challenges. The high throughput data come inevitably with tremendous amount of noise, from which very faint signals are to be discovered. Moreover, the analytic procedures must be affordable in terms of computational costs. While likelihood-based approaches usually lead to non-convex optimization problems that are NP-hard in general, the method of moments provides viable solutions to the computation challenges.

Principal Component Analysis (PCA) \citep{Pea01} is arguably the most prominent tool of this type.
It significantly reduces the dimension of data using eigenvalue decomposition of a second-order moment matrix. Unlike the classical settings where the dimension $d$ is much smaller than the sample size $n$, nowadays it could be the other way around in numerous applications \citep{Rin08, Nov08, YRu01}. Reliability of the low-dimensional embedding is of crucial importance, as all downstream tasks are based on that. Unfortunately, existing theories often fail to provide sharp guarantees when both the dimension and noise level are high, especially in the absence of sparsity structures. The matter is further complicated by the use of nonlinear kernels for dimension reduction \citep{SSM97}, which is de facto PCA in some infinite-dimensional Hilbert space. 

In this paper, we investigate the spectral embedding returned by a hollowed version of PCA. Consider the signal-plus-noise model
\begin{align}
\bx_i = \bar\bx_i + \bz_i \in \RR^d,\qquad i \in [n].
\label{eqn-signal-plus-noise}
\end{align}
Here $\{ \bx_i \}_{i=1}^n $ are noisy observations of signals $\{ \bar\bx_i \}_{i=1}^n $ contaminated by $\{ \bz_i \}_{i=1}^n $. Define the data matrices $\bX = (\bx_1, \cdots , \bx_n)^{\top} \in \RR^{n \times d}$ and $\bar\bX = (\bar\bx_1, \cdots , \bar\bx_n)^{\top} \in \RR^{n \times d}$. Let $\bar\bG = \bar\bX \bar\bX^{\top} \in \RR^{n \times n}$ be the Gram matrix of $\{ \bar\bx_i \}_{i=1}^n $, and $\bG = \cH ( \bX \bX^{\top} )$ be the \textit{hollowed} Gram matrix of $\{ \bx_i \}_{i=1}^n $ where $\cH(\cdot)$ is the hollowing operator, zeroing out all diagonal entries of a square matrix. Denote by $\{ \lambda_j , \bu_j \}_{j=1}^n$ and $\{ \bar\lambda_j , \bar\bu_j \}_{j=1}^n$ the eigen-pairs of $\bG$ and $\bar\bG$, respectively, where the eigenvalues are sorted in descending order. While PCA computes the embedding by eigen-decomposition of $\bX \bX^{\top}$, here we delete its diagonal to enhance concentration and handle heteroscedasticity \citep{KGi00}. We seek an answer to the following fundamental question: how do the eigenvectors of $\bG$ relate to those of $\bar\bG$?

Roughly speaking, our main results state that
\begin{align}
\bu_j = \bG \bu_j / \lambda_j \approx \bG \bar\bu_j / \bar\lambda_j,
\label{intro-eqn-Lp}
\end{align}
where the approximation relies on the $\ell_p$ norm for a proper choice of $p$. In words, the eigenvector $\bu_j$ is a nonlinear function of $\bG$ but can be well approximated by the linear function $\bG \bar\bu_j / \bar\lambda_j$ in the $\ell_p$ norm where $p$ is given by the model's signal-to-noise ratio (SNR). 
This linearization facilitates the analysis and allows to quantify how the magnitude of the signal-to-noise ratio affects theoretical guarantees for signal recovery.

In many statistical problems such as mixture models, the vectors $\{ \bar{\bx}_i \}_{i=1}^n$ live in a low-dimensional subspace of $\RR^d$. Their latent coordinates reflect the geometry of the data, which can be decoded from eigenvalues and eigenvectors of $\bar\bG$. Our results show how well the spectral decomposition of $\bG$ reveals that of $\bar\bG$, characterizing the behavior of individual embedded samples. From there we easily derive the optimality of spectral clustering in sub-Gaussian mixture models and contextual stochastic block models, in terms of both the misclassification rates and the exact recovery thresholds. In particular, the linearization of eigenvector (\ref{intro-eqn-Lp}) helps develop a simple spectral method for contextual stochastic block models, efficiently combining the information from the network and the node attributes.

Our general results hold for Hilbert spaces. 
It is easily seen that construction of the hollowed Gram matrix $\bG$ and the subsequent steps only depend on pairwise inner products $\{ \langle \bx_i, \bx_j \rangle \}_{1 \leq i,j \leq n}$. This makes the ``kernel trick'' applicable \citep{CSh00}, and our analysis readily handles (a hollowed version of) kernel PCA. 

\subsection{A canonical example}\label{sec-intro-gmm}

We demonstrate the merits of the $\ell_p$ analysis using spectral clustering for a mixture of two Gaussians. Let $\by \in \{ \pm 1 \}^n$ be a label vector with i.i.d. Rademacher entries and $\bmu \in \RR^d$ be a deterministic mean vector, both of which are unknown. Consider the model
\begin{align}
\bx_i = y_i \bmu + \bz_i , \qquad i \in [n],
\label{eqn-intro-gmm}
\end{align}
where $\{ \bz_i \}_{i=1}^n$ are i.i.d.~$N( \mathbf{0} , \bI_d )$ vectors. The goal is to estimate $\by$ from $\{ \bx_i \}_{i=1}^n$. (\ref{eqn-intro-gmm}) is a special case of the signal-plus-noise model (\ref{eqn-signal-plus-noise}) with $\bar\bx_i = y_i \bmu$. Since $\PP ( y_i = 1 ) = \PP ( y_i = -1 ) = 1/2$, $\{ \bx_i \}_{i=1}^n$ are i.i.d. samples from a mixture of two Gaussians $\frac{1}{2} N(\bmu, \bI_d) + \frac{1}{2} N(-\bmu, \bI_d )$.

By construction, $\bar\bX = (\bar\bx_1,\cdots, \bar\bx_n)^{\top} = \by \bmu^{\top}$ and $\bar\bG = \| \bmu \|_2^2 \by \by^{\top}$ with $\bar\bu_1 = \by / \sqrt{n}$ and $\bar\lambda_1 = n \| \bmu \|_2^2$.   Hence, $\sgn(\bu_1)$ becomes a natural estimator for $\by$, where $\sgn(\cdot)$ is the entrywise sign function. A fundamental question is whether the empirical eigenvector $\bu_1$ is informative enough to accurately recover the labels in competitive regimes. To formalize the discussion, we denote by
\begin{align}
\mathrm{SNR} = \frac{\| \bmu \|_2^4}{ \| \bmu \|_2^2 + d / n }
\label{eqn-intro-snr}
\end{align}
the signal-to-noise ratio of model (\ref{eqn-intro-gmm}). Consider the challenging asymptotic regime where $n \to \infty$ and $1 \ll \mathrm{SNR}  \lesssim \log n$\footnote{In Theorem \ref{KPCA-thm-GMM} we derive results for the exact recovery of the spectral estimator, i.e. $\PP ( \sgn(\bu_1) = \pm \by ) \to 1$, when $\mathrm{SNR} \gg \log n$. Here we omit that case and discuss error rates.}. The dimension $d$ may or may not diverge. According to Theorem \ref{KPCA-thm-GMM}, the spectral estimator $\sgn(\bu_1)$ achieves the minimax optimal misclassification rate  
\begin{align}
e^{- \frac{1}{2}\mathrm{SNR}  [1 + o(1)]}.
\label{eqn-intro-rate}
\end{align}

In order to get this, we start from an $\ell_p$ analysis of $\bu_1$. Theorem \ref{KPCA-thm-GMM-Lp} shows that
\begin{align}
\PP \Big( \min_{s = \pm 1} \| s \bu_1 - \bG \bar\bu_1 / \bar\lambda_1 \|_p < \varepsilon_n \| \bar\bu_1 \|_p \Big) > 1 - C e^{-p}
\label{eqn-intro-Lp}
\end{align}
for $p = \mathrm{SNR}$, some constant $C>0$ and some deterministic sequence $\{ \varepsilon_n \}_{n=1}^{\infty}$ tending to zero. 
On the event $\| s \bu_1 - \bG \bar\bu_1 / \bar\lambda_1 \|_p < \varepsilon_n \| \bar\bu_1 \|_p$, we apply a Markov-type inequality to the entries of $(s \bu_1 - \bG \bar\bu_1 / \bar\lambda_1)$:
\begin{align}
& \frac{1}{n} |\{ i:~ |(s \bu_1 - \bG \bar\bu_1 / \bar\lambda_1)_i | > \sqrt{\varepsilon_n / n} \}|
 \leq
\frac{
 \frac{1}{ n } \sum_{i=1}^{n} | (s \bu_1 - \bG \bar\bu_1 / \bar\lambda_1)_i |^p
}{
(\sqrt{\varepsilon_n /n} )^p
}
\notag \\
&
  \overset{\mathrm{(i)}}{=} \bigg( \frac{
	\| s \bu_1 - \bG \bar\bu_1 / \bar\lambda_1 \|_p
}{
\sqrt{\varepsilon_n} \| \bar\bu_1\|_p
}
\bigg)^p
\leq \varepsilon_n^{p / 2},
\label{eqn-intro-Lp-outliers}
\end{align}
where $\mathrm{(i)}$ follows from $ \bar\bu_1 = \by / \sqrt{n}$ and $\| \bar\bu_1 \|_p^p = n (1/\sqrt{n})^p$. Hence all but an $\varepsilon_n^{\mathrm{SNR} / 2} $ fraction of $\bu_1$'s entries are well-approximated by those of $\bG \bar\bu_1 / \bar\lambda_1$. On the other hand, since the misclassification error is always bounded by 1, the exceptional event in (\ref{eqn-intro-Lp}) may at most contribute an $C e^{-\mathrm{SNR}}$ amount to the final error. Both $\varepsilon_n^{\mathrm{SNR} / 2} $ and $C e^{-\mathrm{SNR}}$ are negligible compared to the optimal rate $e^{-\mathrm{SNR} / 2}$ in (\ref{eqn-intro-rate}).  This helps us show that the $\ell_p$ bound (\ref{eqn-intro-Lp}) ensures sufficient proximity between $\bu_1$ and $\bG \bar\bu_1 / \bar\lambda_1$, and the analysis boils down to the latter term.

We now explain why $\bG \bar\bu_1 / \bar\lambda_1$ is a good target to aim at. Observe that
\begin{align}
( \bG \bar\bu_1 )_{i} = [ \cH ( \bX \bX^{\top} ) \bar\bu_1 ]_i
=  \sum_{j \neq i} \langle \bx_i , \bx_j \rangle y_j / \sqrt{n}
\propto  \langle \bx_i, \hat\bmu^{(-i)} \rangle,
\label{eqn-intro-lda}
\end{align}
where  
 $\hat\bmu^{(-i)} = \frac{1}{n-1} \sum_{j \neq i} \bx_j y_j$ is the leave-one-out sample mean.
 Consequently, the (unsupervised) spectral estimator $\sgn [ (\bu_1)_i ]$ for $y_i$ is approximated by $\sgn( \langle \bx_i, \hat\bmu^{(-i)} \rangle )$, which coincides with the (supervised) linear discriminant analysis \citep{Fis36} given additional labels $\{ y_j \}_{j \neq i}$. This oracle estimator turns out to capture the difficulty of label recovery. That is, $ \sgn( \bG \bar\bu_1 / \bar\lambda_1 ) $ achieves the optimal misclassification rate in (\ref{eqn-intro-rate}).

Above we provide high-level ideas about why the spectral estimator $\sgn(\bu_1)$ is optimal. Inequality (\ref{eqn-intro-Lp})  ties $\bu_1$ and its linearization $\bG \bar\bu_1 / \bar\lambda_1$ together. The latter is connected to the genie-aided estimator through (\ref{eqn-intro-lda}). As a side remark, the relation (\ref{eqn-intro-lda}) hinges on the fact that $\bG$ is hollowed. Otherwise there would be a square term $\langle \bx_i , \bx_i \rangle $ making things entangled.

%
%
%
%


\subsection{Related work}
Early works on PCA focus on classical settings where the dimension $d$ is fixed and the sample size $n$ goes to infinity \citep{And63}. Motivated by modern applications, in the past two decades there has been a surge of interest in high-dimensional PCA. Most papers in this direction study the consistency of empirical eigenvalues \citep{Joh01, BBP05} or Principal Component (PC) directions \citep{Pau07, Nad08, JMa09, BNa12, PWB16, wang2017asymptotics} under various spiked covariance models with $d$ growing with $n$. Similar results are also available for infinite-dimensional Hilbert spaces \citep{KGi00, ZBl06, KLo16}.
The analysis of PCs amounts to showing how the leading eigenvectors of $ \bX^{\top} \bX = \sum_{i=1}^{n} \bx_i \bx_i^{\top} \in \RR^{d\times d}$
recover those of $\EE (\bx_i \bx_i^{\top})$.
When it comes to dimension reduction, one projects the data onto these PCs and get PC scores. This is directly linked to leading eigenvectors of the Gram matrix $\bX \bX^{\top} \in \RR^{n\times n}$. In high-dimensional problems, the $n$-dimensional PC scores may still consistently reveal meaningful structures even if the $d$-dimensional PCs fail to do so \citep{CZh18}.

Analysis of PC scores is crucial to the theoretical study of spectral methods. However, existing results \citep{BBZ07, ARa19} in related areas cannot precisely characterize individual embedded samples under general conditions. This paper aims to bridge the gap by a novel analysis.
In addition, our work is orthogonal to those with sparsity assumptions \citep{JLu09, JWa16}. Here we are concerned with $\mathrm{(i)}$ the non-sparse regime where most components contribute to the main variability and $\mathrm{(ii)}$ the infinite-dimensional regime in kernel PCA where the sparsity assumption is not appropriate.


There is a vast literature on perturbation theories of eigenvectors. Most classical bounds are deterministic and use the $\ell_2$ norm or other orthonormal-invariant norms as error metrics. This includes the celebrated Davis-Kahan theorem \citep{DKa70} and its extensions \citep{Wed72};  see \cite{SSu90} for a review. Improved $\ell_2$-type results are available for stochastic settings \citep{OVW18}.
For many problems in statistics and machine learning, entrywise analysis is more desirable because that helps characterize the spectral embedding of individual samples. \cite{FWZ16}, \cite{EBW17}, \cite{CTP19} and \cite{DSu19} provide $\ell_{\infty}$ perturbation bounds in deterministic settings. Their bounds are often too conservative when the noise is stochastic. Recent papers \citep{KXi16, AFW17, MSC17, ZBo18, CFM19, Lei19} take advantage of the randomness to obtain sharp $\ell_{\infty}$ results for challenging tasks.

The random matrices considered therein are mostly Wigner-type, with independent entries or similar structures. On the contrary, our hollowed Gram matrix $\bG$ has Wishart-type distribution since its off-diagonal entries are inner products of samples and thus dependent. What is more, our $\ell_p$ bounds with $p$ determined by the signal strength are adaptive. If the signal is weak, existing $\ell_{\infty}$ analysis does not go through as strong concentration is required for uniform control of all the entries. However, our $\ell_p$ analysis still manages to control a vast majority of the entries. If the signal is strong, our results imply $\ell_{\infty}$ bounds. The $\ell_p$ eigenvector analysis in this paper shares some features with the study on $\ell_p$-delocalization \citep{ESY09}, yet the settings are very different. It would be interesting to establish further connections.

The applications in this paper are canonical problems in clustering and have been extensively studied. For the sub-Gaussian mixture model, the settings and methods in \cite{GVe18}, \cite{Nda18} and \cite{LZZ19} are similar to ours. The contextual network problem concerns grouping the nodes based on their attributes and pairwise connections, see \cite{BVR17}, \cite{DSM18} and \cite{YSa20} for more about the model. We defer detailed discussions on these to Sections \ref{KPCA-sec-GMM} and \ref{KPCA-sec-CSBM}.

\subsection{Organization of the paper}
We present the general setup and results for $\ell_p$ eigenvector analysis in Section \ref{KPCA-sec-Lp}; apply them to clustering under mixture models in Section \ref{KPCA-sec-GMM} and contextual community detection in Section \ref{KPCA-sec-CSBM}; show a sketch of main proofs in Section \ref{KPCA-sec-outlines}; and conclude the paper with possible future directions in Section \ref{KPCA-sec-discussions}.

\subsection{Notation}

We use $[n]$ to refer to $\{ 1, 2, \cdots, n \}$ for $n \in \ZZ_+$. Denote by $| \cdot |$ the absolute value of a real number or cardinality of a set. For real numbers $a$ and $b$, let $a \wedge b = \min \{ a, b \}$ and $a \vee b = \max \{ a, b \}$. For nonnegative sequences $\{ a_n \}_{n=1}^{\infty}$ and $\{ b_n \}_{n=1}^{\infty}$, we write $a_n \ll b_n$ or $a_n = o(b_n)$ if $b_n > 0$ and $a_n/b_n \to 0$; $a_n \lesssim b_n$ or $a_n = O(b_n)$ if there exists a positive constant $C$ such that $a_n \leq C b_n$; $a_n \gtrsim b_n$ or $a_n = \Omega (b_n)$ if $b_n \lesssim a_n$. In addition, we write $a_n \asymp b_n$ if $a_n \lesssim b_n$ and $b_n \lesssim a_n$. We let $\mathbf{1}_{ S }$ be the binary indicator function of a set $S$.

Let $\{ \be_j \}_{j=1}^d$ be the canonical bases of $\RR^d$, $\SSS^{d-1} = \{ \bx \in \RR^d:~ \| \bx \|_2 = 1 \}$ and $B(\bm{x},r) = \{ \by \in \RR^d:~\| \by - \bx \|_2 \leq r \}$. For a vector $\bx = (x_1,\cdots,x_d)^{\top} \in \RR^d$ and $p \geq 1$, define its $\ell_p$ norm $\| \bx \|_p = (\sum_{i=1}^{d} |x_i|^p )^{1/p}$. For $i \in [d]$, let $\bx_{-i}$ be the $(d-1)$-dimensional sub-vector of $\bx$ without the $i$-th entry. For a matrix $\bA \in \RR^{n\times m}$, we define its spectral norm $\| \bA \|_2 = \sup_{\|\bx\|_2 = 1 } \| \bA \bx \|_2$ and Frobenius norm $\| \bA \|_{\mathrm{F}} = ( \sum_{i,j} a_{ij}^2 )^{1/2}$. Unless otherwise specified, we use $\bA_{i}$ and $\ba_j$ to refer to the $i$-th row and $j$-th column of $\bA$, respectively. For $1 \leq p,q\leq \infty$, we define the $\ell_{q,p}$ norm as an entrywise matrix norm
\begin{align*}
\| \bA \|_{q,p} =\bigg[ \sum_{i=1}^{n}  \bigg( \sum_{j=1}^{m} |a_{ij}|^q \bigg)^{p/q}\bigg]^{1/p}.
\end{align*}
The notation is not to be confused with $(q,p)$-induced norm, which is not used in the current paper. In words, we concatenate the $\ell_q$ norms of the row vectors of $\bA$ into an $n$-dimensional vector and then compute its $\ell_p$ norm. A special case is $\| \bA \|_{2,\infty} = \max_{i\in[n]} \| \bA_i \|_2$.

Define the sub-Gaussian norms $\| X \|_{\psi_2} = \sup_{p \geq 1} \{ p^{-1/2} \EE^{1/p} |X|^p \}$ for random variable $X$ and $\|\bX\|_{\psi_2} = \sup_{\| \bu \|_2 = 1} \|\dotp{\bu}{\bX}\|_{\psi_2} $ for random vector $\bX$. Denote by $\chi^2_n$ refers to the $\chi^2$-distribution with $n$ degrees of freedom. $\overset{\PP}{\rightarrow}$ represents convergence in probability.
In addition, we adopt the following convenient notations from \cite{Wan19} to make probabilistic statements compact\footnote{In the reference above, $O_{\PP}(\cdot;~ \cdot)$ and $o_{\PP}(\cdot;~ \cdot)$ appear as $\hat O_{\PP}(\cdot;~ \cdot)$ and $\hat o_{\PP}(\cdot;~ \cdot)$. For simplicity we drop their hats in this paper.}.

\begin{definition}\label{KPCA-defn-0}
	Let $\{ X_n \}_{n=1}^{\infty}$, $\{ Y_n \}_{n=1}^{\infty}$ be two sequences of random variables and $\{ r_n \}_{n=1}^{\infty} \subseteq (0,+\infty)$ be deterministic. We write
	\begin{align*}
	X_n = O_{\PP} (Y_n;~ r_n)
	\end{align*}
	if there exists a constant $C_1>0$ such that
	\begin{align*}
	\forall C > 0,~~ \exists C' >0 \text{ and } N > 0, \text{ s.t. } \PP ( |X_n| \geq C' |Y_n| ) \leq C_1 e^{- C r_n }, \qquad \forall ~n \geq N.
	\end{align*}
	We write $X_n = o_{\PP} (Y_n;~ r_n)$ if $X_n = O_{\PP} ( w_n Y_n;~ r_n)$ holds for some deterministic sequence $\{ w_n \}_{n=1}^{\infty}$ tending to zero.
\end{definition}

Both the new notation $O_{\PP}(\cdot;~\cdot)$ and the conventional one $O_{\PP}(\cdot)$ help avoid dealing with tons of unspecified constants in operations. Moreover, the former is more informative as it controls the convergence rate of exceptional probabilities. This is particularly useful when we take union bounds over a growing number of random variables. If $\{Y_n\}_{n=1}^{\infty}$ are positive and deterministic, then $X_n=O_{\PP}(Y_n;~1)$ is equivalent to $X_n=O_{\PP}(Y_n)$. Similar facts hold for $o_{\PP}(\cdot;~\cdot)$ as well.

\section{Main results}\label{KPCA-sec-Lp}

\subsection{Basic setup}

Consider the signal-plus-noise model
\begin{align}
\bx_i = \bar\bx_i + \bz_i \in \RR^d,\qquad i \in [n].
\label{Lp-eqn-signal-plus-noise-euc}
\end{align}
For simplicity, we assume that the signals $\{ \bar{\bx}_i \}_{i=1}^n$ are deterministic and the noises $\{ \bz_i \}_{i=1}^n$ are the only source of randomness. The results readily extend to the case where the signals are random but independent of the noises.


Define the \textit{hollowed} Gram matrix $\bG \in \RR^{n\times n}$ of samples $\{ \bx_i \}_{i=1}^n$ through $G_{ij} = \langle \bx_i , \bx_j \rangle \mathbf{1}_{ \{ i \neq j \} }$, and the Gram matrix $\bar\bG \in \RR^{n\times n}$ of signals $\{ \bar\bx_i \}_{i=1}^n$ through $\bar G_{ij} = \langle \bar\bx_i , \bar\bx_j \rangle$.
Denote the eigenvalues of $\bG$ by $\lambda_1 \geq \cdots \geq \lambda_n$ and their associated eigenvectors by $\{ \bu_j \}_{j=1}^n$. Similarly, we define the eigenvalues $\bar\lambda_1 \geq \cdots \geq \bar\lambda_n$ and eigenvectors $\{ \bar\bu_j \}_{j=1}^n$ of $\bar\bG$.
Since $\bar\bG = \bar\bX \bar\bX^{\top} \succeq 0$, we have $\bar\lambda_j \geq 0$ for all $j \in [n]$.
By convention, $\lambda_0 = \bar\lambda_0 = + \infty$ and $\lambda_{n+1} = \bar\lambda_{n+1} = - \infty$. Some groups of eigenvectors may only be defined up to orthonormal transforms as we allow for multiplicity of eigenvalues.

Let $s$ and $r$ be two integers in $[n]$ satisfying  $0 \leq s \leq n - r$. Define $\bU = (\bu_{s+1},\cdots,\bu_{s + r})$, $\bar\bU = (\bar\bu_{s+1},\cdots, \bar\bu_{s + r})$, $\bLambda = \diag (\lambda_{s+1},\cdots,\lambda_{s + r})$ and $\bar\bLambda = \diag ( \bar\lambda_{s+1},\cdots, \bar\lambda_{s + r})$. In order to study how $\bU$ relates to $\bar\bU$, we adopt the standard notion of eigen-gap \citep{DKa70}:
\begin{align}
\bar{\Delta} = \min\{  \bar\lambda_{s} - \bar\lambda_{s+1} , \bar\lambda_{s+r} - \bar\lambda_{s+r+1} \}.
\end{align}
This is the separation between the set of target eigenvalues $\{ \bar\lambda_{s + j} \}_{j=1}^r$ and the rest, reflecting the signal strength. Define $\kappa = \bar\lambda_1 / \bar\Delta$, which plays the role of condition number.
Most importantly, we use a parameter $\gamma$ to characterize the signal-to-noise ratio and impose the regularity assumptions below. It is worth mentioning that we consider the asymptotic setting $n\to\infty$ throughout the paper to make the results clean and easy to read. They can be easily translated to finite-sample versions similar to those in \cite{AFW17}, since our tools such as concentration inequalities and spectral perturbation bounds are non-asymptotic by nature.

\begin{assumption}[Incoherence]\label{KPCA-assumption-incoherence-euc}
As $n \to \infty$ we have
	\begin{align*}
	\kappa \mu \sqrt{\frac{r}{n} } \leq \gamma \ll \frac{1}{\kappa \mu}
	\qquad \text{where} \qquad
	\mu = \max \bigg\{ \frac{\| \bar\bX \|_{2, \infty}}{\| \bar\bX \|_{2}} \sqrt{\frac{n}{r}} , ~ 1  \bigg\}.
	\end{align*}	
\end{assumption}
\begin{assumption}[Sub-Gaussianity]\label{KPCA-assumption-noise-euc}
	$\{ \bz_i \}_{i=1}^n$ are independent, zero-mean random vectors in $\RR^d$. There exists a constant $\alpha > 0$ and $\bSigma \succeq 0$ such that $\EE e^{\langle \bu,\bz_i \rangle} \leq e^{ \alpha^2 \langle \bSigma \bu, \bu \rangle / 2 }$ holds for all $\bu \in \RR^d$ and $i\in [n]$.
\end{assumption}
\begin{assumption}[Concentration]\label{KPCA-assumption-concentration-euc}
	$\sqrt{n} \max\{ (  \kappa \| \bSigma \|_{2} / \bar\Delta )^{1/2},~  \| \bSigma \|_{\mathrm{F}} / \bar\Delta \} \leq \gamma$.
\end{assumption}

By construction, $\bar\bX = (\bar\bx_1,\cdots,\bar\bx_n)^{\top}$ and $\| \bar\bX \|_{2,\infty} = \max_{i\in[n]} \| \bar\bx_i \|_2$. Assumption \ref{KPCA-assumption-incoherence-euc} regulates the magnitudes of $\{ \| \bar\bx_i \|_2 \}_{i=1}^n$ in order to control the bias induced by the hollowing step. It naturally holds under various mixture models. The incoherence parameter $\mu$ is similar to the usual definition \citep{CRe09} except for the facts that $ \bar\bX $ does not have orthonormal columns and $r$ is not its rank. When $r = 1$, we have $\mu = \sqrt{n} \| \bar\bX \|_{2, \infty} / \| \bar\bX \|_{2} \geq 1 $.
Assumption \ref{KPCA-assumption-noise-euc}  is a standard one on sub-Gaussianity \citep{KLo14}. Here $\{ \bz_i \}_{i=1}^n$ are independent but may not have identical distributions, which allows for heteroscedasticity. Assumption \ref{KPCA-assumption-concentration-euc} governs the concentration of $\bG$ around its population version $\bar\bG$.
To gain some intuition, we define $\bZ = (\bz_1,\cdots,\bz_n)^{\top} \in \RR^{n \times d}$ and observe that
\begin{align*}
\bG
& = \cH [ (\bar\bX + \bZ)  (\bar\bX + \bZ)^{\top} ] = \cH ( \bar\bX \bar\bX^{\top} ) + \cH( \bar\bX\bZ^{\top} + \bZ \bar\bX^{\top} ) + \cH( \bZ\bZ^{\top} ) \notag \\
& = \bar\bX \bar\bX^{\top} + ( \bar\bX\bZ^{\top} + \bZ \bar\bX^{\top} ) + \cH( \bZ\bZ^{\top} )  - \bar\bD,
\end{align*}
where $\bar\bD$ is the diagonal part of $\bar\bX \bar\bX^{\top} + \bar\bX\bZ^{\top} + \bZ \bar\bX^{\top}$. Hence
\begin{align*}
\| \bG - \bar\bG \|_2 \leq  \| \bar\bX\bZ^{\top} + \bZ \bar\bX^{\top} \|_2 + \| \cH( \bZ\bZ^{\top} ) \|_2 + \max_{i \in [n]} | (\bar\bX \bar\bX^{\top} + \bar\bX\bZ^{\top} + \bZ \bar\bX^{\top})_{ii} |.
\end{align*}
The individual terms above are easy to work with. For instance, we may control $\| \cH( \bZ\bZ^{\top} ) \|_2$ using concentration bounds for random quadratic forms such as Hanson-Wright-type inequalities \citep{CYa18}. The spectral and Frobenius norms of $\bSigma$ collectively characterize the effective dimension of the noise distribution. That gives the reason why Assumption \ref{KPCA-assumption-concentration-euc} is formulated as it is. It turns out that Assumptions \ref{KPCA-assumption-incoherence-euc}, \ref{KPCA-assumption-noise-euc} and \ref{KPCA-assumption-concentration-euc} lead to a matrix concentration bound $\| \bG - \bar\bG \|_2 = O_{\PP} ( \gamma \bar\Delta;~ n)$, paving the way for eigenvector analysis. Hence $\gamma^{-1}$ measures the signal strength, similar to the quantity in \cite{AFW17}.

%
%
%


\subsection{$\ell_{2,p}$ analysis of eigenspaces}

Note that $\{ \bu_{s+j} \}_{j=1}^r$ and $\{ \bar\bu_{s+j} \}_{j=1}^r$ are only identifiable up to sign flips, and things become even more complicated if some eigenvalues are identical. To that end, we need to align $\bU$ with $\bar\bU$ using certain orthonormal transform.
Define $\bH = \bU^{\top} \bar\bU \in \RR^{r\times r}$ and let $\tilde\bU \tilde\bLambda  \tilde\bV^{\top}$ denote its singular value decomposition, where $\tilde\bU, \tilde\bV \in \cO_{r\times r}$ and $\tilde\bLambda \in \RR^{r\times r}$ is diagonal with nonnegative entries. The orthonormal matrix $\tilde\bU \tilde \bV^\top $, denoted by the matrix sign function $\sgn ( \bH )$ in the literature \citep{Gro11}, is the best rotation matrix that aligns $\bU$ with $\bar \bU$ and will play an important role throughout our analysis. In addition, define $\bZ = (\bz_1,\cdots,\bz_n)^{\top} \in \RR^{n\times d}$ as the noise matrix.   Recall that for $\bA \in \RR^{n\times r}$ with row vectors $\{ \bA_i \}_{i=1}^n $, the $\ell_{2,p}$ norm is
\begin{align*}
\| \bA \|_{2,p} = \bigg( \sum_{i=1}^n \| \bA_{i} \|_2^p \bigg)^{1/p}.
\end{align*}

\begin{theorem}\label{KPCA-corollary-main}
Suppose that Assumptions \ref{KPCA-assumption-incoherence-euc}, \ref{KPCA-assumption-noise-euc} and \ref{KPCA-assumption-concentration-euc} hold. As long as $2 \leq p \lesssim (\mu\gamma)^{-2}$, we have
\begin{align*}
	&	\| \bU \sgn(\bH) - \bG\bar\bU \bar\bLambda^{-1}  \|_{2,p} = o_{\PP} ( \| \bar\bU \|_{2,p};~p),\\
	& \| \bU \sgn(\bH) - [ \bar\bU + \cH ( \bZ \bX^{\top} ) \bar\bU \bar\bLambda^{-1} ] \|_{2,p} = o_{\PP} ( \| \bar\bU \|_{2,p};~p),\\
	&	\| \bU \sgn(\bH) \|_{2,p} = O_{\PP} ( \| \bar\bU \|_{2,p};~p).
	\end{align*}
In addition, if $\kappa^{3/2} \gamma \ll 1$, then
\begin{align*}
&	\| \bU \bLambda^{1/2} \sgn(\bH) - \bG\bar\bU \bar\bLambda^{-1/2}  \|_{2,p} = o_{\PP} ( \| \bar\bU \|_{2,p} \| \bar\bLambda^{1/2} \|_2 ;~p) ,\\
&	\| \bU \bLambda^{1/2} \sgn(\bH) - [ \bar\bU \bar\bLambda^{1/2} + \cH (\bZ \bX^{\top}) \bar{\bU} \bar\bLambda^{-1/2} ] \|_{2,p} = o_{\PP} ( \| \bar\bU \|_{2,p} \| \bar\bLambda^{1/2} \|_2 ;~p).
\end{align*}
\end{theorem}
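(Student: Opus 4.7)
The plan is to start from the eigen-equation $\bG\bU = \bU\bLambda$ and derive a first-order expansion for $\bU\sgn(\bH)$ in which $\bG\bar\bU\bar\bLambda^{-1}$ appears as the linearized approximation. Rearranging $\bU = \bG\bU\bLambda^{-1}$ and inserting $\sgn(\bH)$ on the right yields
\begin{align*}
\bU\sgn(\bH) - \bG\bar\bU\bar\bLambda^{-1}
= \bG\bigl[\bU\bLambda^{-1}\sgn(\bH) - \bar\bU\bar\bLambda^{-1}\bigr],
\end{align*}
and splitting the bracket as $(\bU\sgn(\bH)-\bar\bU)\bar\bLambda^{-1} + \bU[\bLambda^{-1}\sgn(\bH)-\sgn(\bH)\bar\bLambda^{-1}]$ reduces the task to (a) controlling $\bG(\bU\sgn(\bH)-\bar\bU)\bar\bLambda^{-1}$ in $\ell_{2,p}$ and (b) bounding the eigenvalue-alignment discrepancy by Weyl and Wedin inequalities. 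The global $\ell_2$ ingredient $\|\bU\sgn(\bH) - \bar\bU\|_2 = O_\PP(\gamma;\,n)$ comes for free from the Davis--Kahan theorem together with the matrix-concentration bound $\|\bG-\bar\bG\|_2 = O_\PP(\gamma\bar\Delta;\,n)$ advertised in the discussion of Assumption \ref{KPCA-assumption-concentration-euc}.

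To upgrade the $\ell_2$ bound to $\ell_{2,p}$, I would use the leave-one-out scheme of \cite{AFW17}. For each $i\in[n]$ define $\bG^{(i)}$ by zeroing out $\bz_i$, so that $\bG^{(i)}$ is independent of $\bz_i$, with associated eigenspace $\bU^{(i)}$. At the $i$-th row of the expansion above, I would replace $\bU$ by $\bU^{(i)}$; the replacement is of second order and bounded by the base $\ell_2$ estimate. Conditional on $\{\bz_j\}_{j\neq i}$ the principal piece $\bG_{i,\cdot}(\bU^{(i)}\sgn(\bH^{(i)})-\bar\bU)\bar\bLambda^{-1}$ is a linear functional of $\bz_i$ with sub-Gaussian proxy controlled by the row's $\ell_2$ perturbation, so Assumption \ref{KPCA-assumption-noise-euc} delivers a row-wise tail of rate $\exp(-c q)$ at scale $\sqrt{q}$ for any $q\in[2,p]$. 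Raising these row deviations to the $p$-th power, summing over $i$, and invoking a Chernoff-type bound produces the $\ell_{2,p}$ inequality with failure probability $\lesssim e^{-p}$, matching the $o_\PP(\,\cdot\,;\,p)$ notation. The quadratic-form contribution of $\cH(\bZ\bZ^\top)$ inside $\bG-\bar\bG$ is treated separately by a Hanson--Wright bound conditional on $\bz_i$. The constraint $p\lesssim(\mu\gamma)^{-2}$ is precisely the range in which the per-row tail, combined with the incoherence from Assumption \ref{KPCA-assumption-incoherence-euc}, survives the union bound over the $n$ rows.

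With the first identity proved, the second follows from the algebraic identity $\bG - \bar\bG = -\diag(\bar\bX\bar\bX^\top) + \cH(\bar\bX\bZ^\top) + \cH(\bZ\bX^\top)$ coming from $\bX = \bar\bX+\bZ$: the low-order pieces $\diag(\bar\bX\bar\bX^\top)\bar\bU\bar\bLambda^{-1}$ and $\cH(\bar\bX\bZ^\top)\bar\bU\bar\bLambda^{-1}$ are $o_\PP(\|\bar\bU\|_{2,p};\,p)$ by Assumption \ref{KPCA-assumption-incoherence-euc} and a direct sub-Gaussian row bound. The third estimate is the triangle inequality combined with an $\ell_{2,p}$ bound on $\bG\bar\bU\bar\bLambda^{-1}$ that again reduces to row-wise sub-Gaussian concentration. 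For the second block I would write $\bU\bLambda^{1/2}\sgn(\bH) = \bU\sgn(\bH)\cdot\sgn(\bH)^\top\bLambda^{1/2}\sgn(\bH)$ and approximate $\sgn(\bH)^\top\bLambda^{1/2}\sgn(\bH)$ by $\bar\bLambda^{1/2}$. The hard part will be this square-root alignment, which demands $\|\sgn(\bH)^\top\bLambda\sgn(\bH) - \bar\bLambda\|_2 \ll \bar\lambda_{s+r}$ and is strictly stronger than Weyl; the strengthened hypothesis $\kappa^{3/2}\gamma\ll 1$ enters precisely here, and I expect to need a second-order perturbation argument for the projector $\bU\bU^\top$ against $\bar\bU\bar\bU^\top$ to absorb the additional $\kappa^{1/2}$ loss created by block-rotational degeneracy among nearly coincident eigenvalues.
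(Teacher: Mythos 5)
Your overall architecture is the one the paper uses: the eigen-equation identity $\bU = \bG\bU\bLambda^{-1}$ (i.e.\ Lemma~1 of \cite{AFW17}), the leave-one-out decoupling of $\bz_i$ from the eigenspace, moment-plus-Markov for the $\ell_{2,p}$ conversion, a Hanson--Wright-type bound for the $\cH(\bZ\bZ^\top)$ piece (the paper's Lemma~\ref{KPCA-lemma-Z-product}), and the factorization $\bU\bLambda^{1/2}\sgn(\bH) = \bU\sgn(\bH)\cdot\sgn(\bH)^\top\bLambda^{1/2}\sgn(\bH)$ for the PC-score part. The paper is organized as a general Theorem~\ref{KPCA-theorem-main} under abstract moment conditions (Assumption~\ref{KPCA-assumption-Lp}) followed by a verification step, but that is only packaging.

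The one place where you have a genuine conceptual error, not just imprecision, is your explanation of the constraint $p\lesssim(\mu\gamma)^{-2}$: you say it is ``precisely the range in which the per-row tail\ldots survives the union bound over the $n$ rows.'' That is the wrong mechanism, and it is exactly the wrong mechanism the paper is trying to escape. A union bound over $n$ rows would force $p\gtrsim\log n$ regardless of the signal, which destroys the adaptivity that is the whole point of the $\ell_p$ analysis (see the paper's discussion around~\eqref{eqn-illustration-p-2}--\eqref{eqn-illustration-p-3}, which explicitly contrasts the $\ell_\infty$ union-bound route with the $\ell_p$ route). The constraint on $p$ actually comes from the sub-Gaussian moment growth $\EE^{1/p}|X|^p\lesssim\sqrt{p}\,\|X\|_{\psi_2}$: the $p$-th moment of each row error carries a $\sqrt{p}$ factor, and Assumption~\ref{KPCA-assumption-Lp} (verified in Claim~\ref{claim-assumptions}) requires $\sqrt{p}\,\mu\gamma\lesssim 1$ so that this moment stays below the signal scale $\bar\Delta\|\bar\bU\|_{2,p}$. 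One then sums the $p$-th moments over the rows (not a union bound) and applies Markov via Fact~\ref{KPCA-fact-moment-tail}. If you carried out your argument as written, in the regime $p\ll\log n$ of interest the union bound would not close.

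Two smaller remarks. First, the paper zeroes out the entire sample $\bx_m$ in the leave-one-out construction, not just $\bz_m$, so that the $m$-th row and column of $\bG^{(m)}$ vanish; this is what makes Lemma~3 of \cite{AFW17} apply cleanly to bound $\|\bU\bU^\top-\bU^{(m)}(\bU^{(m)})^\top\|_2$. Your variant retains a nonzero $m$-th row of $\bG^{(m)}$ and would require additional bookkeeping. Second, for the square-root alignment the paper does not need a second-order projector perturbation: it applies Schmitt's matrix-square-root perturbation bound directly, $\|\bar\bH^\top\bLambda^{1/2}\bar\bH-\bar\bLambda^{1/2}\|_2\lesssim\bar\Delta^{-1/2}\|\bLambda\bH-\bH\bar\bLambda\|_2$, and then uses the first-order identity $\bLambda\bH-\bH\bar\bLambda=\bU^\top(\bG-\bar\bG)\bar\bU$. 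The hypothesis $\kappa^{3/2}\gamma\ll1$ enters because the error $\kappa\gamma\|\bar\bU\|_{2,p}$ from the first part gets multiplied by $\|\bar\bLambda^{1/2}\|_2\leq(\kappa\bar\Delta)^{1/2}$, while the target scale is only $\|\bar\bU\|_{2,p}\|\bar\bLambda^{1/2}\|_2\geq\|\bar\bU\|_{2,p}\bar\Delta^{1/2}$; nothing about block-rotational degeneracy is needed beyond that.
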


The first equation in Theorem \ref{KPCA-corollary-main} asserts that although $\bU$ is a highly nonlinear function of $\bG$, it can be well-approximated by a linear form $\bG \bar\bU \bar\bLambda^{-1}$ up to an orthonormal transform. This can be understood from the hand-waving deduction:
\begin{align*}
\bU = \bG \bU \bLambda^{-1} \approx \bG \bar\bU \bar\bLambda^{-1}.
\end{align*}
The second equation in Theorem \ref{KPCA-corollary-main} talks about the difference between $\bU$ and its population version $\bar\bU$. Ignoring the orthonormal transform $\sgn(\bH)$, we have that for a large fraction of $m \in [n]$, the following entrywise approximation holds
\begin{align}
\bU_m \approx
[\bar\bU + \cH ( \bZ \bX^{\top} ) \bar\bU \bar\bLambda^{-1}]_m
= \bar\bU_{m} + \bigg\langle \bz_m,  \sum_{j \neq m} \bx_j  \bar\bU_{j} \bar\bLambda^{-1} \bigg\rangle .
\label{KPCA-eqn-entrywise}
\end{align}
If we keep $\{ \bx_j \}_{j \neq m}$ fixed,  then the spectral embedding $\bU_m$ for the $m$-th sample is roughly linear in $\bz_m$ or equivalently $\bx_m$ itself. This relation is crucial for our analysis of spectral clustering algorithms.
The third equation in Theorem \ref{KPCA-corollary-main} relates to the delocalization property of $\bU$ to that of $\bar\bU$, showing that the mass of $\bU$ is spread out across its rows as long as $\bar\bU$ behaves in a similar way.

Many spectral methods use the rows of $\bU \in \RR^{n\times r}$ to embed the samples $\{ \bx_i \}_{i=1}^n \subseteq \RR^d$ into $\RR^r$ \citep{SMa00, NJW02} and perform downstream tasks. By precisely characterizing the embedding, the first three equations in Theorem \ref{KPCA-corollary-main} facilitate their analysis  under statistical models. 
In PCA, however, the embedding is defined by PC scores. Recall that the PCs are eigenvectors of the covariance matrix $\frac{1}{n} \bX^{\top} \bX \in \RR^{d \times d}$ and PC scores are derived by projecting the data onto them. Therefore, the PC scores in our setting correspond to the rows of $\bU \bLambda^{1/2}$ rather than $\bU$. The last two equations in Theorem \ref{KPCA-corollary-main} quantify their behavior. 

Theorem \ref{KPCA-corollary-main} is written to be easily applicable. It forms the basis of our applications in Sections \ref{KPCA-sec-GMM} and \ref{KPCA-sec-CSBM}. General results under relaxed conditions are given by Theorem \ref{KPCA-theorem-main}.

Let us now gain some intuition about the $\ell_{2,p}$ error metric. For large $p$, 
$\| \bA \|_{2,p} $ is small if a vast majority of the rows have small $\ell_2$ norms, but there could be a few rows that are large. Roughly speaking, the number of those outliers is exponentially small in $p$. We illustrate this using a toy example with $r = 1$, i.e., $\bA = \bx \in \RR^n$ is a vector and $\| \cdot \|_{2,p} = \| \cdot \|_p$. If $\| \bx \|_{p} \leq \varepsilon \| \mathbf{1}_n \|_{p}$ for some $\varepsilon > 0$, then Markov's inequality yields
\begin{align*}
\frac{1}{n} |\{ i:~ |x_i| > t \varepsilon \}| \leq 
\frac{ n^{-1} \| \bx \|_p^p }{(t \varepsilon)^p}
\leq \frac{ n^{-1} \varepsilon^p \| \mathbf{1}_n \|_p^p }{(t \varepsilon)^p} = t^{-p}, \qquad  \forall t > 0.
\end{align*}
Larger $p$ implies stronger bounds. In particular, the following fact states that when $p \gtrsim \log n$, an upper bound in $\ell_{2,p}$ yields one in $\ell_{2,\infty}$, controlling all the row-wise errors simultaneously.
\begin{fact}\label{KPCA-fact-p-log}
	$\| \bx \|_{\infty} \leq \| \bx \|_{c \log n} \leq e^{1/c} \| \bx \|_{\infty}$ for any $ n \in \ZZ_+$, $\bx \in \RR^n$, $c > 0$.
\end{fact}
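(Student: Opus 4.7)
The statement is an elementary two-sided comparison between $\ell_p$ and $\ell_\infty$ norms on $\RR^n$, so the proof plan is essentially just to invoke the standard norm monotonicity on both sides and then compute the constant. No deep ingredients are needed.

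For the lower bound $\|\bx\|_\infty \leq \|\bx\|_{c \log n}$, I would simply observe that for any $p \geq 1$,
\[
\|\bx\|_p^p \;=\; \sum_{i=1}^n |x_i|^p \;\geq\; \max_{i \in [n]} |x_i|^p \;=\; \|\bx\|_\infty^p,
\]
by retaining only the single largest term of the sum, and then take $p$-th roots. This applies in particular to $p = c \log n$ (treating the degenerate case $n=1$ trivially since all three quantities collapse to $|x_1|$).

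For the upper bound $\|\bx\|_{c \log n} \leq e^{1/c} \|\bx\|_\infty$, the plan is to bound each summand crudely by the maximum: $|x_i|^p \leq \|\bx\|_\infty^p$. This gives
\[
\|\bx\|_p^p \;=\; \sum_{i=1}^n |x_i|^p \;\leq\; n \, \|\bx\|_\infty^p, \qquad \text{so} \qquad \|\bx\|_p \leq n^{1/p} \|\bx\|_\infty.
\]
Specializing to $p = c \log n$ yields the constant
\[
n^{1/(c \log n)} \;=\; \exp\!\left( \frac{\log n}{c \log n} \right) \;=\; e^{1/c},
\]
which is exactly the claimed factor. There is no real obstacle here; the only ``choice'' is to write the constant in exponential form so that the $n$-dependence cancels, which is precisely why the threshold $p \asymp \log n$ converts $\ell_p$ control into $\ell_\infty$ control up to a universal constant.
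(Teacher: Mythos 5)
Your proof is correct and follows exactly the same argument as the paper: the chain $\| \bx \|_{\infty} \leq \| \bx \|_p \leq n^{1/p} \| \bx \|_{\infty}$ for $p \geq 1$, specialized to $p = c\log n$ so that $n^{1/p} = e^{1/c}$. Nothing to add.
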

Fact \ref{KPCA-fact-p-log} immediately follows from the relation
\[
\| \bx \|_{\infty} \leq \| \bx \|_p = \bigg(\sum_{i=1}^{n} |x_i|^p\bigg)^{1/p} \leq (n \| \bx \|_{\infty}^p)^{1/p} = n^{1/p} \| \bx \|_{\infty}, \qquad\forall p \geq 1.
\]
Recall that in \Cref{KPCA-corollary-main} we require $\mu \gamma \to 0$ (Assumption \ref{KPCA-assumption-incoherence-euc}) but the convergence rate can be arbitrarily slow. The largest $p$ is of order $(\mu\gamma)^{-2}$. Now we consider a stronger condition $\mu \gamma \lesssim 1 / \sqrt{\log n}$ so that we can take $p \asymp \log n$ and obtain $\ell_{2,\infty}$ approximation bounds.

\begin{corollary}\label{KPCA-corollary-main-inf}
	Suppose that Assumptions \ref{KPCA-assumption-incoherence-euc}, \ref{KPCA-assumption-noise-euc} and \ref{KPCA-assumption-concentration-euc} hold. As long as $\mu\gamma \lesssim 1 / \sqrt{\log n}$, we have
	\begin{align*}
	&	\| \bU \sgn(\bH) - \bG\bar\bU \bar\bLambda^{-1}  \|_{2,\infty} = o_{\PP} ( \| \bar\bU \|_{2,\infty};~\log n),\\
	& \| \bU \sgn(\bH) - [ \bar\bU + \cH ( \bZ \bX^{\top} ) \bar\bU \bar\bLambda^{-1} ] \|_{2,\infty} = o_{\PP} ( \| \bar\bU \|_{2,\infty};~\log n),\\
	&	\| \bU \sgn(\bH) \|_{2,\infty} = O_{\PP} ( \| \bar\bU \|_{2,\infty};~\infty).
	\end{align*}
	In addition, if $\kappa^{3/2} \gamma \ll 1$, then
	\begin{align*}
	&	\| \bU \bLambda^{1/2} \sgn(\bH) - \bG\bar\bU \bar\bLambda^{-1/2}  \|_{2,\infty} = o_{\PP} ( \| \bar\bU \|_{2,\infty} \| \bar\bLambda^{1/2} \|_2 ;~\log n) , \\
&	\| \bU \bLambda^{1/2} \sgn(\bH) - [ \bar\bU \bar\bLambda^{1/2} + \cH (\bZ \bX^{\top}) \bar{\bU} \bar\bLambda^{-1/2} ] \|_{2,\infty} = o_{\PP} ( \| \bar\bU \|_{2,\infty} \| \bar\bLambda^{1/2} \|_2 ;~\log n).
	\end{align*}
\end{corollary}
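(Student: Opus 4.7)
The proof is an essentially immediate consequence of Theorem \ref{KPCA-corollary-main} combined with a matrix analogue of Fact \ref{KPCA-fact-p-log}. The plan is to instantiate Theorem \ref{KPCA-corollary-main} at $p \asymp \log n$ and then sandwich the resulting $\ell_{2,p}$ bounds between $\ell_{2,\infty}$ bounds. Applying Fact \ref{KPCA-fact-p-log} to the $n$-dimensional vector of row norms $(\|\bA_1\|_2,\ldots,\|\bA_n\|_2)^{\top}$ shows that for any $\bA \in \RR^{n\times r}$ and any $c>0$,
\[
\|\bA\|_{2,\infty} \;\le\; \|\bA\|_{2,c\log n} \;\le\; e^{1/c}\,\|\bA\|_{2,\infty}.
\]
Under the hypothesis $\mu\gamma \lesssim 1/\sqrt{\log n}$ we have $(\mu\gamma)^{-2} \gtrsim \log n$, so there exists an absolute constant $c_0 > 0$ for which $p = c_0 \log n$ meets $2 \le p \lesssim (\mu\gamma)^{-2}$ for all large $n$; Theorem \ref{KPCA-corollary-main} therefore applies at this $p$.

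For each of the approximation bounds I would then argue term by term. Taking the first one as a model, Theorem \ref{KPCA-corollary-main} gives
\[
\|\bU \sgn(\bH) - \bG\bar\bU\bar\bLambda^{-1}\|_{2,c_0\log n} \;=\; o_{\PP}\!\bigl(\|\bar\bU\|_{2,c_0\log n};\, c_0\log n\bigr).
\]
The sandwich inequality bounds the left-hand $\ell_{2,\infty}$ error by the $\ell_{2,c_0\log n}$ error, and replaces $\|\bar\bU\|_{2,c_0\log n}$ on the right by $e^{1/c_0}\|\bar\bU\|_{2,\infty}$; the harmless constant $e^{1/c_0}$ is absorbed into the $o_{\PP}(\cdot)$. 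This yields the first displayed equation of the corollary, and the identical argument produces the second, fourth, and fifth equations (the last two of which inherit the extra hypothesis $\kappa^{3/2}\gamma \ll 1$ from Theorem \ref{KPCA-corollary-main} without modification).

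For the third equation, whose rate parameter is $\infty$, the identical sandwich produces an event of probability at least $1 - C_1 e^{-C c_0 \log n} = 1 - C_1 n^{-Cc_0}$ on which the inequality holds. Because the constant $C$ inside Definition \ref{KPCA-defn-0} is arbitrary, this exceptional probability decays faster than any polynomial in $n$, which matches the meaning of $O_{\PP}(\cdot;\infty)$. I do not anticipate a substantive obstacle: Theorem \ref{KPCA-corollary-main} has already done the heavy lifting, and the only care needed is bookkeeping of the rate parameter in the $O_{\PP}(\cdot;~\cdot)$ notation to confirm that decay on the scale $c_0 \log n$ is strong enough to be relabeled as $\log n$ (and $\infty$) in the statement.
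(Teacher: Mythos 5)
Your approach is correct and is exactly the route the paper has in mind (signalled explicitly in the text just before the corollary): instantiate Theorem~\ref{KPCA-corollary-main} at $p \asymp \log n$, then pass from $\ell_{2,p}$ to $\ell_{2,\infty}$ via Fact~\ref{KPCA-fact-p-log} applied to the vector of row norms. The constant $e^{1/c_0}$ is harmless, and rate $c_0\log n$ is equivalent to rate $\log n$ because $C$ is arbitrary in Definition~\ref{KPCA-defn-0}. This settles the four $o_\PP$ displays.

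The one genuine imprecision is your treatment of the rate parameter $\infty$ in the third display. You point out that the sandwich yields an exceptional probability of at most $C_1 n^{-C c_0}$ for every $C$, and conclude that this ``decays faster than any polynomial'' and therefore matches $O_\PP(\cdot;\infty)$. But faster-than-any-polynomial decay, with the threshold $C'$ allowed to vary with $C$, is exactly the content of $O_\PP(\cdot;\log n)$ --- it is not something stronger. Read against Definition~\ref{KPCA-defn-0} and Fact~\ref{KPCA-fact-0}, $O_\PP(\cdot;\infty)$ requires $\PP\bigl(\|\bU\sgn(\bH)\|_{2,\infty}\ge C'\|\bar\bU\|_{2,\infty}\bigr)=0$ for some fixed $C'$ and all large $n$, i.e.\ an almost-sure bound on a full-measure event. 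The sandwich from Theorem~\ref{KPCA-corollary-main} cannot produce that, since the theorem's $O_\PP$ display is stated at rate $p$, and $p=c_0\log n$ caps the resulting rate at $\log n$. So your argument delivers the third display only at rate $\log n$, the same rate as the first two displays, not at rate $\infty$; do not identify the two. If the paper's $\infty$ is to be taken literally, a separate argument beyond the $p \asymp \log n$ sandwich would be required.
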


However, $p$ cannot be arbitrarily large in general. When the signal is weak, we can no longer obtain uniform error bounds as the above and should allow for exceptions. The quantity $(\mu\gamma)^{-2}$ in Theorem \ref{KPCA-corollary-main} measures the signal strength, putting a cap on the largest $p$ we can take. That makes the results adaptive and superior to the $\ell_{2,\infty}$ ones in \citep{AFW17}.



\subsection{Extension to Hilbert spaces}

Since $\bG \in \RR^{n\times n}$ is constructed purely based on pairwise inner products of samples, the whole procedure can be extended to kernel settings. Here we briefly discuss the kernel PCA \citep{SSM97}. Suppose that $\{ \bx_i \}_{i=1}^n $ are samples from some space $\cX$ and $K(\cdot,\cdot):~\cX \times \cX \to \RR$ is a symmetric and positive semi-definite kernel.
The kernel PCA is PCA based on a new Gram matrix $\bK \in \RR^{n\times n}$ with $K_{ij} = K(\bx_i , \bx_j)$. PCA is a special case of kernel PCA with $\cX = \RR^d$ and $K(\bx,\by) = \bx^{\top} \by$. Commonly-used nonlinear kernels include the Gaussian kernel $K(\bx,\by) = e^{- \eta \| \bx - \by \|_2^2 }$ with $\eta > 0$ and polynomial kernel. They offer flexible nonlinear embedding techniques which have achieved great success in machine learning \citep{CSh00}.

According to the Moore-Aronszajn Theorem \citep{Aro50}, there exists a reproducing kernel Hilbert space $\HH$ with inner product $\langle \cdot,\cdot \rangle $ and a function $\phi:~ \cX \to \HH$ such that $K(\bx, \by) =\langle \phi(\bx), \phi(\by) \rangle $ for any $\bx,\by \in \cX$. Hence, kernel PCA of $\{ \bx_i \}_{i=1}^n \subseteq \cX$ is de facto PCA of transformed data $\{ \phi(\bx_i) \}_{i=1}^n \subseteq \HH$.
The transform $\phi$ can be rather complicated since $\HH$ has infinite dimensions in general. Fortunately, the inner products $\{ \langle \phi(\bx_i) , \phi(\bx_j)  \rangle \}$ in $\HH$ can be conveniently computed in the original space $\cX$, which is $K_{ij}$.

Motivated by the kernel PCA, we extend the basic setup to Hilbert spaces. Let $\mathbb{H}$ be a real separable Hilbert space with inner product $\langle \cdot,\cdot \rangle $, norm $\| \cdot \|$, and orthonormal bases $\{ \bh_j \}$.
\begin{definition}[Basics of Hilbert spaces]
	A linear operator $\bA:\HH \to \HH$ is said to be bounded if its operator norm $\| \bA \|_{\mathrm{op}} = \sup_{ \| \bu \| = 1 } \| \bA \bu \|$ is finite.
	Define $\cL (\HH)$ as the collection of all bounded linear operators over $\mathbb{H}$. For any $\bA \in \cL (\HH)$, we use $\bA^*$ to refer to its adjoint operator and let $\Tr ( \bA ) = \sum_{j} \langle \bA \bh_j , \bh_j \rangle$.   Define
	\begin{align*}
	\cS_{+} (\HH) = \{ \bA \in \cL (\HH):~~\bA = \bA^*,~~\langle \bA \bx,\bx \rangle \geq 0,~\forall \bx \in \mathbb{H} \text{ and } \Tr ( \bA ) < \infty \}.
	\end{align*}
	Any $\bA \in \cS_{+}  (\HH)$ is said to be positive semi-definite. We use
	$\| \bA \|_{\mathrm{HS}} = \sqrt{ \Tr ( \bA^* \bA ) } = ( \sum_{j} \| \bA \bh_j \|^2 )^{1/2}$
	to refer to its Hilbert-Schmidt norm, and define $\bA^{1/2} \in \cT ( \mathbb{H} )$ as the unique operator such that $\bA^{1/2} \bA^{1/2} = \bA$.
\end{definition}

\begin{remark}
	When $\mathbb{H} = \RR^d$, we have $\cL(\cH) = \RR^{d\times d}$, $\Tr(\bA) = \sum_{i=1}^{d} A_{ii}$, $\| \cdot \|_{\mathrm{op}} = \| \cdot \|_2$ and $\| \cdot \|_{\mathrm{HS}} = \| \cdot \|_{\mathrm{F}}$. Further, $\cS_{+} (\HH)$ consists of all $d\times d$ positive semi-definite matrices.
\end{remark}

We now generalize model (\ref{Lp-eqn-signal-plus-noise-euc}) to the following one in $\HH$:
\begin{align}
\bx_i = \bar\bx_i + \bz_i \in \HH,\qquad i \in [n].
\label{Lp-eqn-signal-plus-noise}
\end{align}
When $\HH = \RR^d$, the data matrix $\bX = (\bx_1,\cdots,\bx_n)^{\top} \in \RR^{n \times d}$ corresponds to a linear transform from $\RR^d$ to $\RR^n$. For any general $\HH$, we can always define $\bX$ as a bounded linear operator from $\HH$ to $\RR^n$ through its action $\bh \mapsto (\langle \bx_1, \bh \rangle, \cdots, \langle \bx_n, \bh \rangle)$.
With slight abuse of notation, we formally write $\bX = (\bx_1,\cdots,\bx_n)^{\top}$, use $\| \bX \|_{\mathrm{op}}$ to refer to its norm, let $\| \bX \|_{2,\infty} = \max_{i\in[n]} \| \bx_i \|$, and do the same for $\bar\bX$ and $\bZ$. We generalize Assumptions \ref{KPCA-assumption-incoherence-euc}, \ref{KPCA-assumption-noise-euc} and \ref{KPCA-assumption-concentration-euc} accordingly.
\begin{assumption}[Incoherence]\label{KPCA-assumption-incoherence}
	As $n \to \infty$ we have
	\begin{align*}
	\kappa \mu \sqrt{\frac{r}{n} } \leq \gamma \ll \frac{1}{\kappa \mu}
	\qquad \text{where} \qquad
	\mu = \max \bigg\{ \frac{\| \bar\bX \|_{2, \infty}}{\| \bar\bX \|_{\mathrm{op}}} \sqrt{\frac{n}{r}} , ~ 1  \bigg\}.
	\end{align*}	
\end{assumption}

\begin{assumption}[Sub-Gaussianity]
	\label{KPCA-assumption-noise}
	$\{ \bz_i \}_{i=1}^n$ are independent, zero-mean random vectors in $\mathbb{H}$. There exists a constant $\alpha > 0$ and an operator $\bSigma \in \cT  (\HH)$ such that $\EE e^{\langle \bu,\bz_i \rangle} \leq e^{ \alpha^2 \langle \bSigma \bu, \bu \rangle / 2 }$ holds for all $\bu \in \mathbb{H}$ and $i\in [n]$.
\end{assumption}

\begin{assumption}[Concentration]\label{KPCA-assumption-concentration}
	$\sqrt{n} \max\{ (  \kappa \| \bSigma \|_{\mathrm{op}} / \bar\Delta )^{1/2},~  \| \bSigma \|_{\mathrm{HS}} / \bar\Delta \} \leq \gamma$.
\end{assumption}

Again, Assumption \ref{KPCA-assumption-incoherence} on incoherence holds for various mixture models. Assumption \ref{KPCA-assumption-noise} appears frequently in the study of sub-Gaussianity in Hilbert spaces \citep{KLo14}. For kernel PCA, Assumption \ref{KPCA-assumption-noise} automatically holds when the kernel is bounded, i.e. $K(\bx,\bx) \leq C$ for some constant $C$. Assumption \ref{KPCA-assumption-concentration} naturally arises in the study of Gram matrices and quadratic forms in Hilbert spaces \citep{CYa18}.
The same results in Theorem \ref{KPCA-corollary-main} continue to hold under the Assumptions \ref{KPCA-assumption-incoherence}, \ref{KPCA-assumption-noise} and \ref{KPCA-assumption-concentration}.
The proof is in Appendix \ref{KPCA-corollary-main-proof}.

\section{Mixture models}\label{KPCA-sec-GMM}

\subsection{Sub-Gaussian mixture model}\label{KPCA-sec-GMM-sGMM}

Sub-Gaussian and Gaussian mixture models serve as testbeds for clustering algorithms.
Maximum likelihood estimation requires well-specified models and often involves non-convex or combinatorial optimization problems that are hard to solve. The recent years have seen a boom in the study of efficient approaches.
The Lloyd's algorithm \citep{Llo82} with good initialization and its variants are analyzed under certain separation conditions \citep{KKa10, LZh16, Nda18, GZh19}.
Semi-definite programming (SDP) yields reliable results in more general scenarios \citep{ABC15, MVW17, Roy17, FCh18, GVe18, CYa18, CYa20}.
Spectral methods 
are more efficient in terms of computation and have attracted much attention \citep{VWa04, CZh18, LZZ19, SSH19}. However, much less is known about spectral methods compared with SDP.

We apply the $\ell_{p}$ theory of PCA to spectral clustering under a sub-Gaussian mixture model in a Hilbert space $\HH$. Suppose that we collect samples $\{ \bx_i \}_{i=1}^n \subseteq \HH$ from a mixture model
\begin{align}
\bx_i = \bmu_{y_i} + \bz_i , \qquad i \in [n].
\label{eqn-model-sGMM}
\end{align}
Here $\{ \bmu_j \}_{j=1}^K \subseteq \HH$ are cluster centers, $\{ y_i \}_{i=1}^n \subseteq [K]$ are true labels, and $\{ \bz_i \}_{i=1}^n \subseteq \HH$ are noise vectors satisfying Assumption \ref{KPCA-assumption-noise}. For simplicity, we assume that the centers and labels are deterministic. A conditioning argument extends the results to the case where they are independent of $\{ \bz_i \}_{i=1}^n$. Heteroscedasticity is allowed, as the covariance matrices of $\{ \bz_i \}_{i=1}^n$ may be different as long as they are uniformly dominated by some $\bSigma$.
The goal of clustering is to recover $\{ y_i \}_{i=1}^n$ from $\{ \bx_i \}_{i=1}^n$. Below is the spectral algorithm under investigation, based on PCA and approximate $k$-means. Here $r \in [K]$ is the target dimension of embedding.
\begin{enumerate}
	\item Compute the $r$ leading eigenvalues $\{ \lambda_j \}_{j=1}^r$ and their associated eigenvectors $\{ \bu_j \}_{j=1}^r$ of the hollowed Gram matrix $\cH(\bX \bX^{\top})$. Let $\bU = ( \bu_1,\cdots,\bu_r )$ and $\bLambda = \diag (\lambda_1,\cdots,\lambda_r)$.
	\item Conduct $(1+\varepsilon)$-approximate $k$-means clustering on the rows of $\bU \bLambda^{1/2}$, getting $\{ \hat\bmu_j \}_{j=1}^K \subseteq \RR^{r}$ and $\{ \hat{y}_i \}_{i=1}^n \subseteq [K]$ such that
	\begin{align*}
	\sum_{i=1}^{n} \| (\bU \bLambda^{1/2})_i - \hat\bmu_{\hat y_i} \|_2^2
	\leq (1 + \varepsilon) \min_{
		\substack{
			\{ \widetilde\bmu_j \}_{j=1}^K \subseteq \RR^K \\
			\{ \widetilde y_i \}_{i=1}^n \subseteq [K]
		}
	} \bigg\{ \sum_{i=1}^{n} \| (\bU \bLambda^{1/2})_i - \widetilde\bmu_{\widetilde y_i} \|_2^2 \bigg\} .
	\end{align*}
Return $\{ \hat{y}_i \}_{i=1}^n$ as the estimated labels.
\end{enumerate}

The rows of the PC score matrix $\bU \bLambda^{1/2}$ embed the $n$ samples into $\RR^r$, which greatly reduces the dimensionality. When $\varepsilon = 0$, $( \{ \hat\bmu_j \}_{j=1}^K , \{ \hat{y}_i \}_{i=1}^n )$ is an exact solution to the $k$-means program but may be NP-hard to compute in the worst case. Fortunately, for any constant $\varepsilon > 0$ there exists a linear-time algorithm that returns a $(1+\varepsilon)$-approximate solution \citep{KSS04}. 
In that case, the spectral algorithm above is computationally efficient as both steps run in nearly linear time. Our theory handles any constant $\varepsilon \geq 0$.

Define the misclassification rate of $\hat{\by} \in [K]^n$ as
\begin{align}
& \cM ( \hat{\by} , \by ) = n^{-1} \min_{\tau \in S_K}  |\{ i \in [n]:~ \hat{y}_i \neq \tau(y_i )  \}| .
\label{eqn-kmeans-error-rate}
\end{align}
Here $S_K$ is the set of all permutations of $[K]$. We will derive sharp bounds on $\EE  \cM ( \hat{\by} , \by )$. To facilitate presentation and highlight key ideas, we assume that $K,r$ are given and make some regularity assumptions. Estimation of $K$ and $r$ in general scenarios is left for future work.

\begin{assumption}[Regularities]\label{sGMM-assumption-regularity}
Let $\bB \in \RR^{K\times K}$ be the Gram matrix of $\{ \bmu_j \}_{j=1}^K$ with $B_{ij} = \langle \bmu_i, \bmu_j \rangle$. Suppose that $\mathrm{rank}(\bB) = r$ and there is a constant $\kappa_0$ that bounds
\begin{align*}
\frac{n}{ \min_{k \in [K]} |\{ i \in [n]:~y_i = k \}| }
,\qquad
\frac{ \lambda_1(\bB) }{  \lambda_r ( \bB) } 
\qquad\text{and}\qquad
\frac{ \max_{j \in [K]} \|  \bmu_j \| }{\min_{i \neq j} \|\bmu_i - \bmu_j \| }  
\end{align*}
from above. Here $\lambda_j(\cdot)$ denotes the $j$-th largest eigenvalue of a symmetric matrix.
\end{assumption}

The lower bound on the smallest cluster forces $K \leq \kappa_0$. Hence $K$ is a constant and all clusters have comparable sizes. 
The spectral assumption on $\bB$ holds if $\{ \bmu_j \}_{j=1}^K$ span a subspace of dimension $r \leq K$ but do not concentrate near any smaller subspace. Such condition is commonly used in the study of spectral methods for mixture models \citep{HKa13}. 
The last regularity condition in Assumption \ref{sGMM-assumption-regularity} is likely an artifact of proof. Our current results on the empirical embedding $\bU \bLambda^{1/2}$ (\Cref{KPCA-corollary-main}) controls its deviation from the truth using $\| \bar\bLambda \|_2$, which is related to $\max_{j \in [K]} \| \bmu_j \|$. We need such deviation to be smaller than the minimum separation $\min_{i \neq j} \|\bmu_i - \bmu_j \| $ in order to ensure the accuracy of clustering.

Before presenting the general results, we illustrate Assumption \ref{sGMM-assumption-regularity} by two examples. Suppose that $\HH = \RR^d$ for $d \geq K = 3$ and the 3 clusters are equally-sized. When $\bmu_j = \be_j$ for all $j \in \{ 1, 2, 3 \}$, we have $\bB =  \bI_3$, $r = 3$ and $\kappa_0 = 3$. When $\bmu_1 = \be_1$, $\bmu_2 = ( -1/2 , \sqrt{3}/2, 0)^{\top}$ and $\bmu_3 = ( -1/2 , -\sqrt{3}/2, 0)^{\top}$, we have $\bB = (3/2) \bI_3 - (1/2) \bm{1}_{3\times 3}$, $r = 2$ and $\kappa_0 = 3$.

\begin{theorem}\label{thm-sGMM-kmeans}
Consider the mixture model (\ref{eqn-model-sGMM}). Let Assumptions \ref{KPCA-assumption-noise}, \ref{sGMM-assumption-regularity} hold and $\varepsilon \geq 0$ be a constant. Define
$\bar{s}  = \min_{i \neq j} \|\bmu_i - \bmu_j \| $ and
\begin{align}
\mathrm{SNR} = \min \bigg\{ 
\frac{ \bar{s}^2 }{ \| \bSigma \|_{\mathrm{op}} },
\frac{n \bar{s}^4 }{ \| \bSigma \|_{\mathrm{HS}}^2 } 
\bigg\}.
\label{eqn-kmeans-SNR}
\end{align}
There exist constants $C > 0$ and $c >0$ such that the followings hold:
	\begin{enumerate}
	\item If $\mathrm{SNR} > C \log n$, then $\lim_{n \to \infty} \PP [ \cM ( \hat{\by} , \by ) = 0 ] = 1$;
	\item If $1 \ll \mathrm{SNR} \leq C \log n$, then $\limsup_{n\to\infty} \mathrm{SNR}^{-1} \log \EE \cM ( \hat{\by} , \by ) < - c$.
\end{enumerate}
\end{theorem}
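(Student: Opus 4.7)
The plan is to use the PC-score linearization from Theorem~\ref{KPCA-corollary-main} to reduce spectral clustering to a noisy $k$-means problem in $\mathbb{R}^r$ whose per-sample error can be analyzed via sub-Gaussian tail bounds.

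\textbf{Step 1 (Oracle geometry).} I would first describe $\bar\bU\bar\bLambda^{1/2}$. Since $\bar\bX=(\bmu_{y_1},\ldots,\bmu_{y_n})^{\top}$ factors through the centers, $\bar\bG$ has rank $r$ and the rows of $\bar\bU\bar\bLambda^{1/2}$ take only $K$ distinct values, one per cluster, with pairwise distances equal to $\|\bmu_k-\bmu_\ell\|$. Under Assumption~\ref{sGMM-assumption-regularity}, balanced cluster sizes give $\bar\lambda_j\asymp n\lambda_j(\bB)$ for $j\le r$, so $\bar\Delta\asymp n\lambda_r(\bB)\asymp n\bar{s}^2$ and $\kappa=\bar\lambda_1/\bar\Delta=O(1)$. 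The incoherence parameter $\mu$ is bounded by a constant as well, since $\|\bar\bX\|_{2,\infty}\le \max_j\|\bmu_j\|$ and $\|\bar\bX\|_{\mathrm{op}}\asymp\sqrt{n}\,\max_j\|\bmu_j\|$.

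\textbf{Step 2 (Invoke the $\ell_{2,p}$ theorem).} With the above scaling, the parameter $\gamma$ in Assumption~\ref{KPCA-assumption-concentration} satisfies $\gamma^2\asymp\max\{\|\bSigma\|_{\mathrm{op}}/\bar{s}^2,\|\bSigma\|_{\mathrm{HS}}^2/(n\bar{s}^4)\}\asymp\mathrm{SNR}^{-1}$, so $\mathrm{SNR}\gg1$ is exactly the hypothesis $\mu\gamma\to0$ required. Apply the second block of Theorem~\ref{KPCA-corollary-main} to get, for every $2\le p\lesssim\mathrm{SNR}$,
\[
\bigl\|\bU\bLambda^{1/2}\sgn(\bH)-\bar\bU\bar\bLambda^{1/2}-\cH(\bZ\bX^{\top})\bar\bU\bar\bLambda^{-1/2}\bigr\|_{2,p}
=o_{\PP}\bigl(\|\bar\bU\|_{2,p}\|\bar\bLambda^{1/2}\|_2;\,p\bigr).
\]
Since $\|\bar\bU\|_{2,p}\asymp n^{1/p-1/2}$ and $\|\bar\bLambda^{1/2}\|_2\asymp\sqrt{n}\,\max_j\|\bmu_j\|\asymp\sqrt{n}\,\bar{s}$, the right-hand side is $o_{\PP}(n^{1/p}\bar{s};\,p)$.

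\textbf{Step 3 (Row-wise classification).} For each $i$, write the linearized embedding as $\widehat{\bW}_i=(\bar\bU\bar\bLambda^{1/2})_i+\bxi_i$ where $\bxi_i=\sum_{j\neq i}\langle\bz_i,\bx_j\rangle(\bar\bU\bar\bLambda^{-1/2})_j$. Conditioning on $\{\bz_j\}_{j\neq i}$, $\bxi_i$ is a linear functional of $\bz_i$, hence sub-Gaussian with variance proxy $\alpha^2\langle\bSigma\bv_i,\bv_i\rangle$ for $\bv_i=\sum_{j\neq i}\bx_j(\bar\bU\bar\bLambda^{-1/2})_j$. A short calculation using $\bar\bX^{\top}\bar\bU\bar\bLambda^{-1/2}$ being essentially an isometry on the range of $\bar\bV$ shows $\bv_i$ concentrates around a vector whose image under $\bSigma^{1/2}$ has norm of order $\max\{\|\bSigma\|_{\mathrm{op}}^{1/2},\|\bSigma\|_{\mathrm{HS}}/\sqrt{n}\}$. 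Thus the probability that $\bxi_i$ crosses the nearest Voronoi boundary (at distance $\bar{s}/2$ in the oracle embedding) is at most $\exp(-\mathrm{SNR}/(2+o(1)))$, matching the target rate in~(\ref{eqn-intro-rate}).

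\textbf{Step 4 (From row-wise control to $k$-means error).} Let $\widehat{\bW}=\bU\bLambda^{1/2}\sgn(\bH)$, $\bar\bW=\bar\bU\bar\bLambda^{1/2}$, and call $i$ ``bad'' if $\|\widehat{\bW}_i-\bar\bW_i\|>\bar{s}/(4\sqrt\kappa_0)$. Combining Steps 2--3 with the Markov argument in~(\ref{eqn-intro-Lp-outliers}) bounds the fraction of bad $i$ by $\exp(-c\cdot\mathrm{SNR})$ with overwhelming probability. A standard reduction for $(1+\varepsilon)$-approximate $k$-means (as in~\cite{LZh16,GVe18}) then shows that, up to a permutation, every good $i$ is classified correctly: the empirical centers $\{\hat\bmu_k\}$ lie within $O(\bar{s}/\sqrt\kappa_0)$ of rotated versions of the true centers in $\mathbb{R}^r$, which is less than half of the minimum center separation $\bar{s}$. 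Hence $\cM(\hat\by,\by)\le|\mathcal{B}|/n+\sum_i\mathbf{1}\{i\text{ good but misclassified}\}/n$, and taking expectations with the per-sample bound from Step~3 yields part~2. For part~1, choose $p\asymp\log n$ (permitted by $\mathrm{SNR}>C\log n$), apply Corollary~\ref{KPCA-corollary-main-inf} and Fact~\ref{KPCA-fact-p-log} so that $\mathcal{B}=\varnothing$ almost surely, then union-bound the $n$ per-sample Gaussian tails.

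\textbf{Main obstacle.} The crucial technical hurdle is Step~3: turning the global $\ell_{2,p}$ bound into a sharp per-sample tail with the \emph{optimal} constant $1/2$ in the exponent. The $(1+\varepsilon)$ slack in the $k$-means objective is harmless for the rate but must not inflate the effective noise variance used for the tail; avoiding this requires showing that the empirical cluster centers returned by the approximate $k$-means step are themselves only $o(\bar{s})$ away from the oracle centers, so that the decision boundary for each sample is asymptotically the true one. Technically, this is accomplished by a bootstrapping argument: first use a crude Davis--Kahan bound to show the overall misclassification rate vanishes, then use the linearization from Step~2 to refine this into the sharp per-sample exponential rate.
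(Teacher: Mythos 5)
Your Steps 1, 2, and 4 track the structure of the paper's proof reasonably well, but Step 3 is a detour that the paper does not take, and it is the weak link in the argument as written. The theorem only claims existence of unspecified constants $C$ and $c$; it does not assert the sharp exponent $1/2$ from (\ref{eqn-intro-rate}), which the paper only proves for the \emph{Gaussian} mixture in Theorem~\ref{KPCA-thm-GMM}. For a general sub-Gaussian mixture with sub-Gaussian parameter $\alpha$, the variance proxy $\alpha^2\langle\bSigma\bv_i,\bv_i\rangle$ would contaminate the exponent, so the per-sample tail $\exp(-\mathrm{SNR}/(2+o(1)))$ you claim in Step 3 is not generally achievable, and the "short calculation" controlling $\bSigma^{1/2}\bv_i$ is not a calculation at all (it requires analyzing the concentration of $\sum_{j\neq i}\bx_j(\bar\bU\bar\bLambda^{-1/2})_j$, which depends on $\bar\bU$ and hence implicitly on all the noise).

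The paper's mechanism is cleaner and avoids any conditional per-row analysis. It applies an approximate $k$-means recovery lemma (Lemma~\ref{lem-kmeans-key}) whose low-noise hypothesis is verified via the Frobenius-type bound $\|\bU\bLambda^{1/2}\sgn(\bH)-\bar\bU\bar\bLambda^{1/2}\|_{\mathrm F}=o_{\PP}(\sqrt{n}\,\bar{s};\,p)$; that lemma then shows the misclassified set is contained in $\{i:\|[\bU\bLambda^{1/2}\sgn(\bH)]_i-(\bar\bU\bar\bLambda^{1/2})_i\|_2\geq\bar{s}/4\}$. A single Markov-type step on the $\ell_{2,p}$ error,
\[
\min_{\tau\in S_K}|\{i:\hat y_i\neq\tau(y_i)\}|
\leq
n\left(\frac{\|\bU\bLambda^{1/2}\sgn(\bH)-\bar\bU\bar\bLambda^{1/2}\|_{2,p}}{n^{1/p}\bar{s}/4}\right)^{p},
\]
combined with the bound $\|\bU\bLambda^{1/2}\sgn(\bH)-\bar\bU\bar\bLambda^{1/2}\|_{2,p}/n^{1/p}\leq\bar{s}/(4e)$ with probability $1-e^{-p}$ for a suitable $p\asymp\mathrm{SNR}\wedge n$, directly yields $\EE\,\cM(\hat\by,\by)\lesssim e^{-C_0(\mathrm{SNR}\wedge n)}$. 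There is no need for your Step~3, and also no need for the "crude Davis--Kahan bound first, then bootstrap" sequence you propose in the Main Obstacle: verifying the low-noise condition of the $k$-means lemma already supplies the coarse information that empirical and oracle centers are within $\delta=\bar{s}/4$, so the bootstrap is not required. You do correctly identify that the $(1+\varepsilon)$ slack must be shown to not corrupt the decision boundaries, and Lemma~\ref{lem-kmeans-key} is precisely the vehicle for that.
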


The proof is in \Cref{sec-thm-sGMM-kmeans-proof}. Theorem \ref{thm-sGMM-kmeans} asserts that the spectral algorithm exactly recovers all the labels with high probability when $\mathrm{SNR}$ exceeds some constant multiple of $\log n$. When $\mathrm{SNR}$ is not that large but still diverges, we have an exponential bound $e^{-\Omega (\mathrm{SNR})}$ for the misclassification rate. To understand why the quantity $\mathrm{SNR}$ in (\ref{eqn-kmeans-SNR}) measures the signal-to-noise ratio, note that 
\begin{align}
\mathrm{SNR} & \asymp 
\frac{ \min_{j \neq k} \| \bmu_j - \bmu_k \|^2 }{ \| \bSigma \|_{\mathrm{op}} } \cdot \min \bigg\{ 
1,~
\frac{ \min_{j \neq k} \| \bmu_j - \bmu_k \|^2 }{ \| \bSigma \|_{\mathrm{op}} }
\cdot \frac{n}{r(\bSigma) }
\bigg\} .
\label{eqn-kmeans-SNR-2}
\end{align}
Here $r(\bSigma)  = \| \bSigma \|_{\mathrm{HS}}^2 / \| \bSigma \|_{\mathrm{op}}^2$ captures the effective rank of $\bSigma$. In the isotropic case with $\HH = \RR^d$ and $\bSigma = \bI_d$, we have $r(\bSigma) = d$. Thus $\mathrm{SNR}$ characterizes the strength of signal relative to the noise, together with the effect of dimension. It is equivalent to the signal-to-noise ratio in \cite{GVe18} and \cite{CYa18} when $K = O(1)$. 

The $\mathrm{SNR}$ differs from the classical notion of signal-to-noise ratio 
\begin{align}
\frac{ \min_{j \neq k} \| \bmu_j - \bmu_k \|^2 }{ \| \bSigma \|_{\mathrm{op}} }.
\label{eqn-kmeans-SNR-old}
\end{align}
frequently used for quantifying the misclassification rates \citep{LZh16, FCh18, LZZ19, SSH19, GZh19}. Those results hinge on an extra assumption
\begin{align}
\frac{ \min_{j \neq k} \| \bmu_j - \bmu_k \|^2 }{ \| \bSigma \|_{\mathrm{op}} } 
 \gg \max \bigg\{ 1,~ \frac{r(\bSigma)}{n} \bigg\},
\label{KPCA-eqn-def-snr-2}
\end{align}
or the one with $\gg$ replaced by $\gtrsim$. In that case, (\ref{eqn-kmeans-SNR-2}) shows that our $\mathrm{SNR}$ is equivalent to the classical one in (\ref{eqn-kmeans-SNR-old}). The error bound $\EE \cM (\hat\by, \by) = e^{-\Omega(\mathrm{SNR})}$ and the condition $\mathrm{SNR} = \Omega(\log n)$ for exact recovery in \Cref{thm-sGMM-kmeans} are optimal \citep{FCh18}.

In general, our assumption $\mathrm{SNR} \gg 1$ in \Cref{thm-sGMM-kmeans} translates to
\begin{align}
\frac{ \min_{j \neq k} \| \bmu_j - \bmu_k \|^2 }{ \| \bSigma \|_{\mathrm{op}} } 
\gg \max\bigg\{ 1,~
\sqrt{\frac{r(\bSigma)}{n}} \bigg\}.
\label{KPCA-eqn-def-snr-3-new}
\end{align}
It is weaker than (\ref{KPCA-eqn-def-snr-2}) when the noise has high effective dimensions $r(\bSigma) \gg n$. 

For the sub-Gaussian mixture model (\ref{eqn-model-sGMM}) with the regularity Assumption \ref{sGMM-assumption-regularity}, the results in \Cref{thm-sGMM-kmeans} are the best available in the literature. They have only been established for an SDP relaxation of $k$-means under sub-Gaussian mixture models in Euclidean spaces \citep{GVe18} and Hilbert spaces \citep{CYa18}. Our analysis of spectral method is powered by the $\ell_{2,p}$ approximation of the PC score matrix $\bU \bLambda^{1/2}$ in \Cref{KPCA-corollary-main}.
It would be interesting to precisely characterize the constants $C$ and $c$ in \Cref{thm-sGMM-kmeans}, relax the regularity conditions in Assumption \ref{sGMM-assumption-regularity}, and investigate the optimality of spectral method in more general regimes.

\cite{LZZ19} study the spectral algorithm without the hollowing step under the isotropic Gaussian mixture model, with $\HH = \RR^d$, $\bSigma = \bI_d$ and $\{ \bz_i \}_{i=1}^n$ being i.i.d.~from $N(\bm{0}, \bI_d )$. They prove an error bound that is exponential in $\min_{j \neq k} \| \bmu_j - \bmu_k \|^2$, with a sharp constant factor in the exponent. However, as we mentioned above, they require a strong condition (\ref{KPCA-eqn-def-snr-2}). On the other hand, our \Cref{thm-sGMM-kmeans} covers a much broader class of sub-Gaussian mixtures in Hilbert spaces. It only involves the effective dimension instead of the ambient one, which is possibly infinite. Our requirement (\ref{KPCA-eqn-def-snr-3-new}) is weaker than theirs.

\subsection{Gaussian mixture model}\label{KPCA-sec-GMM-GMM}

The symmetries and other structural properties of Gaussian mixture models allow for more precise characterizations than the above. While a main focus of interest is parameter estimation by likelihood-based methods \citep{DLR77} and methods of moments \citep{Pea94},
the problem of clustering is less explored.
Recently there is a surge of interest in sharp statistical guarantees, mostly under the isotropic Gaussian mixture model \citep{LZh16,CZh18,Nda18,LZZ19,CYa20}.
In another line of study, sparsity assumptions are adopted for high-dimensional regimes \citep{ASW13, JWa16}.
We study spectral clustering under the following model.

\begin{definition}[Gaussian mixture model]\label{KPCA-defn-GMM}
For $\by \in \{ \pm 1 \}^n$ and $\bmu \in \RR^d$ with $n, d \geq 2$, we write $\{ \bx_i \}_{i=1}^n \sim \mathrm{GMM} ( \bmu, \by )$ if
\begin{align}
\bx_i = y_i \bmu + \bz_i \in \RR^d ,\qquad i \in [n],
\label{KPCA-sec-GMM-1}
\end{align}
and $\{ \bz_i \}_{i=1}^n \subseteq \RR^d$ are i.i.d. $N(\mathbf{0} , \bI_d)$ vectors.
\end{definition}

It is natural to use the spectral estimator $\hat{\by} = \sgn(\bu_1)$ to recover $\by$, where $\bu_1$ is the leading eigenvector of $\bG = \cH ( \bX \bX^{\top} )$. This method can be viewed as a special case of the spectral algorithm  in Section~\ref{KPCA-sec-GMM-sGMM} that uses $k$-means for two classes and symmetric centroids. To gauge the misclassification rate of $\hat\by$, define
\begin{align}
& \cM ( \hat{\by} , \by ) = \min_{s = \pm 1}
|\{ 
i \in [n] :~ \hat y_i \neq s y_i 
\}|.
\label{eqn-gmm-mismatch}
\end{align}
Note that for the Gaussian mixture model \eqref{KPCA-sec-GMM-1}, $\bar{\bx}_i = y_i \bmu$, $\bar\bX = \by \bmu^{\top}$ and $\bar\bG = \bar\bX \bar\bX^{\top} = \| \bmu \|_2^2 \by \by^{\top}$. Then $\bar\lambda_1 = n \| \bmu \|_2^2$ and $\bar\bu_1 = \by / \sqrt{n}$.

\begin{theorem}\label{KPCA-thm-GMM}
Let $\{ \bx_i \}_{i=1}^n \sim \mathrm{GMM} ( \bmu, \by )$ and $n \to \infty$. Define
\begin{align}
\mathrm{SNR} = \frac{ \| \bmu \|_2^4 }{ \| \bmu \|_2^2 + d / n }.
\label{KPCA-snr-gmm}
\end{align}
	\begin{enumerate}
		\item If $\mathrm{SNR} > (2 + \varepsilon) \log n$ for some constant $\varepsilon > 0$, then $\lim_{n \to \infty} \PP [ \cM (  \hat\by, \by ) = 0 ] = 1$;
		\item If $1 \ll \mathrm{SNR} \leq 2 \log n$, then $\limsup_{n\to\infty} \mathrm{SNR}^{-1} \log \EE \cM ( \hat\by , \by ) \leq - 1 / 2$.
	\end{enumerate}
\end{theorem}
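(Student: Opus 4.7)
\textbf{Proof plan for Theorem~\ref{KPCA-thm-GMM}.}  The plan is to specialise the general $\ell_p$ theory of Section~\ref{KPCA-sec-Lp} to $r=1$, $s=0$ and reduce the sign pattern of $\bu_1$ to that of a leave-one-out oracle.  Since $\bar{\bX}=\by\bmu^{\top}$, direct computation gives $\|\bar{\bX}\|_{2,\infty}/\|\bar{\bX}\|_2=1/\sqrt{n}$, so $\mu=1$; $\bar\lambda_1=n\|\bmu\|_2^2$ and $\bar\lambda_2=0$ give $\kappa=1$ and $\bar\Delta=n\|\bmu\|_2^2$; with $\bSigma=\bI_d$, Assumptions~\ref{KPCA-assumption-incoherence-euc}--\ref{KPCA-assumption-concentration-euc} all hold for $\gamma\asymp 1/\sqrt{\mathrm{SNR}}$, and the growth condition $\gamma\ll 1$ is equivalent to the standing hypothesis $\mathrm{SNR}\to\infty$ in both cases.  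Thus Theorem~\ref{KPCA-corollary-main} is applicable for every $p\lesssim\mathrm{SNR}$, and Corollary~\ref{KPCA-corollary-main-inf} applies when $\mathrm{SNR}\gtrsim\log n$.

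The next step is to identify the linear target.  A direct computation gives
\begin{align*}
(\bG\bar\bu_1/\bar\lambda_1)_i=\frac{n-1}{n^{3/2}\|\bmu\|_2^2}\,\langle\bx_i,\hat\bmu^{(-i)}\rangle,\qquad \hat\bmu^{(-i)}=\frac{1}{n-1}\sum_{j\neq i}\bx_j y_j,
\end{align*}
so the sign of the linearised estimator coincides with the leave-one-out LDA rule $\sgn(\langle\bx_i,\hat\bmu^{(-i)}\rangle)$.  Conditional on $\by$, one has $\hat\bmu^{(-i)}\sim N(\bmu,\bI_d/(n-1))$ independent of $\bx_i\sim N(y_i\bmu,\bI_d)$; Gaussian concentration of $\langle\bmu,\hat\bmu^{(-i)}\rangle$ around $\|\bmu\|_2^2$ and of $\|\hat\bmu^{(-i)}\|_2^2$ around $\|\bmu\|_2^2+d/(n-1)$ together with the Mills-ratio bound then yield the per-coordinate oracle failure bound
\begin{align*}
\PP\bigl(y_i\langle\bx_i,\hat\bmu^{(-i)}\rangle\leq 0\bigr)\leq\exp\bigl(-\tfrac{1}{2}(1-o(1))\,\mathrm{SNR}\bigr).
\end{align*}

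For part~(i), $\mathrm{SNR}>(2+\varepsilon)\log n$ forces $\mu\gamma\lesssim 1/\sqrt{\log n}$, so Corollary~\ref{KPCA-corollary-main-inf} yields $\|\bu_1\sgn(\bH)-\bG\bar\bu_1/\bar\lambda_1\|_{\infty}=o_{\PP}(n^{-1/2};\log n)$.  Combining this with an independent lower-tail estimate for $|\langle\bx_i,\hat\bmu^{(-i)}\rangle|$, which shows that $(\bG\bar\bu_1/\bar\lambda_1)_i$ lies comfortably above $n^{-1/2}$ in magnitude at every $i$ with high probability, one deduces that $\sgn(\bu_1)_i\sgn(\bH)$ coincides with the oracle sign for every $i$.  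Since $\exp(-\mathrm{SNR}/2)\leq n^{-1-\varepsilon/2}$ is summable, a union bound over $i\in[n]$ gives $\PP(\cM(\hat\by,\by)=0)\to 1$.  For part~(ii), set $p=\lfloor\mathrm{SNR}\rfloor$ in Theorem~\ref{KPCA-corollary-main} and repeat the Markov-type argument~\eqref{eqn-intro-Lp-outliers} of the introduction: except on an event of probability at most $Ce^{-\mathrm{SNR}}$, the number of indices where $(\bu_1\sgn(\bH))_i$ disagrees in sign with the oracle is $o(ne^{-\mathrm{SNR}/2})$.  Linearity of expectation on the per-coordinate oracle bound contributes at most $n\exp(-(1-o(1))\mathrm{SNR}/2)$ further misclassifications; dividing by $n$ and noting that $Ce^{-\mathrm{SNR}}\ll e^{-\mathrm{SNR}/2}$ gives the claimed $\limsup$.

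\textbf{Main obstacle.}  The most delicate point is that $\ell_p$-smallness of $\bu_1\sgn(\bH)-\bG\bar\bu_1/\bar\lambda_1$ only controls sign agreement at coordinates where the linearised value itself is bounded away from zero; establishing an anti-concentration lower bound for $|\langle\bx_i,\hat\bmu^{(-i)}\rangle|$ which is \emph{uniform} over $i\in[n]$ in part~(i) is the most technical step, and must absorb the randomness in $\hat\bmu^{(-i)}$ across the leave-one-out ensemble.  The second, more global, balancing act is the choice $p=\mathrm{SNR}$ in part~(ii), engineered so that the exception probability $Ce^{-p}$ of Theorem~\ref{KPCA-corollary-main} is strictly of smaller order than the intrinsic Gaussian rate $e^{-\mathrm{SNR}/2}$ that we wish to achieve; any larger $p$ would violate the admissible range $p\lesssim(\mu\gamma)^{-2}$, while any smaller $p$ would inflate the exception contribution above the target rate.
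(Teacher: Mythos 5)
Your proposal follows the paper's strategy very closely: specialise the $\ell_p$ theory to $r=1$, identify $\bG\bar\bu_1/\bar\lambda_1$ with the leave-one-out LDA score, take $p=\mathrm{SNR}$, and convert the $\ell_p$ approximation into a misclassification bound via the Markov-type argument of the introduction. All the key moving parts match, down to the verification $\mu=1$, $\kappa=1$, $\gamma\asymp\mathrm{SNR}^{-1/2}$.

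There is, however, one genuine gap in the part~(ii) bookkeeping that you already half-sense in your ``Main obstacle'' paragraph but do not resolve. You claim that outside an event of probability $Ce^{-\mathrm{SNR}}$, ``the number of indices where $(\bu_1\sgn(\bH))_i$ disagrees in sign with the oracle is $o(ne^{-\mathrm{SNR}/2})$.'' But the Markov argument \eqref{eqn-intro-Lp-outliers} only bounds the number of indices where the approximation error $|(\bu_1\sgn(\bH)-\bG\bar\bu_1/\bar\lambda_1)_i|$ exceeds $\sqrt{\varepsilon_n/n}$; it does not control sign disagreement with the oracle at indices where the oracle value itself is tiny (below $\sqrt{\varepsilon_n/n}$). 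The paper's Lemma~\ref{KPCA-lemma-GMM-error-rate} closes exactly this gap: it compares $\sgn(u_i)$ directly to $y_i$ (not to the oracle), and shows that a sign flip requires either a large approximation error \emph{or} the perturbation $w_i=(\cH(\bZ\bX^\top)\bar\bu/\bar\lambda)_i$ exceeding $(1-\varepsilon)|\bar u_i|$ — the latter subsuming both the ``oracle wrong'' and ``oracle dangerously small'' events, and still having probability $e^{-(1-o(1))\mathrm{SNR}/2}$ after sending $\varepsilon\to 0$. You should make this decomposition explicit rather than comparing to the oracle in two stages. As a smaller remark on organisation: the paper's part~(i) avoids having to sharpen the constant in the Gaussian tail by first handling $\mathrm{SNR}>C\log n$ for a crude $C>4$ with a direct $\ell_\infty$ argument, then covering $(2+\varepsilon)\log n<\mathrm{SNR}\le C\log n$ by applying the part~(ii) rate bound $\EE\cM\le n^{-1-\varepsilon'}$ together with Markov's inequality on the event $\{\cM\ge 1/n\}$; your single direct argument with a sharp one-sided tail on $\langle\bz_i,\hat\bmu^{(-i)}\rangle$ can also be made to work, but the ``anti-concentration'' framing is a misnomer — what you need is an upper tail bound on the noise term, not a lower bound on $|\langle\bx_i,\hat\bmu^{(-i)}\rangle|$, and you must additionally union-bound the concentration of $\|\hat\bmu^{(-i)}\|_2$ over $i$.
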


Theorem \ref{KPCA-thm-GMM} characterizes the spectral estimator with explicit constants. Here we do not impose any specific assumption on the dimension $d=d_n$ so long as $\mathrm{SNR} \to \infty$. It may be bounded or diverge at any rate. When $\mathrm{SNR}$ exceeds $2\log n$, $\sgn(\bu_1)$ exactly recovers all the labels (up to a global sign flip) with high probability. When $1 \ll \mathrm{SNR} \leq 2 \log n$, the misclassification rate is bounded from above by $e^{ - \mathrm{SNR} / [2 + o(1)] }$. According to \cite{Nda18}, both results are optimal in the minimax sense. The proof of Theorem \ref{KPCA-thm-GMM} is in Appendix \ref{KPCA-thm-GMM-proof}.

\cite{CZh18} prove that $\mathrm{SNR} \to \infty$ is necessary for any estimator to achieve vanishingly small misclassification rate and derive an upper bound $\EE \cM ( \sgn (\tilde{\bu}_1) , \by ) \lesssim 1/\mathrm{SNR}$ for $\tilde{\bu}_1$ being the leading eigenvector of the unhollowed Gram matrix $\bX \bX^{\top}$. \cite{Nda18} obtains exact recovery guarantees as well as an optimal exponential error bound for an iterative algorithm starting from $\sgn(\bu_1)$. Our analysis shows that the initial estimator is already good enough and no refinement is needed. \cite{CYa20} study the information threshold for exact recovery in multi-class setting and use an SDP to achieve that.

The $\mathrm{SNR}$ in (\ref{KPCA-snr-gmm}) is closely related to (indeed equivalent to in the order of magnitude) that in (\ref{eqn-kmeans-SNR-2}). One can immediately see this by setting $\bmu_1 = \bmu$, $\bmu_2 = - \bmu$ and $\bSigma = \bI_d$ in (\ref{eqn-kmeans-SNR-2}). The $\mathrm{SNR}$ precisely quantifies the signal-to-noise ratio for clustering and is always dominated by the classical one $\| \bmu \|_2^2$. When $d \gg n$, the condition $\mathrm{SNR} \to \infty$ is equivalent to
\begin{align}
\| \bmu \|_2 \gg (d/n)^{1/4}.
\label{KPCA-snr-gmm-1}
\end{align}
This is weaker than the commonly-used assumption
\begin{align}
\| \bmu \|_2 \gg \sqrt{d/n}
\label{KPCA-snr-gmm-2}
\end{align}
for clustering \citep{LZh16, LZZ19}, under which $\mathrm{SNR}$ is asymptotically equivalent to $\| \bmu \|_2^2$. Their discrepancy reflects an interesting high-dimensional phenomenon.

For the Gaussian mixture model in Definition \ref{KPCA-defn-GMM}, parameter estimation and clustering amount to recovering $\bmu \in \RR^d$ and $\by \in \{ \pm 1 \}^n$, respectively. A good estimate of $\bmu$ yields that of $\by$. Hence clustering should be easier than parameter estimation. The difference becomes more significant when $d \gg n$ as clustering targets fewer unknowns. To see this, we write $\bX = (\bx_1,\cdots,\bx_n)^{\top} \in \RR^{n\times d}$ and observe that
\[
\bX = \by \bmu^{\top} + \bZ,
\]
where $\bZ = (\bz_1,\cdots,\bz_n)^{\top} \subseteq \RR^{n \times d}$ has i.i.d. $N(0,1)$ entries. Clustering and parameter estimation correspond to estimating the left and right singular vectors of the signal matrix $\EE \bX$. According to the results by \cite{CZh18} on singular subspace estimation, (\ref{KPCA-snr-gmm-1}) and (\ref{KPCA-snr-gmm-2}) are sharp conditions for consistent clustering and parameter estimation. They ensure concentration of the Gram matrix $\bX \bX^{\top}$ and the covariance matrix $\frac{1}{n}\bX^{\top} \bX$.
When $(d/n)^{1/4} \ll \| \bmu \|_2 \ll \sqrt{d/n}$, consistent clustering is possible even without consistent estimation of the model parameter $\bmu$. Intuitively, there are many discriminative directions that can tell the classes apart but they are not necessarily aligned with the direction of $\bmu$.

Below we outline the proof of Theorem \ref{KPCA-thm-GMM}. The following $\ell_p$ approximation result for the regime $1 \ll \mathrm{SNR} \lesssim \log n$ helps illustrate main ideas. Its proof is deferred to Appendix \ref{KPCA-thm-GMM-Lp-proof}.

\begin{theorem}\label{KPCA-thm-GMM-Lp}
	Under the GMM model in Definition \ref{KPCA-defn-GMM} with $n \to \infty$ and $1 \ll \mathrm{SNR} \lesssim \log n$, there exist $\varepsilon_n \to 0$ and positive constants $C, N$ such that
	\begin{align*}
	\PP ( \| \bu_1 - \bG \bar\bu_1 / \bar\lambda_1 \|_{\mathrm{SNR}} < \varepsilon_n \| \bar\bu_1 \|_{\mathrm{SNR}} ) > 1 - C e^{-\mathrm{SNR}}, \qquad \forall n \geq N.
	\end{align*}
\end{theorem}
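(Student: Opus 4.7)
The plan is to apply \Cref{KPCA-corollary-main} with $r = 1$, $s = 0$, and $p$ equal to a constant multiple of $\mathrm{SNR}$, then unpack the $o_\PP(\cdot;\,\cdot)$ notation of \Cref{KPCA-defn-0} to obtain an exponentially small exceptional probability. The whole argument amounts to verifying that this choice of $p$ sits within the admissible range $p \lesssim (\mu\gamma)^{-2}$ under the given SNR regime, then translating the resulting $\ell_{p_0}$ bound into the stated $\ell_{\mathrm{SNR}}$ bound.

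First I would read off the model parameters. Since $\bar\bX = \by\bmu^\top$, we have $\bar\bG = \|\bmu\|_2^2\,\by\by^\top$, so $\bar\lambda_1 = n\|\bmu\|_2^2$, $\bar\lambda_2 = 0$, $\bar\Delta = n\|\bmu\|_2^2$ and $\kappa = 1$. Also $\|\bar\bX\|_{2,\infty} = \|\bmu\|_2$ and $\|\bar\bX\|_2 = \sqrt n\,\|\bmu\|_2$, so the incoherence parameter is $\mu = 1$, and \Cref{KPCA-assumption-noise-euc} holds trivially with $\bSigma = \bI_d$ and $\alpha = 1$. The concentration hypothesis of \Cref{KPCA-assumption-concentration-euc} forces $\gamma \geq \max\{1/\|\bmu\|_2,\,\sqrt{d/n}/\|\bmu\|_2^2\}$, while the lower bound in \Cref{KPCA-assumption-incoherence-euc} forces $\gamma \geq 1/\sqrt n$. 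Taking $\gamma$ equal to the maximum of these three quantities,
\[
\gamma^2 \asymp \frac{1}{\|\bmu\|_2^2} + \frac{d}{n\|\bmu\|_2^4} + \frac{1}{n} = \frac{1}{\mathrm{SNR}} + \frac{1}{n}.
\]
The hypothesis $1 \ll \mathrm{SNR} \lesssim \log n$ then delivers $\gamma \ll 1$ (giving the upper bound in \Cref{KPCA-assumption-incoherence-euc}) and $\gamma^2 \asymp 1/\mathrm{SNR}$, so the admissible range $p \lesssim (\mu\gamma)^{-2}$ in \Cref{KPCA-corollary-main} contains $p_0 := c_0\,\mathrm{SNR}$ for a sufficiently small constant $c_0 > 0$.

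Next, fixing the sign of $\bu_1$ by $\bu_1^\top\bar\bu_1 \geq 0$ (so that $\sgn(\bH) = +1$ in the $r = 1$ case), the first conclusion of \Cref{KPCA-corollary-main} yields $\|\bu_1 - \bG\bar\bu_1/\bar\lambda_1\|_{p_0} = o_\PP\bigl(\|\bar\bu_1\|_{p_0};\, p_0\bigr)$. By \Cref{KPCA-defn-0}, there are a deterministic $w_n \to 0$ and a constant $C_1 > 0$ such that for any $K > 0$ one can find $C'_K > 0$ satisfying
\[
\PP\bigl(\|\bu_1 - \bG\bar\bu_1/\bar\lambda_1\|_{p_0} \geq C'_K\, w_n\,\|\bar\bu_1\|_{p_0}\bigr) \leq C_1 e^{-K p_0}
\]
for $n$ large. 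Picking $K = 1/c_0$ turns the right-hand side into $C_1 e^{-\mathrm{SNR}}$. To pass from $\|\cdot\|_{p_0}$ to $\|\cdot\|_{\mathrm{SNR}}$ on the left, I would use the elementary inequality $\|\bx\|_{\mathrm{SNR}} \leq n^{1/\mathrm{SNR} - 1/p_0}\|\bx\|_{p_0}$; since $\mathrm{SNR} \lesssim \log n$, the prefactor $n^{1/\mathrm{SNR}}$ is bounded by a constant and can be absorbed into $\varepsilon_n$, which still tends to zero.

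The main obstacle is bookkeeping rather than analysis: matching the exponent in the probability bound to exactly $\mathrm{SNR}$ while only the admissible $p \asymp \mathrm{SNR}$ is available. This is resolved precisely by the design of \Cref{KPCA-defn-0}, which lets one trade a larger multiplicative constant $C'_K$ for a faster decay rate $K$. The accompanying norm-equivalence step from $\ell_{p_0}$ to $\ell_{\mathrm{SNR}}$ is delicate only through the factor $n^{1/\mathrm{SNR}}$, which the ceiling $\mathrm{SNR} \lesssim \log n$ keeps bounded. Outside this regime the $\ell_p$ bound of \Cref{KPCA-corollary-main} cannot be upgraded to the full $\ell_\infty$ control that the exact-recovery regime $\mathrm{SNR} \gg \log n$ demands, which is exactly why that regime is handled separately in \Cref{KPCA-thm-GMM}.
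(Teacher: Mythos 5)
Your overall route is the paper's route: verify Assumptions \ref{KPCA-assumption-incoherence-euc}--\ref{KPCA-assumption-concentration-euc} with $\bSigma = \bI_d$, $\kappa = \mu = 1$ and $\gamma^{-2}\asymp \mathrm{SNR}$, then apply \Cref{KPCA-corollary-main} and unpack \Cref{KPCA-defn-0}. That part of your verification is correct and matches the paper's (one-line) proof.

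However, your final step has a genuine error. You take $p_0 = c_0\,\mathrm{SNR}$ with $c_0$ small so that $K=1/c_0$ boosts the tail to $e^{-\mathrm{SNR}}$, and then you must convert the resulting bound $\|\cdot\|_{p_0} < C'_K w_n \|\bar\bu_1\|_{p_0}$ into a bound against $\|\bar\bu_1\|_{\mathrm{SNR}}$. Since $\bar\bu_1$ has all entries of magnitude $n^{-1/2}$, $\|\bar\bu_1\|_{p_0}/\|\bar\bu_1\|_{\mathrm{SNR}} = n^{1/p_0 - 1/\mathrm{SNR}} = n^{(1/c_0-1)/\mathrm{SNR}}$. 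Your claim that this prefactor is bounded because $\mathrm{SNR}\lesssim\log n$ is backwards: $n^{1/\mathrm{SNR}} = e^{\log n/\mathrm{SNR}}$ is bounded precisely when $\mathrm{SNR}\gtrsim\log n$, and the hypothesis $1\ll\mathrm{SNR}\lesssim\log n$ allows, e.g., $\mathrm{SNR}=\log\log n$, for which $n^{1/\mathrm{SNR}}\to\infty$ far faster than any fixed $w_n\to 0$. (The displayed inequality $\|\bx\|_{\mathrm{SNR}}\le n^{1/\mathrm{SNR}-1/p_0}\|\bx\|_{p_0}$ is also false as written, since its exponent is negative; monotonicity only gives $\|\bx\|_{\mathrm{SNR}}\le\|\bx\|_{p_0}$, and the unavoidable loss sits on the right-hand side through $\|\bar\bu_1\|_{p_0}$.) So your $\varepsilon_n$ need not tend to zero.

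The detour that creates this problem is unnecessary. Take $p=\mathrm{SNR}$ itself: since $(\mu\gamma)^{-2}\asymp\mathrm{SNR}$ in this regime, $p\lesssim(\mu\gamma)^{-2}$ holds and \Cref{KPCA-corollary-main} gives $\|\bu_1\sgn(\bH)-\bG\bar\bu_1/\bar\lambda_1\|_{\mathrm{SNR}} = o_{\PP}(\|\bar\bu_1\|_{\mathrm{SNR}};\,\mathrm{SNR})$ directly. Then \Cref{KPCA-defn-0} with $C=1$ already yields the exceptional probability $C_1 e^{-\mathrm{SNR}}$ with $\varepsilon_n = C' w_n\to 0$; no rate boosting and no change of norm index are needed.
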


In a hand-waving way, the analysis right after (\ref{eqn-intro-Lp}) in the introduction suggests that the expected misclassification rate of $\sgn(\bu_1)$ differs from that of $\sgn(\bG \bar\bu_1 / \bar\lambda_1)$ by at most $O( e^{-\mathrm{SNR}} )$.
Then, it boils down to studying $\sgn(\bG \bar\bu_1 / \bar\lambda_1)$.
Note that
\begin{align*}
( \bG \bar\bu_1 / \bar\lambda_1 )_i  \propto
( \bG \by )_i = \sum_{j=1}^{n} [ \cH ( \bX \bX^{\top}) ]_{ij} y_j = \sum_{j \neq i} \langle \bx_i, \bx_j  y_j \rangle
= (n-1) \langle \bx_i, \hat\bmu^{(-i)} \rangle,~~ \forall i \in [n].
\end{align*}
Here $\hat\bmu^{(-i)} = \frac{1}{n-1} \sum_{j \neq i} \bx_j y_j$ is an estimate of $\bmu$ based on the samples $\{ \bx_j \}_{j \neq i}$ and their labels $\{ y_j \}_{j \neq i}$. It is straightforward to prove
\begin{align*}
\EE \cM ( \sgn(\bG \bar\bu_1 / \bar\lambda_1), \by ) = \frac{1}{n} \sum_{i=1}^{n} \PP [ \sgn(\langle \bx_i, \hat\bmu^{(-i)} \rangle) \neq y_i ] \leq e^{-\mathrm{SNR} / [2 + o(1)]}
\end{align*}
and get the same bound for $\EE \cM ( \sgn(\bu_1), \by )$. When $\mathrm{SNR} > (2+\varepsilon) \log n$, this leads to an $n^{-(1+\varepsilon/2)}$ upper bound for the misclassification rate, which implies exact recovery with high probability as any misclassified sample contributes $n^{-1}$ to the error rate. When $\mathrm{SNR} \leq 2 \log n$, we get the second part in Theorem \ref{KPCA-thm-GMM}. The proof is then finished.

The quantity $ \sgn( \langle \bx_i, \hat\bmu^{(-i)} \rangle )$ is the prediction of $y_i$ by linear discriminant analysis (LDA) given features $\{ \bx_i \}_{i=1}^n$ and additional labels $\{ y_j \}_{j \neq i}$. It resembles an oracle (or genie-aided) estimator that is usually linked to the fundamental limits of clustering \citep{ABH16, ZZh16}, which plays an important role in our analysis as well. By connecting $\bu_1$ with $ \bG \bar\bu_1 / \bar\lambda_1$ and thus $\{ \langle \bx_i, \hat\bmu^{(-i)} \rangle \}_{i = 1}^n$, Theorem \ref{KPCA-thm-GMM-Lp} already hints the optimality of $\sgn( \bu_1 )$. Our analysis may also apply to spectral algorithms in similar problems such as the bipartite stochastic block model \citep{NST21}.

Perhaps surprisingly, both the (unsupervised) spectral clustering and (supervised) LDA achieve the minimax optimal misclassification error $e^{-\mathrm{SNR} / [2 + o(1)]}$. The missing labels do not hurt much. This phenomenon is also observed by \cite{Nda18}. On the other hand, the Bayes classifier $\sgn( \langle \bmu , \bx \rangle )$ given the \emph{true} parameter $\bmu$ achieves error rate $1 - \Phi ( \| \bmu \|_2 )$, where $\Phi$ is the cumulative distribution function of $N(0,1)$. As $\| \bmu \|_2 \to \infty$, this is $e^{ - \| \bmu \|_2^2 / [2 + o(1)] }$ and it is always superior to the minimax error without the knowledge of $\bmu$. From there we get the followings for spectral clustering and LDA.
\begin{itemize}
\item If $\| \bmu \|_2 \gg \sqrt{d / n}$, then $\mathrm{SNR} = \| \bmu \|_2^2 [1 + o(1)]$ and both estimators achieve the Bayes error exponent;
\item If $\| \bmu \|_2 \leq C \sqrt{d / n}$ for some constant $C > 0$, then $\mathrm{SNR} \leq \| \bmu \|_2^2 / (1 + C^{-2})$ and both estimators achieve the minimax optimal exponent that is worse than the Bayes error exponent.
\end{itemize}

\section{Contextual stochastic block model}\label{KPCA-sec-CSBM}

\subsection{Problem setup}

Contextual network analysis concerns discovering interesting structures such as communities in a network with the help of node attributes. Large-scale applications call for computationally efficient procedures incorporating the information from both sources. For community detection in the contextual setting, various models and algorithms have been proposed and analyzed \citep{ZLZ16, WFe16, BVR17, MMa17, DSM18, MHC19, YSa20}. How to quantify the benefits of aggregation is a fundamental and challenging question.
We study community detection under a canonical model for contextual network data and prove the optimality of a simple spectral method.

To begin with, we present a binary version of the stochastic block model \citep{HLL83} that plays a central role in statistical network analysis \citep{Abb17}.
We use a label vector $\by = (y_1,\cdots,y_n)^{\top} \in \{\pm 1\}^n$ to encode the block (community) memberships of nodes. For any pair of nodes $i$ and $j$, we connect them with probability $\alpha$ if they are from the same block. Otherwise, the connection probability is $\beta$.

\begin{definition}[Stochastic Block Model]\label{KPCA-defn-SBM}
	For $n \in \ZZ_+$, $\by \in \{ \pm 1 \}^n$ and $0 < \alpha,  \beta < 1$, we write $\bA \sim \mathrm{SBM} ( \by, \alpha, \beta)$ if $\bA \in \{ 0, 1 \}^{n\times n}$ is symmetric, $A_{ii} = 0$ for all $i\in[n]$, $\{ A_{ij} \}_{1 \leq i < j \leq n}$ are independent, and
	\begin{align*}
	\PP ( A_{ij} = 1 ) = \begin{cases}
	\alpha & \mbox{ if } y_i  = y_j \\
	\beta & \mbox{ if } y_i \neq  y_j
	\end{cases},\qquad
	\forall i \neq j.
	\end{align*}
\end{definition}

In addition to the network, we also observe an attribute vector $\bx_i \in \RR^d$ of each node $i$ and postulate the Gaussian mixture model in Definition \ref{KPCA-defn-GMM}. Given the labels and other parameters, the network $\bA$ and node attributes $\{ \bx_i \}_{i=1}^n$ are assumed to be independent.
We borrow the name ``contextual stochastic block model'' from \cite{DSM18}.
More general versions can be found in \cite{BVR17}, \cite{DSM18} and \cite{YSa20}. In another line of research, the network $\bA$ is generated based on the covariates $\{ \bx_i \}_{i=1}^n$ \citep{WFe16, MMa17, MHC19}.

For simplicity, we impose uniform priors on the label vector $\by$ and the direction of separation vector $\bmu$. The two blocks are then approximately balanced.


\begin{definition}[Contextual Stochastic Block Model]\label{KPCA-defn-CSBM}
For $n \in \ZZ_+$, $0 <  \alpha , \beta < 1$, $d \geq 2$ and $R>0$, we write $( \by, \bmu, \bA, \{ \bx_i \}_{i=1}^n ) \sim \mathrm{CSBM} ( n, d, \alpha, \beta, R )$ if
\begin{enumerate}
\item the label vector $\by$ and separation vector $\bmu$ are independently generated from the uniform distributions over $\{ \pm 1 \}^n$ and $\{ \bu \in \RR^d :~ \| \bu \|_2 = R \}$, respectively;
\item given $\by$ and $\bmu$, the network $\bA$ and attributes $\{ \bx_i \}_{i=1}^n $ are independently generated from $\mathrm{SBM} ( \by, \alpha, \beta)$ and $\mathrm{GMM} ( \bmu, \by)$, respectively.
\end{enumerate}
\end{definition}

The goal of contextual community detection is to reconstruct $\by$ based on $\bA$ and $\{ \bx_i \}_{i=1}^n$.
We consider a commonly-used regime of the network where the connection probabilities $\alpha$, $\beta$ scale like $q_n/n$ for some $1 \ll q_n \leq \log n$ and differ by a constant factor. When $q_n \gg \log n$, one can easily recover the communities perfectly from $\bA$ \citep{Abb17}. When $q_n = O(1)$, it is not possible to achieve vanishingly small misclassification error \citep{ZZh16}. We are interested in the intermediate regime $1 \ll q_n \leq \log n$.
Meanwhile, recall that $\mathrm{SNR} = R^4 / (R^2 + d / n)$ in (\ref{KPCA-snr-gmm}) is the signal-to-noise ratio of the Gaussian mixture model. We take $\mathrm{SNR} \asymp q_n$ to ensure that the signal strengths of $\bA$ and $\{ \bx_i \}_{i=1}^n$ are comparable. There is no specific assumption on the dimension $d=d_n$. It may be bounded or diverge at any rate.

\begin{assumption}[Asymptotics]\label{KPCA-assumption-CSBM-q}
	Let $a$, $b$ and $c$ be positive constants. $( \by, \bmu, \bA, \{ \bx_i \}_{i=1}^n ) \sim \mathrm{CSBM} ( n, d, \alpha, \beta, R )$ with $1 \ll q_n \leq \log n$, $\alpha = \frac{a q_n}{n}$, $\beta = \frac{b q_n}{n}$ and $R^4 / (R^2 + d / n) = c q_n $.
\end{assumption}

\subsection{An aggregated spectral estimator}

On the one hand, Section \ref{KPCA-sec-GMM-GMM} shows that the leading eigenvector $\bu_1$ of the hollowed Gram matrix $\bG = \cH( \bX \bX^{\top} )$ is optimal for the Gaussian mixture model. From now on we rename it as $\bu_1(\bG)$ to avoid ambiguity. On the other hand, the second eigenvector $\bu_2(\bA)$ of $\bA$ estimates the labels under the stochastic block model \citep{AFW17}. To get some intuition, suppose that half of the entries in $\{ y_i \}_{i=1}^n$ are $+1$'s and the others are $-1$'s so that $ \mathbf{1}_n^{\top} \by = 0$. For such $\by$, it is easy to see from
\begin{align} \label{fan1}
\EE ( \bA | \by ) = \frac{\alpha+ \beta}{2} \mathbf{1}_n \mathbf{1}_n^{\top} + \frac{\alpha- \beta}{2} \by \by^{\top}
\end{align}
that its second eigenvector $\by/\sqrt{n}$ reveals the community structure. We propose an estimator for the integrated problem by aggregating the two individual spectral estimators $\bu_2(\bA)$ and $\bu_1(\bG)$. Without loss of generality, we assume $\langle \bu_2(\bA), \bu_1(\bG) \rangle \geq 0$ to avoid cancellation.

Let us begin the construction. The ideal `estimator'
\[
\hat y_i^{\mathrm{genie}} = \argmax_{ y = \pm 1 }  \PP ( y_i =  y | \bA , \bX, \by_{-i} ).
\]
is the best guess of $y_i$ given the network, attributes, and labels of all nodes (assisted by  Genie) except the $i$-th one. It is referred to as a genie-aided estimator or oracle estimator in the literature and is closely related to fundamental limits in clustering \citep{ABH16, ZZh16}, see Theorem \ref{KPCA-lemma-fundamental}. To mimic $\hat y_i^{\mathrm{genie}}$, we first approximate its associated odds ratio.

\begin{lemma}\label{KPCA-lemma-CSBM-linearization}
Under Assumption \ref{KPCA-assumption-CSBM-q}, for each given $i$, we have 
\begin{align*}
\bigg| \log \bigg( \frac{ \PP ( y_i =  1 | \bA , \bX, \by_{-i} ) }{ \PP ( y_i = -1 | \bA , \bX, \by_{-i} ) } \bigg) -
\bigg[ \bigg(
\log (a/b) \bA + \frac{2}{n + d / R^2 } \bG
\bigg) \by \bigg]_i
\bigg| = o_{\PP} (q_n;~q_n).
\end{align*}
\end{lemma}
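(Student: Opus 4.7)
The plan is to compute the posterior log-odds explicitly and then linearize each piece. Using conditional independence of $\bA$ and $\bX$ given $(\by,\bmu)$, followed by marginalization of the uniform prior $\bmu\sim\mathrm{Unif}(R\SSS^{d-1})$, the log-odds decomposes as $L = L_{\mathrm{SBM}} + L_{\mathrm{GMM}}$, where $L_{\mathrm{GMM}} = h(\bS+\bx_i) - h(\bS-\bx_i)$ with $\bS=\sum_{j\neq i}y_j\bx_j$ and $h(\bv) = \log \EE_{\bmu} \exp(\langle \bv,\bmu\rangle)$ depends on $\bv$ only through $\|\bv\|$ by spherical symmetry.

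The SBM part is the Bernoulli log-likelihood ratio $L_{\mathrm{SBM}} = \log\tfrac{\alpha(1-\beta)}{\beta(1-\alpha)}(\bA\by)_i + \log\tfrac{1-\alpha}{1-\beta}\sum_{j\neq i}y_j$. Under $\alpha = aq_n/n$ and $\beta = bq_n/n$, Taylor-expanding $\log(1-x)$ yields coefficients $\log(a/b) + O(q_n/n)$ and $O(q_n/n)$ respectively. Bernstein's inequality gives $(\bA\by)_i = O_{\PP}(q_n;q_n)$ and Hoeffding's gives $\sum_{j\neq i}y_j = O_{\PP}(\sqrt{nq_n};q_n)$, so the sub-leading contributions are $o_{\PP}(q_n;q_n)$ and $L_{\mathrm{SBM}} = \log(a/b)(\bA\by)_i + o_{\PP}(q_n;q_n)$.

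For $L_{\mathrm{GMM}}$ I would perform a symmetric Taylor expansion around $\bS$; the second-order terms cancel and yield $L_{\mathrm{GMM}} = 2\langle \nabla h(\bS),\bx_i\rangle + R_3$ with $R_3$ cubic in $\bx_i$. The closed form of the spherical MGF, $\EE_{\bmu}\exp(\langle \bv,\bmu\rangle) = \Gamma(d/2)(R\|\bv\|/2)^{-(d-2)/2}\, I_{(d-2)/2}(R\|\bv\|)$, gives $\nabla h(\bv) = R\psi(R\|\bv\|)\bv/\|\bv\|$ with $\psi(t) = I_{d/2}(t)/I_{(d-2)/2}(t)$. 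The key analytic ingredient is the sharp Amos-type inequality
\[
\psi(t) = \frac{t}{d/2 + \sqrt{d^2/4 + t^2}}\cdot\bigl(1+o(1)\bigr) \qquad \text{as } d+t\to\infty.
\]
Combining this with the concentration $\|\bS\|^2 = n(nR^2+d)\cdot(1+o_{\PP}(1;n))$, which follows from $\bS = (n-1)\bmu + \sum_{j\neq i}y_j\bz_j$ and standard Gaussian and $\chi^2$ tail bounds, gives $\sqrt{d^2/4 + R^2\|\bS\|^2} = nR^2+d/2 + o_{\PP}(nR^2+d;n)$, so that $\nabla h(\bS) = (n+d/R^2)^{-1}\bS \cdot (1+o_{\PP}(1;n))$. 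Since $\langle \bx_i,\bS\rangle = (\bG\by)_i$ and the signal in $2(\bG\by)_i/(n+d/R^2)$ has magnitude of order $\mathrm{SNR}\asymp q_n$ with standard deviation of order $\sqrt{q_n}$, the multiplicative $o_{\PP}(1;n)$ error translates to an additive $o_{\PP}(q_n;q_n)$ and produces the $\tfrac{2}{n+d/R^2}(\bG\by)_i$ term of the lemma.

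The cubic remainder $R_3$ is controlled by $|R_3|\lesssim \|\bx_i\|^3 \|\nabla^3 h\|_{\mathrm{op}}$; since $\|\bx_i\|^2 \asymp R^2+d$ with high probability and the derivatives of $\psi$ inherit Amos-type decay, a case analysis in the two regimes $nR^2\gg d$ and $nR^2\lesssim d$ shows $R_3 = o_{\PP}(q_n;q_n)$. The main obstacle will be making the Amos-type approximation for $\psi$ and its derivatives uniform across the crossover $R\|\bS\|\sim d$, where $\psi$ qualitatively transitions from $\psi(t)\approx t/d$ (for $t\ll d$) to $\psi(t)\approx 1$ (for $t\gg d$); this requires a sufficiently sharp form of Amos' inequality rather than regime-specific asymptotics, so that both the leading-order linearization error and the cubic Taylor remainder are controlled to $o_{\PP}(q_n;q_n)$ throughout.
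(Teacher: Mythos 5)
Your decomposition of the log-odds into $L_{\mathrm{SBM}}+L_{\mathrm{GMM}}$ via conditional independence, the treatment of the SBM part by Taylor-expanding the Bernoulli log-likelihood ratio and invoking Hoeffding/Bernstein, and the use of spherical symmetry to reduce $L_{\mathrm{GMM}}$ to a scalar function of $\|\bS\pm\bx_i\|$ all match the paper's route (Lemmas~\ref{KPCA-lemma-GMM-linearization} and \ref{KPCA-lemma-SBM-linearization}). Where you genuinely diverge is in the key analytic lemma for the Gaussian part. You represent the spherical MGF via $I_{(d-2)/2}(R\|\bv\|)$, identify $\nabla h(\bv)$ with the Bessel ratio $\psi=I_{d/2}/I_{(d-2)/2}$, and propose to prove the linearization by a symmetric third-order Taylor expansion in $\bx_i$ controlled by Amos-type bounds on $\psi$ and its derivatives. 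The paper instead works with the integral $P(t,s)=\int_0^\pi e^{t\cos\theta}(\sin\theta)^{s-2}\,\rd\theta$, proves a two-parameter Laplace-method estimate (Lemma~\ref{KPCA-lemma-integral}) for $\partial_t\log P$, and --- crucially --- avoids the Taylor expansion in $\bx_i$ entirely by using the exact identity $t_2-t_1=(t_2^2-t_1^2)/(t_1+t_2)$ with $t_k=R\|\bS\pm\bx_i y_i\|$, so that $t_1^2-t_2^2=4R^2\langle\bS,\bx_i\rangle y_i$ produces $(\bG\by)_i$ without any cubic remainder to control. The asymptotic you cite, $\psi(t)\approx t/(d/2+\sqrt{d^2/4+t^2})$, is exactly the paper's $g(t,d)$ up to $O(1)$ shifts in $d$, so the two approaches are computing the same object; the trade-off is that your route is more recognizable (Bessel asymptotics) but buys you the harder job of proving a uniform Amos-type bound together with derivative control across the crossover $R\|\bS\|\sim d$, which is not a shelf result and is roughly as much work as the paper's Laplace-method lemma. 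You have honestly flagged this obstacle, and it is the real one: the cubic remainder requires $|\phi'''|=o_{\PP}(q_n;q_n)$ uniformly on $[-1,1]$, which entails proving that $\psi''$ inherits $O(1/\max(d,t)^2)$ decay uniformly across the crossover; by contrast, the paper's identity-based argument only needs a first-order relative error bound on $\log P(t_2,d)-\log P(t_1,d)$, with all the nonlinearity absorbed into the exactly computed $t_2-t_1$. So the proposal is a sound alternative plan, but without the exact-difference trick it shoulders an extra remainder term that the paper's proof is specifically structured to avoid.
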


The $i$-th coordinate of $\bA \by$ corresponds to the log odds ratio $ \log [ \PP ( y_i =  1 | \bA , \by_{-i} ) / \PP ( y_i = -1 | \bA , \by_{-i} ) ]$ for the stochastic block model \citep{ABH16}. From $A_{ii} = 0$ we see that $(\bA \by)_i = \sum_{j \neq i} A_{ij} y_j$ tries to predict the label $y_i$ via majority voting among the neighbors of node $i$. Similarly, $(\bG \by)_i$ relates to the log odds ratio $ \log [ \PP ( y_i =  1 | \bX , \by_{-i} ) / \PP ( y_i = -1 | \bX , \by_{-i} ) ]$ for the Gaussian mixture model. The overall log odds ratio is linked to a linear combination of $\bA \by$ and $\bG \by$ thanks to the conditional independence between $\bA$ and $\bX$ in Definition \ref{KPCA-defn-CSBM}. The proof of Lemma \ref{KPCA-lemma-CSBM-linearization} can be found in Appendix \ref{KPCA-lemma-CSBM-linearization-proof}.

Intuitively, Lemma \ref{KPCA-lemma-CSBM-linearization} reveals that
\begin{align*}
\sgn \bigg( \log (a/b) \bA \by + \frac{2}{n + d / R^2 } \bG \by \bigg) \approx (\hat y_1^{\mathrm{genie}}, \cdots, \hat y_n^{\mathrm{genie}})^{\top}
\end{align*}
The left-hand side still involves unknown parameters $a/b$, $R$ and $\by$.  Once these unknowns are consistently estimated, the substitution version of the left-hand side provides a valid estimator that mimics well the genie-aided estimator and hence is optimal.
Heuristics of linear approximation in Theorem \ref{KPCA-thm-GMM-Lp} above and \cite{AFW17} suggest
\begin{align*}
\bu_2(\bA) \approx \bA \bar\bu / \bar\lambda_A \qquad \text{and} \qquad \bu_1(\bG) \approx \bG \bar\bu / \bar\lambda_G.
\end{align*}
Here $\bar\bu = \by / \sqrt{n}$, $\bar\lambda_A = n(\alpha-\beta)/2$ is the second largest (in absolute value) eigenvalue of $\EE(\bA | \by)$ when $\alpha \neq \beta$ and the two blocks are equally-sized, and $\bar\lambda_G = n R^2$ is the leading eigenvalue of $\bar\bG = \bar\bX \bar\bX^{\top}$. Hence
\begin{align}
~~& \log (a/b) \bA \by + \frac{2}{n + d / R^2 } \bG \by \notag\\
& \approx \log (a/b) \sqrt{n} \bar\lambda_A \bu_2(\bA) + \frac{2}{n + d / R^2 } \sqrt{n} \bar\lambda_G \bu_1(\bG)  \notag \\
& \propto  \frac{n(\alpha-\beta)}{2} \log \bigg( \frac{\alpha}{\beta} \bigg) \bu_2(\bA) + \frac{2 R^4}{R^2 + d / n} \bu_1(\bG),
\label{eqn-CSBM-aggregation}
\end{align}
which yields a linear combination of $\bu_2(\bA)$ and $\bu_1(\bG)$. The coefficient in front of $\bu_1(\bG)$ is twice the $\mathrm{SNR}$ in (\ref{KPCA-snr-gmm}) for Gaussian mixture model.  Analogously, we may regard $\frac{n(\alpha-\beta)}{4} \log ( \alpha / \beta ) $ as a signal-to-noise ratio for the stochastic block model.

An legitimate estimator for $\by$ is obtained by replacing the unknown parameters $\alpha$, $\beta$ and $R$ in (\ref{eqn-CSBM-aggregation}) by their estimates.  When the two classes are balanced, i.e. $\by^{\top} \mathbf{1}_n = 0$,  \eqref{fan1} yields $\lambda_{1}[ \EE(\bA | \by) ] = n(\alpha + \beta) / 2$ and $\lambda_{2}[ \EE(\bA | \by) ] = n(\alpha - \beta) / 2$. Here $\lambda_j(\cdot)$ denotes the $j$-th largest (in absolute value) eigenvalue of a real symmetric matrix. Hence,
\begin{align*}
\frac{n(\alpha-\beta)}{2} \log \bigg( \frac{\alpha}{\beta} \bigg)
& = \lambda_{2}[ \EE(\bA | \by) ] \log \bigg( \frac{ \lambda_{1}[ \EE(\bA | \by) ] + \lambda_{2}[ \EE(\bA | \by) ]}{ \lambda_{1}[ \EE(\bA | \by) ] - \lambda_{2}[ \EE(\bA | \by) ]} \bigg) \\
& \approx \lambda_2(\bA) \log \bigg( \frac{\lambda_1(\bA) + \lambda_2(\bA)}{\lambda_1(\bA) - \lambda_2(\bA)} \bigg) .
\end{align*}
It can be consistently estimated using the plug-in method.
Similarly, using $\lambda_1(\bar\bG) = n R^2$, we have
\begin{align*}
\frac{2 R^4}{R^2 + d / n} = \frac{2 [\lambda_1(\bar\bG)/n]^2 }{ \lambda_1(\bar\bG) / n + d / n } \approx \frac{2 \lambda_1^2(\bG) }{n \lambda_1(\bG) + n d }.
\end{align*}

Based on these, we get an aggregated spectral estimator $\sgn(\hat\bu)$ with
\begin{align}
\hat\bu = \log \bigg( \frac{\lambda_1(\bA) + \lambda_2(\bA)}{\lambda_1(\bA) - \lambda_2(\bA)} \bigg)  \lambda_2(\bA) \bu_2(\bA) +  \frac{2 \lambda_1^2(\bG) }{n \lambda_1(\bG) + n d } \bu_1(\bG).
\label{eqn-CSBM-aggregation-1}
\end{align}
Our estimator uses a weighted sum of two individual estimators without any tuning parameter. 
When there are $K>2$ communities, it is natural to compute spectral embeddings using $(K-1)$ eigenvectors of $\bA$ and $\bG$, respectively. One may apply Procrustes analysis \citep{Wah65} to align the two embeddings and then use their linear combination for clustering. It would be nice to develop a tuning-free procedure similar to the above.

\cite{BVR17} propose a spectral method based on a weighted sum of the graph Laplacian matrix and $\bX \bX^{\top}$. \cite{YSa20} develop an SDP using a weighted sum of $\bA$ and a kernel matrix of $\{ \bx_i \}_{i=1}^n$. \cite{DSM18} study a belief propagation algorithm. Their settings are different from ours.

\subsection{Analysis of the estimator when $q_n = \log n$}

There are very few theoretical results on the information gain in combining the network and node attributes. \cite{BVR17} and \cite{YSa20} derive upper bounds for the misclassification error that depend on both sources of information. However, those bounds are not tight and cannot rigorously justify the benefits. \cite{DSM18} use techniques from statistical physics to derive an information threshold for weak recovery (i.e. better than random guessing) in some regimes. The threshold is smaller than those for the stochastic block model and the Gaussian mixture model. Their calculation is under the sparse regime where the maximum expected degree $n(\alpha + \beta) / 2$ of the network remains bounded as $n$ goes to infinity. They obtain a formal proof by taking certain large-degree limits.
To our best knowledge, the result below gives the first characterization of the information threshold for exact recovery and provides an efficient method achieving it by aggregating the two pieces of information.

We now investigate the aggregated spectral estimator (\ref{eqn-CSBM-aggregation-1}) under the Assumption \ref{KPCA-assumption-CSBM-q} with $q_n = \log n$. Our study shows that $\sgn(\hat\bu)$ achieves the information threshold for exact recovery as well as the optimal misclassification rate, both of which are better than
those based on a single form of data in terms of the mismatch $\cM$ in (\ref{eqn-gmm-mismatch}). To state the results, define
\begin{align}
I^*( a, b, c ) = \frac{(\sqrt{a} - \sqrt{b})^2 + c}{2}.
\label{KPCA-eqn-CSBM-h}
\end{align}

\begin{theorem}\label{KPCA-theorem-CSBM-eaxct}
	Let Assumption \ref{KPCA-assumption-CSBM-q} hold with $q_n = \log n$ and $a \neq b$. 
\begin{enumerate}
\item When $I^*( a, b, c )  > 1$, we have $\lim_{n \to \infty} \PP [\cM ( \sgn( \hat\bu ) , \by ) = 0] = 1$.
\item When $I^*( a, b, c ) < 1$, we have $\liminf_{n \to \infty} \PP [\cM ( \hat\by , \by ) > 0] > 0$ for any sequence of estimators $\hat\by = \hat\by_n ( \bA, \{ \bx_i \}_{i=1}^n )$.
\end{enumerate}
\end{theorem}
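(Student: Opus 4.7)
The plan is to deduce both parts from the genie-aided estimator
$\hat y_i^{\mathrm{genie}} = \argmax_{y = \pm 1} \PP(y_i = y \mid \bA, \bX, \by_{-i})$ together with the log-odds linearization in Lemma~\ref{KPCA-lemma-CSBM-linearization}. For Part 1 I would first show that $\sgn(\hat\bu)$ agrees entrywise with $\hat\by^{\mathrm{genie}}$ with probability $1 - o(1)$, and then control the genie error by a sharp Chernoff bound.

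To match the random coefficients in (\ref{eqn-CSBM-aggregation-1}) with the oracle ones in (\ref{eqn-CSBM-aggregation}), I would use standard adjacency concentration $\| \bA - \EE(\bA \mid \by) \|_2 = O_\PP(\sqrt{q_n})$ in the regime $q_n \asymp \log n$, combined with Weyl's inequality, to obtain $\lambda_1(\bA) = n(\alpha+\beta)/2 + o_\PP(n(\alpha+\beta))$ and $\lambda_2(\bA) = n(\alpha-\beta)/2 + o_\PP(n(\alpha-\beta))$; Theorem~\ref{KPCA-corollary-main} analogously yields $\lambda_1(\bG) = nR^2 (1 + o_\PP(1))$. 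Next, Corollary~\ref{KPCA-corollary-main-inf} applies in our setting because $r = 1$, $\mu = 1$, and the calibration $\mathrm{SNR} \asymp \log n$ gives $\mu\gamma \asymp 1/\sqrt{\log n}$, so it delivers the $\ell_\infty$ linearization $\bu_1(\bG) = \bG \bar\bu_1 / \bar\lambda_1 + o_\PP(\|\bar\bu_1\|_\infty;\,\log n)$. The analogous $\ell_\infty$ bound for $\bu_2(\bA)$ follows from the Wigner-type theory of \cite{AFW17}. Combining these, uniformly in $i \in [n]$, $\hat\bu_i$ equals a strictly positive random scalar times
\[
L_i := \bigl[\log(a/b)\bA + \tfrac{2}{n + d/R^2}\bG\bigr]\by \,\big|_i,
\]
plus an additive error of smaller order, so $\sgn(\hat\bu_i) = \sgn(L_i)$ whenever $L_i$ is not atypically small, and $\sgn(L_i) = \hat y_i^{\mathrm{genie}}$ by Lemma~\ref{KPCA-lemma-CSBM-linearization}.

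For the genie error I would compute, conditional on $y_i = 1$ and on the almost-balanced $\by_{-i}$, the tail probability $\PP(L_i < 0)$ via Chernoff. Since $\bA$ and $\bX$ are conditionally independent given $\by$, the SBM part $\log(a/b)\sum_{j\neq i}A_{ij}y_j$ and the GMM part $\tfrac{2}{n + d/R^2}\sum_{j\neq i}\langle \bx_i, \bx_j\rangle y_j$ of $L_i$ are independent, and their joint Chernoff exponents add. A direct optimization gives the rate $\tfrac12[(\sqrt{a}-\sqrt{b})^2 + c]\log n = I^*(a,b,c)\log n$, so each node misclassifies with probability at most $n^{-I^*(a,b,c)(1 - o(1))}$. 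A union bound over $i \in [n]$ then bounds the total error by $n^{1 - I^*(a,b,c)(1-o(1))} = o(1)$ when $I^*(a,b,c) > 1$.

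For Part 2 I would invoke the classical genie-aided lower bound $\PP(\hat y_i \neq y_i \mid \bA, \bX, \by_{-i}) \geq \PP(\hat y_i^{\mathrm{genie}} \neq y_i \mid \bA, \bX, \by_{-i})$ for any measurable estimator $\hat\by$, together with the matching Chernoff lower bound $\PP(\hat y_i^{\mathrm{genie}} \neq y_i) \geq n^{-I^*(a,b,c)(1+o(1))}$. To promote a node-wise error into failure of exact recovery, I would observe that, after conditioning on $\bmu$ and $\by$, the genie errors at distinct nodes depend on disjoint edge variables and separate noise coordinates, so they are asymptotically independent; the expected number of erroneous nodes is then $n^{1 - I^*(a,b,c)(1+o(1))} \to \infty$, and a Paley--Zygmund/second-moment argument forces $\liminf_n \PP(\cM(\hat\by,\by) > 0) > 0$. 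The main obstacle is the sharp joint large-deviation calculation for $L_i$: the SBM piece contributes a clean Chernoff rate $(\sqrt{a}-\sqrt{b})^2/2$, but the GMM piece combines a linear form and a quadratic form in $\bz_i$ and $\{\bz_j\}_{j \neq i}$, and showing that its Chernoff contribution is exactly $c/2$ without cross-term inflation requires a careful two-step conditioning (first on $\bmu$ and $\{\bz_j\}_{j\neq i}$, then on $\bz_i$). A secondary subtlety in the converse is quantifying the residual dependence of the $n$ genie trials under the shared randomness of $\bmu$, handled by conditioning on $\bmu$ before the second-moment step.
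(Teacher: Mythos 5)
Your overall strategy matches the paper's for Part~1: linearize $\hat\bu$ in $\ell_\infty$ against the linear form $[\log(a/b)\bA + \tfrac{2}{n+d/R^2}\bG]\by$, Chernoff-bound each coordinate, and union bound. The paper packages the linearization in Lemma~\ref{KPCA-lemma-CSBM-inf} and the Chernoff exponent in Lemma~\ref{KPCA-lemma-CSBM-LDP} (the latter is the ``sharp joint large-deviation calculation'' you flag as the main obstacle, proved via Lemma~\ref{KPCA-lemma-chi-square} and G\"{a}rtner--Ellis), and does not route Part~1 through the genie estimator at all --- it works with $\bw$ directly. Your genie framing is an equivalent reformulation given Lemma~\ref{KPCA-lemma-CSBM-linearization}, so the difference is cosmetic.

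Two imprecisions are worth naming. First, asserting that the SBM and GMM ``Chernoff exponents add'' to produce $I^*(a,b,c) = \tfrac{(\sqrt a - \sqrt b)^2 + c}{2}$ is not automatic: the joint exponent is $\sup_t I(t,a,b,c)$, where $I(t,a,b,c)$ is the sum of the two individual rate functions, and this supremum equals the sum of the two individual suprema only because both are maximized at the \emph{same} $t^* = -1/2$. The paper makes this observation explicitly just before Appendix~E.1; without it you would only get an inequality. Second, for Part~2 the claim that genie errors at distinct nodes ``depend on disjoint edge variables'' is false --- node $i$'s genie reads $\{A_{ij}\}_j$ and node $k$'s reads $\{A_{kj}\}_j$, and they share $A_{ik}$. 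Your fallback to a Paley--Zygmund/second-moment argument is still the right move (and is precisely the argument of Theorem~1 of Abbe--Bandeira--Hall, which the paper cites verbatim for the converse together with Lemma~\ref{KPCA-lemma-CSBM-LDP}), but the near-independence has to be argued via second-moment estimates rather than claimed by disjointness. Your remark about conditioning on $\bmu$ before the second-moment step is a correct and non-trivial subtlety introduced by the CSBM model that the ABH argument does not have; it is handled in the paper by the formulation of Lemma~\ref{KPCA-lemma-fundamental}.
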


Theorem \ref{KPCA-theorem-CSBM-eaxct} asserts that $I^*(a,b,c)$ quantifies the signal-to-noise ratio and the phase transition of exact recovery takes place at $I^*( a, b, c ) = 1$.  When $c = 0$ (node attributes are uninformative), we have
$I^*(a, b, 0) = ( \sqrt{a} - \sqrt{b} )^2 / 2 $; the threshold reduces to that for the stochastic block model [$|\sqrt{a} - \sqrt{b} | = \sqrt{2}$ by \cite{ABH16}].  Similarly, when $a = b$ (the network is uninformative), we have
$I^*(a, a, c) = c/2$; the threshold reduces to that for the Gaussian mixture model [$c = 2$ by \cite{Nda18}]. The relation (\ref{KPCA-eqn-CSBM-h}) indicates that combining two sources of information adds up the powers of each part. The proof of Theorem \ref{KPCA-theorem-CSBM-eaxct} is deferred to Appendix \ref{KPCA-theorem-CSBM-eaxct-proof}.

Figure \ref{fig-CSBM} demonstrates the efficacy of our aggregated estimator $\sgn( \hat\bu )$. The two experiments use $c = 0.5$ and $c = 1.5$ respectively. We fix $n = 500$, $d = 2000$ and vary $a$ ($y$-axis), $b$ ($x$-axis) from $0$ to $8$. For each parameter configuration $(a,b,c)$, we compute the frequency of exact recovery (i.e. $\sgn(\hat\bu) = \pm \by$) over 100 independent runs. Light color represents high chance of success. The red curves $(\sqrt{a} - \sqrt{b})^2 + c = 2$ correspond to theoretical boundaries for phase transitions, which match the empirical results pretty well. Also, larger $c$ implies stronger signal in node attributes and makes exact recovery easier.

\begin{figure}[ht]
	\begin{minipage}[b]{0.45\linewidth}
		\centering
		\includegraphics[width=0.7\textwidth]{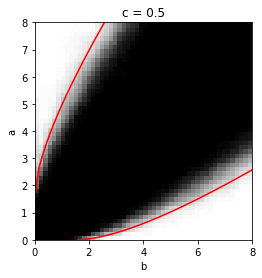}
	\end{minipage}
	\hspace{0.5cm}
	\begin{minipage}[b]{0.45\linewidth}
		\centering
		\includegraphics[width=0.7\textwidth]{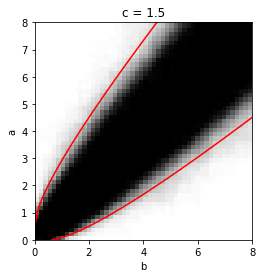}
	\end{minipage}
		\caption{Exact recovery for CSBM: $c = 0.5$ (left) and $c = 1.5$ (right).}
				\label{fig-CSBM}
\end{figure}

When $I^*(a,b,c) < 1$, exact recovery of $\by$ with high probability is no longer possible. In that case, we justify the benefits of aggregation using misclassification rates, by presenting an upper bound for $\sgn(\hat\bu)$ as well as a matching lower bound for all possible estimators. Their proofs can be found in Appendices \ref{KPCA-theorem-CSBM-error-rate-proof} and \ref{KPCA-thm-CSBM-lower-proof}.

\begin{theorem}\label{KPCA-theorem-CSBM-error-rate}
Let Assumption \ref{KPCA-assumption-CSBM-q} hold, $q_n = \log n$, $a \neq b$ and $I^*(a,b,c) \leq 1$. Then
	\begin{align*}
	\limsup_{n \to \infty} q_n^{-1} \log \EE \cM ( \sgn( \hat\bu ) , \by )  \leq - I^*( a, b, c ) .
	\end{align*}
\end{theorem}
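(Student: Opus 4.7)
The argument follows the genie-aided template behind Theorem \ref{KPCA-thm-GMM}: linearize $\hat\bu$ in $\ell_p$, transfer the analysis to the oracle predictor of Lemma \ref{KPCA-lemma-CSBM-linearization}, and close with a combined Chernoff computation. Throughout I would take $p\asymp q_n$, which lies in the admissible window $p\lesssim(\mu\gamma)^{-2}\asymp q_n$ both for $\bG$ (via Theorem \ref{KPCA-corollary-main}) and for $\bA$ (via the Wigner-type $\ell_p$ analogue underlying the $\ell_{2,\infty}$ bounds of \cite{AFW17}).

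First, by spectral concentration for $\bA$ and for the top eigenvalue of $\bG$, the data-driven weights in (\ref{eqn-CSBM-aggregation-1}) satisfy
\begin{align*}
\lambda_2(\bA)\log\bigl(\tfrac{\lambda_1(\bA)+\lambda_2(\bA)}{\lambda_1(\bA)-\lambda_2(\bA)}\bigr) &= [1+o_\PP(1)]\cdot\tfrac{n(\alpha-\beta)}{2}\log(a/b),\\
\tfrac{2\lambda_1^2(\bG)}{n\lambda_1(\bG)+nd} &= [1+o_\PP(1)]\cdot\tfrac{2R^4}{R^2+d/n},
\end{align*}
so the empirical weights may be replaced by their population counterparts at a $1+o_\PP(1)$ cost. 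Second, applying Theorem \ref{KPCA-corollary-main} (and its $\bA$-analogue) with $s=1$, $r=1$ yields, with $\bar\bu=\by/\sqrt n$, $\bar\lambda_A=n(\alpha-\beta)/2$, $\bar\lambda_G=nR^2$,
\begin{align*}
\|\bu_1(\bG)-\bG\bar\bu/\bar\lambda_G\|_p+\|\bu_2(\bA)-\bA\bar\bu/\bar\lambda_A\|_p=o_\PP(\|\bar\bu\|_p;\,q_n).
\end{align*}
Combining these two steps reduces $\hat\bu$, up to a positive scalar and an $\ell_p$ remainder of the same order, to the target linear form $\log(a/b)\,\bA\by+\tfrac{2}{n+d/R^2}\,\bG\by$.

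Applying the Markov argument of (\ref{eqn-intro-Lp-outliers}) with $p=q_n$, all but a $\varepsilon_n^{q_n/2}$ fraction of coordinates of $\sgn(\hat\bu)$ agree with the sign of the target linear form, while the exceptional event has probability at most $Ce^{-q_n}$; since $I^*(a,b,c)\le1$, both quantities are $o(e^{-I^*(a,b,c)q_n})$. Lemma \ref{KPCA-lemma-CSBM-linearization} then identifies this sign with the genie-aided label $\hat y_i^{\mathrm{genie}}=\argmax_{y=\pm1}\PP(y_i=y\mid\bA,\bX,\by_{-i})$ on a coordinate set of complementary size $o_\PP(n;q_n)$, giving
\begin{align*}
\EE\,\cM(\sgn(\hat\bu),\by)\le\tfrac{1}{n}\sum_{i=1}^n\PP[\hat y_i^{\mathrm{genie}}\ne y_i]+o(e^{-I^*(a,b,c)q_n}).
\end{align*}
By symmetry fix $y_i=1$; conditional on $\by$, the row $\bA_{i,\cdot}$ and attribute $\bx_i$ are independent, so the Bayes log-likelihood ratio decomposes as $n-1$ independent Bernoulli summands plus an independent Gaussian summand. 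The Chernoff bound at the tilt $s=1/2$ produces a Bernoulli factor $\exp(-q_n(\sqrt a-\sqrt b)^2/2\cdot[1+o(1)])$ (the R\'enyi-$1/2$ divergence of the SBM) and a matching Gaussian factor $\exp(-q_nc/2\cdot[1+o(1)])$, whose product equals $\exp(-I^*(a,b,c)q_n[1+o(1)])$ by (\ref{KPCA-eqn-CSBM-h}).

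The main obstacle is the coupling between the $\ell_p$ tolerance and the target exponent: we need $p$ large enough that both the outlier fraction $\varepsilon_n^{p/2}$ and the failure probability $e^{-p}$ beat $e^{-I^*(a,b,c)q_n}$, yet small enough to remain inside the range $p\lesssim(\mu\gamma)^{-2}\asymp q_n$ dictated by Theorem \ref{KPCA-corollary-main}; the scaling $p\asymp q_n$ is essentially forced and the two constraints balance tightly. A secondary subtlety is that the common Chernoff tilt $s=1/2$ must be simultaneously optimal (to leading exponential order) for both the Bernoulli and the Gaussian components, so that their rate functions combine \emph{additively} into $I^*(a,b,c)$ rather than into a strictly smaller bound; this additivity reflects the $\pm1$ label symmetry and is what makes the SBM and GMM exponents in (\ref{KPCA-eqn-CSBM-h}) simply sum.
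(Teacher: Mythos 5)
Your proposal has the right building blocks and the right exponent, but the route through the genie-aided estimator introduces a gap that the paper deliberately avoids. The paper's Theorem~\ref{KPCA-theorem-CSBM-error-rate} does not pass through $\hat y_i^{\mathrm{genie}}$ at all. It instead uses Lemma~\ref{KPCA-lemma-CSBM-inf} to get an $\ell_\infty$ approximation $\min_{c=\pm1}\|c\hat\bu-\bw\|_\infty < \varepsilon_n n^{-1/2}\log n$ with failure probability $O(n^{-2})$, and then bounds $\PP(v_i y_i<0)\le\PP(w_i y_i<\varepsilon_n n^{-1/2}\log n)+Cn^{-2}$ directly. The tail $\PP(W_{ni}\le\varepsilon q_n)$ is then controlled by the explicit G\"artner--Ellis computation of Lemma~\ref{KPCA-lemma-CSBM-LDP}. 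The crucial point is that the linearization error is absorbed into the slack $\varepsilon q_n$ of the \emph{margin} event, never into a comparison of sign vectors.

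Your step ``Lemma~\ref{KPCA-lemma-CSBM-linearization} identifies this sign with the genie-aided label on a coordinate set of complementary size $o_\PP(n;q_n)$'' does not follow. That lemma gives $|L_i-[(\log(a/b)\bA+\frac{2}{n+d/R^2}\bG)\by]_i|=o_\PP(q_n;q_n)$, which says nothing about where either quantity is near zero; the two sign vectors can disagree exactly where the margin is small, and $o_\PP(n;q_n)$ is a fraction $w_n\to0$ that need not be anywhere near $e^{-I^*q_n}$. Converting a log-odds approximation to a sign-agreement statement requires controlling the probability that the margin is below the approximation error, which is precisely the tail event $\PP(W_{ni}\le\varepsilon q_n)$ that Lemma~\ref{KPCA-lemma-CSBM-LDP} estimates. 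You would therefore need the same large-deviation computation anyway, and routing through the genie-aided estimator buys nothing for this upper bound (it is used only in the lower bound Theorem~\ref{KPCA-thm-CSBM-lower} via Lemma~\ref{KPCA-lemma-fundamental}).

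Two smaller points. First, the attribute contribution to $W_{ni}$ is $\propto\langle\bx_i,\hat\bmu^{(-i)}\rangle$, a bilinear form in two independent Gaussians, not ``an independent Gaussian summand''; its MGF is computed via Lemma~\ref{KPCA-lemma-chi-square} and the term $-\frac{d}{2}\log(1-t^2/(n-1))$ is not negligible a priori. Lemma~\ref{KPCA-lemma-CSBM-LDP} shows the log-MGF is asymptotically $2c(t+t^2)q_n$, i.e.\ Gaussian-like, but that is a conclusion of a calculation, not a starting hypothesis. Second, your claim that $Ce^{-q_n}$ is $o(e^{-I^*q_n})$ fails at $I^*=1$; it is only $O(e^{-I^*q_n})$, which still suffices for the $\limsup$ bound, but the paper's Lemma~\ref{KPCA-lemma-CSBM-inf} in fact supplies the stronger $O(n^{-2})$ and so sidesteps this boundary case cleanly.
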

\begin{theorem}\label{KPCA-thm-CSBM-lower}
Let Assumption \ref{KPCA-assumption-CSBM-q} hold. For any sequence of estimators $\hat\by = \hat\by_n ( \bA, \{ \bx_i \}_{i=1}^n )$,
	\begin{align*}
	\liminf_{n \to \infty} q_n^{-1} \log \EE \cM ( \hat{ \by } , \by ) \geq - I^*(a, b, c).
	\end{align*}
\end{theorem}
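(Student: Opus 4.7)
The plan is to lower bound $\EE \cM(\hat\by, \by)$ by the Bayes error of a genie-aided estimator for a single coordinate and then compute this error's exponent via a Chernoff argument that exploits the conditional independence of $\bA$ and $\bX$ in the CSBM. The key structural fact to target is that $I^*(a,b,c) = [(\sqrt a - \sqrt b)^2 + c]/2$ is the sum of the SBM exact-recovery exponent $(\sqrt a - \sqrt b)^2/2$ from \cite{ABH16} and the GMM exponent $c/2$ implied by \cite{Nda18} together with $\mathrm{SNR} = c q_n$, mirroring the additivity of Chernoff rates for independent sources.

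For the genie reduction, fix $i \in [n]$ and consider the Bayes-optimal estimator $\hat y_i^\circ$ of $y_i$ given the enhanced information $(\bA, \bX, \by_{-i})$. By Bayes optimality, any proper estimator $\hat y_i$ based on $(\bA,\bX)$ alone satisfies $\PP(\hat y_i \neq y_i) \geq \PP(\hat y_i^\circ \neq y_i) =: p^\circ$, with $p^\circ$ independent of $i$ by exchangeability. The subtlety is to pass from coordinatewise genie errors to a lower bound on $\EE\cM$, which is defined modulo a global sign flip; however, the fact that the genie knows $\by_{-i}$ pins down the sign in its local decision. A standard symmetrization/Le Cam argument (conditioning on the high-probability event that $\hat\by$ has committed to one of $\{\by,-\by\}$) then yields $\EE \cM(\hat\by, \by) \gtrsim p^\circ$, and in fact $\gtrsim n p^\circ$, though the weaker bound already suffices for the claim.

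For the Chernoff lower bound, I would factorize the log-posterior of $y_i$ as
\begin{equation*}
L_i \;=\; L_i^{\mathrm{SBM}} + L_i^{\mathrm{GMM}},
\end{equation*}
where $L_i^{\mathrm{SBM}} = \log[\PP(\bA \mid y_i = 1, \by_{-i})/\PP(\bA \mid y_i = -1, \by_{-i})]$ depends only on $\bA$, and $L_i^{\mathrm{GMM}}$ depends only on $\bX$ after marginalizing $\bmu$ against its uniform-on-sphere prior of radius $R$. This decomposition is the population counterpart of Lemma \ref{KPCA-lemma-CSBM-linearization}. Conditional on $y_i = 1$ and $\by_{-i}$, the two summands are independent, so the cumulant generating function is additive, and sharp large deviations (Cramér-type) yield $p^\circ \geq \exp(-[E^{\mathrm{SBM}} + E^{\mathrm{GMM}}][1+o(1)])$. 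The SBM term contributes $E^{\mathrm{SBM}} = q_n(\sqrt a - \sqrt b)^2/2$ by the classical tilting computation; the GMM term contributes $E^{\mathrm{GMM}} = cq_n/2$ using $R^4/(R^2 + d/n) = cq_n$. Summing gives $p^\circ \geq e^{-q_n I^*(a,b,c)[1+o(1)]}$, hence $q_n^{-1}\log \EE\cM \geq -I^*(a,b,c)[1+o(1)]$, proving the theorem.

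The main obstacles are twofold. First, one must rigorously handle the global sign-flip ambiguity in $\cM$ so that it does not erode the per-coordinate genie bound; the fix is to argue that any estimator concentrates near one of $\{\by, -\by\}$ with enough probability that $\cM$ and the canonically-aligned Hamming distance agree, at which point the genie bound transfers. Second, one must prove a \emph{matching} (not merely upper) Chernoff exponent for $L_i^{\mathrm{GMM}}$: after marginalizing $\bmu$, this statistic is a quadratic form in Gaussian vectors, and one has to verify its sharp tail exponent is exactly $cq_n/2$ rather than the looser $R^2/2$ that would arise from a supergenie knowing $\bmu$. This requires either a careful Laplace/exact-tilt calculation on the quadratic form or a reduction to the Ndaoud-type estimator $\langle \bx_i, \hat\bmu^{(-i)}\rangle$ whose Gaussian approximation has variance $R^2 + d/n$, producing the desired exponent $R^4/[2(R^2 + d/n)] = cq_n/2$.
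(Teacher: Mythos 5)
Your proposal is essentially the paper's blueprint: a genie-aided single-coordinate lower bound (formalized in the paper's Lemma \ref{KPCA-lemma-fundamental}, which rigorously handles the sign-flip issue you flag), followed by the additive decomposition of the log-posterior into SBM and GMM parts via conditional independence (Lemma \ref{KPCA-lemma-CSBM-linearization}) and a G\"artner--Ellis computation of the combined Chernoff exponent through the chi-square moment generating function (Lemmas \ref{KPCA-lemma-CSBM-LDP} and \ref{KPCA-lemma-chi-square}). Your implicit reliance on the two per-source Chernoff rates adding up to $I^*(a,b,c)$ is justified here because both cumulant generating functions are jointly optimized at $t=-1/2$, which is exactly the $\sup_t I(t,a,b,c)$ structure the paper works with.
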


Theorems \ref{KPCA-theorem-CSBM-error-rate} and \ref{KPCA-thm-CSBM-lower} imply that in the $\log n$-regime, the aggregated spectral estimator $\sgn(\hat\bu)$ achieves the optimal misclassification rate:
\[
\EE \cM ( \sgn(\hat\bu) , \by ) = n^{-I^*(a,b,c)+o(1)}.
\]
When $c = 0$, it reduces to the optimal rate $n^{ - ( \sqrt{a} - \sqrt{b} )^2 / 2 + o(1) }$ for the stochastic block model (Definition \ref{KPCA-defn-SBM}) and when  $a=b$, the result reduces $n^{ - c / 2 + o(1) }$ for the Gaussian mixture model (Definition \ref{KPCA-defn-GMM}), respectively. It is easy to show that they are achieved by $\bu_2(\bA)$ \citep{AFW17} and $\bu_1(\bG)$ (Theorem \ref{KPCA-thm-GMM}), which are asymptotically equivalent to our aggregated estimator $\hat\bu$ in extreme cases $c \to 0$ and $a \to b$, respectively. In other words, our result and procedure encompass those for the stochastic block model and Gaussian mixture model as two specific examples.

\subsection{A modified estimator for the general case}

While \Cref{KPCA-thm-CSBM-lower} establishes a lower bound $e^{- q_n [ I^*(a,b,c) + o(1) ] }$ for misclassification under Assumption \ref{KPCA-assumption-CSBM-q} without restricting $q_n = \log n$, our aggregated spectral estimator $\sgn(\hat\bu)$ is only analyzed for the $\log n$-regime. If the network becomes sparser ($q_n \ll \log n$), the empirical eigenvalues $\lambda_1(\bA)$ and $\lambda_2(\bA)$ no longer concentrate around  $\frac{\alpha + \beta}{2}$ and $\frac{\alpha - \beta}{2}$ \citep{FOf05}. The eigenvector analysis of $\bu_2(\bA)$ in \cite{AFW17} breaks down. Consequently, the estimator (\ref{eqn-CSBM-aggregation-1}) fails to approximate the (scaled) vector of log odds $n^{-1/2} \log (a/b) \bA \by + n^{-1/2}  \frac{2}{n + d / R^2 } \bG \by  $.
 
Fortunately, the $\ell_p$ results for $\bu_1(\bG)$ in the current paper continue to hold, and $\hat{\by}_G = \sgn[\bu_1(\bG)]$ faithfully recovers $\by$. Hence, we need only to modify the first term in (\ref{eqn-CSBM-aggregation-1}) concerning the network $\bA$, which aims to approximate $n^{-1/2} \log (a/b) \bA \by $. To approximate $\log(a/b)$, we resort to $\bm{1}^{\top} \bA \bm{1} / n \approx \frac{\alpha + \beta}{2} = \frac{( a+b ) q_n}{n}$ and $\by \bA \by / n \approx \frac{\alpha - \beta}{2} = \frac{( a-b ) q_n}{n}$ so that $a/b \approx \frac{\bm{1}^{\top} \bA \bm{1} + \by \bA \by}{\bm{1}^{\top} \bA \bm{1} - \by \bA \by}$.  Thus, we propose a new estimator $\sgn(\tilde\bu)$ with
\begin{align}
\tilde\bu = \frac{1}{\sqrt{n}} \log \bigg( \frac{
\bm{1}^{\top} \bA \bm{1} + \hat{\by}_G^{\top} \bA \hat{\by}_G
}{
\bm{1}^{\top} \bA \bm{1} - \hat{\by}_G^{\top} \bA \hat{\by}_G
} \bigg) \bA  \hat{\by}_G 
 \ +  \frac{2 \lambda_1^2(\bG) }{n \lambda_1(\bG) + n d } \bu_1(\bG),
\label{eqn-CSBM-aggregation-2}
\end{align}
where $\hat{\by}_G $ estimates $\by$. 
The new estimator $\sgn(\tilde\bu)$ achieves the fundamental limit $e^{- q_n [ I^*(a,b,c) + o(1) ] }$ even if $q_n \ll \log n$. See \Cref{thm-CSBM-q} below and its proof in \Cref{sec-proof-thm-CSBM-q}.

\begin{theorem}\label{thm-CSBM-q}
Let Assumption \ref{KPCA-assumption-CSBM-q} hold and $a \neq b$. We have
	\begin{align*}
	\limsup_{n \to \infty} q_n^{-1} \log \EE \cM ( \sgn( \tilde\bu ) , \by )  \leq - I^*( a, b, c ) .
	\end{align*}
In addition, if $q_n = \log n$ and $ I^*( a, b, c ) > 1$ then $\lim_{n \to \infty} \PP [\cM ( \sgn( \tilde\bu ) , \by ) = 0] = 1$.
\end{theorem}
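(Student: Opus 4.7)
The plan is to show that $\sgn(\tilde\bu)$ approximates the genie-aided estimator whose log-odds vector is identified in \Cref{KPCA-lemma-CSBM-linearization}, and therefore inherits the optimal misclassification rate $e^{-q_n I^*(a,b,c)[1+o(1)]}$ already matched by the lower bound in \Cref{KPCA-thm-CSBM-lower}. The strategy mirrors the proof of \Cref{KPCA-thm-GMM}: use an $\ell_p$ linearization of $\bu_1(\bG)$ with $p\asymp q_n$ to reduce $\tilde\bu$ to an explicit linear combination of $\bA\by$ and $\bG\by$, convert the $\ell_p$ bound to a coordinate-wise statement via the Markov trick in (\ref{eqn-intro-Lp-outliers}), and close with a Chernoff computation on the resulting log-odds.

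First I would establish the scalar limits that feed into (\ref{eqn-CSBM-aggregation-2}). Because $\mathrm{SNR}=cq_n\to\infty$, \Cref{KPCA-thm-GMM} yields $\cM(\hat\by_G,\by)/n = o_{\PP}(1)$, so after absorbing a global sign into $\by$, $\hat\by_G$ agrees with $\by$ on all but $o(n)$ coordinates. Bernstein's inequality for the independent Bernoulli entries of $\bA$ gives $\bm{1}^\top\bA\bm{1} = (a+b)nq_n[1+o_{\PP}(1)]$ and $\by^\top\bA\by = (a-b)nq_n[1+o_{\PP}(1)]$; swapping $\by$ for $\hat\by_G$ in the quadratic form changes it by at most $\cM(\hat\by_G,\by)\cdot O_{\PP}(q_n)$, a $o_{\PP}(1)$ relative perturbation. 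Consequently the scalar $\hat\ell$ multiplying $\bA\hat\by_G/\sqrt n$ in $\tilde\bu$ converges to $\log(a/b)$, while Weyl's inequality produces $\lambda_1(\bG)=nR^2[1+o_{\PP}(1)]$ so that the coefficient of $\bu_1(\bG)$ matches $\tfrac{2nR^2}{n+d/R^2}[1+o_{\PP}(1)]$.

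Second I would linearize $\bu_1(\bG)$ via \Cref{KPCA-corollary-main} applied with $s=0$, $r=1$, $\bar\bu_1=\by/\sqrt n$, $\bar\lambda_1=nR^2$; the incoherence and concentration hypotheses translate directly from Assumption~\ref{KPCA-assumption-CSBM-q} into $\gamma\asymp q_n^{-1/2}$, allowing $p\asymp q_n$. This yields a sign $\epsilon\in\{\pm1\}$ such that $\epsilon\,\bu_1(\bG) = \bG\by/(\sqrt n\cdot nR^2) + E$ with $\|E\|_p = o_{\PP}(\|\bar\bu_1\|_p;\,q_n)$. Combining with the first step produces the master identity
\[
\epsilon\,\tilde\bu \;=\; \frac{1}{\sqrt n}\Big[\log(a/b)\,\bA\by \;+\; \tfrac{2}{n+d/R^2}\,\bG\by\Big] \;+\; E_\star,
\]
where $\|E_\star\|_p$ is of strictly smaller order than the typical $q_n/\sqrt n$ size of a coordinate of the bracket. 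By \Cref{KPCA-lemma-CSBM-linearization}, the bracket equals $1/\sqrt n$ times the oracle log-odds vector $\bL^{\mathrm{genie}}$ up to a uniform $o_{\PP}(q_n;q_n)$ entry-wise error.

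The final step converts $\ell_p$ proximity into a mismatch bound and plugs in the oracle risk. The Markov argument at the entry level, exactly as in (\ref{eqn-intro-Lp-outliers}) with $p\asymp q_n$, shows that $\sgn(\tilde\bu)$ and $\sgn(\bL^{\mathrm{genie}})$ disagree on at most $o(n e^{-q_n})$ coordinates, which is negligible compared to the target $ne^{-q_n I^*(a,b,c)}$ since $I^*(a,b,c)\le 1$ in the nontrivial regime. A standard Chernoff calculation on the Bernoulli--Gaussian log-odds $L_i^{\mathrm{genie}}$, mirroring the proof of \Cref{KPCA-theorem-CSBM-error-rate}, gives $\PP[\sgn(L_i^{\mathrm{genie}})\neq y_i]=e^{-q_n I^*(a,b,c)[1+o(1)]}$ for each $i$, and summing over $i$ delivers the bound on $\EE\cM(\sgn(\tilde\bu),\by)$. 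Exact recovery for $q_n=\log n$ and $I^*(a,b,c)>1$ then follows because $\EE\cM=n^{1-I^*(a,b,c)+o(1)}=o(1)$ and $\cM$ is integer-valued, so Markov's inequality forces $\PP[\cM\ge 1]\to 0$. The main obstacle is the linearization step: unlike the $\log n$ regime in \Cref{KPCA-theorem-CSBM-eaxct} where concentration of the eigenvalues of $\bA$ let one reduce the network term to $\bu_2(\bA)$, here $\tilde\bu$ depends on $\bA\hat\by_G$ with $\hat\by_G=\sgn[\bu_1(\bG)]$ itself random and correlated with $\bG$, so one must control the resulting composite $\ell_p$ error without access to uniform ($\ell_\infty$) control of $\bu_1(\bG)$.
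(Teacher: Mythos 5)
Your architecture is sound and matches the paper's strategy: pass from $\tilde\bu$ to the linear target $\bw=\log(a/b)\,\bA\bar\bu+\tfrac{2R^2}{nR^2+d}\,\bG\bar\bu$ in $\ell_{q_n}$, then run the Chernoff computation of Lemma~\ref{KPCA-lemma-CSBM-LDP} on the coordinates of $\bw$. But the step you flag as ``the main obstacle'' is precisely the content of the paper's proof (Lemma~\ref{KPCA-lemma-CSBM-q}), and your proposal does not supply an argument for it. Concretely, to pass from (\ref{eqn-CSBM-aggregation-2}) to $\bw$ you must show $\|\bA(\hat\by_G-\by)\|_{q_n}=o_{\PP}(n^{1/q_n}q_n;~q_n)$, and this is nontrivial because $\hat\by_G=\sgn[\bu_1(\bG)]$ is random, has only an $\ell_2$ control on its mismatch set $S=\{i:\ (\hat\by_G)_i\neq y_i\}$ (namely $|S|=O_\PP(n/q_n;~n)$ from Lemma~\ref{KPCA-lemma-L2}), and $\bA$ is sparse so Cauchy--Schwarz against $\|\bA\|_2$ is too crude. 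The paper's proof conditions on $|S|\le Cn/q_n$ via Fact~\ref{KPCA-fact-truncation}, exploits the conditional independence of $\bA$ and $S$, and invokes a Lata\l{}a-type moment inequality for $\sum_{j\in S}A_{ij}$ to get the required $q_n$-th moment bound. Your write-up states only that ``one must control the resulting composite $\ell_p$ error'' without giving that control, so there is a genuine gap.

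There is also a smaller but material issue in the last paragraph. You argue that the $\ell_p$-to-mismatch conversion lets off ``at most $o(ne^{-q_n})$ coordinates,'' and claim this is negligible because $I^*(a,b,c)\le 1$ ``in the nontrivial regime.'' But Theorem~\ref{thm-CSBM-q} asserts the rate bound for all $a,b,c$ satisfying Assumption~\ref{KPCA-assumption-CSBM-q}, and your exact-recovery argument for $I^*(a,b,c)>1$ relies on that rate bound being valid there too. When $I^*(a,b,c)>1$, a crude exceptional probability of order $e^{-q_n}$ already dominates the target $e^{-q_n I^*}$, so the naive Markov argument (as in (\ref{eqn-intro-Lp-outliers})) is not enough. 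What rescues the proof is the strength of the $o_\PP(\cdot;~q_n)$ statement --- the exceptional probability can be made $\lesssim e^{-Cq_n}$ for any constant $C$ --- together with Lemma~\ref{KPCA-lemma-GMM-error-rate}, whose two-term bound $\max\{-\log(1/\varepsilon),\,\limsup q_n^{-1}\log q_n(\varepsilon)\}$ and the limit $\varepsilon\to 0$ make the outlier contribution negligible at \emph{every} rate, not just below $e^{-q_n}$. Your proposal should invoke Lemma~\ref{KPCA-lemma-GMM-error-rate} as the conversion mechanism, rather than the back-of-the-envelope Markov bound, and must supply the missing $\ell_{q_n}$ control of $\bA(\hat\by_G-\by)$.
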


\section{Proof ideas}\label{KPCA-sec-outlines}

To illustrate the key ideas behind the $\ell_p$ analysis in Theorem \ref{KPCA-corollary-main}, we use a simple rank-1 model
\begin{align}
\bx_i = \bmu  y_i + \bz_i \in \RR^d , \qquad i \in [n],
\label{KPCA-eqn-Lp-sketch}
\end{align}
where $\by = (y_1,\cdots,y_n)^{\top} \subseteq \{ \pm 1 \}^n$ and $\bmu \in \RR^d$ are deterministic; $\{ \bz_i \}_{i=1}^n$ are independent and $\bz_i \sim N(\mathbf{0} , \bSigma_i)$ for some $\bSigma_i \succ 0$.  We assume  further $\bSigma_i \preceq C \bI_d$ for all $i \in [n]$ and some constant $C>0$.

Model  (\ref{KPCA-eqn-Lp-sketch}) is a heteroscedastic version of the Gaussian mixture model in Definition \ref{KPCA-defn-GMM}. We have $\bar\bx_i = y_i \bmu$, $\bar\bX = (\bar\bx_1 , \cdots, \bar\bx_n)^{\top} = \by \bmu^{\top}$, $\bar\bG = \bar\bX \bar\bX^{\top} = \| \bmu \|_2^2 \by \by^{\top}$, $\bar\lambda_1 = n \| \bmu \|_2^2$ and $\bar\bu_1 = \by / \sqrt{n}$. For simplicity, we suppress the subscript $1$ in $\bu_1$, $\bar\bu_1$, $\lambda_1$ and $\bar\lambda_1$. The goal is to show that for $p$ that satisfies our technical condition,  
\begin{align}
\min_{c = \pm 1} \| c \bu - \bG \bar\bu / \bar\lambda \|_{p} = o_{\PP} ( \| \bar\bu \|_p ;~ p ).
\label{KPCA-outlines-1}
\end{align}
For simplicity, we assume that $\bu$ is already aligned with $\bG\bar\bu / \bar\lambda$ and the optimal $c$ above is $1$.

\subsection{Benefits of hollowing}

The hollowing procedure conducted on the Gram matrix has been commonly used in high-dimensional PCA and spectral methods \citep{KGi00, MSu18, Nda18, CLC19}. 
When the noises $\{ \bz_i \}_{i=1}^n$ are strong and heteroscedastic, it drives $\bG$ closer to $\bar\bG$ and thus ensures small angle between $\bu$ and $\bar\bu$. Such $\ell_2$ proximity is the starting point of our refined $\ell_p$ analysis.

Observe that
\begin{align*}
& \langle \bx_i , \bx_j \rangle = \langle \bar\bx_i ,\bar\bx_j \rangle +  \langle \bar\bx_i ,\bar\bz_j \rangle +  \langle \bz_i ,\bar\bx_j \rangle +  \langle \bz_i ,\bz_j \rangle, \\
& \EE \langle \bx_i , \bx_j \rangle = \langle \bar\bx_i ,\bar\bx_j \rangle + \EE \| \bz_i \|_2^2 \mathbf{1}_{ \{ i = j \} }.
\end{align*}
Hence the diagonal and off-diagonal entries of the Gram matrix behave differently. In high-dimensional and heteroscedastic case, the difference in noise levels $\{ \EE \| \bz_i \|_2^2 \}_{i=1}^n$ could have a severe impact on the spectrum of Gram matrix $\bX\bX^{\top}$. 
In particular, the following lemma shows that the leading eigenvector of $\bX \bX^{\top}$ could be asymptotically perpendicular to that of $\bar\bX \bar\bX^{\top}$, while $\cH ( \bX \bX^{\top})$ is still faithful. The proof is in Appendix \ref{thm-hollowing-proof}.


\begin{lemma}\label{thm-hollowing}
	Consider the model (\ref{KPCA-eqn-Lp-sketch}) with $\bSigma_1 = 2 \bI_d $ and $\bSigma_2 = \cdots = \bSigma_n = \bI_d$. Let $\hat\bu$ and $\bu$ be the leading eigenvectors of the Gram matrix $\bX \bX^{\top}$ and its hollowed version $\cH ( \bX \bX^{\top})$. Suppose that $n \to \infty$ and $(d/n)^{1/4} \ll  \| \bmu \|_2  \ll \sqrt{d/n}$. We have $| \langle \hat\bu, \bar\bu \rangle | \overset{\PP}{\to} 0$ and $| \langle \bu, \bar\bu \rangle | \overset{\PP}{\to} 1$.
\end{lemma}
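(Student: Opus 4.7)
The plan is to analyze both eigenvectors directly via Davis--Kahan, starting from a simple observation: the regime $(d/n)^{1/4} \ll \|\bmu\|_2 \ll \sqrt{d/n}$ forces $d/n \to \infty$ (otherwise the two bounds on $\|\bmu\|_2$ are inconsistent), and in particular $\|\bmu\|_2 \gg 1$. This boost will be crucial for the hollowed case.

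Conditional on $\by$, I would decompose
\[
\bX \bX^\top = \|\bmu\|_2^2 \by \by^\top + \by (\bZ \bmu)^\top + (\bZ\bmu) \by^\top + \bZ \bZ^\top,
\]
and use $\EE[\bZ\bZ^\top] = d\, \bI_n + d\, \be_1 \be_1^\top$ to write
\[
\bX\bX^\top - d\, \bI_n = \underbrace{d\, \be_1 \be_1^\top + \|\bmu\|_2^2 \by \by^\top}_{=:\, \bS} + \bE_1, \qquad \cH(\bX\bX^\top) = \|\bmu\|_2^2 (\by\by^\top - \bI_n) + \bE_2,
\]
with $\bE_1, \bE_2$ collecting the centered noise. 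For noise control, the key observation is that $\bar\bX = \by \bmu^\top$ has rank one, so $\|\by (\bZ\bmu)^\top\|_2 = \sqrt{n}\, \|\bZ \bmu\|_2 = O_\PP(n \|\bmu\|_2)$ because the entries of $\bZ\bmu$ are independent Gaussians with variance at most $2\|\bmu\|_2^2$. For the Wishart-type piece I would combine Bai--Yin-type bounds for $\bSigma_i \preceq 2\bI_d$ with a chi-squared concentration for the diagonal, giving $\|\bZ\bZ^\top - \EE\,\bZ\bZ^\top\|_2 = O_\PP(\sqrt{nd})$ and $\|\cH(\bZ\bZ^\top)\|_2 = O_\PP(\sqrt{nd})$ in the regime $d\gg n$. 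Thus $\|\bE_1\|_2, \|\bE_2\|_2 = O_\PP(\sqrt{nd} + n\|\bmu\|_2)$.

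For the unhollowed eigenvector, note that $|\langle \be_1, \by/\sqrt{n}\rangle| = 1/\sqrt{n}$, so the two rank-one summands in $\bS$ are nearly orthogonal. Viewing $\|\bmu\|_2^2 \by\by^\top$ as a perturbation (of norm $n\|\bmu\|_2^2 \ll d$) of the spike $d\, \be_1 \be_1^\top$, Davis--Kahan shows that the top eigenvalue of $\bS$ is $d[1+o(1)]$ with eigenvector within $o(1)$ of $\be_1$, while the second eigenvalue is $n\|\bmu\|_2^2[1+o(1)]$. The eigengap of $\bS$ is therefore $\asymp d$. Under $\|\bmu\|_2 \ll \sqrt{d/n}$ we have $n\|\bmu\|_2 \ll \sqrt{nd}$, and since $d \gg n$ we also have $\sqrt{nd} \ll d$; hence $\|\bE_1\|_2 = o(d)$. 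Davis--Kahan applied to $\bX\bX^\top - d\bI_n$ then gives $\|\hat\bu \mp \be_1\|_2 \to 0$ in probability, and combined with $|\langle \be_1, \bar\bu\rangle| = 1/\sqrt{n} \to 0$ this yields $|\langle \hat\bu, \bar\bu\rangle| \overset{\PP}{\to} 0$. For the hollowed eigenvector, the signal $\|\bmu\|_2^2(\by\by^\top - \bI_n)$ has top eigenvalue $(n-1)\|\bmu\|_2^2$ with eigenvector $\bar\bu$, next eigenvalue $-\|\bmu\|_2^2$, and hence eigengap $n\|\bmu\|_2^2$. The condition $\|\bmu\|_2 \gg (d/n)^{1/4}$ is equivalent to $n\|\bmu\|_2^2 \gg \sqrt{nd}$, and $\|\bmu\|_2 \gg 1$ gives $n\|\bmu\|_2 \ll n\|\bmu\|_2^2$. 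Therefore $\|\bE_2\|_2 = o(n\|\bmu\|_2^2)$, and Davis--Kahan yields $|\langle \bu, \bar\bu\rangle| \overset{\PP}{\to} 1$.

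The main obstacle is the cross term. The naive bound $\|\bar\bX \bZ^\top\|_2 \leq \|\bar\bX\|_2 \|\bZ\|_2 = \sqrt{nd}\,\|\bmu\|_2$ dominates the signal $n\|\bmu\|_2^2$ precisely when $\|\bmu\|_2 \ll \sqrt{d/n}$, which is exactly the regime considered. Resolving this requires exploiting the rank-one structure of $\bar\bX$ to collapse $\bZ$ to the scalar projection $\bZ\bmu \in \RR^n$, whose norm is only $O_\PP(\sqrt{n}\,\|\bmu\|_2)$. The entire argument then hinges on this refinement together with the implicit consequence $\|\bmu\|_2 \to \infty$ of the stated assumptions.
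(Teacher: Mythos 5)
Your proof is correct and follows essentially the same route as the paper's. Both approaches decompose the (un)hollowed Gram matrix into a low-rank signal plus a noise matrix, control the noise by (a) exploiting the rank-one structure of $\bar\bX=\by\bmu^{\top}$ so that the cross term collapses to the $n$-vector $\bZ\bmu$ with $\|\by(\bZ\bmu)^{\top}\|_2 = O_\PP(n\|\bmu\|_2)$ rather than the naive $\sqrt{nd}\|\bmu\|_2$, and (b) using Wishart/Hanson--Wright concentration for $\bZ\bZ^{\top}$, and then apply Davis--Kahan against the respective eigengaps $d$ (unhollowed, where the heterogeneous diagonal creates a dominant spike at $\be_1$ nearly orthogonal to $\bar\bu$) and $n\|\bmu\|_2^2$ (hollowed). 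The paper packages the spectral concentration into a call to its general Lemma~\ref{KPCA-lemma-L2} and reuses it to handle $\|\bX\bX^\top-\tilde\bG\|_2$ via a hollowing-plus-diagonal split, whereas you re-derive the bounds from scratch for this rank-one case; this is a presentational rather than substantive difference, and all the key estimates and regime checks ($d/n\to\infty$, $\|\bmu\|_2\gg 1$, $\sqrt{nd}\ll n\|\bmu\|_2^2\ll d$) match the paper's.
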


Figure \ref{fig-hollowing} visualizes the entries of eigenvectors $\bar\bu$ (black), $\hat \bu$ (red) and $\bu$ (blue) in a typical realization with $n = 100$, $d = 500$, $\| \bmu\|_2 = 3$ and $\by = ( \mathbf{1}_{n/2}^{\top} , - \mathbf{1}_{n/2}^{\top})^{\top}$. The population eigenvector $\bar\bu$ perfectly reveals class labels, and the eigenvector $\bu$ of the hollowed Gram matrix is aligned with that. Without hollowing, the eigenvector $\hat\bu$ is localized due to heteroscedasticity and fails to recover the labels. The error rates of $\sgn(\hat\bu)$ and $\sgn(\bu)$ are $48\%$ and $3\%$, respectively.

\begin{figure}[t]
	\centering
	\includegraphics[width=0.4\textwidth]{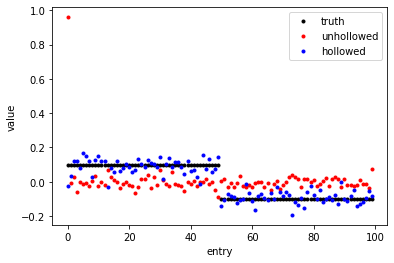}
	\caption{Benefits of hollowing.  Presented are the components of true eigenvector $\by/\sqrt{n}$ (black, $n=100$), the leading eigenvectors $\bu$ (red) and $\hat{\bu}$ (blue) of the Gram matrix and its hollowed version.}
	\label{fig-hollowing}
\end{figure}

With the help of hollowing, we obtain the following results on spectral concentration. See Appendix \ref{lemma-hollowing-proof} for the proof.

\begin{lemma}\label{lemma-hollowing}
	Consider the model (\ref{KPCA-eqn-Lp-sketch}). When $n \to \infty$ and $\| \bmu \|_2 \gg \max \{ 1, (d/n)^{1/4} \}$, we have $\| \bG - \bar\bG \|_2 = o_{\PP} ( \bar\lambda ;~n )$, $|\lambda - \bar\lambda| = o_{\PP} ( \bar\lambda ;~n )$ and $\min_{c = \pm 1} \| c \bu - \bar\bu \|_2 = o_{\PP} (1 ;~ n)$.
\end{lemma}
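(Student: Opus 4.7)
The plan is to bound $\|\bG-\bar\bG\|_2$ directly and then apply standard spectral perturbation tools. Since $\bar\bG$ is rank one with $\bar\lambda_1=n\|\bmu\|_2^2$ and $\bar\lambda_2=0$, Weyl's inequality and Davis--Kahan's $\sin\Theta$ theorem reduce the last two assertions to the first as soon as we prove $\|\bG-\bar\bG\|_2=o_{\PP}(\bar\lambda;n)$. Indeed, the eigen-gap of $\bar\bG$ is exactly $\bar\lambda$, so Weyl gives $|\lambda-\bar\lambda|\le\|\bG-\bar\bG\|_2$ and Davis--Kahan yields $\min_{c=\pm1}\|c\bu-\bar\bu\|_2\le \sqrt{2}\|\bG-\bar\bG\|_2/\bar\lambda$. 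Therefore the entire proof boils down to the operator-norm concentration of $\bG$ around $\bar\bG$.

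I will use the decomposition
\begin{equation*}
\bG-\bar\bG \;=\; -\diag(\bar\bG) \;+\; \cH(\bar\bX\bZ^{\top}+\bZ\bar\bX^{\top}) \;+\; \cH(\bZ\bZ^{\top}),
\end{equation*}
which follows from $\bX=\bar\bX+\bZ$ and $\cH(\bar\bX\bar\bX^{\top})=\bar\bG-\diag(\bar\bG)$, and then control each piece in spectral norm. The deterministic term $\diag(\bar\bG)$ has entries $\|\bmu\|_2^2$, so its spectral norm is $\|\bmu\|_2^2$, which is $\bar\lambda/n$ and hence negligible. For the cross term, I will exploit the rank-one structure $\bar\bX=\by\bmu^{\top}$ to write $\bar\bX\bZ^{\top}=\by(\bZ\bmu)^{\top}$ and obtain $\|\bar\bX\bZ^{\top}\|_2=\sqrt{n}\,\|\bZ\bmu\|_2$; since $\langle\bz_i,\bmu\rangle\sim N(0,\bmu^{\top}\bSigma_i\bmu)$ with variance $\le C\|\bmu\|_2^2$, a $\chi^2$ tail bound gives $\|\bZ\bmu\|_2^2\lesssim n\|\bmu\|_2^2$ with probability $1-e^{-\Omega(n)}$, hence $\|\bar\bX\bZ^{\top}\|_2\lesssim n\|\bmu\|_2$, which is $\bar\lambda/\|\bmu\|_2=o(\bar\lambda)$ under $\|\bmu\|_2\to\infty$. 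The diagonal correction $\diag(\bar\bX\bZ^{\top}+\bZ\bar\bX^{\top})$ is a diagonal with entries $2y_i\bmu^{\top}\bz_i$ of sub-Gaussian order $\|\bmu\|_2$, whose maximum is $O(\|\bmu\|_2\sqrt{\log n})\ll\bar\lambda$ easily.

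The main obstacle, and the only place where the threshold $\|\bmu\|_2\gg(d/n)^{1/4}$ enters, is bounding $\|\cH(\bZ\bZ^{\top})\|_2$. Since $\EE(\bZ\bZ^{\top})$ is diagonal ($\{\bz_i\}$ being independent and centered), we have
\begin{equation*}
\|\cH(\bZ\bZ^{\top})\|_2 \;\le\; \|\bZ\bZ^{\top}-\EE(\bZ\bZ^{\top})\|_2 \;+\; \max_{i\in[n]}\bigl|\|\bz_i\|_2^2-\EE\|\bz_i\|_2^2\bigr|.
\end{equation*}
The first term is a heteroscedastic sub-Gaussian Wishart deviation which I will control by a matrix Bernstein / Hanson--Wright argument to the sharp order $\sqrt{nd}+n$, and the second, by Hanson--Wright plus a union bound, by $\sqrt{d\log n}+\log n$. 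Dividing by $\bar\lambda=n\|\bmu\|_2^2$, the dominant piece $\sqrt{nd}$ contributes $\sqrt{d/n}/\|\bmu\|_2^2$, which is $o(1)$ precisely because $\|\bmu\|_2^2\gg\sqrt{d/n}$; the remaining contributions $n/(n\|\bmu\|_2^2)=\|\bmu\|_2^{-2}$ and $\sqrt{d\log n}/(n\|\bmu\|_2^2)$ are controlled by $\|\bmu\|_2\to\infty$.

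Combining the three estimates yields $\|\bG-\bar\bG\|_2=o_{\PP}(\bar\lambda;n)$, and Weyl plus Davis--Kahan finish the proof. The delicate point, as noted, is the Wishart concentration for $\cH(\bZ\bZ^{\top})$: the $(d/n)^{1/4}$ threshold is exactly what is needed there, and any looser bound (e.g.\ $\|\bar\bX\|_2\|\bZ\|_2$ for the cross term) would require the stronger condition $\|\bmu\|_2\gg\sqrt{d/n}$, which is why the rank-one factorization $\bar\bX\bZ^{\top}=\by(\bZ\bmu)^{\top}$ in the cross-term step is essential.
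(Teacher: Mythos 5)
Your proof is correct and follows essentially the same route as the paper, which simply observes that the lemma is the $r=1$, $\kappa=\mu=1$, $\bSigma=C\bI_d$ instance of its general concentration result (Lemma \ref{KPCA-lemma-L2}): the same decomposition of $\bG-\bar\bG$ into the diagonal bias, the cross term, and $\cH(\bZ\bZ^{\top})$, followed by Hanson--Wright-type bounds and then Weyl plus Davis--Kahan. One cosmetic remark: since $o_{\PP}(\cdot;\,n)$ demands failure probability $e^{-Cn}$ for every $C$, the union-bounded diagonal terms should be stated at deviation level $\sqrt{n}$ rather than $\sqrt{\log n}$ (they remain negligible either way), as the paper does.
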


It is worth pointing out that hollowing inevitably creates bias as the diagonal information of $\bar\bG$ is lost.  Under incoherence conditions on the signals $\{ \bar\bx_i \}_{i=1}^n$ (Assumption \ref{KPCA-assumption-incoherence-euc}), this effect is under control. It becomes negligible when the noise is strong. While the simple hollowing already suffices for our need, general problems may benefit from more sophisticated procedures such as the heteroscedastic PCA in \cite{ZCW18}.

\subsection{Moment bounds and the choice of $p$}

As hollowing has been shown to tackle heteroscedasticity, from now on we focus on the homoscedastic case
\[
\bSigma_1 = \cdots = \bSigma_n = \bI_d
\]
to facilitate presentation. We want to approximate $\bu$ with $\bG \bar\bu / \bar\lambda$. By definition, 
\begin{align*}
\| \bu - \bG \bar\bu / \bar\lambda \|_p = \| \bG \bu / \lambda - \bG \bar\bu / \bar\lambda \|_p
& \leq \| \bG (\bu - \bar\bu) \|_p / |\lambda| + \| \bG \bar\bu \|_p |\lambda^{-1} - \bar\lambda^{-1}| .
\end{align*}
The spectral concentration of $\bG$ (Lemma \ref{lemma-hollowing}) forces $1 / |\lambda| =  O_{\PP} (\bar\lambda^{-1} ;~ n)$ and $|\lambda^{-1} - \bar\lambda^{-1}| = o_{\PP} (\bar\lambda^{-1} ;~ n)$. In order to get (\ref{KPCA-outlines-1}), it suffices to choose some $p \lesssim n$ such that
\begin{align}
& \| \bG (\bu - \bar\bu) \|_p = o_{\PP} ( \bar\lambda \| \bar\bu \|_p ;~ p ), \label{KPCA-outlines-2}\\
& \| \bG \bar\bu \|_p = O_{\PP} ( \bar\lambda \| \bar\bu \|_p ;~ p ). \label{KPCA-outlines-3}
\end{align}

The target (\ref{KPCA-outlines-3}) sheds light on the choice of $p$. Let $\bar\bZ = (\bz_1,\cdots,\bz_n)^{\top}$ and observe that
\begin{align*}
\bG &
 = \cH ( \bX \bX^{\top}) 
 = \cH(\bar\bX \bar\bX^{\top}) + \cH( \bar\bX  \bZ^{\top} ) + \cH( \bZ  \bar\bX^{\top} ) + \cH (\bZ \bZ^{\top}).
\end{align*}
As an example, we show how to obtain
\begin{align}
\| \cH( \bZ  \bar\bX^{\top} ) \bar\bu \|_p = O_{\PP} ( \bar\lambda \| \bar\bu \|_p ;~ p ).
\label{eqn-illustration-p}
\end{align}
By Markov's inequality, a convenient and sufficient condition for \eqref{eqn-illustration-p} is
\begin{align}
\EE^{1/p} \| \cH (\bZ \bar\bX^{\top}) \bar\bu \|_p^p \lesssim \bar\lambda \| \bar\bu \|_p = n \| \bmu \|_2^2 \cdot n^{1/p - 1/2}.
\label{KPCA-outlines-4}
\end{align}
We now establish \eqref{KPCA-outlines-4}.
The facts $[\cH (\bZ  \bar\bX^{\top})]_{ij} = \langle \bz_i , y_j \bmu \rangle \mathbf{1}_{ \{ i \neq j \} }$ and $\bar\bu = \by / \sqrt{n}$ yield
\begin{align}
[ \cH (\bZ \bar\bX^{\top}) \bar\bu ]_i = \sum_{j \neq i} \langle \bz_i ,  y_j \bmu \rangle y_j / \sqrt{n} = \frac{n-1}{\sqrt{n}} \langle \bz_i ,  \bmu \rangle, \qquad \forall i \in [n].
\label{eqn-illustration-p-1}
\end{align}
By $\{ \bz_i \}_{i=1}^n$ are i.i.d. $N(\mathbf{0} , \bI_d)$ random vectors, we have $\langle \bz_i ,  \bmu \rangle \sim N(0, \| \bmu \|_2^2)$. By moment bounds for Gaussian distribution \citep{Ver10}, $\sup_{q \geq 1} \{  q^{-1/2} \EE^{1/q} |\langle \bz_i ,  \bmu \rangle|^q \} \leq c \| \bmu \|_2$ holds for some constant $c$. Then
\begin{align}
\EE \| \cH (\bZ \bar\bX^{\top}) \bar\bu \|_p^p = \sum_{i=1}^{n} \EE | [ \cH (\bZ \bar\bX^{\top}) \bar\bu ]_i |^p \leq n (c \| \bmu\|_2 \sqrt{np} )^p.
\label{eqn-illustration-p-0}
\end{align}
We can achieve (\ref{KPCA-outlines-4}) if $p \lesssim \| \bmu \|_2^2$. Hence $p$ cannot be arbitrarily large. Moment bounds are used throughout the proof. The final choice of $p$ depends on the most stringent condition.

Moments bounds are natural choices for $\ell_p$ control and they adapt to the signal strength. In contrast, the $\ell_{\infty}$ analysis in \cite{AFW17} targets quantities like $\| \bG \bar\bu \|_{\infty}$ and $\| \cH( \bZ  \bar\bX^{\top} ) \bar\bu \|_{\infty}$ by applying concentration inequality to each entry and taking union bounds. We now demonstrate why $\ell_{\infty}$ analysis requires stronger signal than the $\ell_p$ one. Similar to (\ref{eqn-illustration-p}), suppose that we want to prove
\begin{align}
\| \cH( \bZ  \bar\bX^{\top} ) \bar\bu \|_{\infty} = O_{\PP} ( \bar\lambda \| \bar\bu \|_{\infty}) = O_{\PP} (n \| \bmu \|_2^2 / \sqrt{n} ) = O_{\PP} ( \sqrt{n} \| \bmu \|_2^2).
\label{eqn-illustration-p-2}
\end{align}
According to (\ref{eqn-illustration-p-1}), we have $\cH( \bZ  \bar\bX^{\top} ) \bar\bu \sim N ( \bm{0} , \frac{ \| \bmu \|_2^2 (n-1)^2 }{n} \bI_n )$. By Inequality A.3 in \cite{Cha14}, $\EE \| \cH( \bZ  \bar\bX^{\top} ) \bar\bu \|_{\infty} \asymp \sqrt{n \log n} \| \bmu \|_2$ and there exists a constant $c>0$ such that
\begin{align}
\PP [\| \cH( \bZ  \bar\bX^{\top} ) \bar\bu \|_{\infty} > \sqrt{n \log n} \| \bmu \|_2 ] \to 1.
\label{eqn-illustration-p-3}
\end{align}
The $\sqrt{\log n}$ factor is the price of uniform control of $n$ coordinates, as opposed to $\sqrt{p}$ in the adaptive $\ell_p$ bound (\ref{eqn-illustration-p-0}). If $\| \bmu \|_2 \ll \sqrt{\log n}$, (\ref{eqn-illustration-p-3}) contradicts the desired result (\ref{eqn-illustration-p-2}). Then the $\ell_{\infty}$ analysis breaks down.

\subsection{Leave-one-out analysis}

Finally we come to (\ref{KPCA-outlines-2}). Let $\bG_i$ denote the $i$-th row of $\bG$. By definition,
\begin{align*}
& \| \bG (\bu - \bar\bu) \|_{p} = \bigg(
\sum_{i=1}^{n} | \bG_i (\bu - \bar\bu) |^p
\bigg)^{1/p}.
\end{align*}
We need to study $| \bG_i (\bu - \bar\bu) |$ for each individual $i \in [ n ]$. By Cauchy-Schwarz inequality, the upper bound
\[
| \bG_i (\bu - \bar\bu) | \leq \| \bG_i \|_2 \| \bu - \bar\bu \|_2
\]
always holds. Unfortunately, it is too large to be used, as we have not exploited the weak dependence between $\bG_i$ and $\bu$. We should resort to probabilistic analysis for tighter control.

For any $i \in [n]$, we construct a new data matrix
\[
\bX^{(i)} = (\bx_1,\cdots,\bx_{i-1} , \mathbf{0} , \bx_{i+1} , \cdots , \bx_n )^{\top} = (\bI_n - \be_i \be_i^{\top}) \bX
\]
by deleting the $i$-th sample. Then 
\begin{align*}
& \bG_i = ( \langle \bx_i, \bx_1 \rangle , \cdots, \langle \bx_i, \bx_{i-1} \rangle , 0 , 
\langle \bx_i, \bx_{i+1} \rangle ,\cdots , \langle \bx_i, \bx_{n} \rangle )
= \bx_i^{\top}  \bX^{(i)\top}, \\
&  \bG_i (\bu - \bar\bu)  = \langle \bx_i , \bX^{(i)\top} (\bu - \bar\bu) \rangle .
\end{align*}
Recall that $\bu$ is the eigenvector of the whole matrix $\bG$ constructed by $n$ independent samples. It should not depend too much on any individual $\bx_i$. Also, $\bX^{(i)\top} $ is independent of $\bx_i$. Hence the dependence between $\bx_i$ and $\bX^{(i)\top} (\bu - \bar\bu)$ is weak. We would like to invoke sub-Gaussian concentration inequalities to control their inner product.

To decouple them in a rigorous way, we construct leave-one-out auxiliaries $\{ \bG^{(i)} \}_{i=1}^n \subseteq \RR^{n \times n}$ where
\begin{align*}
\bG^{(i)} = \cH ( \bX^{(i)} \bX^{(i)\top} ) = \cH [ (\bI - \be_i \be_i^{\top}) \bX \bX^{\top} (\bI - \be_i \be_i^{\top}) ]
\end{align*} 
is the hollowed Gram matrix of the dataset $\{ \bx_1,\cdots,\bx_{i-1} , \mathbf{0} , \bx_{i+1}, \cdots,\bx_{n} \}$ with $\bx_i$ zeroed out. Equivalently, $\bG^{(i)}$ is obtained by zeroing out the $i$-th row and column of $\bG$.
Let $\bu^{(i)}$ be the leading eigenvector of $\bG^{(i)}$. Then
\begin{align*}
|\bG_i (\bu - \bar\bu)| = |\langle \bx_i , \bX^{(i)\top} (\bu - \bar\bu) \rangle |
\leq \underbrace{ |\langle \bx_i , \bX^{(i)\top} (\bu^{(i)} - \bar\bu) \rangle | }_{\varepsilon_1} + \underbrace{ |\langle \bx_i , \bX^{(i)\top} (\bu - \bu^{(i)}) \rangle | }_{\varepsilon_2}.
\end{align*}
We have the luxury of convenient concentration inequalities for $\varepsilon_1$ as $\bx_i$ and $\bX^{(i)\top} (\bu^{(i)} - \bar\bu)$ are completely independent. In addition, we can safely apply the Cauchy-Schwarz inequality to $\varepsilon_2$ because $\bu^{(i)}$ should be very similar to $\bu$.

The leave-one-out technique is a powerful tool in random matrix theory \citep{ESY09} and high-dimensional statistics \citep{JMo18,Elk18}. \cite{ZBo18}, \cite{AFW17} and \cite{CFM19} apply it to $\ell_{\infty}$ eigenvectors analysis of Wigner-type random matrices. Here we focus on $\ell_p$ analysis of Wishart-type matrices with dependent entries.

\section{Discussion}\label{KPCA-sec-discussions}

We conduct a novel $\ell_p$ analysis of PCA and derive linear approximations of eigenvectors. The results yield optimality guarantees for spectral clustering in several challenging problems. Meanwhile, this study leads to new research directions that are worth exploring.
First, we hope to extend the analysis from Wishart-type matrices to more general random matrices. One example is the normalized Laplacian matrix frequently used in spectral clustering. Second, our general results hold for Hilbert spaces and they are potentially useful in the study of kernel PCA, such as quantifying the performances of different kernels. Third, the linearization of eigenvectors provides tractable characterizations of spectral embedding that serve as the starting point of statistical inference. Last but not least, it would be nice to generalize the results for contextual community detection to multi-class and imbalanced settings. That is of great practical importance.

\section*{Acknowledgements}
EA was supported by the NSF CAREER Award CCF-1552131. JF was supported by the ONR grant N00014-19-1-2120 and NSF grants DMS-2052926, DMS-1712591, and DMS-2053832. KW was supported by a startup fund from Columbia University and the NIH grant 2R01-GM072611-15 when he was a student at Princeton University.

\newpage
\appendix

\section{Useful facts}

Here we list some elementary results about operations using the new notations $O_{\PP}(\cdot;~\cdot)$ and $o_{\PP}(\cdot;~\cdot)$. Most of them can be found in \cite{Wan19}.

\begin{fact}\label{KPCA-fact-0}  The following two statements hold.
\begin{enumerate}
\item $X_n = O_{\PP} (Y_n;~ r_n)$ is equivalent to the following: there exist positive constants $C_1$, $C_2$ and $N$, a non-decreasing function $f:~[C_2,+\infty) \to (0,+\infty)$ satisfying $\lim_{x \to +\infty} f(x) = +\infty$, and a positive deterministic sequence $\{ R_n \}_{n=1}^{\infty}$ tending to infinity such that
	\begin{align*}
	\PP ( |X_n| \geq t |Y_n| ) \leq C_1 e^{- r_n f(t) }, \qquad \forall ~n \geq N,~ C_2 \leq  t \leq R_n. 
	\end{align*}
\item When $X_n = o_{\PP} (Y_n;~ r_n)$, we have
\begin{align*}
\lim_{n \to \infty} r_n^{-1} \log \PP ( |X_n| \geq c |Y_n| ) = - \infty
\end{align*}
for any constant $c > 0$. Here we adopt the convention $\log 0 = -\infty$.
\end{enumerate}
\end{fact}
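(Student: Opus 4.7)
The statement is a two-part fact about the notations $O_{\PP}(\cdot;~\cdot)$ and $o_{\PP}(\cdot;~\cdot)$. Part 1 reformulates the definition of $O_{\PP}$ as the existence of a tail function $f$ and a cutoff sequence $\{R_n\}$, and part 2 extracts a $\lim = -\infty$ statement from the $o_{\PP}$ relation. My plan is to verify each part directly from \Cref{KPCA-defn-0} by tracking how the existential quantifiers fit together; neither part requires genuinely new probabilistic content.

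For the forward direction of part 1, I would instantiate the definition with $C = k$ for $k = 1, 2, 3, \ldots$, producing constants $C'_k > 0$ and $N_k > 0$ such that $\PP(|X_n| \geq C'_k |Y_n|) \leq C_1 e^{-k r_n}$ for $n \geq N_k$. By replacing each $C'_k$ with $\max_{j \leq k} C'_j \vee k$ and each $N_k$ with $\max_{j \leq k} N_j \vee k$---operations that only strengthen the inequalities---I arrange for both sequences to be non-decreasing with $\lim_k C'_k = \lim_k N_k = \infty$. I then build $f$ as the step function $f(t) = k$ on $[C'_k, C'_{k+1})$, set $C_2 = C'_1$, and for each $n \geq N_1$ define $k_n = \max\{k : N_k \leq n\}$ and $R_n = C'_{k_n}$. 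Monotonicity of $N_k$ forces $k_n \to \infty$, hence $R_n \to \infty$; for any $t \in [C_2, R_n] = [C'_1, C'_{k_n}]$, choosing $k = f(t) \leq k_n$ yields $n \geq N_k$ and $C'_k \leq t$, so $\PP(|X_n| \geq t|Y_n|) \leq C_1 e^{-k r_n} = C_1 e^{-r_n f(t)}$. Conversely, given $C > 0$, the assumption $f(x) \to \infty$ supplies some $C' \geq C_2$ with $f(C') \geq C$, and for $n$ large enough that $R_n \geq C'$, the stipulated bound at $t = C'$ gives $\PP(|X_n| \geq C'|Y_n|) \leq C_1 e^{-r_n f(C')} \leq C_1 e^{-C r_n}$, recovering the original definition.

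For part 2, unfolding the definition of $o_{\PP}$ yields a deterministic sequence $w_n \to 0$ with $X_n = O_{\PP}(w_n Y_n; r_n)$. Fixing $c > 0$ and an arbitrary $C > 0$, applying the definition of $O_{\PP}$ produces $C_1 > 0$, $C' = C'(C) > 0$, and $N = N(C)$ such that $\PP(|X_n| \geq C' w_n |Y_n|) \leq C_1 e^{-C r_n}$ for $n \geq N$. Since $w_n \to 0$, we eventually have $C' w_n \leq c$, and therefore $\PP(|X_n| \geq c|Y_n|) \leq C_1 e^{-C r_n}$ for all sufficiently large $n$. Taking logarithms, dividing by $r_n > 0$ (with the convention $\log 0 = -\infty$), and passing to $\limsup_n$ gives $\limsup_n r_n^{-1} \log \PP(|X_n| \geq c|Y_n|) \leq \limsup_n r_n^{-1} \log C_1 - C$; in the regime $r_n \to \infty$ of interest throughout the paper, $r_n^{-1} \log C_1 \to 0$, so this is $\leq -C$. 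Since $C > 0$ was arbitrary, the limit is $-\infty$.

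The only real bookkeeping lies in the forward direction of part 1, where I must simultaneously force $f$ to be non-decreasing and divergent while ensuring $R_n$ also tends to infinity; otherwise both parts are short and involve no analytic obstacle. For part 2 the single subtlety is the convention $\log 0 = -\infty$, which handles the edge case $\PP(|X_n| \geq c|Y_n|) = 0$, and a brief remark about why $r_n^{-1} \log C_1 \to 0$ suffices once $r_n \to \infty$.
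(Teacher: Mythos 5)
Your overall route is the right one: the paper itself gives no proof of this fact (it is treated as an elementary consequence of \Cref{KPCA-defn-0}, with a pointer to \cite{Wan19}), so a direct unpacking of the definition is exactly what is called for, and both your reverse direction of part 1 and your part 2 are correct as written. In part 2 you were right to flag the regime: the conclusion genuinely needs $r_n$ bounded away from zero (e.g.\ $r_n \to \infty$, as everywhere in the paper); for $r_n \to 0$ one can cook up $X_n = o_{\PP}(Y_n;~r_n)$ with $\PP(|X_n| \geq c|Y_n|) = 1 - 1/n$, for which $r_n^{-1}\log \PP \not\to -\infty$, so your caveat is not cosmetic but necessary for the statement to be true.

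There is one small slip in the forward direction of part 1. Replacing $C'_k$ by $\max_{j \leq k} C'_j \vee k$ makes the sequence non-decreasing and divergent but not necessarily \emph{strictly} increasing (if the original $C'_3 = 100$, the modified values can stay at $100$ for several consecutive $k$). In that case, at the endpoint $t = R_n = C'_{k_n}$ with $C'_{k_n} = C'_{k_n+1}$, your step function satisfies $f(t) > k_n$, so the index $k = f(t)$ need not obey $n \geq N_k$, and the claimed bound $\PP(|X_n| \geq t|Y_n|) \leq C_1 e^{-r_n f(t)}$ is not established at that single point, even though the statement requires it for all $C_2 \leq t \leq R_n$. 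The fix is one line: force strict monotonicity, e.g.\ set $C'_k := \max_{j\leq k} C'_j + k$ (still only enlarging the thresholds, so the tail bounds are preserved), after which $t \leq C'_{k_n}$ does imply $f(t) \leq k_n$ and your argument closes. With that repair the proof is complete.
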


\begin{fact}[Truncation]\label{KPCA-fact-truncation}
	If $X_n \mathbf{1}_{ \{ | Z_n | \leq | W_n | \} } = O_{\PP} (Y_n ;~ r_n)$ and $Z_n = o_{\PP} ( W_n;~ r_n )$, then
	\begin{align*}
	X_n = O_{\PP} (Y_n ;~ r_n).
	\end{align*}
\end{fact}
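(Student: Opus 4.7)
The plan is to carry out a standard truncation decomposition on the good event $A_n = \{|Z_n| \leq |W_n|\}$, on which the indicator multiplication is transparent, and to absorb the bad event $A_n^c$ using the second hypothesis. Concretely, for any threshold $C' > 0$, combining $|X_n| \leq |X_n \mathbf{1}_{A_n}| + |X_n|\mathbf{1}_{A_n^c}$ with a union bound gives
\begin{align*}
\PP(|X_n| \geq C' |Y_n|) \leq \PP\bigl(|X_n \mathbf{1}_{A_n}| \geq C'|Y_n|\bigr) + \PP(A_n^c),
\end{align*}
where $\PP(A_n^c) = \PP(|Z_n| > |W_n|)$.

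For a given target decay rate $C > 0$, I would first apply Definition~\ref{KPCA-defn-0} to the hypothesis $X_n \mathbf{1}_{A_n} = O_\PP(Y_n;~r_n)$, obtaining a fixed constant $C_1$ and then $C', N_1$ such that the first term is bounded by $C_1 e^{-C r_n}$ for $n \geq N_1$. For the tail $\PP(A_n^c)$, I would invoke Fact~\ref{KPCA-fact-0}(2) on $Z_n = o_\PP(W_n;~r_n)$ with the specialization $c = 1$; that yields $r_n^{-1} \log \PP(|Z_n| \geq |W_n|) \to -\infty$, so for the same $C$ there is $N_2$ with $\PP(A_n^c) \leq e^{-C r_n}$ for every $n \geq N_2$.

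Summing the two bounds produces $\PP(|X_n| \geq C' |Y_n|) \leq (C_1 + 1) e^{-C r_n}$ for $n \geq \max(N_1, N_2)$, which matches the definition of $X_n = O_\PP(Y_n;~r_n)$ with universal prefactor $C_1 + 1$. There is no substantive obstacle; the only point worth verifying is that this prefactor is chosen once and for all, independent of the target rate $C$, which it is. The unbounded-$t$ form in Fact~\ref{KPCA-fact-0}(1) plays no role in the argument.
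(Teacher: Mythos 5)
Your argument is correct. The paper itself does not write out a proof of this fact: it just remarks that the statement follows from Fact~\ref{KPCA-fact-0} together with Lemma~4 of the cited reference \cite{Wan19}. Your self-contained verification directly from Definition~\ref{KPCA-defn-0} -- decomposing on $A_n = \{|Z_n| \leq |W_n|\}$, controlling the first term by the hypothesis on $X_n\mathbf{1}_{A_n}$, and controlling $\PP(A_n^c)$ via Fact~\ref{KPCA-fact-0}(2) -- spells out exactly what that citation is standing in for, and is arguably more transparent. You also correctly flag that the universal prefactor $C_1 + 1$ is chosen once, independently of the target decay rate $C$, which is the only place one could go wrong with the quantifier order in Definition~\ref{KPCA-defn-0}.

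One small expository point: starting from the inequality $|X_n| \leq |X_n\mathbf{1}_{A_n}| + |X_n|\mathbf{1}_{A_n^c}$ and a \emph{generic} union bound would force a threshold split of $C'/2$ in each term. The reason your stated probability bound holds with the full threshold $C'$ is the event inclusion
\begin{align*}
\{|X_n| \geq C'|Y_n|\} \subseteq \{|X_n\mathbf{1}_{A_n}| \geq C'|Y_n|\} \cup A_n^c,
\end{align*}
which holds because $X_n\mathbf{1}_{A_n} = X_n$ on $A_n$. This is what you are implicitly using, and stating it explicitly would make that step airtight; either way, the conclusion is unaffected.
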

Fact \ref{KPCA-fact-truncation} directly follows from Fact \ref{KPCA-fact-0} above and Lemma 4 in \cite{Wan19}.

\begin{fact}\label{KPCA-fact-moment-tail}
	If $\EE^{1/r_n} |X_n|^{r_n} \lesssim Y_n$ or $\EE^{1/r_n} |X_n|^{r_n} \ll Y_n$ for deterministic $Y_n$, then $X_n=O_{\PP} ( Y_n ;~r_n )$ or $X_n=o_{\PP} ( Y_n ;~r_n )$, respectively.

\end{fact}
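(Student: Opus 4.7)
\textbf{Proof proposal for Fact \ref{KPCA-fact-moment-tail}.} The plan is a direct application of Markov's inequality to the $r_n$-th moment, matching the resulting tail bound to the template in Definition \ref{KPCA-defn-0}. First I would handle the $O_\PP$ statement. Suppose $\EE^{1/r_n}|X_n|^{r_n}\le M\,|Y_n|$ for some constant $M>0$ and all sufficiently large $n$ (we may assume $Y_n\neq 0$ for large $n$, else the hypothesis forces $X_n=0$ a.s.\ and the conclusion is trivial). For any $t>0$, raising to the $r_n$-th power and invoking Markov gives
\begin{equation*}
\PP\bigl(|X_n|\ge t|Y_n|\bigr)=\PP\bigl(|X_n|^{r_n}\ge t^{r_n}|Y_n|^{r_n}\bigr)\le \frac{\EE|X_n|^{r_n}}{t^{r_n}|Y_n|^{r_n}}\le\Bigl(\frac{M}{t}\Bigr)^{\!r_n}.
\end{equation*}
Given any $C>0$, choose $C'=M e^{C}$; then $(M/C')^{r_n}=e^{-Cr_n}$, so $\PP(|X_n|\ge C'|Y_n|)\le e^{-Cr_n}$ for all large $n$. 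This is exactly Definition \ref{KPCA-defn-0} with constant $C_1=1$, giving $X_n=O_\PP(Y_n;r_n)$.

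For the $o_\PP$ statement, by hypothesis there is a deterministic sequence $\{\varepsilon_n\}$ with $\varepsilon_n\to 0$ such that $\EE^{1/r_n}|X_n|^{r_n}\le \varepsilon_n |Y_n|$ for large $n$. Set $w_n=\sqrt{\varepsilon_n}\to 0$ (using the convention $w_n=1$ whenever $\varepsilon_n=0$ is attained, which is harmless). Then $\EE^{1/r_n}|X_n|^{r_n}\le w_n\cdot w_n|Y_n|$, and since $w_n\le 1$ eventually, we may apply the $O_\PP$ part already established with $Y_n$ replaced by $w_n Y_n$ and with constant $M=1$. This yields $X_n=O_\PP(w_n Y_n;r_n)$, which by Definition \ref{KPCA-defn-0} is precisely $X_n=o_\PP(Y_n;r_n)$.

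There is no real obstacle; the content is just bookkeeping to align Markov's inequality with the quantifier structure of Definition \ref{KPCA-defn-0}. The only mild subtlety is the mapping between the implicit constant in $\lesssim$ and the universal constant $C'$ in the definition, which is handled by the choice $C'=Me^{C}$ that absorbs any desired exponential decay rate $C$ into the moment bound's multiplicative constant $M$.
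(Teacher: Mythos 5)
Your proof is correct and is exactly the standard Markov-inequality argument; the paper itself gives no proof of this fact, deferring it (with the other facts in Appendix A) to \cite{Wan19}, where the same moment-to-tail conversion is used. One cosmetic fix: rather than setting $w_n=1$ at indices where $\varepsilon_n=0$ (which could spoil $w_n\to 0$ if that happens infinitely often), take $w_n=\sqrt{\varepsilon_n}\vee n^{-1}$; at such indices $X_n=0$ a.s., so the required tail bound holds trivially for any positive threshold.
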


\begin{fact}[Lemma 2 in \cite{Wan19}]\label{KPCA-fact-arithmetic}
	If $X_n = O_{\PP} (Y_n ;~ r_n )$ and $W_n = O_{\PP} (Z_n ;~ s_n )$, then
	\begin{align*}
	& X_n + W_n = O_{\PP} ( |Y_n| + |Z_n| ;~ r_n \wedge s_n ) , \\
	& X_n W_n = O_{\PP} ( Y_n Z_n ;~ r_n \wedge s_n ).
	\end{align*}
\end{fact}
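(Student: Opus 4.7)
The plan is to unpack Definition \ref{KPCA-defn-0} directly and reduce each claim to a union bound on two one-sided tail events, to which the hypotheses on $X_n$ and $W_n$ apply. Recall that $X_n = O_\PP(Y_n;~r_n)$ means there exists an \emph{absolute} constant $C_1>0$ (not depending on the exponent $C$) such that for every $C>0$ we can find $C'>0$ and $N$ with $\PP(|X_n|\geq C'|Y_n|)\leq C_1 e^{-Cr_n}$ for all $n\geq N$. The task is therefore to produce, for each given $C>0$, a single constant $C'$ (depending on $C$) that controls the sum or product with the desired tail rate $r_n\wedge s_n$, while keeping the leading constant independent of $C$.

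For the sum, I would begin with the elementary observation that if $|X_n|+|W_n|\geq C'(|Y_n|+|Z_n|)$, then at least one of the inequalities $|X_n|\geq C'|Y_n|$ or $|W_n|\geq C'|Z_n|$ must hold (otherwise their sum would be strictly smaller). Combined with $|X_n+W_n|\leq |X_n|+|W_n|$, the union bound gives
\[
\PP\bigl(|X_n+W_n|\geq C'(|Y_n|+|Z_n|)\bigr)\leq \PP(|X_n|\geq C'|Y_n|)+\PP(|W_n|\geq C'|Z_n|).
\]
Given $C>0$, apply the hypotheses to pick $C_1'$ with $\PP(|X_n|\geq C_1'|Y_n|)\leq \alpha e^{-Cr_n}$ for $n\geq N_1$ and $C_2'$ with $\PP(|W_n|\geq C_2'|Z_n|)\leq \beta e^{-Cs_n}$ for $n\geq N_2$, where $\alpha,\beta$ are the absolute constants coming from the two hypotheses. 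Taking $C'=\max\{C_1',C_2'\}$ and $N=\max\{N_1,N_2\}$ yields $\PP(|X_n+W_n|\geq C'(|Y_n|+|Z_n|))\leq (\alpha+\beta)e^{-C(r_n\wedge s_n)}$, which is exactly the definition of $X_n+W_n=O_\PP(|Y_n|+|Z_n|;~r_n\wedge s_n)$ with absolute constant $\alpha+\beta$.

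The product is handled analogously, replacing the additive splitting by a multiplicative one: if $|X_n W_n|\geq C'|Y_nZ_n|$, then at least one of $|X_n|\geq \sqrt{C'}|Y_n|$ or $|W_n|\geq \sqrt{C'}|Z_n|$ must hold. Thus
\[
\PP(|X_nW_n|\geq C'|Y_nZ_n|)\leq \PP(|X_n|\geq \sqrt{C'}|Y_n|)+\PP(|W_n|\geq \sqrt{C'}|Z_n|).
\]
Given $C>0$, the hypotheses provide $C_1',C_2'$ with the two tail bounds governed by $e^{-Cr_n}$ and $e^{-Cs_n}$; choosing $C'=(\max\{C_1',C_2'\})^2$ and summing produces the same $(\alpha+\beta)e^{-C(r_n\wedge s_n)}$ bound, matching the definition with an absolute constant independent of $C$.

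No deep obstacle is anticipated; the only subtlety to watch is that the leading constant in Definition \ref{KPCA-defn-0} must not depend on the exponent $C$. My construction satisfies this automatically because $\alpha$ and $\beta$ are the absolute constants supplied by the hypotheses on $X_n$ and $W_n$, and $\alpha+\beta$ inherits that property. The minimum $r_n\wedge s_n$ in the rate is unavoidable: it arises from bounding $e^{-Cr_n}+e^{-Cs_n}\leq 2e^{-C(r_n\wedge s_n)}$, and one cannot do better without further structural assumptions.
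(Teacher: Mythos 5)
Your proof is correct: the containment of the bad event in the union of the two one-sided tail events (additive splitting with the same $C'$, multiplicative splitting with $\sqrt{C'}$), the monotonicity step that lets you take $C'=\max\{C_1',C_2'\}$, the bound $e^{-Cr_n}+e^{-Cs_n}\leq 2e^{-C(r_n\wedge s_n)}$, and the observation that the leading constant $\alpha+\beta$ stays independent of $C$ are all exactly what is needed to match Definition \ref{KPCA-defn-0}. The paper itself gives no proof of this fact—it simply cites Lemma 2 of \cite{Wan19}—and your union-bound argument is the standard self-contained verification of that cited result.
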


\begin{fact}[Lemma 3 in \cite{Wan19}]\label{KPCA-fact-transform}
	We have the followings:
	\begin{enumerate}
		\item if $X_n = {O}_{\PP} (Y_n ;~ r_n)$, then $|X_n|^{\alpha} = {O}_{\PP} ( |Y_n|^{\alpha} ;~ r_n)$ for any $\alpha > 0$;
		\item if $X_n = {o}_{\PP} (1 ;~ r_n)$, then $f(X_n) = {o}_{\PP} ( 1 ;~ r_n )$ for any $f:~\RR\to \RR$ that is continuous at $0$.
	\end{enumerate}
\end{fact}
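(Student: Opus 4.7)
The plan is to unpack the definitions and treat the two parts separately, as they require different mechanisms.

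For part (1), I would argue directly from Definition~\ref{KPCA-defn-0}. Suppose $X_n = O_{\PP}(Y_n;\, r_n)$, so there is a constant $C_1 > 0$ with the property that for every $C > 0$ one can find $C' > 0$ and $N > 0$ with
\[
\PP(|X_n| \ge C'|Y_n|) \le C_1 e^{-C r_n} \qquad \text{for all } n \ge N.
\]
Since $\alpha > 0$, the event $\{|X_n|^\alpha \ge (C')^\alpha |Y_n|^\alpha\}$ is identically the event $\{|X_n| \ge C'|Y_n|\}$. Hence the same probability bound holds verbatim with the threshold $C'' := (C')^\alpha$ and the same $C_1$. Since $C > 0$ was arbitrary, this matches the definition of $|X_n|^\alpha = O_{\PP}(|Y_n|^\alpha;\, r_n)$. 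This step is essentially bookkeeping.

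For part (2), the content is genuine and the implicit assumption $f(0) = 0$ (forced by the fact that $X_n = o_{\PP}(1;r_n)$ already tends to zero in a strong probabilistic sense and we want $f(X_n) = o_{\PP}(1;r_n)$) is what makes continuity at $0$ meaningful. I would proceed as follows. By hypothesis there exists a deterministic sequence $w_n \downarrow 0$ and a constant $C_1 > 0$ such that for each $C > 0$ there is $C'_C$ and $N_C$ with $\PP(|X_n| \ge C'_C w_n) \le C_1 e^{-Cr_n}$ for $n \ge N_C$. By continuity of $f$ at $0$, choose $\delta_0 > 0$ small enough that $f$ is bounded on $[-\delta_0,\delta_0]$, and define the local modulus of continuity $g(t) := \sup_{|x| \le t} |f(x)|$ for $t \in [0,\delta_0]$. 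Then $g$ is non-decreasing with $g(t) \to 0$ as $t \to 0^+$.

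The key construction is the candidate rate $\tilde w_n$. I would pick a sequence $a_n \to \infty$ that grows slowly enough that $a_n w_n \to 0$ (for example $a_n = 1/\sqrt{w_n}$ whenever $w_n > 0$, with $\tilde w_n := 0$ otherwise), and set $\tilde w_n := g(a_n w_n)$, which tends to $0$. Now for each fixed $C > 0$, for all $n$ large enough that $C'_C \le a_n$ and $a_n w_n \le \delta_0$, the event $\{|X_n| < C'_C w_n\}$ implies $|f(X_n)| \le g(C'_C w_n) \le g(a_n w_n) = \tilde w_n$ by monotonicity of $g$. Hence
\[
\PP(|f(X_n)| \ge \tilde w_n) \le \PP(|X_n| \ge C'_C w_n) \le C_1 e^{-Cr_n}
\]
for $n$ large, which yields $f(X_n) = O_{\PP}(\tilde w_n;\, r_n)$, and since $\tilde w_n \to 0$, the conclusion $f(X_n) = o_{\PP}(1;\, r_n)$ follows.

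The only subtle point — and the main obstacle — is verifying that a single sequence $\tilde w_n$ tending to zero absorbs all the constants $C'_C$ simultaneously as $C$ varies; this is precisely what the device of letting $a_n \to \infty$ slowly enough accomplishes, using the monotonicity of $g$ to kill the $C$-dependence inside the modulus. Everything else is direct from the definitions.
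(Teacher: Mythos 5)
Your proof is correct, but note that the paper itself gives no argument for this fact: it is imported verbatim as Lemma~3 of \cite{Wan19} (the appendix explicitly defers all these facts to that reference), so there is no internal proof to compare against. Your part (1) is exactly the obvious event identity $\{|X_n|^{\alpha}\geq (C')^{\alpha}|Y_n|^{\alpha}\}=\{|X_n|\geq C'|Y_n|\}$ and needs nothing more. For part (2), your construction is the right one: the genuine issue is producing a \emph{single} rate $\tilde w_n\to 0$ that works for every $C$ simultaneously, and letting $a_n\to\infty$ slowly ($a_nw_n\to 0$) and setting $\tilde w_n=g(a_nw_n)$ with the monotone local modulus $g$ resolves it cleanly; for each fixed $C$ one waits until $a_n\geq C_C'$ and $a_nw_n\leq\delta_0$, absorbing the $C$-dependent constant. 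Two cosmetic repairs: the inclusion argument bounds $\PP(|f(X_n)|>\tilde w_n)$, so in the definition take the threshold $2\tilde w_n$ (and replace $\tilde w_n$ by $\max\{\tilde w_n,e^{-n}\}$, or $w_n$ by $\max\{w_n,e^{-n}\}$, to avoid degenerate zero values); neither affects the argument. You are also right that the statement as printed tacitly needs $f(0)=0$ (otherwise $f\equiv 1$ is a counterexample when $r_n\not\to 0$); this is consistent with how the paper uses the fact, namely with the modulus function $w$ satisfying $w(0)=0$ in the proof of Lemma~\ref{KPCA-lemma-GMM-linearization}, so flagging it is appropriate rather than a defect.
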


\begin{definition}[A uniform version of $O_{\PP}(\cdot,~\cdot)$]\label{KPCA-defn-1}
	Let $\{ \Lambda_n \}_{n=1}^{\infty}$ be a sequence of finite index sets. For any $n \geq 1$, $\{ X_{n\lambda} \}_{\lambda \in \Lambda_n}$, $\{ Y_{n\lambda} \}_{\lambda \in \Lambda_n}$ are two collections of random variables; $\{ r_{n\lambda} \}_{\lambda \in \Lambda_n} \subseteq (0,+\infty)$ are deterministic. We write
	\begin{align}
	\{ X_{n\lambda} \}_{\lambda \in \Lambda_n} = O_{\PP} ( \{ Y_{n\lambda} \}_{\lambda \in \Lambda_n};~ \{ r_{n\lambda} \}_{\lambda \in \Lambda_n} )
	\label{eqn-definition-O-uniform}
	\end{align}
	if there exist positive constants $C_1$, $C_2$ and $N$, a non-decreasing function $f:~[C_2,+\infty) \to (0,+\infty)$ satisfying $\lim_{x \to +\infty} f(x) = +\infty$, and a positive deterministic sequence $\{ R_n \}_{n=1}^{\infty}$ tending to infinity such that
	\begin{align*}
	\PP ( |X_n| \geq t |Y_n| ) \leq C_1 e^{- r_n f(t) }, \qquad \forall ~n \geq N,~ C_2 \leq  t \leq R_n. 
	\end{align*}
	When $Y_{n\lambda} = Y_n$ and/or $r_{n\lambda} = r_n$ for all $n$ and $\lambda$, we may replace
	$ \{ Y_{n\lambda} \}_{\lambda \in \Lambda_n}$ and/or $\{ r_{n\lambda} \}_{\lambda \in \Lambda_n} $ in (\ref{eqn-definition-O-uniform}) by $Y_n$ and/or $r_n$ for simplicity.
\end{definition}

\begin{fact}\label{KPCA-fact-union}
	If $r_n \gtrsim \log |\Lambda_n|$, then $\{ X_{n\lambda} \}_{\lambda \in \Lambda_n } = O_{\PP} ( \{ Y_{n\lambda} \}_{\lambda \in \Lambda_n } ;~ r_n)$ implies that
	\[
	\max_{\lambda \in \Lambda_n} |X_{n\lambda}| = O_{\PP} ( \max_{\lambda \in \Lambda_n} Y_{n\lambda} ;~ r_n ).
	\]
\end{fact}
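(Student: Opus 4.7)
The plan is to run a straightforward union bound and then exploit the divergence of the auxiliary function $f$ supplied by the characterization in Fact \ref{KPCA-fact-0} to absorb the combinatorial factor $|\Lambda_n|$ into the exponential rate $r_n$. First, I would unpack the hypothesis $\{X_{n\lambda}\}_{\lambda \in \Lambda_n} = O_{\PP}(\{Y_{n\lambda}\}_{\lambda \in \Lambda_n};~r_n)$ via the uniform version in Definition \ref{KPCA-defn-1}, producing constants $C_1, C_2 > 0$, an integer $N$, a non-decreasing $f:[C_2,+\infty) \to (0,+\infty)$ with $f(t) \to +\infty$, and a sequence $R_n \to \infty$ such that
$$\PP\bigl(|X_{n\lambda}| \geq t |Y_{n\lambda}|\bigr) \leq C_1 e^{-r_n f(t)}$$
uniformly over $\lambda \in \Lambda_n$, for every $n \geq N$ and every $C_2 \leq t \leq R_n$.

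Next, I would establish the set inclusion
$$\Bigl\{ \max_{\lambda} |X_{n\lambda}| \geq t \max_{\lambda} |Y_{n\lambda}| \Bigr\} \subseteq \bigcup_{\lambda \in \Lambda_n} \bigl\{ |X_{n\lambda}| \geq t |Y_{n\lambda}| \bigr\},$$
which holds because any index $\lambda^{\star}$ attaining the maximum on the left-hand side satisfies $|X_{n\lambda^{\star}}| \geq t \max_{\mu} |Y_{n\mu}| \geq t |Y_{n\lambda^{\star}}|$. A union bound together with the pointwise tail estimate then gives
$$\PP\Bigl( \max_{\lambda} |X_{n\lambda}| \geq t \max_{\lambda} |Y_{n\lambda}| \Bigr) \leq |\Lambda_n| \cdot C_1 e^{-r_n f(t)}.$$

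The hypothesis $r_n \gtrsim \log |\Lambda_n|$ provides a constant $M>0$ with $\log |\Lambda_n| \leq M r_n$ eventually, so the right-hand side is bounded by $C_1 \exp(-r_n (f(t) - M))$. To match Definition \ref{KPCA-defn-0} with rate $r_n$, fix any target constant $C > 0$ and choose once and for all a value $t = C' \geq C_2$ large enough that $f(C') \geq M + C$; since $R_n \to \infty$, the constraint $C' \leq R_n$ is automatic for $n$ sufficiently large, and we conclude
$$\PP\Bigl( \max_{\lambda} |X_{n\lambda}| \geq C' \max_{\lambda} |Y_{n\lambda}| \Bigr) \leq C_1 e^{-C r_n},$$
which is exactly the definition of $\max_{\lambda} |X_{n\lambda}| = O_{\PP}(\max_{\lambda} Y_{n\lambda};~ r_n)$.

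There is no real obstacle here; the only delicate point to verify is that $f$ does not depend on $\lambda$, so that a single choice of $C'$ works simultaneously for all indices. That is precisely the content of the uniform $O_{\PP}$ notation in Definition \ref{KPCA-defn-1}; without it the union bound would not close. The hypothesis $r_n \gtrsim \log|\Lambda_n|$ is sharp, because when $|\Lambda_n| = e^{\Theta(r_n)}$ this is exactly the regime in which the combinatorial factor can be folded into the exponential decay.
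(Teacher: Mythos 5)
Your proof is correct and is the argument one would expect: unpack the uniform tail bound from Definition~\ref{KPCA-defn-1}, observe that the index attaining $\max_\lambda |X_{n\lambda}|$ already has $|Y_{n\lambda}|$ no larger than the max (which gives the set inclusion), apply a union bound, and absorb the $|\Lambda_n|$ factor into the exponent using $\log|\Lambda_n| \lesssim r_n$ together with the unboundedness of $f$. The paper states Fact~\ref{KPCA-fact-union} without proof, so there is nothing to compare against; your argument is the canonical one, and your closing observation — that the uniform version of $O_\PP$ (a single $f$, $C_1$, $C_2$, $R_n$ for all $\lambda$) is exactly what makes the union bound close — correctly identifies the point of Definition~\ref{KPCA-defn-1}. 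One cosmetic remark: you move between $\max_\lambda |Y_{n\lambda}|$ and $\max_\lambda Y_{n\lambda}$ at the end; these coincide only because the $Y_{n\lambda}$ appearing in the paper's applications (norms, rates) are nonnegative, which the statement of the fact tacitly assumes by writing $\max_\lambda Y_{n\lambda}$ rather than $\max_\lambda |Y_{n\lambda}|$.
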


\section{More on $\ell_{2,p}$ analysis of eigenspaces}\label{KPCA-sec-general}

In this section, we provide a generalized version of Theorem \ref{KPCA-corollary-main} and its proof.
Instead of Assumption \ref{KPCA-assumption-incoherence}, we use a weaker version of that (Assumption \ref{KPCA-assumption-spectral}) at the cost of a more nested regularity condition for $p = p_n$ (Assumption \ref{KPCA-assumption-Lp}). Assumptions \ref{KPCA-assumption-noise} and \ref{KPCA-assumption-concentration} are still in use.

\begin{assumption}[Incoherence]\label{KPCA-assumption-spectral}
	$n \to \infty$ and $ \| \bar\bG \|_{2, \infty} / \bar\Delta \leq \gamma \ll 1 / \kappa$.
\end{assumption}

\begin{assumption}[Regularity of $p = p_n$]\label{KPCA-assumption-Lp}
	$\sqrt{n p} \| \bar\bX \bSigma^{1/2} \|_{2,p} \lesssim \bar\Delta \| \bar\bU\|_{2,p}$ and
\[
n^{1/p}  \sqrt{r p} \max\{ \| \bSigma \|_{\mathrm{HS}},~\sqrt{n} \| \bSigma \|_{\mathrm{op}}  \}
\lesssim  \bar\Delta \| \bar\bU \|_{2,p}.
\]
\end{assumption}


\begin{theorem}\label{KPCA-theorem-main}
	Let Assumptions \ref{KPCA-assumption-noise}, \ref{KPCA-assumption-concentration}, \ref{KPCA-assumption-spectral} and \ref{KPCA-assumption-Lp} hold. We have
	\begin{align*}
	&\| \bU \sgn(\bH) \|_{2,p} = O_{\PP} \left(  \| \bar\bU \|_{2,p} +  \gamma \bar\Delta^{-1} \| \bar\bG \|_{2,p} ;~p \wedge n \right) , \\
	&\| \bU \sgn(\bH) - \bG\bar\bU \bar\bLambda^{-1} \|_{2,p} = O_{\PP} \left( \kappa \gamma \| \bar\bU \|_{2,p} + \gamma \bar\Delta^{-1} \| \bar\bG \|_{2,p} ;~p \wedge n \right)
	,\\&
	\| \bU \bLambda^{1/2} \sgn(\bH) - \bG\bar\bU \bar\bLambda^{-1/2} \|_{2,p} = O_{\PP} ( \kappa^{3/2} \gamma \bar\Delta^{1/2} \| \bar\bU \|_{2,p} + \kappa^{1/2} \gamma \bar\Delta^{-1/2} \| \bar\bG \|_{2,p} ;~p\wedge n).
	\end{align*}
\end{theorem}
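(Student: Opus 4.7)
The plan is to anchor the argument in the self-consistency identity $\bU = \bG \bU \bLambda^{-1}$. Writing $\bO := \sgn(\bH)$ for the optimal rotation, this telescopes as
\begin{align*}
\bU \bO - \bG \bar\bU \bar\bLambda^{-1}
= \bG (\bU \bO - \bar\bU)(\bO^\top \bLambda^{-1} \bO)
 + \bG \bar\bU \bigl( \bO^\top \bLambda^{-1} \bO - \bar\bLambda^{-1} \bigr).
\end{align*}
The second summand is eigenvalue error, governed by Weyl/Davis--Kahan and operator-norm concentration of $\bG - \bar\bG$; the first summand is linear in the Davis--Kahan residual $\bU \bO - \bar\bU$, so once a coarse $\ell_2$ bound on that residual is in hand, careful row-wise analysis of $\bG$ acting on it yields the sharper $\ell_{2,p}$ bound.

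First I would establish the operator-norm concentration $\|\bG - \bar\bG\|_{\mathrm{op}} = O_{\PP}(\gamma \bar\Delta;\,n)$, using the decomposition $\bG - \bar\bG = \cH(\bar\bX \bZ^\top + \bZ \bar\bX^\top) + \cH(\bZ \bZ^\top) - \bar\bD$ from the main text, sub-Gaussian matrix concentration for the linear-in-$\bZ$ part, and the Hanson--Wright inequality of \cite{CYa18} for the quadratic part; here $\|\bar\bD\|_{\mathrm{op}} \leq \|\bar\bG\|_{2,\infty} \leq \gamma \bar\Delta$ is exactly Assumption \ref{KPCA-assumption-spectral}. Davis--Kahan then delivers $\|\bU \bO - \bar\bU\|_{\mathrm{F}} = O_{\PP}(\kappa \gamma;\,n)$, together with $\|\bO^\top \bLambda^{\pm 1} \bO - \bar\bLambda^{\pm 1}\|_{\mathrm{op}} = O_{\PP}(\gamma \bar\Delta^{\pm 1};\,n)$ after normalizing by $\bar\lambda_{s+r} \gtrsim \bar\Delta/\kappa$.

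Next I would bootstrap this $\ell_2$ input to $\ell_{2,p}$ via the leave-one-out method outlined in Section \ref{KPCA-sec-outlines}. For each $i$, let $\bG^{(i)}$ be $\bG$ with its $i$-th row and column zeroed and let $\bU^{(i)}$ be its top-$r$ eigenvectors with alignment $\bO^{(i)}$. Independence of $\bG^{(i)}$ from $\bz_i$ makes $\bz_i^\top \bX^{(i)\top} \bU^{(i)}$ sub-Gaussian with explicit variance proxy, so Assumption \ref{KPCA-assumption-noise} yields a $p$-th moment bound of the form $\EE^{1/p} \|\bG_{i,\cdot} \bU^{(i)}\|_2^p \lesssim \sqrt{p}\,\bigl(\|\bar\bX \bSigma^{1/2}\|_{2,p} + n^{1/p}\sqrt{r}(\|\bSigma\|_{\mathrm{HS}} + \sqrt{n}\|\bSigma\|_{\mathrm{op}})\bigr)$ after summing the $p$-th powers over $i$, recovering the two clauses in Assumption \ref{KPCA-assumption-Lp}. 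A standard gap-based bound $\|\bU^{(i)} \bO^{(i)} - \bU \bO\|_{\mathrm{F}} \lesssim \bar\Delta^{-1}\|(\bG - \bG^{(i)}) \bar\bU\|_{\mathrm{F}}$ replaces $\bU^{(i)}$ by $\bU$ at negligible cost in $\ell_{2,p}$. This yields the first two displays of the theorem, where the term $\gamma \bar\Delta^{-1}\|\bar\bG\|_{2,p}$ tracks the hollowing bias together with the signal part of $\bG$ acting on $\bar\bU$. The third display follows from the first by writing
\begin{align*}
\bU \bLambda^{1/2} \bO - \bG \bar\bU \bar\bLambda^{-1/2}
= \bigl[\bU \bO - \bG \bar\bU \bar\bLambda^{-1}\bigr] \bO^\top \bLambda^{1/2} \bO
 + \bG \bar\bU \bigl[ \bar\bLambda^{-1} \bO^\top \bLambda^{1/2} \bO - \bar\bLambda^{-1/2} \bigr],
\end{align*}
and bounding the second bracket by $\|\bO^\top \bLambda^{1/2} \bO - \bar\bLambda^{1/2}\|_{\mathrm{op}} / \bar\lambda_{s+r}^{1/2} \lesssim \gamma \kappa / \bar\Delta^{1/2}$, which combines with the first-display bound to produce the $\kappa^{3/2}$ factor.

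The hard part will be the circular dependence in the row-wise control: bounding $\bG_{i,\cdot}(\bU \bO - \bar\bU)$ requires knowing $\bU \bO - \bar\bU$ in some row-wise sense, and the leave-one-out swap $\bU \to \bU^{(i)}$ is only cheap if a weaker $\ell_{2,p}$ bound is already available. Closing this loop demands a fixed-point / bootstrapping argument, iterated finitely many times to refine the $\ell_2$ Davis--Kahan initialization into the sharp $\ell_{2,p}$ output; the two clauses of Assumption \ref{KPCA-assumption-Lp} are precisely the growth conditions on $p$ needed for the bootstrap to contract. The Hilbert-space setting adds a further twist: the Hanson--Wright-type control of $\cH(\bZ \bZ^\top)$ must be split into an operator-norm piece ($\sqrt{n}\|\bSigma\|_{\mathrm{op}}$) and a Hilbert--Schmidt piece ($\|\bSigma\|_{\mathrm{HS}}$), which is the reason both quantities sit inside the maximum in Assumption \ref{KPCA-assumption-Lp}.
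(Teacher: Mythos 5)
Your proposal follows essentially the same route as the paper's proof: the same self-consistency identity $\bU = \bG\bU\bLambda^{-1}$ as the deterministic starting point, the same operator-norm concentration plus Davis--Kahan initialization, the same leave-one-out construction exploiting independence of $\bG^{(i)}$ from $\bz_i$ to get $p$-th moment bounds, the same absorption/contraction argument to close the circular dependence (the paper absorbs the $O_{\PP}(\kappa\gamma)$-weighted copy of $\|\bG(\bU\bH-\bar\bU)\|_{2,p}$ appearing on the right-hand side), and the same matrix-square-root perturbation for the $\bLambda^{1/2}$ display. The only cosmetic difference is that the paper works with $\bH=\bU^{\top}\bar\bU$ throughout and converts to $\sgn(\bH)$ at the end, and its leave-one-out swap is quantified via $\|\bU\bU^{\top}-\bU^{(m)}(\bU^{(m)})^{\top}\|_2\lesssim\kappa\|(\bU\bH)_m\|_2$ rather than via $\|(\bG-\bG^{(i)})\bar\bU\|_{\mathrm{F}}$, which is what makes the self-bounding step sum cleanly into $\|\bU\bH\|_{2,p}$.
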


\subsection{Proof of Theorem \ref{KPCA-theorem-main}}\label{KPCA-sec-proof-theorem-main}

The following lemmas provide useful intermediate results, whose proofs can be found in Sections \ref{KPCA-proof-KPCA-lemma-L2} and \ref{KPCA-sec-proof-lemma-Lp_prelim}.

\begin{lemma}\label{KPCA-lemma-L2}
	Let Assumptions \ref{KPCA-assumption-noise}, \ref{KPCA-assumption-concentration} and \ref{KPCA-assumption-spectral} hold. We have $\| \bG -  \bar\bG \|_2 = O_{\PP} ( \gamma \bar\Delta;~n )$, $\| \bLambda - \bar\bLambda \|_2 = O_{\PP}( \gamma \bar\Delta;~n )$ and $ \| \bU \bU^{\top} - \bar\bU \bar\bU^{\top}  \|_2 = O_{\PP}( \gamma;~n )$.
\end{lemma}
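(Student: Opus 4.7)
\textbf{Proof plan for Lemma \ref{KPCA-lemma-L2}.} The three conclusions are tied together: once I have the matrix concentration bound $\|\bG-\bar\bG\|_2 = O_\PP(\gamma\bar\Delta;n)$, the eigenvalue bound follows from Weyl's inequality and the eigenspace bound follows from the Davis--Kahan $\sin\Theta$ theorem combined with Assumption~\ref{KPCA-assumption-spectral} (which guarantees $\gamma\bar\Delta \ll \bar\Delta$, so the perturbation is smaller than the eigen-gap). So the real work is the first bound.

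My plan is to use the decomposition already displayed in the paper,
\[
\bG-\bar\bG \;=\; (\bar\bX\bZ^\top+\bZ\bar\bX^\top) \;+\; \cH(\bZ\bZ^\top) \;-\; \bar\bD,
\]
where $\bar\bD$ is the diagonal part of $\bar\bX\bar\bX^\top+\bar\bX\bZ^\top+\bZ\bar\bX^\top$, and bound each piece in spectral norm.
\begin{enumerate}
\item The cross term $\|\bar\bX\bZ^\top+\bZ\bar\bX^\top\|_2 \le 2\|\bZ\bar\bX^\top\|_2$. I would view $\bZ\bar\bX^\top = \sum_{i=1}^n \be_i \bz_i^\top \bar\bX^\top$ as a sum of independent sub-Gaussian rows (using Assumption~\ref{KPCA-assumption-noise}) and invoke a matrix Bernstein / sub-Gaussian concentration bound in Hilbert space, giving a bound of order $\sqrt{n}\|\bSigma\|_{\mathrm{op}}^{1/2}\|\bar\bX\|_{\mathrm{op}}$ up to logarithmic factors that get absorbed into the $O_\PP(\cdot;n)$ notation. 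Using $\bar\lambda_1 = \|\bar\bX\|_{\mathrm{op}}^2$, $\kappa = \bar\lambda_1/\bar\Delta$ and Assumption~\ref{KPCA-assumption-concentration}, this reduces to $O_\PP(\gamma\bar\Delta;n)$.
\item For $\|\cH(\bZ\bZ^\top)\|_2$, I would apply a Hanson--Wright type inequality for quadratic forms in sub-Gaussian random vectors in Hilbert space (cited in the paper as \citep{CYa18}). The resulting bound is essentially $\sqrt{n}\|\bSigma\|_{\mathrm{HS}} + n\|\bSigma\|_{\mathrm{op}}$, which by Assumption~\ref{KPCA-assumption-concentration} is again $O_\PP(\gamma\bar\Delta;n)$.
\item For the diagonal correction $\|\bar\bD\|_2 = \max_{i\in[n]}|\bar\bD_{ii}|$, the signal piece $\|\bar\bx_i\|^2 \le \|\bar\bX\|_{2,\infty}^2 \le \|\bar\bG\|_{2,\infty} \le \gamma\bar\Delta$ by Assumption~\ref{KPCA-assumption-spectral}. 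The cross-term $|\langle\bar\bx_i,\bz_i\rangle|$ is sub-Gaussian with parameter $\alpha\|\bSigma^{1/2}\bar\bx_i\|\le \alpha\|\bSigma\|_{\mathrm{op}}^{1/2}(\gamma\bar\Delta)^{1/2}$; a union bound over $i\in[n]$ (the extra $\log n$ gets swallowed by $O_\PP(\cdot;n)$) gives a bound dominated by the previous two terms under Assumption~\ref{KPCA-assumption-concentration}.
\end{enumerate}
Adding the three pieces yields $\|\bG-\bar\bG\|_2 = O_\PP(\gamma\bar\Delta;n)$.

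From here, Weyl's inequality gives $|\lambda_j-\bar\lambda_j|\le\|\bG-\bar\bG\|_2$ for every $j$, whence $\|\bLambda-\bar\bLambda\|_2 = O_\PP(\gamma\bar\Delta;n)$. For the eigenspace bound, note that Assumption~\ref{KPCA-assumption-spectral} gives $\gamma\ll 1/\kappa$, so on the high-probability event $\{\|\bG-\bar\bG\|_2 \le c\gamma\bar\Delta\}$ we have $\|\bG-\bar\bG\|_2 \le \bar\Delta/2$ eventually. The Davis--Kahan $\sin\Theta$ theorem (applied to the spectral projector onto eigenvectors $s{+}1,\dots,s{+}r$) then yields
\[
\|\bU\bU^\top-\bar\bU\bar\bU^\top\|_2 \;\le\; \frac{2\|\bG-\bar\bG\|_2}{\bar\Delta} \;=\; O_\PP(\gamma;n),
\]
which is the third claim.

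\textbf{Main obstacle.} The bookkeeping to translate each classical concentration bound into the $O_\PP(\cdot;n)$ formalism of Definition~\ref{KPCA-defn-0}, so that the exceptional probability decays like $e^{-Cn}$ uniformly in the scaling constant, is the most delicate step. The Hilbert-space Hanson--Wright bound for $\|\cH(\bZ\bZ^\top)\|_2$ is the analytically heaviest input; one must be careful that the hollowing does not wreck the quadratic-form inequality (which it does not, since subtracting a diagonal term only improves the operator norm by at most the maximum diagonal entry, which is easy to control separately). Everything else is a clean application of Weyl and Davis--Kahan.
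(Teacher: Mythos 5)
Your proposal is correct and follows essentially the same route as the paper's proof: the same decomposition $\bG-\bar\bG = (\bar\bX\bZ^\top+\bZ\bar\bX^\top) + \cH(\bZ\bZ^\top) - \bar\bD$, a sub-Gaussian concentration bound on the cross term (the paper uses an $\varepsilon$-net covering argument where you propose matrix Bernstein, but either works), the Hanson--Wright bound from \cite{CYa18} (packaged in the paper as Lemma~\ref{KPCA-lem-concentration-gram}) for $\|\cH(\bZ\bZ^\top)\|_2$, a union bound for $\|\bar\bD\|_2$, and then Weyl plus Davis--Kahan for the remaining two claims. Your use of $\|\bar\bX\|_{2,\infty}^2 \le \|\bar\bG\|_{2,\infty} \le \gamma\bar\Delta$ for the diagonal cross-term is in fact slightly tighter than the paper's $\|\bar\bX\|_{2,\infty}^2 \le \kappa\bar\Delta$, but both lead to the same conclusion under Assumption~\ref{KPCA-assumption-concentration}.
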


\begin{lemma}\label{KPCA-lemma-Lp_prelim}
	Let Assumptions \ref{KPCA-assumption-noise}, \ref{KPCA-assumption-concentration}, \ref{KPCA-assumption-spectral} and \ref{KPCA-assumption-Lp} hold. We have
	\begin{align*}
	& \| \bG \bar\bU -  \bar\bU \bar\bLambda - \cH ( \bZ \bX^{\top} ) \bar\bU \|_{2,p}
	=  (\gamma + \sqrt{r/n} ) O_{\PP}( \bar\Delta \| \bar\bU \|_{2,p};~p), \\
	& \|\cH(\bZ \bX^{\top}) \bar\bU \|_{2,p} = O_{\PP} \Big(
	\sqrt{n p} \| \bar\bX \bSigma^{1/2} \|_{2,p} +
	n^{1/p} \sqrt{r p} \max\{  \| \bSigma \|_{\mathrm{HS}},~\sqrt{n} \| \bSigma \|_{\mathrm{op}}  \}
	;~p \wedge n\Big) , \\
	& \| \bG \bar\bU \bar\bLambda^{-1} \|_{2,p} = O_{\PP} ( \| \bar\bU \|_{2,p};~p \wedge n).
	\end{align*}
\end{lemma}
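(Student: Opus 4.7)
The plan is to establish the three bounds in sequence, each starting from an algebraic decomposition of $\bG - \bar\bG$ and then controlling the pieces via sub-Gaussian moment calculations under Assumptions \ref{KPCA-assumption-noise}, \ref{KPCA-assumption-concentration}, \ref{KPCA-assumption-spectral} and \ref{KPCA-assumption-Lp}. The algebraic starting point is the expansion $\bX\bX^{\top} = \bar\bX\bar\bX^{\top} + \bar\bX\bZ^{\top} + \bZ\bar\bX^{\top} + \bZ\bZ^{\top}$, which upon hollowing yields
\[
\bG - \bar\bG = -\cD(\bar\bX\bar\bX^{\top}) + \cH(\bar\bX\bZ^{\top}) + \cH(\bZ\bar\bX^{\top}) + \cH(\bZ\bZ^{\top}),
\]
where $\cD$ extracts the diagonal. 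Since $\cH(\bZ\bX^{\top}) = \cH(\bZ\bar\bX^{\top}) + \cH(\bZ\bZ^{\top})$, two terms cancel and we obtain the convenient identity
\[
\bG\bar\bU - \bar\bU\bar\bLambda - \cH(\bZ\bX^{\top})\bar\bU = -\cD(\bar\bX\bar\bX^{\top})\bar\bU + \cH(\bar\bX\bZ^{\top})\bar\bU,
\]
which reduces the first bound to estimating each of the two remaining terms in $\ell_{2,p}$.

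For the first bound, the diagonal piece is deterministic: its $i$th row is $\|\bar\bx_i\|^2\bar\bU_i$, so $\|\cD(\bar\bX\bar\bX^{\top})\bar\bU\|_{2,p}\leq \|\bar\bX\|_{2,\infty}^2\|\bar\bU\|_{2,p}\leq \|\bar\bG\|_{2,\infty}\|\bar\bU\|_{2,p}\leq \gamma\bar\Delta\|\bar\bU\|_{2,p}$ by Assumption \ref{KPCA-assumption-spectral}. The random cross term $\cH(\bar\bX\bZ^{\top})\bar\bU$ has $i$th row $\sum_{j\neq i}\langle \bar\bx_i,\bz_j\rangle\bar\bU_j$, which is a sub-Gaussian vector in $\RR^r$ whose marginal in any unit direction $u$ has $\psi_2$-norm at most $\alpha\|\bSigma^{1/2}\bar\bx_i\|\bigl(\sum_{j\neq i}\langle u,\bar\bU_j\rangle^2\bigr)^{1/2}\leq \alpha\|\bSigma^{1/2}\bar\bx_i\|$, using orthonormality of the columns of $\bar\bU$. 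The standard moment bound for sub-Gaussian vectors gives $\EE^{1/p}\|(\cH(\bar\bX\bZ^{\top})\bar\bU)_i\|_2^p\lesssim (\sqrt{r}+\sqrt{p})\|\bSigma^{1/2}\bar\bx_i\|$, so summing over $i$ and applying Fact \ref{KPCA-fact-moment-tail} produces $\|\cH(\bar\bX\bZ^{\top})\bar\bU\|_{2,p} = O_{\PP}\bigl((\sqrt{r}+\sqrt{p})\|\bar\bX\bSigma^{1/2}\|_{2,p};~p\bigr)$. Combining with $\sqrt{np}\|\bar\bX\bSigma^{1/2}\|_{2,p}\lesssim \bar\Delta\|\bar\bU\|_{2,p}$ from Assumption \ref{KPCA-assumption-Lp} and the trivial $p,r\geq 1$ reductions yields the $\sqrt{r/n}\bar\Delta\|\bar\bU\|_{2,p}$ half, completing bound 1.

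For the second bound, split $\cH(\bZ\bX^{\top})\bar\bU = \cH(\bZ\bar\bX^{\top})\bar\bU + \cH(\bZ\bZ^{\top})\bar\bU$. In the linear part, row $i$ equals $\bA^{(i)}\bz_i$ with $\bA^{(i)} = \bar\bU^{(i),\top}\bar\bX$ deterministic (where $\bar\bU^{(i)}$ zeros out the $i$th row of $\bar\bU$), so rows are \emph{independent} sub-Gaussian vectors with proxy covariance $\alpha^2\bA^{(i)}\bSigma\bA^{(i),*}$. The sub-Gaussian moment bound $\EE^{1/p}\|\bA^{(i)}\bz_i\|_2^p\lesssim \|\bA^{(i)}\bSigma^{1/2}\|_{\mathrm{HS}} + \sqrt{p}\|\bA^{(i)}\bSigma^{1/2}\|_{\mathrm{op}}$, combined with $\|\bA^{(i)}\bSigma^{1/2}\|_{\mathrm{HS}}\leq \|\bar\bX\bSigma^{1/2}\|_{2,2}\leq n^{1/2 - 1/p}\|\bar\bX\bSigma^{1/2}\|_{2,p}$ (power-mean) and the analogous bound on the operator norm, gives $\sqrt{np}\|\bar\bX\bSigma^{1/2}\|_{2,p}$ after summing and taking $p$th root. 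For the quadratic part, whose row $i$ equals $\bM^{(i)}\bz_i$ with $\bM^{(i)}h = \sum_{j\neq i}\langle h,\bz_j\rangle\bar\bU_j$ now random, the plan is to condition on $\{\bz_j\}_{j\neq i}$ and apply a Hilbert-space Hanson--Wright inequality in the spirit of \cite{CYa18} to control $\|\bM^{(i)}\bz_i\|_2$ via the Hilbert--Schmidt and operator norms of $\bM^{(i)}\bSigma^{1/2}$; then take expectation over $\{\bz_j\}_{j\neq i}$ using the second-moment identities $\EE\|\bSigma^{1/2}\bz_j\|^2 \lesssim \|\bSigma\|_{\mathrm{HS}}^2$ and the tail $\|\bz_j\|_{\psi_2}\lesssim\sqrt{\|\bSigma\|_{\mathrm{op}}}$. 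Summing the $n$ rows introduces the $n^{1/p}$ factor, and carefully tracking the two scales (Frobenius vs.\ operator) of $\bSigma$ yields the $\max\{\|\bSigma\|_{\mathrm{HS}},\sqrt{n}\|\bSigma\|_{\mathrm{op}}\}$ structure and the $p\wedge n$ concentration rate (the $\wedge n$ comes from the sub-exponential decay inherent to the quadratic form, whereas the linear part alone gives rate $p$).

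The third bound is then a short consequence of the first two. Write
\[
\bG\bar\bU\bar\bLambda^{-1} = \bar\bU + \bigl[\cH(\bZ\bX^{\top})\bar\bU + \bR\bigr]\bar\bLambda^{-1},
\]
where $\bR = \bG\bar\bU - \bar\bU\bar\bLambda - \cH(\bZ\bX^{\top})\bar\bU$ is the residual controlled by bound 1. Because $\bar\bG\succeq 0$ gives $\bar\lambda_{s+r}\geq \bar\Delta$ and hence $\|\bar\bLambda^{-1}\|_{\mathrm{op}}\leq \bar\Delta^{-1}$, the triangle inequality yields $\|\bG\bar\bU\bar\bLambda^{-1}\|_{2,p}\leq \|\bar\bU\|_{2,p} + \bar\Delta^{-1}(\|\cH(\bZ\bX^{\top})\bar\bU\|_{2,p} + \|\bR\|_{2,p})$, and the two random pieces are $O_{\PP}(\|\bar\bU\|_{2,p};~p\wedge n)$ thanks to Assumption \ref{KPCA-assumption-Lp} (which exactly rescales the $\sqrt{np}\|\bar\bX\bSigma^{1/2}\|_{2,p}$ and $n^{1/p}\sqrt{rp}\max\{\cdot\}$ terms by $\bar\Delta^{-1}$ to $\|\bar\bU\|_{2,p}$) and Assumption \ref{KPCA-assumption-spectral} (which gives $\gamma\ll 1$ and $\sqrt{r/n}\leq \gamma/\kappa\lesssim 1$). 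The chief technical obstacle will be the Hanson--Wright treatment of $\cH(\bZ\bZ^{\top})\bar\bU$: the hollowing constraint and the projection by $\bar\bU$ break the clean bilinear form, so bookkeeping the two noise scales $\|\bSigma\|_{\mathrm{HS}}$ and $\|\bSigma\|_{\mathrm{op}}$ carefully enough that they recombine into the stated $\max\{\cdot,\sqrt{n}\cdot\}$ is the most delicate step, as is promoting the concentration rate for that term from $p$ to $p\wedge n$.
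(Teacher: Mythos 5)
Your plan coincides with the paper's proof of this lemma in almost every respect: you use the same identity $\bG\bar\bU-\bar\bU\bar\bLambda-\cH(\bZ\bX^{\top})\bar\bU=[\cH(\bar\bX\bar\bX^{\top})-\bar\bX\bar\bX^{\top}+\cH(\bar\bX\bZ^{\top})]\bar\bU$, the same deterministic diagonal estimate $\|\bar\bX\|_{2,\infty}^2\le\|\bar\bG\|_{2,\infty}\le\gamma\bar\Delta$ from Assumption \ref{KPCA-assumption-spectral}, rowwise sub-Gaussian moment bounds for $\cH(\bar\bX\bZ^{\top})\bar\bU$ and $\cH(\bZ\bar\bX^{\top})\bar\bU$ (the paper routes these through Lemma \ref{KPCA-lemma-Lp-gaussian} applied to the matrix of $\psi_2$-norms, you use vector-valued moment bounds; the resulting scales agree), and the same triangle-inequality argument with $\|\bar\bLambda^{-1}\|_2\le\bar\Delta^{-1}$ and Assumption \ref{KPCA-assumption-Lp} for the third display. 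Those parts are sound.

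The genuine gap is in your treatment of $\cH(\bZ\bZ^{\top})\bar\bU$. Conditioning on $\{\bz_j\}_{j\neq i}$ and applying a Hanson--Wright/sub-Gaussian bound to $\bz_i$ is fine, but the deconditioning step as you state it --- ``take expectation over $\{\bz_j\}_{j\neq i}$ using $\EE\|\bSigma^{1/2}\bz_j\|^2\lesssim\|\bSigma\|_{\mathrm{HS}}^2$ and $\|\bz_j\|_{\psi_2}\lesssim\sqrt{\|\bSigma\|_{\mathrm{op}}}$'' --- cannot yield a tail of size $e^{-\Omega(p\wedge n)}$: second moments of individual noise vectors give only Chebyshev-type control of the random quantities $\|\bM^{(i)}\bSigma^{1/2}\|_{\mathrm{HS}}$ and $\|\bM^{(i)}\bSigma^{1/2}\|_{\mathrm{op}}$, whereas the definition of $O_{\PP}(\cdot;\,p\wedge n)$ requires exponential control. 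What the paper actually does (Lemma \ref{KPCA-lemma-Z-product}) is bound these norms columnwise by $\|\bar\bU\|_2\,\|\bZ\bSigma\bZ^{\top}\|_2^{1/2}$ (the $\sqrt{r}$ comes from summing the $r$ columns) and then invoke the uniform spectral-norm concentration $\|\bZ\bSigma\bZ^{\top}\|_2=O_{\PP}(\max\{\|\bSigma\|_{\mathrm{HS}}^2,\,n\|\bSigma\|_{\mathrm{op}}^2\};\,n)$ of Lemma \ref{KPCA-lem-concentration-gram}, i.e.\ the Hanson--Wright bound of \cite{CYa18} plus a covering argument applied to $\tilde\bz_j=\bSigma^{1/2}\bz_j$. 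It is this probability-$1-e^{-\Omega(n)}$ event, combined with the rate-$p$ conditional moment bound via Fact \ref{KPCA-fact-arithmetic}, that produces both the $\sqrt{n}\|\bSigma\|_{\mathrm{op}}$ scale and the $p\wedge n$ rate --- not, as you suggest, the sub-exponential decay of the quadratic form itself. (Alternatively, a fully unconditional moment computation using $\EE^{1/p}\|\bSigma^{1/2}\bz_i\|^p\lesssim\|\bSigma\|_{\mathrm{HS}}+\sqrt{p}\,\|\bSigma\|_{\mathrm{op}}$ closes the argument when $p\lesssim n$; but as written your plan is missing the uniform operator-norm control that the stated bound and rate rest on.)
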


We now prove Theorem \ref{KPCA-theorem-main}. Let $\bar\gamma = \| \bG - \bar\bG \|_2 / \bar\Delta$. It follows from Lemma 1 in \cite{AFW17} that when $\bar\gamma \leq 1/10$,
\begin{align*}
&\| \bU \bH - \bG\bar\bU \bar\bLambda^{-1} \|_{2,p} \leq 6 \bar\gamma \bar\Delta^{-1} \| \bG \bar\bU \|_{2,p} + 2 \bar\Delta^{-1} \| \bG ( \bU \bH - \bar\bU ) \|_{2,p}.
\end{align*}
By Lemma \ref{KPCA-lemma-L2} and $\gamma \to 0$ in Assumption \ref{KPCA-assumption-spectral},
$\bar\gamma = O_{\PP}(\gamma;~n) = o_{\PP}(1;~n)$. Lemma \ref{KPCA-lemma-Lp_prelim} asserts that $ \| \bG \bar\bU \|_{2,p} \leq \| \bG \bar\bU \bar\bLambda^{-1} \|_{2,p} \| \bar\bLambda \|_2 =  O_{\PP} ( \kappa \bar\Delta \| \bar\bU \|_{2,p};~p \wedge n)$, respectively. Hence
\begin{align}
\| \bU \bH - \bG\bar\bU \bar\bLambda^{-1} \|_{2,p} 
&= O_{\PP}( \kappa \gamma \| \bar\bU \|_{2,p} ;~p \wedge n) +   \| \bG ( \bU \bH - \bar\bU ) \|_{2,p} O_{\PP}( \bar\Delta^{-1} ;~n ),
\label{KPCA-eqn-thm-main}\\
\| \bU \bH \|_{2,p} 
& \leq \| \bG\bar\bU \bar\bLambda^{-1} \|_{2,p} + \| \bU \bH - \bG\bar\bU \bar\bLambda^{-1} \|_{2,p} = O_{\PP}(  \| \bar\bU \|_{2,p} ;~p \wedge n) \notag\\
& + \| \bG ( \bU \bH - \bar\bU ) \|_{2,p} O_{\PP}( \bar\Delta^{-1} ;~n ) .
\label{KPCA-eqn-thm-main-0}
\end{align}

	We construct leave-one-out auxiliaries $\{ \bG^{(m)} \}_{m=1}^n \subseteq \RR^{n \times n}$ where $\bG^{(m)}$ is obtained by zeroing out the $m$-th row and column of $\bG$. Mathematically, we define a new data matrix
	\[
	\bX^{(m)} = (\bx_1,\cdots,\bx_{m-1} , \mathbf{0} , \bx_{m+1} , \cdots , \bx_n )^{\top} = (\bI_n - \be_m \be_m^{\top}) \bX
	\]
	by deleting the $m$-th sample and
	\[
	\bG^{(m)} = \cH ( \bX^{(m)} \bX^{(m)\top} ) = \cH [ (\bI_n - \be_m \be_m^{\top}) \bX \bX^{\top}
	(\bI_n - \be_m \be_m^{\top}) ].
	\]
	Let $\{ \bu^{(m)}_j \}_{j=1}^n$ be the eigenvectors of $\bG^{(m)}$, $\bU^{(m)} = (\bu^{(m)}_{s+1} , \cdots, \bu^{(m)}_{s+r} ) \in \RR^{n\times r}$ and $\bH^{(m)} = \bU^{(m)\top} \bar\bU $. The construction is also used by \cite{AFW17} in entrywise eigenvector analysis.

By Minkowski's inequality,
\begin{align}
& \| \bG ( \bU \bH - \bar\bU ) \|_{2,p} 
 \leq \bigg( \sum_{m=1}^{n} [ \| \bG_m (\bU \bH - \bU^{(m)} \bH^{(m)} ) \|_2 + \|  \bG_m (\bU^{(m)} \bH^{(m)} - \bar\bU) \|_2 ]^p \bigg)^{1/p} \notag\\
& \leq \bigg( \sum_{m=1}^{n} \| \bG_m (\bU \bH - \bU^{(m)} \bH^{(m)} ) \|_2^p \bigg)^{1/p}  +  \bigg( \sum_{m=1}^{n} \| \bG_m (\bU^{(m)} \bH^{(m)} - \bar\bU) \|_2^p \bigg)^{1/p}.
\label{KPCA-eqn-thm-main-1}
\end{align}

The first term on the right hand side of (\ref{KPCA-eqn-thm-main-1}) corresponds to leave-one-out perturbations. When $\max\{ \| \bar\bG \|_{2,\infty}, \| \bG - \bar\bG \|_2 \} \kappa \leq \bar\Delta /32$, Lemma 3 in \cite{AFW17} forces
\begin{align*}
& \| \bU \bU^{\top} - \bU^{(m)} (\bU^{(m)})^{\top} \|_2 \leq 3 \kappa \| ( \bU \bH )_{m} \|_2,\qquad \forall m \in [n], \notag \\
& \max_{m\in[n]} \| \bU^{(m)} \bH^{(m)} - \bar\bU \|_2 \leq 6 \max\{ \| \bar\bG \|_{2,\infty}, \| \bG - \bar\bG \|_2 \}  / \bar\Delta .
\end{align*}
The fact $\| \bar\bG \|_{2,\infty} \leq \gamma \bar\Delta$, the result $\| \bG - \bar\bG \|_2 = O_{\PP} (\gamma \bar\Delta;~n)$ in Lemma \ref{KPCA-lemma-L2}, and Assumption \ref{KPCA-assumption-spectral} imply that
\begin{align}
&\| \bG \|_{2,\infty} \leq \| \bar\bG \|_{2,\infty} + \| \bG - \bar\bG \|_2 = O_{\PP} ( \gamma\bar\Delta;~n),\notag \\
&\bigg( \sum_{m=1}^{n} \| \bU \bU^{\top} - \bU^{(m)} (\bU^{(m)})^{\top} \|_2^p \bigg)^{1/p}
= O_{\PP} ( \kappa \| \bU \bH \|_{2,p};~n ), \notag\\
& \max_{m\in[n]} \| \bU^{(m)} \bH^{(m)} - \bar\bU \|_2 = O_{\PP} ( \gamma;~n).
\label{KPCA-eqn-thm-main-1.5}
\end{align}
The definitions $\bH = \bU^{\top} \bar\bU $ and $\bH^{(m)} = ( \bU^{(m)})^{\top} \bar\bU $ yield
\begin{align*}
\| \bU \bH - \bU^{(m)} \bH^{(m)} \|_2 = \| ( \bU \bU^{\top} - \bU^{(m)} (\bU^{(m)})^{\top} ) \bar\bU \|_2 \leq \| \bU \bU^{\top} - \bU^{(m)} (\bU^{(m)})^{\top} \|_2.
\end{align*}
Based on these estimates,
\begin{align}
& \bigg( \sum_{m=1}^{n} \| \bG_m (\bU \bH - \bU^{(m)} \bH^{(m)} ) \|_2^p \bigg)^{1/p} 
\leq \| \bG \|_{2,\infty} \bigg( \sum_{m=1}^{n} \| \bU \bH - \bU^{(m)} \bH^{(m)}\|_2^p \bigg)^{1/p} \notag\\
& \leq \| \bG \|_{2,\infty} \bigg( \sum_{m=1}^{n} \| \bU \bU^{\top} - \bU^{(m)} (\bU^{(m)})^{\top} \|_2^p \bigg)^{1/p}
= O_{\PP} ( \kappa \gamma \bar\Delta \| \bU \bH \|_{2,p};~n )  \notag\\
& = O_{\PP} ( \kappa \gamma \bar\Delta \| \bar\bU \|_{2,p};~p\wedge n ) +  O_{\PP} ( \kappa \gamma  \| \bG ( \bU \bH - \bar\bU) \|_{2,p} ;~n).
\label{KPCA-eqn-thm-main-2}
\end{align}
The last equality follows from (\ref{KPCA-eqn-thm-main-0}). We use (\ref{KPCA-eqn-thm-main-1}), (\ref{KPCA-eqn-thm-main-2}) and $\kappa \gamma = o(1)$ from Assumption \ref{KPCA-assumption-spectral} to derive
\begin{align*}
\| \bG ( \bU \bH - \bar\bU) \|_{2,p} \leq  \bigg( \sum_{m=1}^{n} \| \bG_m (\bU^{(m)} \bH^{(m)} - \bar\bU) \|_2^p \bigg)^{1/p} +  O_{\PP} ( \kappa \gamma \bar\Delta \| \bar\bU \|_{2,p};~p\wedge n ).
\end{align*}
By plugging this into (\ref{KPCA-eqn-thm-main}) and (\ref{KPCA-eqn-thm-main-0}) and using $\kappa \gamma = o(1)$, we obtain that
\begin{align}
&\| \bU \bH - \bG\bar\bU \bar\bLambda^{-1} \|_{2,p} =  O_{\PP}( \kappa \gamma \| \bar\bU \|_{2,p} ;~p \wedge n) +  \bigg( \sum_{m=1}^{n} \| \bG_m (\bU^{(m)} \bH^{(m)} - \bar\bU) \|_2^p \bigg)^{1/p}
O_{\PP} (\bar\Delta^{-1};~n), \label{KPCA-eqn-thm-main-3} \\
&\| \bU\bH \|_{2,p} = O_{\PP}(\| \bar\bU \|_{2,p} ;~p \wedge n) + \bigg( \sum_{m=1}^{n} \| \bG_m (\bU^{(m)} \bH^{(m)} - \bar\bU) \|_2^p \bigg)^{1/p}
O_{\PP} (\bar\Delta^{-1};~n). \label{KPCA-eqn-thm-main-3.5}
\end{align}

We now control the second term in \eqref{KPCA-eqn-thm-main-1}.
From the decompositions
\begin{align*}
& \bG = \cH [ (\bar\bX + \bZ)  (\bar\bX + \bZ)^{\top} ]  = \cH(\bar\bX \bar\bX^{\top} + \bar\bX\bZ^{\top} + \bZ \bar\bX^{\top} ) +  \cH( \bZ \bZ^{\top} ),
\end{align*}
we have
\begin{align}
&\bigg( \sum_{m=1}^{n} \| \bG_m (\bU^{(m)} \bH^{(m)} - \bar\bU) \|_2^p \bigg)^{1/p}
\notag\\ &
 \leq~ 
 \| \cH(\bar\bX \bar\bX^{\top} + \bar\bX\bZ^{\top} + \bZ \bar\bX^{\top} ) \|_{2,p} \max_{m\in[n]} \| \bU^{(m)} \bH^{(m)} - \bar\bU \|_2 \notag\\
& + \bigg( \sum_{m=1}^{n} \| [  \cH(\bZ \bZ^{\top}) ]_m (\bU^{(m)} \bH^{(m)} - \bar\bU) \|_2^p \bigg)^{1/p}.
\label{KPCA-eqn-thm-main-4}
\end{align}

We now work on the first term on the right hand side of (\ref{KPCA-eqn-thm-main-4}). Define $\bM \in \R^{n\times n}$ through $M_{ij}  = \| ( \bar\bX \bZ^{\top})_{ij} \|_{\psi_2}$. Then $\EE M_{ij} = 0$ and $M_{ij} = \| \langle \bar\bx_i, \bz_j \rangle \|_{\psi_2} \lesssim \| \bSigma^{1/2} \bar\bx_i \|$, where $\lesssim$ only hides a universal constant. 
\begin{align*}
 \| \bM \|_{2,p}
& = \bigg[ \sum_{i=1}^{n} \bigg(
\sum_{j=1}^{n} |M_{ij}|^2
\bigg)^{p/2}
\bigg]^{1/p}
\lesssim  \bigg[ \sum_{i=1}^{n} \bigg(
\sum_{j=1}^{n}  \| \bSigma^{1/2} \bar\bx_i \|^2
\bigg)^{p/2}
\bigg]^{1/p}
= \sqrt{n} \| \bar\bX \bSigma^{1/2} \|_{2,p},\\
  \| \bM^{\top} \|_{2,p}
&= \bigg[ \sum_{j=1}^{n} \bigg(
\sum_{i=1}^{n} |M_{ij}|^2
\bigg)^{p/2}
\bigg]^{1/p} \lesssim  \bigg[ \sum_{j=1}^{n} \bigg(
\sum_{i=1}^{n}  \| \bSigma^{1/2} \bar\bx_i \|^2
\bigg)^{p/2}
\bigg]^{1/p}
\notag\\ & 
= n^{1/p} \| \bar\bX \bSigma^{1/2} \|_{2,2}
\leq \sqrt{n} \| \bar\bX \bSigma^{1/2} \|_{2,p}.
\end{align*}
By Lemma \ref{KPCA-lemma-Lp-gaussian} and $p \geq 2$,
\begin{align*}
&\| \bar\bX \bZ^{\top} \|_{2,p} = O_{\PP} (
\sqrt{p} \| \bM \|_{2,p};~p
) = O_{\PP} (
\sqrt{np} \| \bar\bX \bSigma^{1/2} \|_{2,p};~p
), \\
&\|\bZ  \bar\bX^{\top} \|_{2,p} = O_{\PP} (
\sqrt{p} \| \bM^{\top} \|_{2,p};~p
) = O_{\PP} (
\sqrt{np} \| \bar\bX \bSigma^{1/2} \|_{2,p};~p
).
\end{align*}
These estimates and $\sqrt{n p} \| \bar\bX \bSigma^{1/2} \|_{2,p} \lesssim \bar\Delta \| \bar\bU\|_{2,p}$ in Assumption \ref{KPCA-assumption-Lp} yield
\begin{align*}
&\| \cH ( \bar\bX \bZ^{\top} + \bZ \bar\bX^{\top} ) \|_{2,p} \leq
\|  \bar\bX \bZ^{\top} + \bZ \bar\bX^{\top} \|_{2,p} = O_{\PP} (  \bar\Delta \| \bar\bU \|_{2,p} ;~p).
\end{align*}
This and (\ref{KPCA-eqn-thm-main-1.5}) lead to
\begin{align}
& \| \cH(\bar\bX \bar\bX^{\top} + \bar\bX\bZ^{\top} + \bZ \bar\bX^{\top} ) \|_{2,p} \max_{m\in[n]} \| \bU^{(m)} \bH^{(m)} - \bar\bU \|_2 \notag\\
& = O_{\PP} ( \gamma ( \| \bar\bX \bar\bX^{\top} \|_{2,p} + \bar\Delta \| \bar\bU \|_{2,p} ) ;~p \wedge n).
\label{KPCA-eqn-thm-main-5}
\end{align}
We use (\ref{KPCA-eqn-thm-main-3}), (\ref{KPCA-eqn-thm-main-4}) and (\ref{KPCA-eqn-thm-main-5}) to get
\begin{align}
\| \bU \bH - \bG\bar\bU \bar\bLambda^{-1} \|_{2,p} =~& O_{\PP}( \kappa \gamma \| \bar\bU \|_{2,p} ;~p \wedge n) + O_{\PP}(  \gamma \bar\Delta^{-1} \| \bar\bX \bar\bX^{\top} \|_{2,p} ;~p \wedge n)
\notag\\ &
+ \bigg( \sum_{m=1}^{n} \| [  \cH(\bZ \bZ^{\top}) ]_m (\bU^{(m)} \bH^{(m)} - \bar\bU) \|_2^p \bigg)^{1/p} O_{\PP} (\bar\Delta^{-1};~n).
\label{KPCA-eqn-thm-main-6}
\end{align}

By construction, $\bU^{(m)} \bH^{(m)} - \bar\bU \in \R^{n\times r}$ is independent of $\bz_m$. We invoke Lemma \ref{KPCA-lemma-Z-product} to get
\begin{align}
& \bigg( \sum_{m=1}^{n} \| [  \cH(\bZ \bZ^{\top}) ]_m (\bU^{(m)} \bH^{(m)} - \bar\bU) \|_2^p \bigg)^{1/p} =
\bigg(
\sum_{m=1}^{n} \bigg\| \sum_{j\neq m}  \langle \bz_m,\bz_j \rangle (\bU^{(m)} \bH^{(m)} - \bar\bU)_j \bigg\|_2^p 
\bigg)^{1/p} \notag\\
& = n^{1/p} \max_{m\in[n]}  \| \bU^{(m)} \bH^{(m)} - \bar\bU  \|_2   O_{\PP} \left(
\sqrt{r p} \max\{ \| \bSigma \|_{\mathrm{HS}},~\sqrt{n} \| \bSigma \|_{\mathrm{op}} \}
;~p \wedge n \right) \notag\\
&
= O_{\PP} (  \gamma \bar\Delta \| \bar\bU \|_{2,p} ;~p\wedge n ),
\label{KPCA-eqn-thm-main-8}
\end{align}
where we also used (\ref{KPCA-eqn-thm-main-1.5}) and Assumption \ref{KPCA-assumption-Lp}.

We use (\ref{KPCA-eqn-thm-main-6})
and (\ref{KPCA-eqn-thm-main-8}) to derive
\begin{align}
&\| \bU \bH - \bG\bar\bU \bar\bLambda^{-1} \|_{2,p} =  O_{\PP}( \kappa \gamma \| \bar\bU \|_{2,p} ;~p \wedge n) + O_{\PP}(  \gamma \bar\Delta^{-1} \| \bar\bG \|_{2,p} ;~p \wedge n).
\label{KPCA-eqn-thm-main-9}
\end{align}
Consequently, Lemma \ref{KPCA-lemma-Lp_prelim} yields
\begin{align}
\| \bU \bH \|_{2,p} & \leq \| \bG\bar\bU \bar\bLambda^{-1} \|_{2,p} + \| \bU \bH - \bG\bar\bU \bar\bLambda^{-1} \|_{2,p} \notag\\
& = O_{\PP}(  \| \bar\bU \|_{2,p} ;~p \wedge n) + O_{\PP}(  \gamma \bar\Delta^{-1} \| \bar\bG \|_{2,p} ;~p \wedge n) .
\label{KPCA-eqn-thm-main-10}
\end{align}

Lemma 2 in \cite{AFW17} and the result $\| \bG - \bar\bG \|_2 = O_{\PP} (\gamma \bar\Delta;~n)$ in Lemma \ref{KPCA-lemma-L2} imply that $\| \bH - \sgn(\bH) \|_2 = O_{\PP} ( \gamma^2;~n)$. As $\sgn(\bH)$ is orthonormal, we have $\| \bH^{-1} \|_2 = O_{\PP} (1,~n)$ and
\begin{align}
\| \bU \sgn(\bH) - \bU \bH \|_{2,p} 
& \leq \| \bU \bH  \bH^{-1} ( \sgn(\bH) - \bH ) \|_{2,p} \notag\\
& \leq \| \bU \bH \|_{2,p} \| \bH^{-1}\|_2 \| \sgn(\bH) - \bH \|_2 = \| \bU \bH \|_{2,p} O_{\PP} ( \gamma^2;~n ).
\label{KPCA-eqn-thm-main-11}
\end{align}
The tail bounds for $\| \bU \sgn(\bH) \|_{2,p}$ and $\| \bU \sgn(\bH) - \bG \bar\bU \bar\bLambda^{-1} \|_{2,p}$ in Theorem \ref{KPCA-theorem-main} follow from (\ref{KPCA-eqn-thm-main-9}), (\ref{KPCA-eqn-thm-main-10}) and (\ref{KPCA-eqn-thm-main-11}).

Finally we use the results above to control $\| \bU \bLambda^{1/2} \sgn(\bH) - \bG \bar\bU \bar\bLambda^{-1/2} \|_{2,p}$. By Lemma \ref{KPCA-lemma-L2},
$\| \bLambda - \bar\bLambda \|_2 \leq \| \bG - \bar\bG \|_2 = O_{\PP} (\gamma \bar\Delta;~n) = o_{\PP} (\bar\Delta;~n)$.
Hence $n^{-1} \log \PP ( \| \bLambda - \bar\bLambda \|_2 \geq \bar\Delta / 2 ) \to -\infty$.
When $\| \bG - \bar\bG \|_2 < \bar\Delta / 2$, we have $\bLambda \succ (\bar\Delta / 2) \bI$, and $\bLambda^{1/2}$ is well-defined. It remains to show that
\begin{align}
& \| \bU \bLambda^{1/2} \bar\bH - \bG \bar\bU \bar\bLambda^{-1/2} \|_{2,p} \mathbf{1}_{ \{ \| \bG - \bar\bG \|_2 < \bar\Delta / 2 \} } \notag\\
& = O_{\PP} ( \kappa^{3/2} \gamma \bar\Delta^{1/2} \| \bar\bU \|_{2,p} + \kappa^{1/2} \gamma \bar\Delta^{-1/2} \| \bar\bG \|_{2,p} ;~p\wedge n).
\label{KPCA-eqn-thm-main-12}
\end{align}

Define $\bar\bH = \sgn(\bH)$. When $\| \bG - \bar\bG \|_2 < \bar\Delta / 2$ happens, we use triangle's inequality to derive
\begin{align*}
\| \bU \bLambda^{1/2} \bar\bH - \bG \bar\bU \bar\bLambda^{-1/2} \|_{2,p}
& \leq \| \bU \bar\bH ( \bar\bH^{\top} \bLambda^{1/2} \bar\bH - \bar\bLambda^{1/2} ) \|_{2,p}
+ \| ( \bU \bar\bH - \bG \bar\bU \bar\bLambda^{-1} ) \bar\bLambda^{1/2} \|_{2,p} \notag \\
& \leq \| \bU \bar\bH \|_{2,p} \|  \bar\bH^{\top} \bLambda^{1/2} \bar\bH - \bar\bLambda^{1/2} \|_2
+ \| \bU \bar\bH - \bG \bar\bU \bar\bLambda^{-1} \|_{2,p} \| \bar\bLambda\|_2^{1/2}.
\end{align*}
It is easily seen from $\| \bar\bLambda \|_2 \leq \kappa \bar\Delta$ that
\begin{align*}
\| \bU \bar\bH - \bG \bar\bU \bar\bLambda^{-1} \|_{2,p} \| \bar\bLambda\|_2^{1/2}
= O_{\PP} ( \kappa^{3/2} \gamma \bar\Delta^{1/2} \| \bar\bU \|_{2,p} + \kappa^{1/2} \gamma \bar\Delta^{-1/2} \| \bar\bG \|_{2,p} ;~p\wedge n).
\end{align*}
Hence
\begin{align}
& \| \bU \bLambda^{1/2} \bar\bH - \bG \bar\bU \bar\bLambda^{-1/2} \|_{2,p}  \mathbf{1}_{ \{ \| \bG - \bar\bG \|_2 < \bar\Delta / 2 \} } 
= O_{\PP} ( \kappa^{3/2} \gamma \bar\Delta^{1/2} \| \bar\bU \|_{2,p} + \kappa^{1/2} \gamma \bar\Delta^{-1/2} \| \bar\bG \|_{2,p} ;~p\wedge n) \notag\\
&+ O_{\PP} (  \| \bar\bU \|_{2,p} +  \gamma \bar\Delta^{-1} \| \bar\bG \|_{2,p} ;~p\wedge n) \cdot \|  \bar\bH^{\top} \bLambda^{1/2} \bar\bH - \bar\bLambda^{1/2} \|_2  \mathbf{1}_{ \{ \| \bG - \bar\bG \|_2 < \bar\Delta / 2 \} } .
\label{KPCA-eqn-thm-main-13}
\end{align}

Note that $ \bar\bH^{\top} \bLambda^{1/2} \bar\bH = ( \bar\bH^{\top} \bLambda \bar\bH)^{1/2}$. In view of the perturbation bound for matrix square roots \citep[Lemma 2.1]{Sch92},
\begin{align*}
\|  \bar\bH^{\top} \bLambda^{1/2} \bar\bH - \bar\bLambda^{1/2} \|_2
& \leq \frac{
	\| \bar\bH^{\top} \bLambda \bar\bH - \bar\bLambda \|_2
}{ \lambda_{\min} ( \bar\bH^{\top} \bLambda^{1/2} \bar\bH ) + \lambda_{\min} (\bar\bLambda^{1/2}) }
\leq \frac{
	\| \bLambda \bar\bH - \bar\bH \bar\bLambda \|_2
}{ 2 \bar\Delta^{1/2} } \notag \\
& \lesssim (
\| \bLambda \bH - \bH \bar\bLambda \|_2
+ \| \bLambda (\bar\bH - \bH) \|_2 +  \| (\bar\bH - \bH) \bar\bLambda \|_2
) /  \bar\Delta^{1/2} \notag\\
&\lesssim \| \bLambda \bH - \bH \bar\bLambda \|_2 /  \bar\Delta^{1/2} 
+   O_{\PP} ( \kappa \gamma^2 \bar\Delta^{1/2} ;~n )
\end{align*}
as long as $\| \bG - \bar\bG \|_2 < \bar\Delta / 2$. Here we used $\| \bH - \bar\bH \|_2 = O_{\PP} (\gamma^2;~n)$ according to Lemma \ref{KPCA-lemma-L2} as well as Lemma 2 in \cite{AFW17}.

From $\bU^{\top} \bG = \bLambda \bU^{\top}$ and $\bar\bG \bar\bU = \bar\bU\bar\bLambda$ we obtain that
\begin{align*}
\bLambda \bH - \bH \bar\bLambda 
=  \bLambda \bU^{\top} \bar\bU - \bU^{\top} \bar\bU \bar\bLambda
= \bU^{\top} \bG \bar\bU - \bU^{\top} \bar\bG \bar\bU = \bU^{\top} (\bG - \bar\bG ) \bar\bU 
\end{align*}
and $\| \bLambda \bH - \bH \bar\bLambda \|_2 \leq \| \bG - \bar\bG \|_2 = O_{\PP} ( \gamma \bar\Delta;~n)$.
As a result,
\begin{align*}
\|  \bar\bH^{\top} \bLambda^{1/2} \bar\bH - \bar\bLambda^{1/2} \|_2 \mathbf{1}_{ \{ \| \bG - \bar\bG \|_2 < \bar\Delta / 2 \} } = O_{\PP} ( \gamma  \bar\Delta^{1/2};~n ),
\end{align*}
where we also used $\kappa \gamma = o(1)$ in Assumption \ref{KPCA-assumption-spectral}. Plugging this into (\ref{KPCA-eqn-thm-main-13}), we get the desired bound (\ref{KPCA-eqn-thm-main-12}) and thus complete the proof of Theorem \ref{KPCA-theorem-main}.

\subsection{Proof of Lemma \ref{KPCA-lemma-L2}}\label{KPCA-proof-KPCA-lemma-L2}
Note that
\begin{align}
\bG 
& = \cH [ (\bar\bX + \bZ)  (\bar\bX + \bZ)^{\top} ] = \cH ( \bar\bX \bar\bX^{\top} ) + \cH( \bar\bX\bZ^{\top} + \bZ \bar\bX^{\top} ) + \cH( \bZ\bZ^{\top} ) \notag \\
& = \bar\bX \bar\bX^{\top} + ( \bar\bX\bZ^{\top} + \bZ \bar\bX^{\top} ) + \cH( \bZ\bZ^{\top} )  - \bar\bD,
\label{KPCA-ineq-L2-decomposition}
\end{align}
where $\bar\bD$ is the diagonal part of $\bar\bX \bar\bX^{\top} + \bar\bX\bZ^{\top} + \bZ \bar\bX^{\top}$, with $\bar \bD_{ii} = \| \bar\bx_i \|^2 + 2 \langle \bar\bx_i , \bz_i \rangle$.
From $\| \langle \bar\bx_i , \bz_i \rangle \|_{\psi_2} \lesssim  \| \bSigma^{1/2} \bar\bx_i \|$ we get $\{ | \langle \bar\bx_i , \bz_i \rangle | \}_{i=1}^n =  O_{\PP} ( \{ \| \bSigma^{1/2} \bar\bx_i \| \sqrt{n} \}_{i=1}^n ;~ n )$. By Fact \ref{KPCA-fact-union},
\[
\max_{i\in[n]} | \langle \bar\bx_i , \bz_i \rangle | = O_{\PP} \Big( \max_{i\in[n]} \| \bSigma^{1/2} \bar\bx_i \| \sqrt{n};~ n \Big)
\]
and
\begin{align}
\| \bar\bD \|_2 &= \max_{i\in[n]} |\bar\bD_{ii}|
=  \max_{i\in[n]} \| \bar\bx_i\|^2 + O_{\PP}\left(
\max_{i\in[n]}  \| \bSigma^{1/2} \bar\bx_i \| \sqrt{n};~n\right) \notag\\
& = \| \bar\bX\|_{2,\infty}^2 + O_{\PP}\left( 
\| \bar\bX \|_{2,\infty} ( n \| \bSigma \|_{\mathrm{op}} )^{1/2};~n\right) \notag\\
& \leq \| \bar\bX \bar\bX^{\top} \|_{2,\infty} + O_{\PP}\left( 
\| \bar\bX \bar\bX^{\top} \|_{2}^{1/2} ( n \| \bSigma \|_{\mathrm{op}} )^{1/2};~n\right)
 \notag\\
& 
=  \| \bar\bG \|_{2,\infty} + O_{\PP}\left( 
( n \kappa \bar\Delta \| \bSigma \|_{\mathrm{op}} )^{1/2};~n\right)
.
\label{KPCA-ineq-L2-D-1}
\end{align}

Note that $\| \bZ \bar\bX^{\top} \|_2 = \sup_{ \bu ,\bv \in  \SSS^{n-1} } \bu^{\top} \bZ \bar\bX^{\top}  \bv$.
Since $\{ \bz_i^{\top} \bar\bX^{\top}  \bv \}_{i=1}^n$ are zero-mean, independent and
\begin{align*}
\| \bz_i^{\top} \bar\bX^{\top}  \bv \|_{\psi_2} \lesssim \| \bSigma^{1/2} \bar\bX^{\top}  \bv \| \leq \|  \bar\bX \bSigma^{1/2} \|_{\mathrm{op}}
\leq ( \| \bar\bG \|_2  \| \bSigma \|_{\mathrm{op}} )^{1/2} =  ( \kappa \bar\Delta  \| \bSigma \|_{\mathrm{op}} )^{1/2},
\end{align*}
we have
\[
\|  \bu^{\top} \bZ \bar\bX^{\top} \bv \|_{\psi_2} = \bigg\| \sum_{i=1}^{n} u_i \bz_i^{\top} \bar\bX^{\top} \bv \bigg\|_{\psi_2} \lesssim \bigg( \sum_{i=1}^{n} u_i^2 \| \bz_i^{\top} \bar\bX^{\top} \bv \|_{\psi_2}^2 \bigg)^{1/2} \lesssim  ( \kappa \bar\Delta  \| \bSigma \|_{\mathrm{op}} )^{1/2}.
\]
A standard covering argument \citep[Section 5.2.2]{Ver10} yields
\[
\| \bZ \bar\bX^{\top} \|_2 = O_{\PP} ( ( n \kappa \bar\Delta  \| \bSigma \|_{\mathrm{op}} )^{1/2} ;~n ).
\]
The same tail bound also holds for $\|  \bar\bX \bZ^{\top} \|_2$.

From these estimates, (\ref{KPCA-ineq-L2-decomposition}), (\ref{KPCA-ineq-L2-D-1}) and Lemma \ref{KPCA-lem-concentration-gram} we obtain that
\begin{align*}
\| \bG -  \bar\bX \bar\bX^{\top} \|_2 = O_{\PP} \left(
\| \bar\bG \|_{2,\infty}^2 + ( n \kappa \bar\Delta  \| \bSigma \|_{\mathrm{op}} )^{1/2} + \max\{
\sqrt{n} \| \bSigma \|_{\mathrm{HS}},~ n \| \bSigma \|_{\mathrm{op}}
\};~ n
\right).
\end{align*}
By Assumptions \ref{KPCA-assumption-spectral} and \ref{KPCA-assumption-concentration}, we have $ n \kappa  \| \bSigma \|_{\mathrm{op}} \leq \bar\Delta$. Hence $n \| \bSigma \|_{\mathrm{op}} \leq ( n \kappa \bar\Delta  \| \bSigma \|_{\mathrm{op}} )^{1/2}$ and $\| \bG -  \bar\bX \bar\bX^{\top} \|_2 = O_{\PP} ( \gamma \bar\Delta;~n )$.

Finally, Weyl's inequality \citep{SSu90} and Davis-Kahan theorem \citep{DKa70} assert that
$\| \bLambda - \bar\bLambda \|_2 \leq \| \bG - \bar\bG \|_2 = O_{\PP}( \gamma \bar\Delta;~n )$ and $\| \bU \bU^{\top} - \bar\bU \bar\bU^{\top} \|_2 \lesssim \| \bG - \bar\bG \|_2 / \bar\Delta = O_{\PP}( \gamma;~n )$.

\subsection{Proof of Lemma \ref{KPCA-lemma-Lp_prelim}}\label{KPCA-sec-proof-lemma-Lp_prelim}
Observe that
\begin{align*}
\bG 
& = \cH ( \bX \bX^{\top} ) = \cH [ (\bar\bX + \bZ)  \bX^{\top} ]  = \bar\bX \bar\bX^{\top}  + [ \cH(\bar\bX \bar\bX^{\top}) - \bar\bX \bar\bX^{\top} ] + \cH( \bar\bX\bZ^{\top}) + \cH( \bZ \bX^{\top} ) .
\end{align*}
From $ \bar\bX \bar\bX^{\top} \bar\bU = \bar\bG \bar\bLambda = \bar\bU \bar\bLambda $ we get
\begin{align*}
\| \bG \bar\bU - \bar\bU  \bar\bLambda - \cH ( \bZ \bX^{\top} ) \bar\bU \|_{2,p} 
& = \| \bG \bar\bU -\bar\bX \bar\bX^{\top} \bar\bU  - \cH ( \bZ \bX^{\top} ) \bar\bU \|_{2,p}  \\
& = \| [ \cH ( \bar\bX \bar\bX^{\top} ) - \bar\bX \bar\bX^{\top} + \cH (\bar\bX \bZ^{\top}) ] \bar\bU \|_{2,p} 
\\&
\leq\left( \sum_{m=1}^{n} ( \| \bar\bx_m \|^2 \|\bar \bU_m\|_2 )^p \right)^{1/p}
+ \|\cH(\bar\bX \bZ^{\top}) \bar\bU \|_{2,p}.
\end{align*}
On the one hand,  we have
\begin{align*}
\sum_{m=1}^{n} ( \| \bar\bx_m \|^2 \|\bar \bU_m\|_2 )^p \leq \max_{m\in[n]}\| \bar\bx_m \|^{2p} \sum_{m=1}^{n} \|\bar \bU_m\|_2^p 
= \| \bar\bX \|_{2,\infty}^{2p} \| \bar\bU \|_{2,p}^p
\leq ( \gamma \bar\Delta \| \bar\bU \|_{2,p} )^p,
\end{align*}
where we used $\| \bar\bX \|_{2,\infty}^2 \leq \| \bar\bX \bar\bX^{\top} \|_{2,\infty} \leq \gamma \bar\Delta $ in Assumption \ref{KPCA-assumption-spectral}. On the other hand, $\{ \bz_j \}_{j\neq m}$ are independent, $\| \langle \bar\bx_m, \bz_j  \rangle \|_{\psi_2} \lesssim \| \bSigma^{1/2} \bar\bx_m \|$, $\bar\bU = (\bar\bu_1,\cdots,\bar\bu_r)$ and $\| \bar\bu_j \|_2 = 1$ for $j \in [r]$.
Then
\begin{align*}
& \| [ \cH( \bar\bX \bZ^{\top} )  ]_{m} \bar\bu_j \|_{\psi_2}
= \| ( \bar\bX \bZ^{\top} )_{m} (\bI - \be_m \be_m^{\top}) \bar\bu_j \|_{\psi_2} \\
& = \bigg\| \sum_{k \neq m} \bar u_{jk} \langle \bar\bx_m, \bz_j \rangle \bigg\|_{\psi_2} \lesssim \| \bSigma^{1/2} \bar\bx_m \|,\qquad j\in [r],~  m \in [n].
\end{align*}
Lemma \ref{KPCA-lemma-Lp-gaussian} forces $\|\cH(\bar\bX \bZ^{\top}) \bar\bU \|_{2,p} = O_{\PP} ( \sqrt{p} \| \bM \|_{2,p};~p )$, where $M_{ij} = \| \bSigma^{1/2} \bar\bx_i \|$. Hence
\begin{align*}
& \| \bM \|_{2,p} = \bigg[ \sum_{i=1}^{n}  \bigg( \sum_{j=1}^{r} \| \bSigma^{1/2} \bar\bx_i \|^2 \bigg)^{p/2} \bigg]^{1/p} = \sqrt{r} \| \bar\bX \bSigma^{1/2} \|_{2,p},\\
&\|\cH(\bar\bX \bZ^{\top}) \bar\bU \|_{2,p} = O_{\PP} ( \sqrt{rp} \| \bar\bX \bSigma^{1/2} \|_{2,p};~p )
= O_{\PP} ( \sqrt{r/n} \bar\Delta \| \bar\bU\|_{2,p} ;~p ),
\end{align*}
where the last equality follows from Assumption \ref{KPCA-assumption-Lp}.
By combining the two parts we get 
\begin{align}
\| \bG \bar\bU - \bar\bU \bar\bLambda  - \cH ( \bZ \bX^{\top} ) \bar\bU \|_{2,p}
& = (\gamma + \sqrt{r/n} ) O_{\PP}( \bar\Delta \| \bar\bU \|_{2,p};~p), \notag\\
\| \bG \bar\bU \bar\bLambda^{-1} - \cH ( \bZ \bX^{\top} ) \bar\bU \bar\bLambda^{-1} \|_{2,p}
& \leq \| \bG \bar\bU -  \bar\bU \bar\bLambda - \cH ( \bZ \bX^{\top} ) \bar\bU \|_{2,p} \| \bar\bLambda^{-1} \|_2 + \| \bar\bU \|_{2,p}  \notag\\
& = O_{\PP} ( \| \bar\bU \|_{2,p} ;~p).
\label{KPCA-ineq-proof-lemma-Lp-prelim-0}
\end{align}

To study $\cH ( \bZ \bX^{\top} ) \bar\bU$, we decompose it into $\cH ( \bZ \bar\bX^{\top} ) \bar\bU + \cH ( \bZ \bZ^{\top} ) \bar\bU$. Note that
\begin{align*}
&[ \cH(\bZ \bar\bX^{\top}) \bar\bU ]_{mj} =  ( \bZ \bar\bX^{\top} )_{m} (\bI- \be_m \be_m^{\top})  \bar\bu_j
= \langle \bz_m,  \bar\bX^{\top} (\bI- \be_m \be_m^{\top})  \bar\bu_j \rangle,\\
& \| [ \cH(\bZ \bar\bX^{\top})  \bar\bU ]_{mj} \|_{\psi_2} \lesssim \| \bSigma^{1/2} \bar\bX^{\top} (\bI- \be_m \be_m^{\top})  \bar\bu_j \| .
\end{align*}
Lemma \ref{KPCA-lemma-Lp-gaussian} forces $\|\cH(\bZ \bar\bX^{\top}) \bar\bU \|_{2,p} = O_{\PP} ( \sqrt{p} \| \bM \|_{2,p};~p )$, where $M_{ij} =  \| \bSigma^{1/2} \bar\bX^{\top} (\bI- \be_m \be_m^{\top})  \bar\bu_j \|$. From
\begin{align*}
& \sum_{j=1}^{r}
\| \bSigma^{1/2} \bar\bX^{\top} (\bI- \be_m \be_m^{\top})  \bar\bu_j \|^2
= \bigg\langle 
(\bI- \be_m \be_m^{\top}) \bar\bX \bSigma \bar\bX^{\top} (\bI- \be_m \be_m^{\top}) ,~
\sum_{j=1}^{r} \bar\bu_j \bar\bu_j^{\top} \bigg\rangle \\
& \leq \Tr ( \bar\bX \bSigma \bar\bX^{\top} )
= \| \bar\bX \bSigma^{1/2} \|_{2,2}^2
\end{align*}
we get
\begin{align}
& \| \bM \|_{2,p} 
= \bigg[
\sum_{m=1}^{n}
\bigg(
\sum_{j=1}^{r}
\| \bSigma^{1/2} \bar\bX^{\top} (\bI- \be_m \be_m^{\top})  \bar\bu_j \|^2
\bigg)^{p/2}
\bigg]^{1/p}
= n^{1/p} \| \bar\bX \bSigma^{1/2} \|_{2,2}
\leq n^{1/2} \| \bar\bX \bSigma^{1/2} \|_{2,p}, \notag\\
& \|\cH(\bZ \bar\bX^{\top}) \bar\bU \|_{2,p} = O_{\PP} ( \sqrt{n p} \| \bar\bX \bSigma^{1/2} \|_{2,p};~p ) = O_{\PP} (  \bar\Delta \| \bar\bU\|_{2,p} ;~p ),
\label{KPCA-ineq-proof-lemma-Lp-prelim-1}
\end{align}
where we used Assumption \ref{KPCA-assumption-Lp} to get the last equality.

Note that $\| \bar\bU \|_2 = 1$ and $\| [ \cH(\bZ \bZ^{\top}) \bar\bU ]_{m} \|_2 
= \| \sum_{j\neq m}  \langle
\bz_m,  \bz_j \rangle \bar\bU_j \|_2$, $\forall m \in [n]$. Lemma \ref{KPCA-lemma-Z-product} asserts that
\begin{align}
& \| \cH(\bZ \bZ^{\top}) \bar\bU \|_{2,p}
 = 
\bigg(
\sum_{m=1}^{n} \bigg\| \sum_{j\neq m}  \langle \bz_m,\bz_j \rangle \bar\bU_j \bigg\|_2^p 
\bigg)^{1/p}
\notag\\
& = n^{1/p} \| \bar\bU \|_2^p  O_{\PP} \left(
\sqrt{r p} \max\{ \| \bSigma \|_{\mathrm{HS}},~\sqrt{n} \| \bSigma \|_{\mathrm{op}} \}
;~p \wedge n \right) \notag\\
& = O_{\PP} \Big(
n^{1/p} \sqrt{r p} \max\{  \| \bSigma \|_{\mathrm{HS}},~\sqrt{n} \| \bSigma \|_{\mathrm{op}}  \}
;~p \wedge n\Big)
= O_{\PP} ( \bar\Delta \| \bar\bU \|_{2,p};~p\wedge n ).
\label{KPCA-ineq-proof-lemma-Lp-prelim-2}
\end{align}
The last equality is due to Assumption \ref{KPCA-assumption-Lp}. Then we complete the proof using (\ref{KPCA-ineq-proof-lemma-Lp-prelim-0}),
(\ref{KPCA-ineq-proof-lemma-Lp-prelim-1}) and (\ref{KPCA-ineq-proof-lemma-Lp-prelim-2}).


\section{Proof of Theorem \ref{KPCA-corollary-main}}\label{KPCA-corollary-main-proof}

We will invoke Theorem \ref{KPCA-theorem-main} to prove Theorem \ref{KPCA-corollary-main} in the Hilbert setting (under Assumptions \ref{KPCA-assumption-incoherence}, \ref{KPCA-assumption-noise} and \ref{KPCA-assumption-concentration}). We claim that Assumption \ref{KPCA-assumption-Lp} holds, $p \lesssim n$ and
\begin{align}
\gamma  \| \bar\bG \|_{2,\infty} / \bar\Delta \ll  \sqrt{r/n} .
\label{KPCA-corollary-main-1}
\end{align}
In that case, Theorem \ref{KPCA-theorem-main} asserts that
\begin{align}
&\| \bU \sgn(\bH) - \bG\bar\bU \bar\bLambda^{-1} \|_{2,p} = O_{\PP} \left( \kappa \gamma \| \bar\bU \|_{2,p} + \gamma \bar\Delta^{-1} \| \bar\bG \|_{2,p} ;~p \right) \label{KPCA-eqn-corollary-main-2} 
,\\
&\| \bU \sgn(\bH) \|_{2,p} = O_{\PP} \left(  \| \bar\bU \|_{2,p} +  \gamma \bar\Delta^{-1} \| \bar\bG \|_{2,p} ;~p \right) , \label{KPCA-eqn-corollary-main-1} \\
&
\| \bU \bLambda^{1/2} \sgn(\bH) - \bG\bar\bU \bar\bLambda^{-1/2} \|_{2,p} = O_{\PP} ( \kappa^{3/2} \gamma \bar\Delta^{1/2} \| \bar\bU \|_{2,p} + \kappa^{1/2} \gamma \bar\Delta^{-1/2} \| \bar\bG \|_{2,p} ;~p\wedge n).  \label{KPCA-eqn-corollary-main-3} 
\end{align}

When $2 \leq p < \infty$, we have $n^{-1/2} \| \bv \|_2 \leq n^{-1/p} \| \bv \|_p \leq \| \bv \|_{\infty}$, $\forall \bv \in \RR^n$. This inequality and (\ref{KPCA-corollary-main-1}) force that
\begin{align*}
\gamma  \| \bar\bG \|_{2,p} \leq \gamma  n^{1/p} \| \bar\bG \|_{2,\infty}  \ll  n^{1/p} \bar\Delta \sqrt{r/n} =  n^{1/p} \bar\Delta n^{-1/2} \| \bar\bU \|_{2,2} \leq  \bar\Delta \| \bar\bU \|_{2,p}.
\end{align*}
Hence $\gamma \bar\Delta^{-1} \| \bar\bG \|_{2,p} = o( \| \bar\bU \|_{2,p})$. The first, third and fourth equation in Theorem \ref{KPCA-corollary-main} directly follow from (\ref{KPCA-eqn-corollary-main-2}), (\ref{KPCA-eqn-corollary-main-1}), (\ref{KPCA-eqn-corollary-main-3}) and $\kappa \gamma \ll 1/ \mu \lesssim 1$ in Assumption \ref{KPCA-assumption-incoherence}.

To control $\| \bU \sgn(\bH) - [ \bar\bU + \cH ( \bZ \bX^{\top} ) \bar\bU \bar\bLambda^{-1} ] \|_{2,p} $, we invoke Lemma \ref{KPCA-lemma-Lp_prelim} to get
\begin{align*}
& \| \bG \bar\bU -  \bar\bU \bar\bLambda - \cH ( \bZ \bX^{\top} ) \bar\bU \|_{2,p}
=  (\gamma + \sqrt{r/n} ) O_{\PP}( \bar\Delta \| \bar\bU \|_{2,p};~p) = o_{\PP}( \bar\Delta \| \bar\bU \|_{2,p};~p).
\end{align*}
Then 
\begin{align*}
& \| \bU \sgn(\bH) - [ \bar\bU + \cH ( \bZ \bX^{\top} ) \bar\bU \bar\bLambda^{-1} ] \|_{2,p} \\
& \leq \| \bU \sgn(\bH) - \bG\bar\bU \bar\bLambda^{-1}  \|_{2,p}
+ \|  \bG\bar\bU \bar\bLambda^{-1} - [ \bar\bU + \cH ( \bZ \bX^{\top} ) \bar\bU \bar\bLambda^{-1} ]  \|_{2,p} \\
& \leq  \| \bU \sgn(\bH) - \bG\bar\bU \bar\bLambda^{-1}  \|_{2,p}
+ \|  \bG\bar\bU  - [ \bar\bU \bar\bLambda + \cH ( \bZ \bX^{\top} ) \bar\bU ]  \|_{2,p} \| \bar\bLambda^{-1} \|_2  \\
& = o_{\PP} ( \| \bar\bU \|_{2,p};~p\wedge n) .
\end{align*}
Similarly, we obtain from (\ref{KPCA-eqn-corollary-main-3}) and Lemma \ref{KPCA-lemma-Lp_prelim}that
\begin{align*}
&\| \bU \bLambda^{1/2} \sgn(\bH) - [ \bar\bU \bar\bLambda^{1/2} + \cH (\bZ \bX^{\top}) \bar{\bU} \bar\bLambda^{-1/2} ] \|_{2,p} 
 = o_{\PP} ( \| \bar\bU \|_{2,p} \| \bar\bLambda^{1/2} \|_2 ;~p \wedge n).
\end{align*}

So far we have get all the desired results in Theorem \ref{KPCA-corollary-main}, provided that Assumption \ref{KPCA-assumption-Lp}, $p \lesssim n$ and (\ref{KPCA-corollary-main-1}) hold. 

\begin{claim}\label{claim-p-n}
$p \lesssim n$ under Assumption \ref{KPCA-assumption-incoherence}.
\end{claim}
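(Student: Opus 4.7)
The plan is to chain together the lower bound on $\gamma$ from Assumption \ref{KPCA-assumption-incoherence} with trivial lower bounds on $\kappa$, $\mu$ and $r$. Recall that the hypothesis of Theorem \ref{KPCA-corollary-main} caps $p$ by a constant multiple of $(\mu\gamma)^{-2}$, so it suffices to show $(\mu\gamma)^{-2} \lesssim n$, equivalently $\mu^2\gamma^2 \gtrsim 1/n$.

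First I would observe that $\mu \geq 1$ by the definition $\mu = \max\{\|\bar\bX\|_{2,\infty}\|\bar\bX\|_{\mathrm{op}}^{-1}\sqrt{n/r},\,1\}$, that $r \geq 1$ since $r \in [n]$, and that $\kappa = \bar\lambda_1/\bar\Delta \geq 1$ because $\bar\Delta \leq \bar\lambda_{s+r} \leq \bar\lambda_1$. Then Assumption \ref{KPCA-assumption-incoherence} yields
\[
\gamma \;\geq\; \kappa\mu\sqrt{r/n} \;\geq\; \sqrt{1/n},
\]
and therefore
\[
\mu^2\gamma^2 \;\geq\; \mu^2 \cdot \kappa^2\mu^2 \frac{r}{n} \;\geq\; \frac{1}{n}.
\]
Taking reciprocals gives $(\mu\gamma)^{-2}\leq n$, so any $p$ satisfying $p \lesssim (\mu\gamma)^{-2}$ automatically satisfies $p \lesssim n$, which is Claim \ref{claim-p-n}.

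There is really no hard step here; the only subtlety is to verify that $\kappa\geq 1$ from the definition of the eigen-gap. This is immediate since $\bar\Delta$ is a minimum that is bounded above by $\bar\lambda_{s+r}-\bar\lambda_{s+r+1}$, and the positive-semidefiniteness of $\bar\bG$ gives $\bar\lambda_{s+r+1}\geq 0$, hence $\bar\Delta\leq \bar\lambda_{s+r}\leq\bar\lambda_1$.
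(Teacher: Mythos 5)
Your proposal is correct and follows essentially the same chain of inequalities as the paper's own proof: use $p \lesssim (\mu\gamma)^{-2}$, then apply $\gamma \geq \kappa\mu\sqrt{r/n}$ from Assumption \ref{KPCA-assumption-incoherence} together with $\mu,\kappa,r \geq 1$ to get $(\mu\gamma)^{-2} \leq n$. Your verification that $\kappa \geq 1$ via $\bar\Delta \leq \bar\lambda_{s+r} - \bar\lambda_{s+r+1} \leq \bar\lambda_{s+r} \leq \bar\lambda_1$ (using positive semi-definiteness of $\bar\bG$) is a valid justification of a fact the paper uses without comment.
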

\begin{proof}
It is easy seen that
\begin{align*}
p \overset{\mathrm{(i)}}{\lesssim} (\mu \gamma)^{-2} \overset{\mathrm{(ii)}}{\leq} \gamma^{-2} 
\overset{\mathrm{(iii)}}{\leq} (\kappa \mu \sqrt{r / n})^{-2}
= \frac{n}{r \kappa^2 \mu^2} \overset{\mathrm{(iv)}}{\leq} n,
\end{align*}
where we used $\mathrm{(i)}$ the condition on $p$; $\mathrm{(ii)}$ $\mu \geq 1$; $\mathrm{(iii)}$ Assumption \ref{KPCA-assumption-incoherence}; $\mathrm{(iv)}$ $r \geq 1$, $\kappa \geq 1$ and $\mu \geq 1$.
\end{proof}

To verify (\ref{KPCA-corollary-main-1}), we start from
\begin{align}
\| \bar\bG \|_{2,\infty} & = \| \bar\bX \bar\bX^{\top} \|_{2,\infty} 
\leq \| \bar\bX \|_{2,\infty} \| \bar\bX \|_{\mathrm{op}}
= \frac{\| \bar\bX \|_{2,\infty} }{\| \bar\bX \|_{\mathrm{op}}} \cdot \| \bar\bX\|_{\mathrm{op}}^2 \notag\\
& \overset{\mathrm{(i)}}{\leq} ( \mu \sqrt{r/n} )  (\kappa \bar\Delta)
=  \kappa \mu  \sqrt{r/n} \cdot \bar\Delta,
\label{ineq-conversion-1}
\end{align}
where $\mathrm{(i)}$ is due to $\mu \geq (\| \bar\bX \|_{2,\infty} / \| \bar\bX \|_{\mathrm{op}} ) \sqrt{n/r}$ and $\| \bar\bX \|_{\mathrm{op}}^2 = \| \bar\bG \|_{\mathrm{op}}= \kappa \bar\Delta$.
Assumption \ref{KPCA-assumption-incoherence} forces $\gamma \geq \kappa  \mu \sqrt{r/n} $ and
\begin{align}
\| \bar\bG \|_{2,\infty} / \bar\Delta \leq \gamma .
\label{ineq-conversion-2}
\end{align}
In addition, (\ref{ineq-conversion-1}) and the condition $\gamma \ll (\kappa \mu)^{-1}$ in Assumption \ref{KPCA-assumption-incoherence} imply (\ref{KPCA-corollary-main-1})

It remains to check Assumption \ref{KPCA-assumption-Lp}, which can be implied by the followings.

\begin{claim}\label{claim-assumptions}
Under Assumptions \ref{KPCA-assumption-incoherence} and \ref{KPCA-assumption-concentration}, we have
\begin{align}
& \sqrt{np} \| \bar{\bX} \bSigma^{1/2} \|_{2,p} \leq \sqrt{p} \mu \gamma \bar\Delta \| \bar\bU \|_{2,p} , \label{eqn-claim-a-1} \\
& n^{1/p}  \sqrt{r p} \max\{ \| \bSigma \|_{\mathrm{HS}},~\sqrt{n} \| \bSigma \|_{\mathrm{op}}  \}
\leq \sqrt{p} \gamma \bar\Delta  \| \bar\bU \|_{2,p}.
\label{eqn-claim-a-2}
\end{align}
Therefore, if $p \lesssim (\mu \gamma)^{-2}$ then $\sqrt{np} \| \bar{\bX} \bSigma^{1/2} \|_{2,p}  \lesssim \bar\Delta \| \bar\bU \|_{2,p}$ and $n^{1/p}  \sqrt{r p} \max\{ \| \bSigma \|_{\mathrm{HS}},~\sqrt{n} \| \bSigma \|_{\mathrm{op}}  \}
\lesssim \bar\Delta \| \bar\bU \|_{2,p} $.
\end{claim}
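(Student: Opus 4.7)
The plan is to reduce both inequalities to Assumption \ref{KPCA-assumption-concentration} after (i) using incoherence to upper bound $\| \bar\bX \|_{2,\infty}$ and (ii) establishing a power-mean lower bound on $\| \bar\bU \|_{2,p}$ to absorb a factor of $n^{1/p}$ that will appear on the left-hand side. The key elementary ingredients I will lean on are: since $\bar\bU$ has orthonormal columns, $\| \bar\bU \|_{2,2}^2 = r$, so applying the monotonicity of power means to the nonnegative sequence $(\| \bar\bU_i \|_2)_{i=1}^n$ yields, for any $p \ge 2$,
\[
\| \bar\bU \|_{2,p} \;\ge\; n^{1/p - 1/2} \| \bar\bU \|_{2,2} \;=\; n^{1/p - 1/2} \sqrt{r};
\]
the identity $\| \bar\bX \|_{\mathrm{op}}^2 = \bar\lambda_1 = \kappa \bar\Delta$; and the consequence of Assumption \ref{KPCA-assumption-incoherence} that $\| \bar\bX \|_{2,\infty} \le \mu \sqrt{r/n}\,\| \bar\bX \|_{\mathrm{op}} = \mu \sqrt{r \kappa \bar\Delta / n}$.

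To establish (\ref{eqn-claim-a-1}), I first peel off $\bSigma^{1/2}$ via the row-wise bound $\| \bSigma^{1/2}\bar\bx_i \| \le \| \bSigma\|_{\mathrm{op}}^{1/2} \| \bar\bx_i \|$, which gives $\| \bar\bX \bSigma^{1/2} \|_{2,p} \le \| \bSigma\|_{\mathrm{op}}^{1/2} \| \bar\bX \|_{2,p}$. Then I use $\| \bar\bX \|_{2,p}^p = \sum_i \| \bar\bx_i \|^p \le n \| \bar\bX \|_{2,\infty}^p$ together with the incoherence bound above to obtain
\[
\sqrt{n}\,\| \bar\bX \bSigma^{1/2} \|_{2,p} \;\le\; n^{1/p}\,\mu\,\sqrt{r \kappa \bar\Delta \| \bSigma \|_{\mathrm{op}}}.
\]
Assumption \ref{KPCA-assumption-concentration} supplies $\sqrt{n \kappa \| \bSigma \|_{\mathrm{op}} / \bar\Delta} \le \gamma$, equivalently $\sqrt{\kappa \| \bSigma\|_{\mathrm{op}}} \le \gamma \sqrt{\bar\Delta / n}$, so the right-hand side is at most $n^{1/p - 1/2} \sqrt{r}\,\mu \gamma \bar\Delta \le \mu \gamma \bar\Delta \| \bar\bU \|_{2,p}$ by the power-mean lower bound. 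Multiplying through by $\sqrt{p}$ recovers (\ref{eqn-claim-a-1}).

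For (\ref{eqn-claim-a-2}), Assumption \ref{KPCA-assumption-concentration} yields $\| \bSigma \|_{\mathrm{HS}} \le \gamma \bar\Delta / \sqrt{n}$ directly, and also $\sqrt{n}\| \bSigma \|_{\mathrm{op}} \le \gamma^2 \bar\Delta /(\sqrt{n}\kappa) \le \gamma \bar\Delta / \sqrt{n}$, where I use $\kappa \ge 1$ and $\gamma \ll 1$ (from Assumption \ref{KPCA-assumption-incoherence}, since $\mu \ge 1$). So $\max\{\| \bSigma\|_{\mathrm{HS}}, \sqrt{n}\| \bSigma\|_{\mathrm{op}}\} \le \gamma \bar\Delta / \sqrt{n}$, whence
\[
n^{1/p} \sqrt{r}\,\max\{\|\bSigma\|_{\mathrm{HS}}, \sqrt{n}\|\bSigma\|_{\mathrm{op}}\}
\;\le\; n^{1/p - 1/2} \sqrt{r}\,\gamma \bar\Delta
\;\le\; \gamma \bar\Delta \,\| \bar\bU \|_{2,p},
\]
and multiplying by $\sqrt{p}$ gives (\ref{eqn-claim-a-2}). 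Finally, if $p \lesssim (\mu \gamma)^{-2}$ then $\sqrt{p}\,\mu\gamma \lesssim 1$, and since $\mu \ge 1$ we also have $\sqrt{p}\,\gamma \lesssim 1$, which turns the factors $\sqrt{p}\,\mu\gamma$ and $\sqrt{p}\,\gamma$ on the right-hand sides of (\ref{eqn-claim-a-1}) and (\ref{eqn-claim-a-2}) into absolute constants, yielding the two conditions of Assumption \ref{KPCA-assumption-Lp}. There is no serious obstacle; the only mildly nonobvious step is recognizing that orthonormality of $\bar\bU$ plus power-mean monotonicity provides the $n^{1/p - 1/2} \sqrt{r}$ lower bound needed to match the $n^{1/p}$ factor that hollowing-plus-incoherence inevitably produces.
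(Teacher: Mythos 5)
Your proof is correct and follows essentially the same route as the paper: bound $\|\bar\bX\bSigma^{1/2}\|_{2,p}$ by $n^{1/p}\|\bSigma\|_{\mathrm{op}}^{1/2}\|\bar\bX\|_{2,\infty}$, invoke the incoherence definition for $\|\bar\bX\|_{2,\infty}$ together with $\|\bar\bX\|_{\mathrm{op}}^2=\kappa\bar\Delta$, pull in $\sqrt{n\kappa\|\bSigma\|_{\mathrm{op}}/\bar\Delta}\le\gamma$ and $\sqrt{n}\|\bSigma\|_{\mathrm{HS}}/\bar\Delta\le\gamma$ from Assumption~\ref{KPCA-assumption-concentration}, and absorb the residual $n^{1/p-1/2}\sqrt{r}$ via the power-mean (H\"older) lower bound $n^{-1/p}\|\bar\bU\|_{2,p}\ge n^{-1/2}\|\bar\bU\|_{2,2}=n^{-1/2}\sqrt{r}$. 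The paper phrases the first inequality as a ratio bound $\bar\Delta\sqrt{r}/(n\|\bar\bX\bSigma^{1/2}\|_{2,\infty})\ge1/(\mu\gamma)$ and then applies the conversion $n^{-1/p}\|\cdot\|_p\le\|\cdot\|_\infty$, whereas you chain the estimates directly — but the ingredients and conclusions are identical.
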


\begin{proof}
To prove (\ref{eqn-claim-a-1}), we first prove an inequality in $\| \cdot \|_{2,\infty}$ and then convert it to $\| \cdot \|_{2,p}$ using
\begin{align}
n^{-1/2} \| \bv \|_2 \leq n^{-1/p} \| \bv \|_p \leq \| \bv \|_{\infty}
,\qquad \forall \bv \in \RR^n
\label{KPCA-corollary-main-conversion}
\end{align}
By elementary calculation, 
\begin{align*}
\frac{ \bar\Delta \sqrt{r} }{ n \| \bar\bX \bSigma^{1/2} \|_{2,\infty}  }
& \overset{\mathrm{(i)}}{\geq}   \frac{ \bar\Delta \sqrt{r} }{ n \| \bar\bX \|_{2,\infty} \| \bSigma\|_{\mathrm{op}}^{1/2}  } 
= \bigg( \frac{ \bar\Delta }{ \kappa n \| \bSigma\|_{\mathrm{op}} }  \bigg)^{1/2} \frac{ \sqrt{\kappa r \bar\Delta / n} }{ \| \bar\bX \|_{2,\infty} } \\
& \overset{\mathrm{(ii)}}{=} \bigg( \frac{ \bar\Delta }{ \kappa n \| \bSigma\|_{\mathrm{op}} }  \bigg)^{1/2} \bigg( \frac{ \| \bar\bX \|_{\mathrm{op}} }{ \| \bar\bX \|_{2,\infty} } \sqrt{\frac{r}{n}} \bigg) 
\overset{\mathrm{(iii)}}{\geq}  \bigg( \frac{ \bar\Delta }{ \kappa n \| \bSigma\|_{\mathrm{op}} }  \bigg)^{1/2} \frac{1}{\mu}
\overset{\mathrm{(iv)}}{\geq} \frac{1}{\mu \gamma} .
\end{align*}
where we used $\mathrm{(i)}$ $\| \bar\bX \bSigma^{1/2} \|_{2,\infty} \leq \| \bar\bX \|_{2,\infty} \| \bSigma\|_{\mathrm{op}}^{1/2} $; $\mathrm{(ii)}$ $\kappa \bar\Delta = \| \bar\bG \|_{\mathrm{op}}  = \| \bar\bX \|_{\mathrm{op}}^2$; $\mathrm{(iii)}$ $\mu \geq \frac{ \| \bar\bX \|_{2,\infty}  }{ \| \bar\bX \|_{\mathrm{op}} } \sqrt{\frac{n}{r}}$; $\mathrm{(iv)}$ $\gamma \geq (\kappa n \| \bSigma \|_{\mathrm{op}} / \bar\Delta)^{1/2}$ in Assumption \ref{KPCA-assumption-concentration}.
We use (\ref{KPCA-corollary-main-conversion}) to get
\begin{align*}
& \sqrt{n p} \| \bar\bX \bSigma^{1/2} \|_{2,p}
 \leq \sqrt{n p} n^{1/p} \| \bar\bX \bSigma^{1/2} \|_{2,\infty}
\lesssim \sqrt{np}  n^{1/p} \frac{ \sqrt{r} \bar\Delta \mu \gamma}{n} = \bar\Delta   n^{1/p} \sqrt{r/n} (\sqrt{p} \mu \gamma) \\
& =  \bar\Delta  n^{1/p} n^{-1/2} \| \bar \bU \|_{2,2} (\sqrt{p} \mu \gamma) \leq \bar\Delta  n^{1/p} n^{-1/p} \| \bar \bU \|_{2,p} (\sqrt{p} \mu \gamma)
=  \bar\Delta  \| \bar \bU \|_{2,p} (\sqrt{p} \mu \gamma)
\lesssim \bar\Delta  \| \bar \bU \|_{2,p}.
\end{align*}
The last inequality is due to $p \lesssim (\mu \gamma)^{-2}$.

We finally prove (\ref{eqn-claim-a-2}).
By the conversion (\ref{KPCA-corollary-main-conversion}),
\begin{align*}
& n^{1/p}  \sqrt{r p} \max\{ \| \bSigma \|_{\mathrm{HS}},~\sqrt{n} \| \bSigma \|_{\mathrm{op}}  \}
 = n^{1/p } \| \bar\bU \|_{2,2} \sqrt{p}  \max\{ \| \bSigma \|_{\mathrm{HS}},~\sqrt{n} \| \bSigma \|_{\mathrm{op}}  \} \\
& \leq n^{1/2} \| \bar\bU \|_{2,p}  \sqrt{p}  \max\{ \| \bSigma \|_{\mathrm{HS}},~\sqrt{n} \| \bSigma \|_{\mathrm{op}}  \}
= \| \bar\bU \|_{2,p}  \sqrt{p}  \max\{ \sqrt{n}  \| \bSigma \|_{\mathrm{HS}},~ n \| \bSigma \|_{\mathrm{op}}  \}.
\end{align*}
By Assumption \ref{KPCA-assumption-concentration}, we have $\sqrt{n} \| \bSigma \|_{\mathrm{HS}} \leq \gamma \bar{\Delta}$, $n \| \bSigma \|_{\mathrm{op}} \leq \gamma^2  \bar\Delta / \kappa$ and
\[
\max\{ \sqrt{n} \| \bSigma \|_{\mathrm{HS}}
, n \| \bSigma \|_{\mathrm{op}} \} \leq \max \{  \gamma \bar{\Delta} , \gamma^2  \bar\Delta / \kappa \} \leq \gamma \bar{\Delta}.
\]
Then (\ref{eqn-claim-a-2}) directly follows.
\end{proof}

\section{Proofs of Section \ref{KPCA-sec-GMM-sGMM}}\label{KPCA-sec-GMM-sGMM-proof}

\subsection{Proof of \Cref{thm-sGMM-kmeans}}\label{sec-thm-sGMM-kmeans-proof}

We will invoke \Cref{KPCA-corollary-main} for Hilbert spaces to study the spectral embedding $\bU \bLambda^{1/2}$. To begin with, define $\bY \in \{ 0, 1\}^{n\times K}$ through $Y_{ij} = \bm{1}_{ \{ y_i = j \} }$. From $\bar\bx_i = \bmu_{\by_i}$ it is easy to see that $\bar\bG = \bY \bB \bY^{\top}$. We first analyze the $r$ leading eigenvalues of $\bar\bG$.

On the one hand, we have $\| \bY \|_{\mathrm{F}}^2 = n$ and
\begin{align}
\| \bar\bG \|_2 \leq \| \bY \|_2^2 \| \bB \|_2 \leq n \| \bB\|_2.
\label{eqn-thm-sGMM-kmeans-1}
\end{align}
On the other hand, under Assumption \ref{sGMM-assumption-regularity} we have $\mathrm{rank}(\bar\bG) \leq \mathrm{rank}(\bB)  = r $. Denote by $\sigma_j(\cdot)$ the $j$-th largest singular value of a matrix. There exists $\bQ \in \RR^{K\times r}$ such that $\bB = \bQ \bQ^{\top}$ and $\sqrt{ \lambda_j(\bB) } = \sigma_j (\bQ)$. For any distinct $i,j \in [K]$,
\[
\bar{s}^2 \leq \| \bmu_i - \bmu_j \|^2 = (\be_i - \be_j)^{\top} \bB (\be_i - \be_j) \leq 2 \| \bB \|_2.
\]
Therefore,
\begin{align}
 \lambda_r (\bar\bG) &= \lambda_r (\bY \bQ \bQ^{\top} \bY^{\top}) = \sigma_r^2 ( \bY \bQ  ) \geq [ \sigma_K (\bY ) \cdot \sigma_r (\bQ) ]^2 = \lambda_K (\bY^{\top} \bY) \cdot \lambda_r(\bB) \\
& \geq \lambda_K (\bY^{\top} \bY) \cdot \| \bB\|_2 / \kappa_0
 \geq \frac{ \bar{s}^2 \lambda_K (\bY^{\top} \bY) }{ 2 \kappa_0 } .
\label{eqn-thm-sGMM-kmeans-2}
\end{align}
and 
\[
\kappa = \| \bar\bG \|_2 / \lambda_r (\bar\bG) \leq \frac{n \| \bB\|_2}{\lambda_K (\bY^{\top} \bY) \cdot \| \bB\|_2 / \kappa_0} 
= \kappa_0 n / \lambda_K (\bY^{\top} \bY).
\]
Note that $\lambda_K (\bY^{\top} \bY) = \min_{k \in [K]} |\{ i \in [n]:~y_i = k \}| $. From Assumption \ref{sGMM-assumption-regularity} and the estimate \eqref{eqn-thm-sGMM-kmeans-1}, we get $\mathrm{rank}(\bar\bG) = r$,
\begin{align}
&\bar{\Delta} =  \lambda_r (\bar\bG) -  \lambda_{r+1} (\bar\bG) \geq  \frac{ n \bar{s}^2 }{  2 \kappa_0^2 }
\qquad\text{and}\qquad \kappa   \leq  \kappa_0^2.
\label{eqn-kmeans-eigengap}
\end{align}

Let $T_k = \{ i \in [n]:~y_i = k \}$ for any $k \in [K]$. Because $\bar\bx_i = \bmu_k$ holds for all $i \in T_k$, $\{ \bar\bU_i \}_{i \in T_k}$ are all the same. Then
\begin{align}
& K = \| \bar\bU \|_{\mathrm{F}}^2 \geq \sum_{i \in T_k} \|  \bar\bU_i \|_2^2, \notag \\
& \|  \bar\bU_i \|_2 \leq \sqrt{K / |T_k|} \leq \sqrt{K / (n/\kappa_0)} = \sqrt{K \kappa_0 /n} \leq \kappa_0 / \sqrt{n}, \qquad \forall i \in T_k , \notag \\
& \|  \bar\bU \|_{2,p} \leq n^{1/p} \|  \bar\bU \|_{2,\infty} \leq \kappa_0 n^{1/p - 1/2} , \qquad\forall p \geq 1.
\label{eqn-kmeans-u}
\end{align}

To apply \Cref{KPCA-corollary-main}, we first verify Assumption \ref{KPCA-assumption-incoherence}. By definition,
\begin{align*}
&\| \bar\bX \|_{2, \infty}^2 = \max_{k \in [K]} \| \bmu_k \|^2 \leq  \| \bB \|_2  . 
\end{align*}
By \eqref{eqn-thm-sGMM-kmeans-2},
\begin{align*}
&\| \bar\bX \|_{\mathrm{op}}^2 = \| \bar\bG \|_2 \geq \lambda_r(\bar\bG) \geq \lambda_K(\bY^{\top} \bY) \| \bB \|_2 / \kappa_0 \geq n \| \bB \|_2 / \kappa_0^2 .
\end{align*}
Therefore,
\begin{align}
& \frac{\| \bar\bX \|_{2, \infty}}{\| \bar\bX \|_{\mathrm{op}}} \sqrt{\frac{n}{r}}
\leq \frac{ \sqrt{\| \bB\|_2} }{ \sqrt{n \| \bB \|_2} / \kappa_0 }
\cdot \sqrt{\frac{n}{r}} \leq \kappa_0.
\label{eqn-thm-sGMM-kmeans-3}
\end{align}
In light of \eqref{eqn-kmeans-eigengap} and \eqref{eqn-thm-sGMM-kmeans-3}, Assumption \ref{KPCA-assumption-incoherence} holds when $\kappa_0^3 \sqrt{r / n} \leq \gamma \ll 1$. In addition, we have $1 \leq \mu \leq \kappa_0$.

Next, we verify Assumption \ref{KPCA-assumption-concentration}. From \eqref{eqn-kmeans-eigengap},
\begin{align*}
& \frac{ \kappa \| \bSigma \|_{\mathrm{op}} }{  \bar\Delta }
\leq \frac{ 2 \kappa_0^2 \| \bSigma \|_{\mathrm{op}} }{  \bar{s}^2 n } =
\frac{ 2 \kappa_0^3 \| \bSigma \|_{\mathrm{op}} }{  \bar{s}^2 n  } 
\qquad\text{and}\qquad
  \frac{ \| \bSigma \|_{\mathrm{HS}} }{ \bar\Delta } \leq 
\frac{ 2 \kappa_0  \| \bSigma \|_{\mathrm{HS}} }{  \bar{s}^2 n } 
\end{align*}
we see that Assumption \ref{KPCA-assumption-concentration} holds when
\[
\gamma \geq 2 \kappa_0^{3/2} \max \bigg\{ 
 \frac{ \sqrt{ \| \bSigma \|_{\mathrm{op}} } }{  \bar{s} }  ,~
\frac{  \| \bSigma \|_{\mathrm{HS}} }{ \sqrt{n} \bar{s}^2 } 
\bigg\} = \frac{2 \kappa_0^{3/2} }{ \sqrt{ \mathrm{SNR} } } .
\]

To sum up, all the assumptions in \Cref{KPCA-corollary-main} hold if
\begin{align}
&\gamma = 2 \kappa_0^{3} \max \bigg\{
\frac{1 }{ \sqrt{ \mathrm{SNR} } } , ~ \sqrt{\frac{r}{n}}
\bigg\} 
\qquad\text{and}\qquad
2 \leq p \leq  \mathrm{SNR} \wedge  n .
\label{eqn-kmeans-proof-0}
\end{align}
We adopt the regime in (\ref{eqn-kmeans-proof-0}). By \Cref{KPCA-corollary-main},
\begin{align}
& \| \bU \bLambda^{1/2} \sgn(\bH) - \bG \bar\bU \bar\bLambda^{-1/2} \|_{2,p} = o_{\PP} ( \| \bar\bU \|_{2,p} \| \bar\bLambda^{1/2} \|_2 ;~p) , 
\label{eqn-kmeans-proof-1}\\
& \| \bU \bLambda^{1/2} \sgn(\bH) - [ \bar\bU \bar\bLambda^{1/2} + \cH (\bZ \bX^{\top}) \bar{\bU} \bar\bLambda^{-1/2} ] \|_{2,p} = o_{\PP} ( \| \bar\bU \|_{2,p} \| \bar\bLambda^{1/2} \|_2 ;~p).
\label{eqn-kmeans-proof-2}
\end{align}

Suppose that the approximate $k$-means step is conducted on the rows of $\bU \bLambda^{1/2} \sgn(\bH)$ rather than $\bU \bLambda^{1/2}$. This does not affect the outcomes $\{ \hat{y}_i \}_{i=1}^n$ but facilitates analysis. We need the following lemma on the recovery error of approximate $k$-means clustering, whose proof is in \Cref{sec-lem-kmeans-key-proof}.

\begin{lemma}[Approximate $k$-means]\label{lem-kmeans-key}
	Let $\{ \bx_i \}_{i=1}^n \subseteq \RR^d$ and define
	\[
	L( \{ \bmu_j \}_{j=1}^K , \{ y_i \}_{i=1}^n ) = \sum_{i=1}^{n} \| \bx_i - \bmu_{y_i} \|_2^2,
	\qquad \forall \{ \bmu_j \}_{j=1}^K \subseteq \RR^d ,~~ \{ y_i \}_{i=1}^n \subseteq [K].
	\]
	Let $ \{ \bmu_j^{\star} \}_{j=1}^K, \{ \hat\bmu_j \}_{j=1}^K  \subseteq \RR^d$, $\{ y_i^{\star} \}_{i=1}^n, \{ \hat y_i  \}_{i=1}^n \subseteq [K]$, $s = \min_{j \neq k} \| \bmu_j^{\star} - \bmu_k^{\star} \|_2$ and $n_{\min} =  \min_{j \in [K]}  |\{ i :~ y_i^{\star} = j \}|$. Suppose that the followings hold for some $C > 0$ and $\delta \in (0, s / 2 )$:
	\begin{enumerate}
		\item (Near-optimality) $L ( \{ \hat\bmu_j \}_{j=1}^K, \{ \hat y_i \}_{i=1}^n ) \leq C \cdot L( \{ \bmu_j^{\star} \}_{j=1}^K , \{ y_i^{\star} \}_{i=1}^n )$ and $\hat y_i  \in \argmin_{j \in [K]} \| \bx_i - \hat\bmu_j \|_2$ for all $i \in [n]$;
		\item (Low-noise condition) $ L( \{ \bmu_j^{\star} \}_{j=1}^K , \{ y_i^{\star} \}_{i=1}^n ) \leq  \delta^2 n_{\min}  / [(1 + \sqrt{C})^2 K ]$.
	\end{enumerate}
	Then, there exists a permutation $\tau:~[K] \to [K]$ such that
	\begin{align*}
	& \sum_{j = 1}^K \| \bmu_j^{\star} - \hat\bmu_{\tau(j)} \|_2^2 \leq  \frac{(1 + \sqrt{C})^2 K}{n_{\min}} L ( \{ \bmu_j^{\star} \}_{j=1}^K, \{  y_i^{\star} \}_{i=1}^n ) , \\ 
	& \{i:~ \hat{y}_i \neq \tau(y_i^{\star}) \} \subseteq  \{ i:~ \| \bx_i - \bmu_{y_i^{\star}}^{\star} \|_2 \geq s / 2 - \delta \} , \\
	& | \{i:~ \hat{y}_i \neq \tau(y_i^{\star}) \} | \leq |  \{ i:~ \| \bx_i - \bmu_{y_i^{\star}}^{\star} \|_2 \geq s / 2 - \delta \} |
	\leq \frac{ L( \{ \bmu_j^{\star} \}_{j=1}^K , \{ y_i^{\star} \}_{i=1}^n ) }{ (s / 2 - \delta)^2 }.
	\end{align*}
\end{lemma}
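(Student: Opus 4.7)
The overall strategy is: (i) bound the aggregated distance from the estimated centers to the reference centers along the reference labels, (ii) use that bound plus the separation $s$ to construct a permutation $\tau$ that pairs each $\hat\bmu_j$ with a nearby $\bmu_k^{\star}$, and (iii) argue that any point mislabeled under $\tau$ must sit far from its reference center, then apply Markov's inequality.

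\textbf{Step 1 (aggregate center error).} By the triangle inequality, $\|\hat\bmu_{\hat y_i} - \bmu_{y_i^{\star}}^{\star}\|_2 \le \|\bx_i - \hat\bmu_{\hat y_i}\|_2 + \|\bx_i - \bmu_{y_i^{\star}}^{\star}\|_2$. Squaring, summing over $i$, and applying Cauchy--Schwarz to the cross term together with the near-optimality bound $L(\{\hat\bmu_j\},\{\hat y_i\}) \le C L(\{\bmu_j^{\star}\},\{y_i^{\star}\}) = CL^{\star}$ yields
\[
\sum_{i=1}^n \|\hat\bmu_{\hat y_i} - \bmu_{y_i^{\star}}^{\star}\|_2^2 \;\le\; \bigl(1+\sqrt{C}\bigr)^2 L^{\star}.
\]
Restricting to $T_k^{\star} := \{i:\, y_i^{\star} = k\}$ for each $k$, the average over $i \in T_k^{\star}$ is at most $(1+\sqrt{C})^2 L^{\star}/n_{\min}$, so some $i \in T_k^{\star}$ satisfies $\|\hat\bmu_{\hat y_i} - \bmu_k^{\star}\|_2^2 \le (1+\sqrt{C})^2 L^{\star}/n_{\min}$.

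\textbf{Step 2 (building the permutation).} Define $\tau(k) := \argmin_{j \in [K]} \|\hat\bmu_j - \bmu_k^{\star}\|_2$ (ties broken arbitrarily). Step 1 gives $\|\hat\bmu_{\tau(k)} - \bmu_k^{\star}\|_2^2 \le (1+\sqrt{C})^2 L^{\star}/n_{\min} \le \delta^2$ by the low-noise hypothesis, so $\|\hat\bmu_{\tau(k)} - \bmu_k^{\star}\|_2 \le \delta < s/2$ for every $k$. If $\tau(k_1) = \tau(k_2)$ for distinct $k_1,k_2$, then $\|\bmu_{k_1}^{\star}-\bmu_{k_2}^{\star}\|_2 \le 2\delta < s$, contradicting the definition of $s$; hence $\tau$ is injective on $[K]$, i.e.\ a permutation. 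Summing the per-$k$ bound over $k$ gives the first display of the lemma.

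\textbf{Step 3 (misclassification).} Fix $i$ with $\|\bx_i - \bmu_{y_i^{\star}}^{\star}\|_2 < s/2 - \delta$. Then by triangle inequality $\|\bx_i - \hat\bmu_{\tau(y_i^{\star})}\|_2 < s/2$, while for any $k' \neq y_i^{\star}$,
\[
\|\bx_i - \hat\bmu_{\tau(k')}\|_2 \;\ge\; \|\bmu_{k'}^{\star}-\bmu_{y_i^{\star}}^{\star}\|_2 - \|\hat\bmu_{\tau(k')}-\bmu_{k'}^{\star}\|_2 - \|\bx_i-\bmu_{y_i^{\star}}^{\star}\|_2 \;>\; s - \delta - (s/2-\delta) \;=\; s/2.
\]
Since $\tau$ is a bijection on $[K]$, every $\hat\bmu_j$ is some $\hat\bmu_{\tau(k')}$, so the nearest-center rule forces $\hat y_i = \tau(y_i^{\star})$. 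The contrapositive gives the claimed set containment, and Markov's inequality applied to $\sum_i \|\bx_i - \bmu_{y_i^{\star}}^{\star}\|_2^2 = L^{\star}$ supplies the count $L^{\star}/(s/2-\delta)^2$.

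The argument is essentially a chain of elementary triangle and Markov inequalities; the only delicate point is ensuring that the constant $(1+\sqrt{C})^2$ produced by Cauchy--Schwarz in Step 1 matches the constant appearing in the hypothesis of Step 2, so that $\|\hat\bmu_{\tau(k)} - \bmu_k^{\star}\|_2 < \delta$ holds with strict inequality and the permutation argument goes through cleanly.
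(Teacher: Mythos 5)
Your proof is correct. Steps 1 and 3 match the paper's argument essentially verbatim: triangle inequality plus Cauchy--Schwarz for the aggregate bound $\sum_i \|\hat\bmu_{\hat y_i} - \bmu_{y_i^\star}^\star\|_2^2 \le (1+\sqrt{C})^2 L^\star$ (the paper phrases this in Frobenius-norm language for the matrices $\hat\bY\hat\bM^\top$ and $\bY^\star\bM^{\star\top}$, but the inequality is identical), and the nearest-center triangle-inequality argument for the set containment followed by Markov. Where you genuinely diverge is in Step 2, the construction of $\tau$. The paper sets $\tau(j) = \argmax_k S_{jk}$ with $S_{jk} = |\{i: y_i^\star = j, \hat y_i = k\}|$, i.e.\ the plurality estimated label inside the $j$-th reference cluster; pigeonhole gives $S_{j\tau(j)} \ge n_{\min}/K$, and injectivity is then established by a contradiction against the low-noise bound on $\|\hat\bY\hat\bM^\top - \bY^\star\bM^{\star\top}\|_{\mathrm{F}}^2$ using $a^2 + b^2 \ge (a+b)^2/2$. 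You instead define $\tau(k) = \argmin_j \|\hat\bmu_j - \bmu_k^\star\|_2$ and extract, by averaging over $T_k^\star$, some $i \in T_k^\star$ with $\|\hat\bmu_{\hat y_i} - \bmu_k^\star\|_2^2 \le (1+\sqrt{C})^2 L^\star / n_{\min}$, hence $\|\hat\bmu_{\tau(k)} - \bmu_k^\star\|_2 \le \delta/\sqrt{K} \le \delta$; injectivity then drops out of $2\delta < s$ in one line. Your $\tau$ and the resulting injectivity proof are cleaner and more self-contained. The trade-off is in the first display: the paper's pigeonhole gives $\sum_j S_{j\tau(j)}\|\bmu_j^\star - \hat\bmu_{\tau(j)}\|_2^2 \le \|\hat\bY\hat\bM^\top - \bY^\star\bM^{\star\top}\|_{\mathrm{F}}^2$ and divides by $n_{\min}/K$, whereas you bound each cluster's restricted sum by the full sum (discarding the other clusters' contributions) and then sum over $k$, picking up the factor $K$ that way instead. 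Both routes land on exactly the stated bound $\frac{(1+\sqrt{C})^2 K}{n_{\min}} L^\star$, so nothing is lost.
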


To apply \Cref{lem-kmeans-key}, take $\bx_i = [ \bU \bLambda^{1/2} \sgn(\bH) ]_i$ and $y^{\star}_i = y_i$ for $i \in [n]$. Define $\bmu^{\star}_j = (\bar\bU \bar\bLambda^{1/2})_i$ for any $i \in \{ l :~y_l = j \}$. The near-optimality condition holds with $C = 1 + \varepsilon$. Moreover, we use Assumption \ref{sGMM-assumption-regularity} to get
\begin{align}
& s = \min_{j \neq k} \| \bmu_j^{\star} - \bmu_k^{\star} \|_2 = \bar{s}
\qquad\text{and}\qquad
n_{\min} =  \min_{j \in [K]}  |\{ i :~ y_i^{\star} = j \}| \geq n / \kappa_0.
\label{eqn-kmeans-s}
\end{align}

We now verify the low-noise condition with $\delta =  \bar{s} / 4 $. By definition,
\[
L( \{ \bmu_j^{\star} \}_{j=1}^K , \{ y_i^{\star} \}_{i=1}^n ) = \sum_{i=1}^{n} \| [\bU \bLambda \sgn(\bH)]_i - (\bar\bU \bar\bLambda^{1/2})_i \|_2^2
= \| \bU \bLambda \sgn(\bH) - \bar\bU \bar\bLambda^{1/2}  \|_{\mathrm{F}}^2.
\]
Since
\[
\bar\bU \bar\bLambda^{1/2}  = \bar\bG \bar\bU \bar\bLambda^{-1/2} = \bG \bar\bU \bar\bLambda^{-1/2} - ( \bG - \bar\bG ) \bar\bU \bar\bLambda^{-1/2} ,
\]
we have
\begin{align*}
& \sqrt{L( \{ \bmu_j^{\star} \}_{j=1}^K , \{ y_i^{\star} \}_{i=1}^n )}
= \| \bU \bLambda^{1/2} \sgn(\bH) - \bar\bU \bar\bLambda^{1/2} \|_{\mathrm{F}} \\
& \leq \| \bU \bLambda^{1/2} \sgn(\bH) - \bG  \bar\bU \bar\bLambda^{-1/2} \|_{\mathrm{F}}
+ \| \bG - \bar\bG \|_2 \| \bar\bU \bar\bLambda^{-1/2}  \|_{\mathrm{F}}\\
& \leq n^{1/2-1/p} \| \bU \bLambda^{1/2} \sgn(\bH) - \bG  \bar\bU \bar\bLambda^{-1/2} \|_{2,p} + \| \bG - \bar\bG \|_2 \| \bar\bU \|_{\mathrm{F}} \| \bar\bLambda^{-1/2}  \|_2 \\
& \overset{\mathrm{(i)}}{=} n^{1/2-1/p} \cdot o_{\PP} ( \| \bar\bU \|_{2,p} \| \bar\bLambda^{1/2} \|_2 ;~p) + O_{\PP} (\gamma\bar{\Delta} ;~n) \cdot \sqrt{K} \cdot \bar\Delta^{-1/2} \\
&  \overset{\mathrm{(ii)}}{=}  o_{\PP} (   \bar\Delta^{1/2}  ;~p) 
\overset{\mathrm{(iii)}}{=}  o_{\PP} (  \sqrt{n} \bar{s}  ;~p) . 
\end{align*}
Here we used $\mathrm{(i)}$ \Cref{eqn-kmeans-proof-1} and \Cref{KPCA-lemma-L2}; $\mathrm{(ii)}$ \Cref{eqn-kmeans-u} and Assumption \ref{sGMM-assumption-regularity}; $\mathrm{(iii)}$ \Cref{eqn-kmeans-eigengap}.
On the other hand, \Cref{eqn-kmeans-s} and $\delta = \bar{s} / 4 $ imply that
\begin{align*}
\delta^2 n_{\min}  / [(1 + \sqrt{C})^2 K ] \gtrsim n \bar{s}^2.
\end{align*}
As a result, there exists $\eta_n \to 0$ such that
\begin{align}
\PP \bigg(
L( \{ \bmu_j^{\star} \}_{j=1}^K , \{ y_i^{\star} \}_{i=1}^n ) \leq \eta_n  \delta^2 n_{\min}  / [(1 + \sqrt{C})^2 K ]
\bigg) \geq 1 - e^{-p}.
\label{eqn-kmeans-low-noise}
\end{align}
Consequently, the low-noise condition holds with probability at least $1 - e^{-p}$.

Now that the regularity conditions are verified, on the event in \Cref{eqn-kmeans-low-noise} we use \Cref{lem-kmeans-key} to get
\begin{align}
\min_{\tau \in S_K} | \{i:~ \hat{y}_i \neq \tau(y_i^{\star}) \} | & \leq |  \{ i:~ \| [ \bU \bLambda^{1/2} \sgn(\bH) ]_i - (\bar\bU \bar\bLambda^{1/2})_i \|_2 \geq s / 2 - \delta \} | \notag\\
& \leq |  \{ i:~ \| [ \bU \bLambda^{1/2} \sgn(\bH) ]_i - (\bar\bU \bar\bLambda^{1/2})_i \|_2 \geq \bar{s} / 4  \} | \notag\\
& \leq \frac{ \| \bU \bLambda^{1/2} \sgn(\bH)  - \bar\bU \bar\bLambda^{1/2}  \|_{2,p}^p }{ ( \bar{s} / 4 )^p } \notag \\
& = n \bigg(  \frac{ \| \bU \bLambda^{1/2} \sgn(\bH)  - \bar\bU \bar\bLambda^{1/2}  \|_{2,p}  }{ n^{1/p} \bar{s} / 4 }  \bigg)^p .
\label{eqn-kmeans-proof-markov}
\end{align}
So far we have not specified the choice of $p$. The results above hold for any $p$ satisfying \Cref{eqn-kmeans-proof-0}.

Next, we will find some constant $C_0 \in (0, 1]$ such that
\begin{align}
\PP \bigg(
\| \bU \bLambda^{1/2} \sgn(\bH)  - \bar\bU \bar\bLambda^{1/2}  \|_{2,p}/ n^{1/p} \leq \frac{ \bar{s} }{4e} 
\bigg) \leq 1 - e^{-p}
\label{eqn-kmeans-proof-p}
\end{align}
holds for $p = C_0 ( \mathrm{SNR} \wedge n )$. If that is true, then \Cref{eqn-kmeans-low-noise,eqn-kmeans-proof-markov,eqn-kmeans-proof-p} imply that
\begin{align*}
\EE \cM( \hat\by , \by ) \leq 3 e^{-C_0 ( \mathrm{SNR} \wedge n )} .
\end{align*}
When $\mathrm{SNR} \geq 2 C_0^{-1} \log n$, for large $n$ we have $\EE \cM( \hat\by , \by ) \leq 3 n^{-2}$. Then
\[
\PP [ \cM( \hat\by , \by )  > 0 ] = \PP [ \cM( \hat\by , \by )  \geq 1 /n  ] \leq \EE \cM( \hat\by , \by ) \leq 3 / n \to 0.
\]
When $\mathrm{SNR} \leq 2 C_0^{-1} \log n$, for large $n$ we have $\EE \cM( \hat\by , \by ) \leq 3 e^{-C_0 \mathrm{SNR} } $ and then
\[
\limsup_{n\to\infty} \mathrm{SNR}^{-1} \log \EE \cM ( \hat{\by} , \by ) < - C_0.
\]

It remains to find $C_0 \in (0, 1]$ and prove \Cref{eqn-kmeans-proof-p} for $p = C_0 ( \mathrm{SNR} \wedge n )$. By \Cref{eqn-kmeans-proof-2}, there exists $\xi_n \to 0 $ such that
\begin{align}
& \PP \Big( \| \bU \bLambda^{1/2} \sgn(\bH) - [ \bar\bU \bar\bLambda^{1/2} + \cH (\bZ \bX^{\top}) \bar{\bU} \bar\bLambda^{-1/2} ] \|_{2,p} 
\geq \xi_n \| \bar\bU \|_{2,p} \| \bar\bLambda^{1/2} \|_2 
\Big) 
\leq e^{-2p}
\label{eqn-kmeans-final-1}
\end{align} 
holds for large $n$. By Lemma \ref{KPCA-lemma-Lp_prelim}, Claim \ref{claim-p-n} and Claim \ref{claim-assumptions},
\begin{align*}
& \|\cH(\bZ \bX^{\top}) \bar\bU \|_{2,p} = O_{\PP} (
\sqrt{p} \mu \gamma \bar\Delta \| \bar\bU \|_{2,p}
;~p ) .
\end{align*}
There exists a constant $C_1$ such that
\begin{align*}
\PP \bigg(
\|\cH(\bZ \bX^{\top}) \bar\bU \|_{2,p} \geq C_1 \sqrt{p} \mu \gamma \bar\Delta \| \bar\bU \|_{2,p}
\bigg) \leq e^{-2p}
\end{align*}
when $n$ is large. We have
\begin{align}
\PP \bigg(
\|\cH(\bZ \bX^{\top}) \bar\bU \bar\bLambda^{-1/2} \|_{2,p} \geq C_1 \sqrt{p} \mu \gamma \bar\Delta^{1/2} \| \bar\bU \|_{2,p}
\bigg) \leq e^{-2p}.
\label{eqn-kmeans-final-2}
\end{align}

By \Cref{eqn-kmeans-final-1,eqn-kmeans-u,eqn-kmeans-final-2,eqn-kmeans-eigengap}, there exists a constant $C_2$ such that
\begin{align}
\PP \bigg(
\| \bU \bLambda^{1/2} \sgn(\bH) - \bar\bU \bar\bLambda^{1/2} \|_{2,p} 
\geq C_2 \max\{ \sqrt{p \gamma^2} , \xi_n \} \cdot  \bar{s} n^{1/p}
\bigg) \leq e^{-2p}
\end{align}
holds for large $n$. Recall that $\xi_n \to 0$. Hence, the desired inequality (\ref{eqn-kmeans-proof-p}) holds so long as $C_2 \sqrt{p \gamma^2} \leq \sqrt{2} / (4e)$, which is equivalent to
\[
p \leq \frac{2}{(4 C_2 e)^2 \gamma^2} = \frac{1}{8 C_2^2 e^2 \gamma^2} = 
\frac{1}{ 8 C_2^2 e^2 \kappa_0^{6} }
\min \{
\mathrm{SNR}  , ~ n/K
\}.
\]
Hence, it suffices to take $p = C_0 (\mathrm{SNR} \wedge n)$ with
\[
C_0 = \frac{1}{ 8 C_2^2 e^2 \kappa_0^{6} K }.
\]

\subsection{Proof of \Cref{lem-kmeans-key}}\label{sec-lem-kmeans-key-proof}
Define $\bX = (\bx_1,\cdots,\bx_n)^{\top} \in \RR^{n \times d}$, $\bM^{\star} = (\bmu_1^{\star} , \cdots, \bmu_K^{\star}) \in \RR^{d \times K}$, $\hat\bM = (\hat\bmu_1 , \cdots, \hat\bmu_K )  \in \RR^{d \times K}$, $\bY^{\star} = ( \be_{y_1^{\star}} , \cdots, \be_{y_n^{\star}} )^{\top}  \in \RR^{n \times K}$ and $\hat\bY = ( \be_{\hat y_1 } , \cdots, \be_{\hat y_n } )^{\top}  \in \RR^{n \times K}$. Then 
\begin{align*}
\| \hat\bY \hat\bM^{\top} - \bY^{\star} \bM^{\star \top} \|_{\mathrm{F}}
&\leq \| \hat\bY \hat\bM^{\top} - \bX \|_{\mathrm{F}}
+ \| \bX -  \bY^{\star} \bM^{\star \top} \|_{\mathrm{F}} \\
& = \sqrt{ L ( \{ \hat\bmu_j \}_{j=1}^K, \{ \hat y_i \}_{i=1}^n ) } + \sqrt{ L ( \{ \bmu_j^{\star} \}_{j=1}^K, \{  y_i^{\star} \}_{i=1}^n )  } \\
& \leq (1 + \sqrt{C}) \sqrt{ L ( \{ \bmu_j^{\star} \}_{j=1}^K, \{  y_i^{\star} \}_{i=1}^n )  }
\end{align*}
and
\begin{align}
\| \hat\bY \hat\bM^{\top} - \bY^{\star} \bM^{\star \top} \|_{\mathrm{F}}^2
\leq (1 + \sqrt{C})^2 L ( \{ \bmu_j^{\star} \}_{j=1}^K, \{  y_i^{\star} \}_{i=1}^n )
\leq \frac{ n_{\min} \delta^2 }{  K } < \frac{ n_{\min} s^2 }{ 4  K }.
\label{eqn-lem-kmeans-dk-1}
\end{align}

\begin{claim}
	There exists a permutation $\tau:~[K] \to [K]$ such that
	\begin{align*}
	\sum_{j = 1}^K \| \bmu_j^{\star} - \hat\bmu_{\tau(j)} \|_2^2 \leq 
	\frac{ K }{n_{\min}} \| \hat\bY \hat\bM^{\top} - \bY^{\star} \bM^{\star \top} \|_{\mathrm{F}}^2  .
	\end{align*}
\end{claim}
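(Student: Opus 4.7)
The plan is a majority-vote construction followed by an injectivity argument that cashes in on the low-noise separation. For each $j, k \in [K]$, I will set $T_j = \{ i :~ y_i^{\star} = j \}$ and $n_{jk} = |\{ i \in T_j :~ \hat y_i = k \}|$, so that $\sum_k n_{jk} = |T_j| \geq n_{\min}$ and a direct expansion gives
\[
\| \hat\bY \hat\bM^{\top} - \bY^{\star} \bM^{\star \top} \|_{\mathrm{F}}^2
= \sum_{i=1}^n \| \hat\bmu_{\hat y_i} - \bmu_{y_i^\star}^{\star} \|_2^2
= \sum_{j,k=1}^K n_{jk} \| \hat\bmu_k - \bmu_j^{\star} \|_2^2.
\]

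The permutation candidate will be $\tau(j) = \argmax_{k \in [K]} n_{jk}$, with ties broken arbitrarily. Pigeonhole then gives $n_{j, \tau(j)} \geq |T_j|/K \geq n_{\min}/K$, and therefore
\[
\| \hat\bmu_{\tau(j)} - \bmu_j^{\star} \|_2^2
\leq \frac{1}{n_{j,\tau(j)}} \sum_{k=1}^K n_{jk} \| \hat\bmu_k - \bmu_j^{\star} \|_2^2
\leq \frac{K}{n_{\min}} \sum_{k=1}^K n_{jk} \| \hat\bmu_k - \bmu_j^{\star} \|_2^2.
\]
Summing this inequality over $j \in [K]$ recovers the advertised bound immediately, \emph{provided} that $\tau$ is in fact a permutation.

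The main obstacle, and really the only nontrivial point, is injectivity of $\tau$. Here I will invoke the low-noise estimate (\ref{eqn-lem-kmeans-dk-1}) to upgrade the per-$j$ inequality to the uniform bound
\[
\| \hat\bmu_{\tau(j)} - \bmu_j^{\star} \|_2^2
\leq \frac{K}{n_{\min}} \| \hat\bY \hat\bM^{\top} - \bY^{\star} \bM^{\star \top} \|_{\mathrm{F}}^2
\leq \frac{K}{n_{\min}} \cdot \frac{n_{\min} \delta^2}{K} = \delta^2
\]
for every $j \in [K]$. If $\tau(j_1) = \tau(j_2) = k$ for some $j_1 \neq j_2$, the triangle inequality would force $\| \bmu_{j_1}^{\star} - \bmu_{j_2}^{\star} \|_2 \leq 2\delta < s$, contradicting the assumed separation $s = \min_{j \neq k} \| \bmu_j^{\star} - \bmu_k^{\star} \|_2$. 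Hence $\tau$ is injective, and being a self-map of $[K]$ it must be a bijection, which completes the proof of the claim.
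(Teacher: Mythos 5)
Your proof is correct and follows essentially the same route as the paper: the same majority-vote map $\tau(j)=\argmax_k n_{jk}$, the same pigeonhole bound $n_{j,\tau(j)}\geq n_{\min}/K$, and an injectivity argument via the triangle inequality against the low-noise estimate \eqref{eqn-lem-kmeans-dk-1}. The only (cosmetic) variation is in the injectivity step: the paper lower-bounds $\|\hat\bY\hat\bM^{\top}-\bY^{\star}\bM^{\star\top}\|_{\mathrm F}^2\geq n_{\min}s^2/(2K)$ directly and contradicts the low-noise constant, whereas you first upgrade to the uniform bound $\|\hat\bmu_{\tau(j)}-\bmu_j^{\star}\|_2\leq\delta$ and then contradict the center separation $s>2\delta$; the two are quantitatively equivalent.
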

For any $j,k \in [K]$, define $S_{jk} = |\{ i \in [n] :~ y_i^{\star} = j,~ \hat y_i = k \}|$ and $\tau(j) = \argmax_{k \in [K]} S_{jk}$ (break any tie by selecting the smaller index). We have $S_{j \tau(j)} \geq n_{\min} / K$ for all $j$ and
\begin{align}
\| \hat{\bY} \hat{\bM}^{\top} - \bY^{\star} \bM^{\star\top} \|_{\mathrm{F}}^2
= \sum_{j, k \in [K]} S_{jk} \| \bmu_j^{\star} - \hat\bmu_k \|_2^2.
\label{eqn-lem-kmeans-dk-0}
\end{align}

We first prove by contradiction that $\tau:~[K] \to [K]$ must be a permutation (bijection). Suppose there exist distinct $j$ and $k$ such that $\tau(j) = \tau(k) = \ell$. By the triangle's inequality,
\begin{align*}
& \| \bmu_j^{\star} - \hat\bmu_{\ell} \|_2 + \| \hat\bmu_{\ell}- \bmu_k^{\star} \|_2 \geq \| \bmu_j^{\star} - \bmu_k^{\star} \|_2 \geq s ,\\
&  \| \bmu_j^{\star} - \hat\bmu_{\ell} \|_2^2 + \| \hat\bmu_{\ell}- \bmu_k^{\star} \|_2^2 \geq s^2 / 2.
\end{align*}
By \Cref{eqn-lem-kmeans-dk-0} and the facts that $S_{j\ell} \geq n_{\min} / K$ and $S_{k\ell} \geq n_{\min} / K$,
\begin{align*}
\| \hat{\bY} \hat{\bM}^{\top} - \bY^{\star} \bM^{\star\top} \|_{\mathrm{F}}^2 \geq
S_{j \ell} \| \bmu_j^{\star} - \hat\bmu_{\ell} \|_2^2
+ S_{k \ell} \| \bmu_k^{\star} - \hat\bmu_{\ell} \|_2^2
\geq \frac{n_{\min} s^2}{2K} ,
\end{align*}
which contradicts \Cref{eqn-lem-kmeans-dk-1}. Now that $\tau$ is a permutation, we derive from \Cref{eqn-lem-kmeans-dk-0} that
\begin{align*}
& \| \hat{\bY} \hat{\bM}^{\top} - \bY^{\star} \bM^{\star\top} \|_{\mathrm{F}}^2
\geq \sum_{j = 1}^K S_{j\tau(j)} \| \bmu_j^{\star} - \hat\bmu_{\tau(j)} \|_2^2
\geq \frac{n_{\min}}{K} \sum_{j = 1}^K \| \bmu_j^{\star} - \hat\bmu_{\tau(j)} \|_2^2 .
\end{align*}

Now that the claim has been proved, we obtain that
\begin{align*}
\sum_{j = 1}^K \| \bmu_j^{\star} - \hat\bmu_{\tau(j)} \|_2^2 \leq 
\frac{ K }{n_{\min}} \| \hat\bY \hat\bM^{\top} - \bY^{\star} \bM^{\star \top} \|_{\mathrm{F}}^2 
\leq \frac{(1 + \sqrt{C})^2 K L ( \{ \bmu_j^{\star} \}_{j=1}^K, \{  y_i^{\star} \}_{i=1}^n )}{n_{\min}}
\leq \delta^2 .
\end{align*}
Hence $\max_{j \in [K]} \|  \bmu_j^{\star} - \hat\bmu_{\tau(j)} \|_2 \leq \delta$ and $\min_{j \neq k} \| \hat\bmu_j - \hat\bmu_k \|_2 \geq s - 2 \delta$. 
For any $j \neq y_i^{\star}$, we have
\begin{align*}
& \| \bx_i - \hat\bmu_{\tau(j)} \|_2 - \| \bx_i - \hat\bmu_{\tau(y_i^{\star})} \|_2 \\
& \geq ( \| \bx_i - \bmu_{j}^{\star} \|_2 - \| \bmu_{j}^{\star} - \hat\bmu_{\tau(j)} \|_2 ) - ( \| \bx_i - \bmu_{y_i^{\star}}^{\star} \|_2 + \| \bmu_{y_i^{\star}}^{\star} - \hat\bmu_{\tau(y_i^{\star})} \|_2 ) \\
& \geq \| \bx_i - \bmu_{j}^{\star} \|_2 - \| \bx_i - \bmu_{y_i^{\star}}^{\star} \|_2 - 2 \delta \\
& \geq  (\| \bmu_{j}^{\star} - \bmu_{y_i^{\star}}^{\star} \|_2 -
\| \bx_i - \bmu_{y_i^{\star}}^{\star} \|_2 ) - \| \bx_i - \bmu_{y_i^{\star}}^{\star} \|_2  - 2 \delta \\
& = \| \bmu_{j}^{\star} - \bmu_{y_i^{\star}}^{\star} \|_2 - 2 \| \bx_i - \bmu_{y_i^{\star}}^{\star} \|_2 - 2 \delta  \geq s - 2 ( \| \bx_i - \bmu_{y_i^{\star}}^{\star} \|_2 + \delta ).
\end{align*}
If $\| \bx_i - \bmu_{y_i^{\star}}^{\star} \|_2 < s / 2 - \delta $, then $\| \bx_i - \hat\bmu_{\tau(j)} \|_2 > \| \bx_i - \hat\bmu_{\tau(y_i^{\star})} \|_2$ for all $j \neq y_i^{\star}$. The assumption $\hat y_i  \in \argmin_{j \in [K]} \| \bx_i - \hat\bmu_j \|_2$, $\forall i \in [n]$ forces
\begin{align*}
\{i:~ \hat{y}_i \neq \tau(y_i^{\star}) \} & \subseteq \{ i:~ \| \bx_i - \hat\bmu_{\tau(j)} \|_2 \leq \| \bx_i - \hat\bmu_{\tau(y_i^{\star})} \|_2 \text{ for some } j \neq  y_i^{\star} \} \\
& \subseteq \{ i:~ \| \bx_i - \bmu_{y_i^{\star}}^{\star} \|_2 \geq s / 2 - \delta \}.
\end{align*}
The desired bound on $ | \{i:~ \hat{y}_i \neq \tau(y_i^{\star}) \} |$ then becomes obvious.

\section{Proofs of Section \ref{KPCA-sec-GMM-GMM}}\label{KPCA-sec-GMM-proof}

\subsection{Useful lemmas}

We first prove a lemma bridging $\ell_p$ approximation and misclassification rates.

\begin{lemma}\label{KPCA-lemma-GMM-error-rate}
	Suppose that $\bv = \bv_n, \bw = \bw_n$ and $\bar\bv =\bar{\bv}_n$ are random vectors in $\RR^n$, $\min_{i \in [n]} |\bar v_i| = \delta_n > 0$, and $p = p_n \to \infty$. If $\min_{s = \pm 1} \| s \bv - \bar\bv - \bw \|_p = o_{\PP} ( n^{1/p} \delta_n;~ p)$, then
	\begin{align*}
	& \limsup_{n\to\infty} p^{-1} \log \bigg(
	\frac{1}{n}
	\EE \min_{s = \pm 1} \left|  \{ i \in [n]:~ s \sgn(v_i) \neq \sgn(\bar v_i) \} \right| 
	\bigg)
	\\ &
	\leq
	\limsup_{\varepsilon \to 0}
	\limsup_{n\to\infty} p^{-1} \log \bigg( \frac{1}{n} \sum_{i=1}^{n}  \PP  \left(
	-w_i \sgn(\bar v_i) \geq (1-\varepsilon) |\bar v_i|
	\right)
	\bigg).
	\end{align*}
\end{lemma}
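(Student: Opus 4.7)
\textbf{Proof proposal for Lemma \ref{KPCA-lemma-GMM-error-rate}.}

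The plan is to relate the random mismatch count to the deterministic tail probabilities on the right-hand side via a two-step splitting: separate the indices into those where the $\ell_p$ approximation is locally good (of size $\varepsilon |\bar v_i|$) and those where it fails, then show the good-index mismatches are contained in a sign-free event of the form $\{-w_i \sgn(\bar v_i) \geq (1-\varepsilon)|\bar v_i|\}$.

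\textbf{Step 1 (deterministic pointwise implication).} Fix $\varepsilon \in (0, 1)$ and $s \in \{\pm 1\}$. Write
$s v_i \sgn(\bar v_i) = |\bar v_i| + w_i \sgn(\bar v_i) + (s v_i - \bar v_i - w_i)\sgn(\bar v_i)$.
A sign mismatch $s \sgn(v_i) \neq \sgn(\bar v_i)$ means $s v_i \sgn(\bar v_i) \leq 0$. If in addition $|sv_i - \bar v_i - w_i| < \varepsilon |\bar v_i|$, then $w_i \sgn(\bar v_i) \leq -(1-\varepsilon)|\bar v_i|$, i.e.\ $-w_i \sgn(\bar v_i) \geq (1-\varepsilon)|\bar v_i|$. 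Crucially, this conclusion does not depend on $s$.

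\textbf{Step 2 (splitting and Markov).} Let $s^\ast = s^\ast(\omega)$ achieve the minimum in $\|s\bv - \bar\bv - \bw\|_p$. By Step 1,
\begin{align*}
\min_{s = \pm 1} |\{i:~ s \sgn(v_i) \neq \sgn(\bar v_i)\}|
&\leq |\{i:~ s^\ast \sgn(v_i) \neq \sgn(\bar v_i),~ |s^\ast v_i - \bar v_i - w_i| < \varepsilon|\bar v_i|\}| \\
&\quad + |\{i:~ |s^\ast v_i - \bar v_i - w_i| \geq \varepsilon|\bar v_i|\}| \\
&\leq |\{i:~ -w_i \sgn(\bar v_i) \geq (1-\varepsilon)|\bar v_i|\}| + \varepsilon^{-p} \delta_n^{-p} \|s^\ast \bv - \bar\bv - \bw\|_p^p,
\end{align*}
where the second inequality uses Markov-type control on the bad set together with the lower bound $|\bar v_i| \geq \delta_n$.

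\textbf{Step 3 (invoking the hypothesis).} By definition of $o_\PP(\cdot;~p)$ (Definition \ref{KPCA-defn-0}), for any constant $C > 0$ there exist a sequence $c_n = c_n(C) \to 0$ and a constant $C_1 > 0$ such that the event $E_n := \{\|s^\ast\bv - \bar\bv - \bw\|_p \leq c_n n^{1/p}\delta_n\}$ satisfies $\PP(E_n^c) \leq C_1 e^{-Cp}$ for large $n$. On $E_n$ the last term in Step 2, divided by $n$, is at most $(c_n/\varepsilon)^p$. Bounding the mismatch count by $n$ on $E_n^c$ and taking expectations yields
\begin{align*}
\frac{1}{n}\EE \min_{s = \pm 1} |\{i:~ s\sgn(v_i) \neq \sgn(\bar v_i)\}|
\leq \frac{1}{n}\sum_{i=1}^n \PP(-w_i \sgn(\bar v_i) \geq (1-\varepsilon)|\bar v_i|) + (c_n/\varepsilon)^p + C_1 e^{-Cp}.
\end{align*}

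\textbf{Step 4 (logarithmic asymptotics).} Applying $p^{-1}\log$, using $\log(a+b+c) \leq \log 3 + \max\{\log a, \log b, \log c\}$ and the fact that $\limsup$ commutes with $\max$ of finitely many sequences, the second term contributes $\log(c_n/\varepsilon) \to -\infty$ and the third contributes $-C$. Letting $C \to \infty$ (which is permitted because the bound in Step 3 holds for every $C$) eliminates both error terms, and the stated inequality follows after taking $\limsup_{\varepsilon \to 0}$ on the right.

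\textbf{Main obstacle.} The only subtle point is that the sign $s^\ast$ achieving the minimum is data-dependent, yet the right-hand side of the conclusion is sign-free. This is resolved by the observation in Step 1 that the implication $\{\text{mismatch}\} \cap \{\text{local approximation holds}\} \subseteq \{-w_i \sgn(\bar v_i) \geq (1-\varepsilon)|\bar v_i|\}$ makes no reference to $s$, so the sign-dependence is confined to the Markov error term, which vanishes.
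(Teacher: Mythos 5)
Your proposal is correct and follows essentially the same route as the paper's proof: the same pointwise inclusion $\{\text{mismatch}\}\cap\{|r_i|<\varepsilon|\bar v_i|\}\subseteq\{-w_i\sgn(\bar v_i)\geq(1-\varepsilon)|\bar v_i|\}$, the same Markov bound on the exceptional set using $|\bar v_i|\geq\delta_n$, and the same use of the $o_{\PP}(\cdot;\,p)$ tail to kill both error terms in the $p^{-1}\log$ asymptotics. The only cosmetic difference is bookkeeping: the paper folds the two error terms together by choosing the threshold $\varepsilon^2 n^{1/p}\delta_n$ and the tail rate $e^{-p/\varepsilon}$ so that both decay like $e^{-p\log(1/\varepsilon)}$, whereas you keep the constant $C$ in the tail bound separate and send $C\to\infty$; both are valid.
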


\begin{proof}[\bf Proof of Lemma \ref{KPCA-lemma-GMM-error-rate}]
	
	Let $S_n = \{ i \in [n]:~ \sgn(v_i) \neq \sgn(\bar v_i) \}$ and $\br = \bv - \bar\bv - \bw$. For notational simplicity, we will prove the upper bound for $
	\limsup_{n\to\infty} p^{-1} \log ( \EE |S_n| / n )$ under a stronger assumption $\| \br \|_p = o_{\PP} ( n^{1/p} \delta_n;~ p)$. Otherwise we just redefine $\bv$ as $ (\argmin_{s = \pm 1} \| s \bv - \bar\bv - \bw \|_p ) \bv$ and go through the same proof.
	
	As a matter of fact,
	\begin{align*}
	S_n 
	& \subseteq \{ i \in [n]:~ - ( v_i -\bar v_i) \sgn (\bar v_i ) \geq | \bar v_i |  \}
	= \{ i \in [n]:~ - ( w_i + r_i ) \sgn (\bar v_i ) \geq | \bar v_i |  \}.
	\end{align*}
	For any $\varepsilon \in (0,1)$,
	\begin{align*}
	&\{ i \in [n]:~ -r_i \sgn(\bar v_i) < \varepsilon |\bar v_i| \text{ and } -w_i \sgn(\bar v_i) < (1-\varepsilon) |\bar v_i| \} \\
	& \subseteq  \{ i \in [n]:~ - ( w_i + r_i ) \sgn (\bar v_i ) < | \bar v_i |  \}.
	\end{align*}
	Hence
	\begin{align*}
	&S_n  \subseteq \{ i \in [n]:~ -r_i \sgn(\bar v_i) \geq \varepsilon |\bar v_i| \text{ or } -w_i \sgn(\bar v_i) \geq (1-\varepsilon) |\bar v_i| \} \notag \\
	& \subseteq \{ i \in [n]:~ |r_i| \geq \varepsilon |\bar v_i| \} \cup \{ i \in [n]:~ -w_i \sgn(\bar v_i) \geq (1-\varepsilon) |\bar v_i| \}.
	\end{align*}
	Let $ q_n (\varepsilon) = \frac{1}{n}\sum_{i=1}^{n} \PP  \left(
	-w_i \sgn(\bar v_i) \geq (1-\varepsilon) |\bar v_i|
	\right)$. We have $\EE | S_n | \leq \EE \left| \{ i \in [n]:~ |r_i| \geq \varepsilon |\bar v_i| \} \right| + n q_n (\varepsilon)$.
	
	To study $\{ i \in [n]:~ |r_i| \geq \varepsilon |\bar v_i| \}$, we define $\mathcal{E}_n = \{ \| \br \|_p < \varepsilon^2 n^{1/p} \delta_n \}$. Since $\| \br \|_p = o_{\PP} ( n^{1/p} \delta_n ;~ p )$, there exist $C_1, N\in \ZZ_+$ such that $\PP ( \mathcal{E}_n^c ) \leq C_1 e^{- p / \varepsilon }$, $\forall n \geq N$. When $\mathcal{E}_n$ happens,
	\begin{align*}
	\left| \{ i \in [n]:~ |r_i| \geq \varepsilon |\bar v_i| \} \right|
	\leq \left| \{ i \in [n]:~ |r_i| \geq \varepsilon \delta_n \} \right|
	\leq \frac{\| \br \|_p^p }{ (\varepsilon \delta_n)^p}
	\leq \frac{ (\varepsilon^2 n^{1/p} \delta_n )^p }{ (\varepsilon \delta_n)^p}
	= n \varepsilon^p.
	\end{align*}
	Then by $\log t = \log (1 + t - 1) \leq t - 1 < t$ for $t \geq 1$, we have $\log (1/\varepsilon) \leq 1 / \varepsilon$,
	\begin{align*}
	&n^{-1} \EE\left| \{ i \in [n]:~ |r_i| \geq \varepsilon |\bar v_i| \} \right|
	\leq \varepsilon^p \PP (\mathcal{E}_n) + 1 \cdot \PP (\mathcal{E}_n^c) \\
	& \leq e^{ - p  \log (1/ \varepsilon) } + C_1 e^{-p/\varepsilon}
	\leq (C_1 \vee 1) e ^{-p \log (1/\varepsilon)},
	\end{align*}
	and $n^{-1} \EE |S_n| \leq (C_1 \vee 1) e^{ - p  \log (1/ \varepsilon ) }  +q_n (\varepsilon)$. As a result,
	\begin{align*}
	\log (\EE |S_n| / n )
	& \leq \log ( (C_1 \vee 1) e^{ - p  \log (1/ \varepsilon ) }  +q_n (\varepsilon) )
	\leq  \log [ 2 \max \{ (C_1 \vee 1) e^{ - p  \log (1/ \varepsilon ) } , q_n (\varepsilon) \} ] \\
	& \leq  \log 2 +\max\{ \log (C_1 \vee 1) - p  \log (1/ \varepsilon ),~ \log q_n (\varepsilon)
	\}.
	\end{align*}
	The assumption $p = p_n \to \infty$ leads to
	\begin{align*}
	\limsup_{n\to\infty} p^{-1} \log (\EE |S_n| / n ) \leq \max\{ - \log (1/\varepsilon),~ \limsup_{n\to\infty} p^{-1} \log q_n (\varepsilon) \}, \qquad\forall \varepsilon \in (0,1).
	\end{align*}
	By letting $\varepsilon \to 0$ we finish the proof.
\end{proof}

The following lemma will be used in the analysis of misclassification rates.

\begin{lemma}\label{KPCA-lemma-GMM-norms}
Consider the Gaussian mixture model in Definition \ref{KPCA-defn-GMM} with $d \geq 2$. Let $R = \| \bmu \|_2$ and $ p = \mathrm{SNR} = R^4 / (R^2 + d / n)$. If $n \to \infty$ and $\mathrm{SNR} \to \infty$, then for any fixed $i$ we have
$\| \hat\bmu^{(-i)} - \bmu \|_2 = O_{\PP} ( \sqrt{(d \vee p)/n} ;~p )$, $\left| 
\| \hat\bmu^{(-i)} \|_2 - \sqrt{R^2 + d / (n-1)} \right|
= O_{\PP} ( \sqrt{ p / n } ;~p )$, $\| \bx_i \|_2 = O_{\PP} ( R \vee \sqrt{d};~p )$, $\langle \hat\bmu^{(-i)} - \bmu , \bx_i \rangle = \sqrt{p / n} O_{\PP} (  R \vee \sqrt{d};~p)$ and $\langle \hat\bmu^{(-i)} , \bx_i \rangle = O_{\PP} (R^2;~ p)$.
\end{lemma}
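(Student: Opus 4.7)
\textbf{Proof plan for Lemma \ref{KPCA-lemma-GMM-norms}.}
The strategy is uniformly moment-based: for each quantity I will estimate $\EE^{1/p}|\cdot|^p$ and then convert via Fact \ref{KPCA-fact-moment-tail}. Two structural facts drive everything. First, since $y_j\in\{\pm1\}$ and $\bz_j\sim N(\bm 0,\bI_d)$, each $y_j\bz_j$ is distributed as $N(\bm 0,\bI_d)$, and these are independent across $j$; hence
\[
\hat\bmu^{(-i)}-\bmu=\frac{1}{n-1}\sum_{j\ne i}y_j\bz_j\sim N\!\left(\bm 0,\tfrac{1}{n-1}\bI_d\right).
\]
Second, $\hat\bmu^{(-i)}$ is independent of $\bx_i=y_i\bmu+\bz_i$, because it only uses $\{(y_j,\bz_j)\}_{j\ne i}$. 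Throughout, I will use the two cases of $\mathrm{SNR}$: if $R^2\ge d/n$ then $p\asymp R^2$ and $\sqrt{R^2+d/n}\asymp R$; if $R^2<d/n$ then $p\asymp R^4n/d$ and $\sqrt{R^2+d/n}\asymp\sqrt{d/n}$. In particular one checks in both cases that $p\lesssim R^2$ and $\sqrt p\lesssim R\vee\sqrt d$.

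For (1), $(n-1)\|\hat\bmu^{(-i)}-\bmu\|_2^2\sim\chi^2_d$, and by the standard $\chi^2$ moment bound $\EE^{1/p}(\chi^2_d)^{p/2}\lesssim \sqrt{d\vee p}$, giving $\EE^{1/p}\|\hat\bmu^{(-i)}-\bmu\|_2^p\lesssim\sqrt{(d\vee p)/n}$. For (3), triangle inequality gives $\|\bx_i\|_2\le R+\|\bz_i\|_2$, and the same $\chi^2_d$ bound yields $\EE^{1/p}\|\bz_i\|_2^p\lesssim\sqrt{d\vee p}$; the observation $\sqrt p\lesssim R\vee\sqrt d$ turns this into $O_\PP(R\vee\sqrt d;p)$.

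For (2), write $a=R^2+d/(n-1)$ and decompose
\[
\|\hat\bmu^{(-i)}\|_2^2-a=2\langle\bmu,\hat\bmu^{(-i)}-\bmu\rangle+\Big(\|\hat\bmu^{(-i)}-\bmu\|_2^2-\tfrac{d}{n-1}\Big),
\]
then apply $|\sqrt x-\sqrt a|\le|x-a|/\sqrt a$. The first term is $N(0,R^2/(n-1))$, so its $p$-th moment is $\lesssim R\sqrt{p/n}$; dividing by $\sqrt a\ge R$ gives $\sqrt{p/n}$. For the second term, using $\EE^{1/p}|\chi^2_d-d|^p\lesssim\sqrt{dp}+p$ (Bernstein for sums of independent $g^2-1$), one gets a bound of $(\sqrt{dp}+p)/[(n-1)\sqrt a]$, which a short case check (splitting on $R^2\gtrless d/n$) confirms is $\lesssim\sqrt{p/n}$.

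For (4), condition on $\bx_i$: by independence, $\langle\hat\bmu^{(-i)}-\bmu,\bx_i\rangle\mid\bx_i\sim N(0,\|\bx_i\|_2^2/(n-1))$, so $\EE^{1/p}|\langle\hat\bmu^{(-i)}-\bmu,\bx_i\rangle|^p\lesssim\sqrt{p/n}\,\EE^{1/p}\|\bx_i\|_2^p\lesssim\sqrt{p/n}(R\vee\sqrt d)$ by (3). Finally (5) decomposes as $\langle\hat\bmu^{(-i)},\bx_i\rangle=y_iR^2+\langle\bmu,\bz_i\rangle+\langle\hat\bmu^{(-i)}-\bmu,\bx_i\rangle$; the first piece is $O(R^2)$, the middle piece has $p$-th moment $\lesssim R\sqrt p\lesssim R^2$, and for the last one I verify via the same two-case analysis that $\sqrt{p/n}(R\vee\sqrt d)\lesssim R^2$. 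The only real obstacle is this case analysis, which relies on unwinding the definition $p=R^4/(R^2+d/n)$; everything else is direct moment computation plus Fact \ref{KPCA-fact-moment-tail}.
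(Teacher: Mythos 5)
Your proposal is correct and the moment-plus-\Cref{KPCA-fact-moment-tail} machinery is the same engine the paper runs on (the paper just sometimes goes through the tail bound in Lemma \ref{KPCA-lemma-chi-square} rather than a moment estimate; the two routes are interchangeable here). The one place you diverge genuinely is $\langle\hat\bmu^{(-i)}-\bmu,\bx_i\rangle$: the paper first splits $\bx_i = y_i\bmu + \bz_i$, handles $\langle\hat\bmu^{(-i)}-\bmu,\bmu\rangle$ as a univariate Gaussian via $\bw_i$, and then conditions on $\hat\bmu^{(-i)}$ to treat $\langle\hat\bmu^{(-i)}-\bmu,\bz_i\rangle/\|\hat\bmu^{(-i)}-\bmu\|_2\sim N(0,1)$ and combine with the bound from part one. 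You instead condition on $\bx_i$ once, so the whole inner product is $N(0,\|\bx_i\|_2^2/(n-1))$ and a single application of your bound on $\EE^{1/p}\|\bx_i\|_2^p$ finishes it. That is a tidier single-conditioning that avoids the split and the recombination; the tradeoff is that you lean on the moment version of your part-(3) estimate rather than the $O_\PP$ statement, which you do have since you proved (3) through moments anyway. Your reduction of the "case check" in (2) is also sound — as you note, the inequality $\sqrt{dp}+p \lesssim \sqrt{np}\,\sqrt{a}$ follows from $p \le R^2 \le nR^2 + d \asymp na$ without actually needing to split cases.
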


\begin{proof}[\bf Proof of Lemma \ref{KPCA-lemma-GMM-norms}]
Let $\bw_i = \sum_{j\neq i} \bz_j  y_j$ and note that $(n-1) \hat{\bmu}^{(-i)} = \sum_{j \neq i} \bx_j  y_j = \sum_{j \neq i} (\bmu  y_j + \bz_j )  y_j = (n-1) \bmu + \bw_i$. From $\bw_i \sim N ( \mathbf{0}, (n-1) \bI_d )$ we get $\| \bw_i \|_2^2 / (n-1) \sim \chi_d^2$, and Lemma \ref{KPCA-lemma-chi-square} leads to $\| \bw_i \|_2^2 / (n-1) - d = O_{\PP} ( p \vee \sqrt{pd} ;~p)$. Then
\begin{align*}
& \| \hat\bmu^{(-i)} - \bmu \|_2^2 = (n-1)^{-2} \| \bw_i \|_2^2 = \frac{ d + O_{\PP} ( p \vee \sqrt{pd} ;~p) }{n - 1} = O_{\PP} ( (d \vee p) / n ;~p ),
\end{align*}
and $\| \hat\bmu^{(-i)} - \bmu \|_2 = O_{\PP} ( \sqrt{(d \vee p)/n} ;~p )$. To study $\| \hat\bmu^{(-i)}  \|_2$, we start from the decomposition
\begin{align*}
\| \hat\bmu^{(-i)} \|_2^2 = \| \bmu \|_2^2 + 2 (n-1)^{-1} \langle \bmu, \bw_i \rangle  +(n-1)^{-2}  \| \bw_i \|_2^2.
\end{align*}
Since $\langle \bmu, \bw_i \rangle \sim N ( 0, (n-1) R^2 )$, Lemma \ref{KPCA-lemma-Lp-gaussian} yields $ \langle \bmu, \bw_i \rangle  = O_{\PP} ( R \sqrt{np};~p )$.
We use these and $\sqrt{p} \leq R$ to derive
\begin{align*}
\| \hat\bmu^{(-i)} \|_2^2 
& = R^2 + \frac{2 \cdot O_{\PP} ( R \sqrt{np};~p )  }{n-1} + \frac{ d + O_{\PP} ( p \vee \sqrt{pd} ;~p) }{n - 1} \\
& = R^2 + \frac{d}{n-1} + \frac{ \max\{
	R\sqrt{np},~ p ,~ \sqrt{pd}
	\} }{n} O_{\PP} ( 1 ;~p )\\
& = R^2 + \frac{d}{n-1} + \frac{ \max\{
	R\sqrt{np} ,~ \sqrt{pd}
	\} }{n} O_{\PP} ( 1 ;~p ) \\
& = R^2 + \frac{d}{n-1} + \sqrt{ \frac{p}{n} } O_{\PP} ( R \vee \sqrt{d / n} ;~p ).
\end{align*}
Based on this and $\sqrt{R^2 + d / (n-1)} \geq \sqrt{R^2 + d / n} \asymp R \vee \sqrt{d / n}$, 
\begin{align*}
& \left| 
\| \hat\bmu^{(-i)} \|_2 - \sqrt{R^2 + d / (n-1)} \right| = \frac{
	\left|  \| \hat\bmu^{(-i)} \|_2^2 - [ R^2 + d / (n-1) ] 
	\right|
}{
	\| \hat\bmu^{(-i)} \|_2 + \sqrt{R^2 + d / (n-1)}
} \\
& \leq \frac{  \sqrt{ p / n } O_{\PP} ( R \vee \sqrt{d / n} ;~p ) }{ \sqrt{R^2 + d / (n-1)} }
= O_{\PP} ( \sqrt{ p / n } ;~p ).
\end{align*}

From $\| \bz_i \|_2^2 \sim \chi_d^2$ and Lemma \ref{KPCA-lemma-chi-square} we get $\| \bz_i \|_2^2 = d + O_{\PP} ( \sqrt{pd} \vee p ;~p ) = O_{\PP} ( p \vee d;~p )$. Hence
$\| \bx_i \|_2 \leq \| \bmu \|_2 + \| \bz_i \|_2 = R + O_{\PP} ( \sqrt{p \vee d} ;~p ) = O_{\PP} ( R \vee \sqrt{d};~p )$ as $R \geq \sqrt{p}$.

Now we study $\langle \hat\bmu^{(-i)} - \bmu , \bx_i \rangle = \langle \hat\bmu^{(-i)} - \bmu , \bmu \rangle  y_i + \langle \hat\bmu^{(-i)} - \bmu , \bz_i \rangle$.
On the one hand, $ \langle \hat\bmu^{(-i)} - \bmu , \bmu \rangle  =(n-1)^{-1} \langle \bw_i , \bmu \rangle \sim N(0, R^2 / (n-1))$ and Lemma \ref{KPCA-lemma-Lp-gaussian} imply that $\langle \hat\bmu^{(-i)} - \bmu , \bmu \rangle = O_{\PP} ( R \sqrt{ p / n } ;~p )$.
On the other hand, $ \langle \hat\bmu^{(-i)} - \bmu , \bz_i \rangle / \| \hat\bmu^{(-i)} - \bmu \|_2 \sim N(0,1)$ leads to $ \langle \hat\bmu^{(-i)} - \bmu , \bz_i \rangle / \| \hat\bmu^{(-i)} - \bmu \|_2 = O_{\PP} (\sqrt{p};~p)$. Since $\| \hat\bmu^{(-i)} - \bmu \|_2 = O_{\PP} ( \sqrt{(d \vee p)/n} ;~p )$, we have $ \langle \hat\bmu^{(-i)} - \bmu , \bz_i \rangle  = \sqrt{p / n} O_{\PP} ( \sqrt{p \vee d};~p)$.
As a result,
\begin{align*}
\langle \hat\bmu^{(-i)} - \bmu , \bx_i \rangle = \sqrt{p / n} O_{\PP} (  R \vee \sqrt{d};~p).
\end{align*}

Note that $\left| \langle \bmu , \bx_i \rangle \right| \leq  \left| \|\bmu\|_2^2  y_i  + \langle \bmu , \bz_i \rangle \right| \leq R^2 + \left| \langle \bmu , \bz_i \rangle \right|$. From $\langle \bmu , \bz_i \rangle \sim N( 0, R^2 )$ we obtain that $  \langle \bmu , \bz_i \rangle = O_{\PP} ( R \sqrt{p};~p )$. The fact $\sqrt{p} \leq R$ leads to $ \langle \bmu , \bx_i \rangle  = O_{\PP} ( R^2;~p )$ and
\begin{align*}
\langle \hat\bmu^{(-i)} , \bx_i \rangle  = 
\langle \bmu , \bx_i \rangle +  \langle \hat\bmu^{(-i)} - \bmu , \bx_i \rangle =
O_{\PP} ( R^2 + \sqrt{p/n} (R \vee \sqrt{d}) ;~p ) = O_{\PP} (R^2;~ p),
\end{align*}
where we also applied $\sqrt{pd / n} = R^2 \sqrt{d / n} / \sqrt{ R^2 + d / n } \leq R^2$.
\end{proof}

\subsection{Proof of Theorem \ref{KPCA-thm-GMM}}\label{KPCA-thm-GMM-proof}

We supress the subscripts of $\lambda_1$, $\bar\lambda_1$, $\bu_1$ and $\bar\bu_1$. First, suppose that $\mathrm{SNR} > C \log n$ for some constant $C>0$. We have
\begin{align*}
&\PP [\cM (\hat\by , \by ) > 0] \leq \PP \bigg(
\min_{c = \pm 1} \| c \bu - \bar\bu \|_2 \geq \frac{1}{\sqrt{n}}
\bigg)  \notag\\
& \leq \PP \bigg(
\min_{c = \pm 1} \| c \bu - \bar\bu - \cH ( \bZ \bX^{\top} ) \bar\bu / \bar\lambda \|_{\infty} \geq \frac{1}{2 \sqrt{n}}
\bigg) + \PP \bigg( \| \cH ( \bZ \bX^{\top} ) \bar\bu / \bar\lambda \|_{\infty} \geq \frac{1}{2 \sqrt{n}}
\bigg) .
\end{align*}
By \Cref{KPCA-corollary-main-inf},
\[
\min_{c = \pm 1} \| c \bu - \bar\bu - \cH ( \bZ \bX^{\top} ) \bar\bu / \bar\lambda \|_{\infty} = o_{\PP} (\| \bar\bu \|_{\infty} ;~ \log n ).
\]
Therefore, for sufficiently large $n$ we have
\begin{align*}
\PP \bigg(
\min_{c = \pm 1} \| c \bu - \bar\bu - \cH ( \bZ \bX^{\top} ) \bar\bu / \bar\lambda \|_{\infty} \geq \frac{1}{2 \sqrt{n}}
\bigg) \leq \frac{1}{n}.
\end{align*}
We are going to show that when $C$ is large,
\begin{align}
\PP \bigg( \| \cH ( \bZ \bX^{\top} ) \bar\bu / \bar\lambda \|_{\infty} \geq \frac{1}{2 \sqrt{n}}
\bigg) \to 0 .
\label{eqn-gmm-new-0}
\end{align}
If that is true, then $\PP [\cM (\hat\by , \by ) = 0] \to 1$ provided that $\mathrm{SNR} > C \log n$. Note that
\begin{align}
\PP \bigg( \| \cH ( \bZ \bX^{\top} ) \bar\bu / \bar\lambda \|_{\infty} \geq \frac{1}{2 \sqrt{n}}
\bigg)
& = \PP \bigg( \frac{1}{n \| \bmu \|_2^2} \max_{i \in [n]} 
\bigg| \bigg\langle
\bz_i , \sum_{j \neq i} \bx_j \bar\bu_j
\bigg\rangle \bigg|
\geq \frac{1}{2 \sqrt{n}}
\bigg) \notag\\
& \leq \sum_{i=1}^{n}
 \PP \bigg( 
\bigg| \bigg\langle
\bz_i , \sum_{j \neq i} \bx_j \bar\bu_j
\bigg\rangle \bigg|
\geq \frac{\sqrt{n} \| \bmu \|_2^2}{2}
\bigg).
\label{eqn-gmm-new-1}
\end{align}

Note that $\bar\bu_j = y_j / \sqrt{n}$ and $\bx_j = y_j \bmu + \bz_j$, we have
\begin{align*}
\sum_{j \neq i} \bx_j \bar\bu_j = \sum_{j \neq i} ( \bmu + y_j \bz_j) = \sqrt{\frac{n-1}{n}} (
\sqrt{n-1}  \bmu + \bw_i ),
\end{align*}
where $\bw_i = \frac{1}{\sqrt{n-1}} \sum_{j \neq i} y_j \bz_j \sim N(\bm{0} , \bI)$. Hence
\begin{align}
& \PP  \bigg(
\bigg| \bigg\langle \bz_i , \sum_{j \neq i} \bx_j \bar\bu_j \bigg\rangle 
\bigg | \geq \frac{ \sqrt{n} \| \bmu \|_2^2 }{ 2 }
\bigg) 
\notag\\ &
 =\PP  \bigg(
\bigg| \bigg\langle \bz_i , \frac{ \sqrt{n-1} \bmu + \bw_i }{ \|   \sqrt{n-1} \bmu + \bw_i  \|_2 } \bigg\rangle 
\bigg | \geq
\frac{ \sqrt{n} \| \bmu \|_2^2  }{2 \|  \sqrt{n-1} \bmu + \bw_i  \|_2 }
\bigg).
\label{KPCA-eqn-sGMM-1}
\end{align}

By the triangle's inequality,
\begin{align*}
\|   \sqrt{n-1} \bmu + \bw_i   \|_2 \leq \sqrt{n}  \| \bmu \|_2 + \| \bw_i \|_2.
\end{align*}
Since Lemma \ref{KPCA-lem-concentration-gram} yields $\| \bw_i \|_2^2 = O_{\PP} ( d \vee n ;~ n )$. There exist constants $c_1, c_2>0$ such that
\begin{align*}
\PP ( \|  \bw_i \|_2 \geq c_1 \sqrt{d \vee n} ) < c_2 e^{-n}.
\end{align*}
The assumption $\mathrm{SNR} \geq C \log n$ yields $\| \bmu \|_2 \gg 1$ and thus
\begin{align*}
\PP \Big( \|  \sqrt{n-1} \bmu + \bw_i   \|_2 \geq (c_1+1) \max\{ \sqrt{d} ,~ \sqrt{n} \| \bmu \|_2 \}  \Big) < c_2 e^{-n}.
\end{align*}
Hence, there is a constant $c_1'$ such that
\begin{align*}
\PP \Big( \|  \sqrt{n-1} \bmu + \bw_i   \|_2 \geq c_1' \sqrt{n \| \bmu \|_2^2 + d }
 \Big) < c_2 e^{-n}.
\end{align*}

By (\ref{KPCA-eqn-sGMM-1}) and the definition of $\mathrm{SNR} $,
\begin{align*}
& \PP  \bigg(
\bigg| \bigg\langle \bz_i , \sum_{j \neq i} \bx_j \bar\bu_j \bigg\rangle 
\bigg | \geq  \sqrt{n} \| \bmu \|_2^2 / 2
\bigg) \notag\\
&\leq  \PP  \bigg(
\bigg| \bigg\langle \bz_i , \frac{ \sqrt{n-1} \bmu + \bw_i }{ \|   \sqrt{n-1} \bmu + \bw_i  \|_2 } \bigg\rangle 
\bigg | \geq
\frac{ \sqrt{n} \| \bmu \|_2^2  }{2  c_1' \sqrt{n \| \bmu \|_2^2 + d } }
\bigg) 
+
\PP \Big( \|  \sqrt{n-1} \bmu + \bw_i   \|_2 \geq c_1' \sqrt{n \| \bmu \|_2^2 + d }
\Big) \notag \\
& \leq \PP  \bigg(
\bigg| \bigg\langle \bz_i , \frac{ \sqrt{n-1} \bmu + \bw_i }{ \|  \sqrt{n-1} \bmu + \bw_i \|_2 } \bigg\rangle 
\bigg | \geq
\frac{ \sqrt{\mathrm{SNR}} }{2 c_1' }
\bigg) + c_2 e^{-n}.
\end{align*}
From the fact
\[
\bigg\langle \bz_i , \frac{ \sqrt{n-1} \bmu + \bw_i }{ \|  \sqrt{n-1} \bmu + \bw_i \|_2 } \bigg\rangle  \sim N(\bm{0} , \bI)
\]
we obtain that
\begin{align*}
& \PP  \bigg(
\bigg| \bigg\langle \bz_i , \sum_{j \neq i} \bx_j \bar\bu_j \bigg\rangle 
\bigg | \geq  \sqrt{n} \| \bmu \|_2^2 / 2
\bigg) 
 \leq e^{-c_3 \mathrm{SNR}} + c_2 e^{-n}.
\end{align*}
Here $c_3$ is some constant. Without loss of generality, assume that $c_3 < 1/2$. Therefore, (\ref{eqn-gmm-new-1}) leads to
\begin{align*}
\PP \bigg( \| \cH ( \bZ \bX^{\top} ) \bar\bu / \bar\lambda \|_{\infty} \geq \frac{1}{2 \sqrt{n}}
\bigg)  \leq n (e^{-c_3 \mathrm{SNR}} + c_2 e^{-n}).
\end{align*}
The right-hand side tends to zero if $\mathrm{SNR} > 2 c_3^{-1} \log n$, proving (\ref{eqn-gmm-new-0}). Consequently, when $\mathrm{SNR} >  2 c_3^{-1}  \log n > 4 \log n$ we have $\PP [\cM (\hat\by , \by ) = 0] \to 1$.

Next, consider the regime $1 \ll \mathrm{SNR} \leq 2 c_3^{-1} \log n$ and take $p = \mathrm{SNR}$. By Theorem \ref{KPCA-corollary-main},
\[
\min_{c = \pm 1} \| c \bu - \bar\bu - \cH ( \bZ \bX^{\top} ) \bar\bu / \bar\lambda \|_p = o_{\PP} (\| \bar\bu \|_p ;~ p ).
\]
Since $\bar\bu = n^{-1/2} \by $ , Lemma \ref{KPCA-lemma-GMM-error-rate} asserts that
\begin{align*}
& 
\limsup_{n\to\infty} p^{-1} \log 
\EE \cM [ \sgn(\bu) ]  =
\limsup_{n\to\infty} p^{-1} \log \bigg(
\frac{1}{n}
\EE \min_{s = \pm 1} \left|  \{ i \in [n]:~ s \sgn(u_i) \neq \sgn(\bar u_i) \} \right| 
\bigg)
\notag \\ &
\leq
\limsup_{\varepsilon \to 0}
\limsup_{n\to\infty} p^{-1} \log \bigg( \frac{1}{n} \sum_{i=1}^{n}  \PP  \left(
-[\cH ( \bZ \bX^{\top} ) \bar\bu / \bar\lambda]_i \sgn(\bar u_i) \geq (1-\varepsilon) |\bar u_i|
\right)
\bigg).
\end{align*}
From $[\cH ( \bZ \bX^{\top} ) \bar\bu]_i = \sum_{j \neq i} \langle \bz_i , \bx_j  \rangle \bar\bu_j$ and $\bar\lambda = n \| \bmu \|_2^2$ we obtain that
\begin{align*}
& \PP  \left(
-[\cH ( \bZ \bX^{\top} ) \bar\bu / \bar\lambda]_i \sgn(\bar u_i) \geq (1-\varepsilon) |\bar u_i|
\right)
\leq \PP  \left(
| [\cH ( \bZ \bX^{\top} ) \bar\bu / \bar\lambda]_i | \geq (1-\varepsilon) /\sqrt{n}
\right) \notag \\
& \leq \PP  \bigg(
\bigg| \bigg\langle \bz_i , \sum_{j \neq i} \bx_j \bar\bu_j \bigg\rangle 
\bigg | \geq (1-\varepsilon) \sqrt{n} \| \bmu \|_2^2 
\bigg) , \qquad \forall \varepsilon \in (0,1].
\end{align*}
The estimates above yield
\begin{align}
& 
\limsup_{n\to\infty} p^{-1} \log 
\EE \cM ( \sgn(\bu) , \by ) \notag \\
& \leq
\limsup_{\varepsilon \to 0}
\limsup_{n\to\infty} p^{-1} \log \PP  \bigg(
\bigg| \bigg\langle \bz_i , \sum_{j \neq i} \bx_j \bar\bu_j \bigg\rangle 
\bigg | \geq (1-\varepsilon) \sqrt{n} \| \bmu \|_2^2 
\bigg).
\label{KPCA-eqn-GMM-0}
\end{align}

Since $\sum_{j \neq i} \bx_j \bar\bu_j = (n-1) \hat{ \bmu }^{-i} / \sqrt{n}$, we get
\begin{align}
\PP  \bigg(
\bigg| \bigg\langle \bz_i , \sum_{j \neq i} \bx_j \bar\bu_j \bigg\rangle 
\bigg | \geq (1-\varepsilon) \sqrt{n} \| \bmu \|_2^2 
\bigg)
 \leq \PP  \bigg(
\bigg| \frac{ \langle \bz_i , \hat{ \bmu }^{-i} \rangle }{ \| \hat{ \bmu }^{-i} \|_2 }
\bigg | \geq \frac{( 1-\varepsilon) \| \bmu \|_2^2 }{  \| \hat{ \bmu }^{-i} \|_2 }
\bigg).
\label{KPCA-thm-GMM-0}
\end{align}

Let $R = \| \bmu \|_2$. Lemma \ref{KPCA-lemma-GMM-norms} yields $\left| 
\| \hat\bmu^{(-i)} \|_2 - \sqrt{R^2 + d / (n-1)} \right|
= O_{\PP} ( \sqrt{ p / n } ;~p )$.
Hence there exist constants $C_1$, $C_2$ and $N$ such that
\begin{align}
\PP ( \| \hat\bmu^{(-i)} \|_2 - \sqrt{R^2 + d / (n-1)} \geq C_1 \sqrt{p / n} ) \leq C_2 e^{-p}, \qquad \forall n \geq N.
\label{KPCA-thm-GMM-1}
\end{align}
On the one hand, $\sqrt{R^2 + d / (n-1)} = [1 + o(1)] \sqrt{R^2 + d / n} = [1 + o(1)] R^2 / \sqrt{p}$. On the other hand,
\[
\frac{R^2 / \sqrt{p}}{ \sqrt{p/n} } = \frac{\sqrt{n} R^2}{p} = \frac{\sqrt{n} R^2}{ R^4 / (R^2 + d / n) } = \frac{\sqrt{n} (R^2 + d / n)}{R^2} \geq \sqrt{n}.
\]
As a result, (\ref{KPCA-thm-GMM-1}) implies that for any constant $\delta > 0$, there exists a constant $N'$ such that
\begin{align}
\PP ( \| \hat\bmu^{(-i)} \|_2 \geq (1 + \delta) R^2 / \sqrt{p} ) \leq C_2 e^{-p}, \qquad \forall n \geq N'.
\label{KPCA-thm-GMM-2}
\end{align}
By (\ref{KPCA-thm-GMM-0}) and (\ref{KPCA-thm-GMM-2}),
\begin{align}
& \PP  \bigg(
\bigg| \bigg\langle \bz_i , \sum_{j \neq i} \bx_j \bar\bu_j \bigg\rangle 
\bigg | \geq (1-\varepsilon) \sqrt{n} \| \bmu \|_2^2 
\bigg) \notag \\
& \leq \PP  \bigg(
\bigg| \frac{ \langle \bz_i , \hat{ \bmu }^{-i} \rangle }{ \| \hat{ \bmu }^{-i} \|_2 }
\bigg | \geq \frac{( 1-\varepsilon) \| \bmu \|_2^2 }{ (1 + \delta) R^2 / \sqrt{p} }
\bigg) + C_2 e^{-p} \notag \\
& = \PP  \bigg(
\bigg| \frac{ \langle \bz_i , \hat{ \bmu }^{-i} \rangle }{ \| \hat{ \bmu }^{-i} \|_2 }
\bigg | \geq \frac{ 1-\varepsilon}{1 + \delta} \sqrt{p}
\bigg) + C_2 e^{-p}, \qquad \forall n \geq N'.
\label{KPCA-thm-GMM-3}
\end{align}

The independence between $\bz_i $ and $\hat{ \bmu }^{-i}$ yields $ \langle \bz_i , \hat{ \bmu }^{-i} \rangle / \| \hat{ \bmu }^{-i} \|_2 \sim N(0,1)$. Then we get 
\begin{align}
& 
\limsup_{n\to\infty} p^{-1} \log 
\EE \cM [ \sgn(\bu) ]  
\leq -1/2.
\label{KPCA-thm-GMM-4}
\end{align}
from (\ref{KPCA-thm-GMM-0}), (\ref{KPCA-thm-GMM-3}), standard tail bounds for Gaussian random variable and the fact that $\varepsilon$, $\delta$ are arbitrary.

Finally, when $ (2+\varepsilon) \log n < \mathrm{SNR} \leq 2c_3^{-1} \log n$ for some constant $\varepsilon > 0$, (\ref{KPCA-thm-GMM-4}) implies the existence of positive constants $\varepsilon'$ and $N''$ such that
\begin{align*}
\EE  \cM ( \hat\by , \by ) =
\EE  \cM ( \sgn(\bu) , \by )  \leq n^{-1 - \varepsilon'}, \qquad \forall n \geq N''.
\end{align*}
Then we must have $\PP [ \cM ( \sgn(\bu) , \by ) = 0] \to 1 $ because
\[
\PP [ \cM ( \hat\by , \by ) > 0] = \PP [ \cM ( \hat\by , \by ) \geq 1/n] \leq  \EE  \cM ( \hat\by , \by ) / n^{-1} \leq n^{-\varepsilon'} \to 0.
\]

\subsection{Proof of Theorem \ref{KPCA-thm-GMM-Lp}}\label{KPCA-thm-GMM-Lp-proof}
It is easily checked that Assumptions \ref{KPCA-assumption-incoherence}, \ref{KPCA-assumption-noise} and \ref{KPCA-assumption-concentration} hold with $\bSigma = \bI$, $\kappa = 1$, $\mu = 1$ and $\gamma \asymp \mathrm{SNR}$. Theorem \ref{KPCA-corollary-main} then yields the desired result.

\section{Proof of Section \ref{KPCA-sec-CSBM}}\label{KPCA-sec-CSBM-proof}

Define
\begin{align*}
I( t, a, b, c ) = \frac{a}{2} [ 1 - (a/b)^{t} ] +  \frac{b}{2} [1 - (b/a)^{t} ]  - 2 c ( t + t^2 )
\end{align*}
for $( t, a, b, c ) \in \RR \times (0,+\infty)^3$. It is easily seen that both $a (a/b)^t + b(b/a)^t$ and $t+t^2$ are convex and achieve their minima at $-1/2$. Then
\begin{align*}
I^*( a, b, c )  = I(-1/2, a, b, c) = \sup_{ t \in \RR } I(t, a, b, c).
\end{align*}

\subsection{Useful lemmas}

We present three useful lemmas. The first one finds an $\ell_{\infty}$ approximation of the aggregated spectral estimator $\hat\bu$. The second one concerns large deviation probabilities. The third one relates genie-aided estimators to fundamental limits of clustering.

\begin{lemma}\label{KPCA-lemma-CSBM-inf}
	Let $\bar\bu = \by / \sqrt{n}$ and
	\[
	\bw = \log(a/b) \bA \bar\bu + \frac{2 R^2}{n R^2 + d } \bG \bar\bu.
	\]
	For $\hat\bu$ defined by \eqref{eqn-CSBM-aggregation-1},
	there exist some $\varepsilon_n \to 0$ and constant $C>0$ such that 
	\[
	\PP ( \min_{c = \pm 1}  \| c \hat\bu - \bw \|_{\infty} < \varepsilon_n n^{-1/2} \log n ) > 1 - C n^{-2}.
	\]
\end{lemma}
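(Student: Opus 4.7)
Decompose $\hat\bu = \alpha_A\bu_2(\bA) + \alpha_G\bu_1(\bG)$ with
\[
\alpha_A = \log\!\Big(\tfrac{\lambda_1(\bA)+\lambda_2(\bA)}{\lambda_1(\bA)-\lambda_2(\bA)}\Big)\lambda_2(\bA),
\qquad
\alpha_G=\tfrac{2\lambda_1^2(\bG)}{n\lambda_1(\bG)+nd}.
\]
Since $\bar\bu=\by/\sqrt{n}$ is the signed second eigenvector of $\EE(\bA\mid\by)$ at eigenvalue $\bar\lambda_A = n(\alpha-\beta)/2$ and the top eigenvector of $\bar\bG$ at eigenvalue $\bar\lambda_G=nR^2$, the target $\bw$ is exactly the linear combination $\log(a/b)\bA\bar\bu + \tfrac{2R^2}{nR^2+d}\bG\bar\bu$ of the two population linear forms. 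The plan is to linearize each of $\bu_2(\bA),\bu_1(\bG)$ in $\ell_{\infty}$, concentrate the two scalar coefficients, and combine.

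For the $\bG$-piece I would invoke Corollary \ref{KPCA-corollary-main-inf} as in the proof of Theorem \ref{KPCA-thm-GMM-Lp} (rank-one signal, $\kappa=\mu=1$, $\gamma\asymp q_n^{-1/2}$, so $\mu\gamma\lesssim 1/\sqrt{\log n}$ when $q_n=\log n$), obtaining $s_G\in\{\pm 1\}$ with $\|s_G\bu_1(\bG)-\bG\bar\bu/\bar\lambda_G\|_{\infty}=o_{\PP}(n^{-1/2};\log n)$. For the $\bA$-piece I would import the $\ell_{\infty}$ eigenvector result of \cite{AFW17} for the stochastic block model in the $q_n=\log n$ regime (eigen-gap $\asymp q_n$, spectral fluctuation $O_{\PP}(\sqrt{q_n};\log n)$), giving $s_A\in\{\pm 1\}$ with $\|s_A\bu_2(\bA)-\bA\bar\bu/\bar\lambda_A\|_{\infty}=o_{\PP}(n^{-1/2};\log n)$. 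The construction convention $\langle\bu_2(\bA),\bu_1(\bG)\rangle\geq 0$ forces $s_A=s_G$ with failure probability $o(n^{-2})$, since both eigenvectors sit within $o_{\PP}(1)$ of $\pm\bar\bu$. Weyl's inequality plus the concentration $\|\bA-\EE(\bA\mid\by)\|_2=O_{\PP}(\sqrt{q_n};\log n)$ give $\lambda_1(\bA)=n(\alpha+\beta)/2\,(1+o_{\PP}(1;\log n))$ and $\lambda_2(\bA)=\bar\lambda_A(1+o_{\PP}(1;\log n))$, whence $\alpha_A=\bar\lambda_A\log(a/b)\,(1+o_{\PP}(1;\log n))$; Lemma \ref{KPCA-lemma-L2} similarly gives $\alpha_G=\bar\lambda_G\cdot\tfrac{2R^2}{nR^2+d}(1+o_{\PP}(1;\log n))$.

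Taking $c=s_A=s_G$ and expanding,
\[
c\hat\bu-\bw = \Big(\tfrac{c s_A\alpha_A}{\bar\lambda_A}-\log(a/b)\Big)\bA\bar\bu + \Big(\tfrac{c s_G\alpha_G}{\bar\lambda_G}-\tfrac{2R^2}{nR^2+d}\Big)\bG\bar\bu + c\alpha_A\br_A + c\alpha_G\br_G,
\]
where $\br_A,\br_G$ are the $\ell_{\infty}$-small residuals from the two linearizations. Each scalar-gap coefficient is $o_{\PP}(1;\log n)$; the $\ell_{\infty}$ norms $\|\bA\bar\bu\|_{\infty},\|\bG\bar\bu\|_{\infty}$ are $O_{\PP}(q_n n^{-1/2};\log n)$ by eigenvector delocalization for $\bu_2(\bA),\bu_1(\bG)$, so the scalar-gap contributions are $o_{\PP}(n^{-1/2}\log n;\log n)$. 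Since $|\alpha_A|,|\alpha_G|\asymp q_n=\log n$, the residual contributions $|\alpha_j|\|\br_j\|_{\infty}$ are likewise $o_{\PP}(n^{-1/2}\log n;\log n)$. Unwrapping $o_{\PP}(\cdot;\log n)$ via Definition \ref{KPCA-defn-0} with the free constant chosen so that $e^{-C\log n}\leq n^{-2}$ delivers the stated $1-Cn^{-2}$ bound. The main obstacle is the simultaneous sign alignment $s_A=s_G$ together with bookkeeping that keeps every error at the right scale $n^{-1/2}\log n$ rather than $n^{-1/2}$; both issues are resolved by the $\ell_{\infty}$ delocalization of $\bu_2(\bA)$ and $\bu_1(\bG)$ at the $q_n=\log n$ threshold, with exceptional probability $o(n^{-2})$, supplied by \cite{AFW17} and Corollary \ref{KPCA-corollary-main-inf} respectively.
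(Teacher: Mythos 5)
Your proposal follows essentially the same route as the paper's proof: introduce a scalar-concentration step (Weyl/spectral concentration for $\lambda_1(\bA),\lambda_2(\bA),\lambda_1(\bG)$), an $\ell_\infty$-linearization step ($\bu_2(\bA)\approx \bA\bar\bu/\bar\lambda_A$ via \cite{AFW17} and $\bu_1(\bG)\approx \bG\bar\bu/\bar\lambda_G$ via Corollary \ref{KPCA-corollary-main-inf}), and then combine with the sign convention $\langle \bu_2(\bA),\bu_1(\bG)\rangle\ge 0$; the paper organizes the error through an intermediate vector $\bv$ with population coefficients while you expand $c\hat\bu-\bw$ directly, but the decomposition is algebraically equivalent. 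The only small inaccuracy is calling the bound on $\|\bA\bar\bu\|_\infty$ and $\|\bG\bar\bu\|_\infty$ "eigenvector delocalization" --- it is really degree/row-sum concentration, or equivalently the product of the eigenvalue bound $|\lambda_2(\bA)|\lesssim q_n$ with $\|\bu_2(\bA)\|_\infty\lesssim n^{-1/2}$ (which is what the paper uses) --- but the resulting $O_{\PP}(q_n n^{-1/2};\log n)$ scale and the rest of the bookkeeping are correct.
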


\begin{proof}[\bf Proof of Lemma \ref{KPCA-lemma-CSBM-inf}]
	Define, as in \eqref{eqn-CSBM-aggregation},
	\begin{align*}
	& \bv = \frac{n(\alpha-\beta)}{2} \log \bigg( \frac{\alpha}{\beta} \bigg) \bu_2(\bA) + \frac{2n R^4}{n R^2 + d } \bu_1(\bG).
	\end{align*}
	Then
	\begin{align}
	\| \bv - \bw \|_{\infty} & \leq \log(a/b) \| [n(\alpha - \beta)/2] \bu_2(\bA) - \bA \bar\bu \|_{\infty} \notag\\
	& + \frac{2 R^2}{n R^2 + d } \| (nR^2) \bu_1(\bG) - \bG \bar\bu  \|_{\infty},
	\label{KPCA-theorem-CSBM-error-rate-0} \\
	\| \hat\bu - \bv \|_{\infty} & \leq \bigg|
	\lambda_2(\bA) \log \bigg( \frac{\lambda_1(\bA) + \lambda_2(\bA)}{\lambda_1(\bA) - \lambda_2(\bA)} \bigg) - \frac{n(\alpha-\beta)}{2} \log \bigg( \frac{\alpha}{\beta} \bigg)
	\bigg| \| \bu_2(\bA) \|_{\infty} \notag\\
	& + \bigg|
	\frac{2 \lambda_1^2(\bG) }{n \lambda_1(\bG) + n d } - \frac{2n R^4}{n R^2 + d }
	\bigg| \| \bu_1(\bG) \|_{\infty} . \label{KPCA-theorem-CSBM-error-rate-1}
	\end{align}
	
	For simplicity, suppose that $\langle \bu_1(\bG) , \bar\bu \rangle \geq 0 $ and $\langle \bu_2(\bA) , \bar\bu \rangle \geq 0 $. By Lemma \ref{KPCA-lemma-L2} and Theorem \ref{KPCA-corollary-main-inf}, we have
	\begin{align*}
	& | \lambda_1(\bG) - nR^2 | = o_{\PP} (1;~ n), \\
	& \| \bu_1(\bG) - \bG \bar\bu / (n R^2) \|_{\infty} = o_{\PP} ( n^{-1/2} ;~\log n ), \\
	& \| \bu_1(\bG) \|_{\infty} = O_{\PP} ( n^{-1/2} ;~\log n ).
	\end{align*}
	Hence there exists $\varepsilon_{1n} \to 0$ and a constant $C_1$ such that
	\begin{align}
	\PP ( &
	| \lambda_1(\bG) / nR^2 - 1 | < \varepsilon_{1n}, \notag \\
	& \| \bu_1(\bG) - \bG \bar\bu / (n R^2) \|_{\infty} < \varepsilon_{1n} / \sqrt{n}, ~ 
	\| \bu_1(\bG) \|_{\infty} < C_1 / \sqrt{n} ) > 1 - n^{-2}.
	\label{KPCA-theorem-CSBM-error-rate-2}
	\end{align}
	By mimicking the proof of Corollary 3.1 in \cite{AFW17} and applying Lemma 6 therein, we get $\varepsilon_{2n} \to 0$ and a constant $C_2$ such that
	\begin{align}
	\PP (&
	\max\{ | \lambda_1(\bA) - n(\alpha+\beta) / 2|,~| \lambda_2(\bA) - n(\alpha-\beta) / 2|\} < \varepsilon_{2n} \sqrt{\log n}, \notag \\
	& \| \bu_2(\bA) - \bA \bar\bu / [ n (\alpha - \beta) / 2 ] \|_{\infty} < \varepsilon_{2n} / \sqrt{n}, ~ 
	\| \bu_2(\bA) \|_{\infty} < C_2 / \sqrt{n} ) > 1 - n^{-2}.
	\label{KPCA-theorem-CSBM-error-rate-3}
	\end{align}
	Inequalities (\ref{KPCA-theorem-CSBM-error-rate-0}), (\ref{KPCA-theorem-CSBM-error-rate-1}), (\ref{KPCA-theorem-CSBM-error-rate-2}) and (\ref{KPCA-theorem-CSBM-error-rate-3}) yield some $\varepsilon_n \to 0$ and constant $C>0$ such that
	\begin{align*}
	\PP ( \| \hat\bu - \bw \|_{\infty} < \varepsilon_n n^{-1/2} \log n ) > 1 - C n^{-2}.
	\end{align*}
\end{proof}

\begin{lemma}\label{KPCA-lemma-CSBM-LDP}
Let Assumption \ref{KPCA-assumption-CSBM-q} hold and define 
	\begin{align*}
	W_{ni} = \Bigg( \frac{2 R^2}{n R^2 + d } \sum_{j \neq i} \langle \bx_i , \bx_j \rangle y_j
	  + \log(a/b) \sum_{j \neq i} A_{ij} y_j \Bigg) y_i.
	\end{align*}
For any fixed $i$,
	\begin{align*}
	\lim_{n \to \infty} q_n^{-1} \log \PP (W_{ni} \leq \varepsilon q_n)  = - \sup_{ t \in \RR } \{ \varepsilon t + I(t, a, b, c) \},
	\qquad \forall \varepsilon < \frac{a-b}{2} \log (a/b) + 2 c.
	\end{align*}
As a result, for any $\varepsilon < \frac{a-b}{2} \log (a/b) + 2 c$ and $\delta > 0$ there exists $N>0$ such that
\begin{align*}
\PP ( W_{ni} \leq \varepsilon q_n) \leq e^{- q_n [ \sup_{ t \in \RR } \{ \varepsilon t + I(t,a,b,c) \} - \delta ] } ,\qquad\forall n \geq N
\end{align*}
\end{lemma}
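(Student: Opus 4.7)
\textbf{Proof proposal for Lemma \ref{KPCA-lemma-CSBM-LDP}.} The strategy is to establish convergence of the normalized log moment generating function $q_n^{-1}\log \EE e^{tW_{ni}}\to\Lambda(t):=-I(t,a,b,c)$ on an open neighborhood of $0$, and then derive the rate via Chernoff (upper bound) and an exponential-tilting/Cramér construction (lower bound); jointly this is the Gärtner--Ellis theorem. By the symmetries $\by\mapsto-\by$ and rotations of $\bmu$, one may fix $y_i=1$ and $\bmu=R\be_1$ without changing the law of $W_{ni}$. Decompose $W_{ni}=T_1+T_2$ with $T_2=\log(a/b)\sum_{j\ne i}y_iy_jA_{ij}$ and $T_1=\tfrac{2R^2}{nR^2+d}\sum_{j\ne i}y_iy_j\langle\bx_i,\bx_j\rangle$; by the CSBM definition $\bA\perp\bX\mid(\by,\bmu)$, so $T_1,T_2$ are conditionally independent given $\by$.

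For $T_2$, conditioning on $\by$ and using independence of the $A_{ij}$,
$\log\EE[e^{tT_2}\mid\by]=|S_+|\log[1-\alpha+\alpha(a/b)^t]+|S_-|\log[1-\beta+\beta(b/a)^t]$,
where $|S_\pm|=|\{j\ne i:y_j=\pm y_i\}|$. Plugging in $\alpha=aq_n/n,\beta=bq_n/n$, applying $\log(1+x)=x+O(x^2)$, using $|S_\pm|/n\to 1/2$, and handling the $\by$-average via $\EE\prod_{j\ne i}\cosh((f_1-f_2)/2)$ (contributing only $O(q_n^2/n)=o(q_n)$ after unconditioning), one obtains $q_n^{-1}\log\EE e^{tT_2}\to\Lambda_2(t)=-\tfrac{a}{2}[1-(a/b)^t]-\tfrac{b}{2}[1-(b/a)^t]$ for every $t\in\RR$.

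For $T_1$, substitute $\bx_j=y_j\bmu+\bz_j$ to obtain the algebraic identity
$\sum_{j\ne i}y_iy_j\langle\bx_i,\bx_j\rangle=(n-1)R^2+(n-1)y_i\langle\bz_i,\bmu\rangle+\langle\bmu+y_i\bz_i,\tilde\bz\rangle$,
where $\tilde\bz:=\sum_{j\ne i}y_j\bz_j\sim N(0,(n-1)\bI_d)$ is independent of $\bz_i$. Decompose $\bz_i=y_i\eta\bmu/R+\bw$ with $\eta\sim N(0,1)$ and $\|\bw\|^2\sim\chi^2_{d-1}$; then $\|\bmu+y_i\bz_i\|^2=(R+\eta)^2+\|\bw\|^2$, and conditionally on $\bz_i$, $\langle\bmu+y_i\bz_i,\tilde\bz\rangle\sim N(0,(n-1)\|\bmu+y_i\bz_i\|^2)$. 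Writing $\gamma=\tfrac{2tR^2}{nR^2+d}$, $\delta=\gamma^2(n-1)/2$, iterated Gaussian/$\chi^2$ integration yields
$\log\EE e^{tT_1}=\gamma(n-1)R^2+\delta R^2-\tfrac{d}{2}\log(1-2\delta)+\tfrac{[\gamma R(n-1)(1+\gamma)]^2}{2(1-2\delta)}$.
Assumption \ref{KPCA-assumption-CSBM-q} forces $\gamma,\delta\to 0$; the key identity $nR^4/(nR^2+d)=cq_n$ then gives $\gamma(n-1)R^2=2tcq_n[1+o(1)]$ and
$\tfrac{d\gamma^2(n-1)}{2}+\tfrac{\gamma^2(n-1)^2R^2}{2}=\tfrac{(n-1)\gamma^2(d+(n-1)R^2)}{2}=\tfrac{2t^2nR^4}{nR^2+d}[1+o(1)]=2t^2cq_n[1+o(1)]$,
while the remaining terms are $o(q_n)$. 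Hence $q_n^{-1}\log\EE e^{tT_1}\to\Lambda_1(t)=2c(t+t^2)$ on a neighborhood of $0$.

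Adding, $\Lambda(t)=\Lambda_1(t)+\Lambda_2(t)=-I(t,a,b,c)$, which is smooth and strictly convex with $\Lambda'(0)=2c+\tfrac{(a-b)\log(a/b)}{2}$. Chernoff with $t\le 0$ gives $\limsup_n q_n^{-1}\log\PP(W_{ni}\le\varepsilon q_n)\le-\sup_{s\le 0}[s\varepsilon-\Lambda(s)]$, and for $\varepsilon<\Lambda'(0)$ the constrained and unconstrained suprema agree by convex duality, yielding the upper bound $-\sup_{t\in\RR}\{\varepsilon t+I(t,a,b,c)\}$. The matching lower bound is the standard Cramér tilting: at the optimizing $s^\star<0$, introduce the tilted law with density proportional to $e^{s^\star W_{ni}}$ (which factorizes across the independent $A_{ij}$ and $\bz_j$'s after conditioning on $\by,\bmu$); a second-moment/CLT argument shows that $W_{ni}/q_n\to\varepsilon$ in probability under the tilt, giving $\PP(W_{ni}\le\varepsilon q_n)\ge e^{q_n[\Lambda(s^\star)-s^\star\varepsilon]+o(q_n)}$. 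The non-asymptotic upper bound in the second half of the statement is then just a restatement of the limit. The main technical obstacle is the analysis of $\Lambda_1$: the signal-direction contribution (through $\eta$) and the high-dimensional orthogonal-noise contribution (through $\|\bw\|^2$) dominate in different regimes of the relation between $R^2$ and $d/n$, and one must verify that their sum is uniformly $2t^2cq_n$ across all parameter configurations, which is precisely what the identity $nR^4/(nR^2+d)=cq_n$ provides.
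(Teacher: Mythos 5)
Your proposal is correct and follows essentially the same route as the paper: exploit the conditional independence of the network and attribute contributions, compute each conditional MGF in closed form (Bernoulli products for $\sum_{j\neq i}A_{ij}y_jy_i$, Gaussian/$\chi^2$ integration for the Gram term, which the paper packages as Lemma \ref{KPCA-lemma-chi-square}), use the identity $nR^4/(nR^2+d)=cq_n$ to obtain the limit $q_n^{-1}\log\EE e^{tW_{ni}}\to-I(t,a,b,c)$ for each fixed $t$, and conclude by the G\"artner--Ellis theorem (which the paper invokes as Lemma \ref{KPCA-lemma-LDP} rather than re-deriving via Chernoff plus tilting). The only cosmetic caveat is that the limit of the cumulant generating function is needed at the optimizing $t^\star$, hence for all fixed $t\in\RR$ and not merely near $0$; your computation in fact delivers this since $\gamma,\delta\to0$ for every fixed $t$.
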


\begin{proof}[\bf Proof of Lemma \ref{KPCA-lemma-CSBM-LDP}]
We will invoke Lemma \ref{KPCA-lemma-LDP} to prove Lemma \ref{KPCA-lemma-CSBM-LDP}, starting from the calculation of $\EE e^{t W_{ni}}$. Conditioned on $ y_i$, $ \sum_{j \neq i} \langle \bx_i , \bx_j \rangle y_j$ and $\sum_{j \neq i} A_{ij}  y_j$ are independent. Hence
\begin{align*}
\EE ( e^{t W_{ni}} |  y_i ) = \EE \bigg[
\exp \bigg(
t \cdot \frac{2 R^2}{n R^2 + d } \sum_{j \neq i} \langle \bx_i , \bx_j \rangle y_j y_i
\bigg)
\bigg|  y_i \bigg] \cdot \EE \bigg[
\exp \bigg(
t \log(a/b)   \sum_{j \neq i} A_{ij}  y_j y_i
\bigg)
\bigg|  y_i \bigg].
\end{align*}
We claim that for any fixed $t \in \RR$, there exists $N > 0$ such that when $n > N$,
\begin{align}
& \log \EE \bigg[
\exp \bigg(
t \cdot \frac{2 R^2}{n R^2 + d } \sum_{j \neq i} \langle \bx_i , \bx_j \rangle y_j y_i
\bigg)
\bigg|  y_i \bigg]
=
\log \EE \bigg[ \exp \bigg(
t \cdot \frac{2 R^2}{n R^2 + d } \sum_{j \neq i} \langle \bx_i , \bx_j \rangle y_j y_i
\bigg)
\bigg] \notag\\
& = 2 c (t + t^2)  [ 1 + o(1) ] q_n , \label{KPCA-lemma-CSBM-LDP-1} \\
&\log \EE \bigg[
\exp \bigg(
t \log(a/b)  y_i \sum_{j \neq i} A_{ij}  y_j
\bigg)
\bigg|  y_i \bigg] = \log \EE \bigg[
\exp \bigg(
t \log(a/b)  y_i \sum_{j \neq i} A_{ij}  y_j
\bigg) \bigg] \notag \\
& = \frac{ a  [ (a/b)^{t} - 1 ] + b  [ (b/a)^{t} -1 ]  }{2}   [ 1 + o(1) ] q_n.
\label{KPCA-lemma-CSBM-LDP-2}
\end{align}
If (\ref{KPCA-lemma-CSBM-LDP-1}) and (\ref{KPCA-lemma-CSBM-LDP-2}) hold, then 
\[
\EE ( e^{t W_{ni}} |  y_i ) = \EE \bigg[
\exp \bigg(
t \cdot \frac{2 R^2}{n R^2 + d } \sum_{j \neq i} \langle \bx_i , \bx_j \rangle y_j y_i 
\bigg) \bigg] \cdot \EE
\exp \bigg(
t \log(a/b)  y_i \sum_{j \neq i} A_{ij}  y_j
\bigg)
\]
does not depend on $ y_i$, and
\begin{align*}
q_n^{-1} \log \EE e^{t W_{ni}} & = q_n^{-1} \log \EE \bigg[
\exp \bigg(
t \cdot \frac{2 R^2}{n R^2 + d } \sum_{j \neq i} \langle \bx_i , \bx_j \rangle y_j y_i 
\bigg) \bigg] \\
&  + q_n^{-1} \log \EE
\bigg[ \exp \bigg(
t \log(a/b)  y_i \sum_{j \neq i} A_{ij}  y_j
\bigg) \bigg]\\
& = \bigg(  \frac{a}{2} [ (a/b)^{t} - 1 ] + \frac{b}{2}  [ (b/a)^{t} -1 ]  + 2 c (t + t^2) \bigg) [ 1 + o(1) ]\\
& = - I(t, a, b, c) [ 1 + o(1) ].
\end{align*}
Lemma \ref{KPCA-lemma-LDP} implies that for $\varepsilon < -\frac{\partial}{\partial t} I(t,a,b,c) |_{t = 0} =  \frac{a-b}{2} \log (a/b) + 2c$, 
\begin{align*}
\lim_{n \to \infty} q_n^{-1}  \log \PP (W_{ni} \leq \varepsilon q_n) = - \sup_{ t \in \RR } \{ \varepsilon t + I(t, a, b, c) \}.
\end{align*}

Below we prove (\ref{KPCA-lemma-CSBM-LDP-1}) and (\ref{KPCA-lemma-CSBM-LDP-2}). From $\bx_i  = \bmu  y_i + \bz_i $ we see that given $ y_i$, $\bx_i  y_i \sim N(\bmu, \bI_d)$ is independent of $\sqrt{n-1}\hat\bmu^{(-i)} \sim N( \sqrt{n-1} \bmu,  \bI_d )$. Lemma \ref{KPCA-lemma-chi-square} asserts that
\begin{align*}
& \log \EE ( e^{t \langle \bx_i, \hat\bmu^{(-i)} \rangle  y_i } |  y_i )
 = \log \EE ( e^{ (t / \sqrt{n-1}) \langle \bx_i  y_i , \sqrt{n - 1} \hat\bmu^{(-i)} \rangle } |  y_i ) \\
& = \frac{ (\frac{t}{\sqrt{ n - 1 }})^2 }{ 2[ 1 - (\frac{t}{\sqrt{ n - 1 }})^2 ] } ( \| \bmu \|_2^2 + (n-1) \| \bmu \|_2^2 ) \\
&~~ + \frac{ \frac{t}{\sqrt{ n - 1 }} }{ 1 - (\frac{t}{\sqrt{ n - 1 }})^2  } \langle \bmu, \sqrt{n-1} \bmu \rangle
- \frac{d}{2} \log \bigg[
1 - \bigg( \frac{t}{\sqrt{n - 1 }} \bigg)^2
\bigg] \\
& =  \frac{  t R^2 }{1 - t^2 / (n-1)} \bigg( 1 + \frac{nt}{2(n-1)} \bigg) - \frac{d}{2} \log \bigg( 1 - \frac{t^2}{n - 1} \bigg), \qquad \forall t \in ( - \sqrt{n-1} , \sqrt{ n - 1} ).
\end{align*}
Since the right hand side does not depend on $ y_i$, $ \log \EE e^{t \langle \bx_i, \hat\bmu^{(-i)} \rangle  y_i }$ is also equal to it.
Now we fix any $t \in \RR$ and let $s = 2 t p / R^2 = 2 t / [1 + d / (nR^2)]$. Since $|s| < 2 |t|$, we have $|s| < \sqrt{ n - 1}$ for large $n$. In that case, we obtain from the equation above that 
\begin{align*}
& \log \EE \bigg[ \exp \bigg( t \cdot \frac{2 \langle \hat\bmu^{(-i)} , \bx_i \rangle  y_i }{1 + d / (nR^2) } \bigg) \bigg]
=  \log \EE 
e^{ s \langle \bx_i, \hat\bmu^{(-i)} \rangle  y_i }  \\
&=\frac{  s R^2 }{1 - s^2 / (n-1)} \bigg( 1 + \frac{ns}{2(n-1)} \bigg) - \frac{d}{2} \log \bigg( 1 - \frac{s^2}{n - 1} \bigg).
\\& 
= [1 + o(1) ] s R^2  (1 + s / 2) + \frac{d}{2} \cdot \frac{s^2}{n - 1} [1+o(1)]
= \bigg[ 2tp 
\bigg( 1 + \frac{tp}{R^2} \bigg)
+ \frac{d}{2n} \cdot \frac{4t^2 p^2}{R^4}
\bigg][1+o(1)] \\
&=2pt \bigg[ 1 + \frac{tp}{R^2} \bigg( 1 + \frac{d}{nR^2} \bigg)  \bigg] [1 + o(1)] = 2pt(1+t) [1+o(1)],
\end{align*}
where we used $p = R^4 / ( R^2 + d / n )$.
It then follows from the results above and the assumption $p = c q_n$ that
\begin{align*}
\log \EE \bigg[ \exp \bigg( t \cdot \frac{2 \langle \hat\bmu^{(-i)} , \bx_i \rangle  y_i }{1 + d / (nR^2) }  \bigg) \bigg]
= c q_n p^{-1} \log \EE \bigg[ \exp \bigg( t \cdot \frac{2 \langle \hat\bmu^{(-i)} , \bx_i \rangle  y_i }{1 + d / (nR^2) }   \bigg)\bigg] =  2 c  (t + t^2)  [ 1+ o(1) ] q_n,
\end{align*} 
which leads to (\ref{KPCA-lemma-CSBM-LDP-1}).

On the other hand,
\begin{align*}
& \EE ( e^{ t A_{ij}  y_i  y_j } |  y_i )  = \frac{1}{2} \EE ( e^{ t A_{ij} } |   y_i  y_j = 1) + \frac{1}{2} \EE ( e^{ - t A_{ij} } |   y_i  y_j = - 1) \\
&=\frac{1}{2} [ u e^{t} + (1-u) ] + \frac{1}{2} [ v e^{-t} + (1-v) ]
= 1 + \frac{ u (e^{t} - 1 ) + v (e^{-t} -1) }{2}, \qquad \forall  t \in \RR.
\end{align*}
Conditioned on $ y_i$, $\{ A_{ij}  y_i  y_j \}_{j\neq i}$ are i.i.d. random variables. Hence
\begin{align*}
\EE \bigg[
\exp \bigg(
t \log(a/b)  y_i \sum_{j \neq i} A_{ij}  y_j
\bigg) \bigg|  y_i \bigg] = \bigg( 1 + \frac{ u [ (a/b)^{t} - 1 ] + v [ (b/a)^{t} -1 ] }{2} \bigg)^{n-1}.
\end{align*}
Again, the right-hand side does not depend on $ y_i$. By substituting $u = a q_n / n$ and $v = b q_n / n$,
\begin{align*}
& \log \EE \bigg[ \exp \bigg(
t \log(a/b)  y_i \sum_{j \neq i} A_{ij}  y_j
\bigg) \bigg]  = (n-1) \log \bigg( 1 + \frac{ u [ (a/b)^{t} - 1 ] + v [ (b/a)^{t} -1 ] }{2} \bigg) \\
& =  (n-1) \log \bigg( 1 + \frac{ a q_n [ (a/b)^{t} - 1 ] + b q_n [ (b/a)^{t} -1 ] }{2n} \bigg) \\
& =  \frac{ a  [ (a/b)^{t} - 1 ] + b  [ (b/a)^{t} -1 ]  }{2} \cdot  [ 1 + o(1) ] q_n .
\end{align*}
We get (\ref{KPCA-lemma-CSBM-LDP-2}) and thus finish the proof.
\end{proof}

\begin{lemma}[Fundamental limit via genie-aided approach]\label{KPCA-lemma-fundamental}
	Suppose that $\cS$ is a Borel space and $(\by, \bX)$ is a random element in $\{ \pm 1 \}^n \times \cS$. Let $\cF$ be a family of Borel mappings from $\cS$ to $\{ \pm 1 \}^n$. Define
	\begin{align*}
	&\cM ( \bu, \bv ) = \min \bigg\{
	\frac{1}{n} \sum_{i=1}^{n} \mathbf{1}_{ \{ u_i \neq v_i \} },~
	\frac{1}{n} \sum_{i=1}^{n} \mathbf{1}_{ \{ - u_i \neq v_i \} }
	\bigg\},\qquad
	\forall \bu,~ \bv \in \{ \pm 1 \}^n,\\
	&f(\cdot | \tilde\bX, \tilde{\by}_{-i}   ) = \PP ( y_i = \cdot | \bX = \tilde\bX, \by_{-i} = \tilde{\by}_{-i}   ),\qquad
	\forall i \in [n],~ \tilde\bX \in \cS,~ \tilde{\by}_{-i} \in \{ \pm 1 \}^{n-1}.
	\end{align*}
	We have
	\begin{align*}
	\inf_{\hat{ \by } \in \cF }
	\EE \cM ( \hat{ \by }, \by )
	\geq \frac{n-1}{3n-1} \cdot \frac{1}{n} \sum_{i=1}^{n}  \PP  \left[
	f ( y_i | \bX, \by_{-i} )
	< f ( - y_i | \bX, \by_{-i} )
	\right].
	\end{align*}
\end{lemma}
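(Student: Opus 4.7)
The plan is to relate the genie-aided Bayes error to the actual $\cM$-error by building a genie-aided predictor out of $\hat\by$ and then paying a small price for the sign ambiguity inherent to $\cM$. For any Borel $\hat\by:\cS \to \{\pm 1\}^n$ and any $i \in [n]$, construct the auxiliary predictor
\begin{align*}
\tilde y_i(\bX, \by_{-i}) := \hat y_i(\bX) \cdot \sgn\Bigl(\sum_{j \ne i} \hat y_j(\bX) y_j\Bigr),
\end{align*}
with the convention $\sgn(0)=1$. This is a Borel function from $\cS \times \{\pm 1\}^{n-1}$ to $\{\pm 1\}$, so a standard Bayes-optimality computation yields $\PP(\tilde y_i \ne y_i) \geq \EE\min\{f(1 | \bX, \by_{-i}), f(-1|\bX, \by_{-i})\} \geq q_i$, where $q_i := \PP[f(y_i|\bX,\by_{-i}) < f(-y_i|\bX,\by_{-i})]$ is the per-coordinate quantity appearing in the lemma. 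This is the step that converts the lemma into an estimate on the constructed $\tilde y_i$.

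Next I would introduce the bookkeeping quantities $T := \sum_j \hat y_j y_j$, $s^* := \sgn(T)$ and $S_i := T - \hat y_i y_i$, and define the ``tie set'' $\cT := \{|T| \leq 1\}$. Note that $n\cM(\hat\by, \by) = (n - |T|)/2$ and $\{\tilde y_i \ne y_i\} = \{\hat y_i y_i \ne \sgn(S_i)\}$. On $\cT^c$, i.e., $|T| \geq 2$, one has $|S_i| \geq 1$ and $\sgn(S_i) = s^*$, so $\{\tilde y_i \ne y_i\} = \{s^* \hat y_i \ne y_i\}$; summing these indicators over $i$ recovers $n\cM(\hat\by, \by) = \sum_i \ind\{s^* \hat y_i \ne y_i\}$. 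On $\cT$ the correspondence breaks, but the constraint $|T|\leq 1$ forces $n\cM \geq (n-1)/2$, so that $\PP(\cT) \leq 2\EE[n\cM]/(n-1)$.

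Combining the two pieces, for each $i$,
\begin{align*}
q_i \leq \PP(\tilde y_i \ne y_i) \leq \PP(s^* \hat y_i \ne y_i, \cT^c) + \PP(\cT).
\end{align*}
Summing over $i$ gives $\sum_i q_i \leq \EE[n\cM\, \ind_{\cT^c}] + n\PP(\cT) \leq \EE n\cM + n\PP(\cT)$, and plugging in $\PP(\cT) \leq 2\EE[n\cM]/(n-1)$ produces $\sum_i q_i \leq \frac{3n-1}{n-1}\EE n\cM$. Rearranging, dividing by $n$, and taking the infimum over $\hat\by \in \cF$ gives the claim. The main obstacle is handling the tie set $\cT$, on which the genie-aided miscount no longer lines up with the $\cM$-miscount; the forced lower bound $n\cM \geq (n-1)/2$ on $\cT$ compensates for exactly this loss and produces the clean constant $(n-1)/(3n-1)$.
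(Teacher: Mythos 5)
Your proof is correct, and it follows the same high-level strategy as the paper: build a genie-aided predictor $\tilde y_i = s(\hat\by_{-i},\by_{-i})\,\hat y_i$ out of $\hat\by$, invoke Bayes optimality (the paper calls it Neyman--Pearson) to get $q_i \le \PP(\tilde y_i \neq y_i)$, and then handle the tie set $\cT = \{|\langle\hat\by,\by\rangle|\le 1\}$ on which the leave-one-out sign $\sgn(S_i)$ need not agree with the global sign $s^*$. Where you genuinely diverge is the final bookkeeping. The paper partitions the class $\cF$ into $\cF_\varepsilon = \{\hat\by: \PP(\cT)\le\varepsilon\}$ and its complement, bounds $\inf_{\cF_\varepsilon}\EE\cM \ge \delta - \varepsilon$ and $\inf_{\cF\setminus\cF_\varepsilon}\EE\cM \ge \tfrac{n-1}{2n}\varepsilon$ separately, and then optimizes over $\varepsilon\in[0,1]$ to extract $\tfrac{n-1}{3n-1}$. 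You instead give a single per-estimator chain: use the event-level identity $\{\tilde y_i\neq y_i\}\cap\cT^c = \{s^*\hat y_i\neq y_i\}\cap\cT^c$ to get $\sum_i q_i \le \EE[n\cM] + n\PP(\cT)$, then observe $n\cM\ge\tfrac{n-1}{2}$ on $\cT$ and apply Markov to get $\PP(\cT)\le\tfrac{2\EE[n\cM]}{n-1}$, yielding $\sum_i q_i \le \tfrac{3n-1}{n-1}\EE[n\cM]$ directly. The same constant pops out, but your route avoids introducing the auxiliary family $\cF_\varepsilon$ and the outer supremum, and it makes transparent where the factor $\tfrac{3n-1}{n-1}$ comes from (one contribution from the aligned count on $\cT^c$, two from the Markov bound on the tie set). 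This is a cleaner derivation of the identical inequality.
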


\begin{proof}[\bf Proof of Lemma \ref{KPCA-lemma-fundamental}]
	
	For $\bu, \bv \in \{ \pm 1 \}^m$ with some $m \in \ZZ_+$, define the sign
	$$s(\bu, \bv ) = \argmin_{c = \pm 1} \|c \bu - \bv \|_1$$ with any tie-breaking rule. As a matter of fact, $s(\bu, \bv ) = \sgn( \langle \bu , \bv \rangle )$ if $ \langle \bu , \bv \rangle \neq 0$. When $| \langle \bu , \bv \rangle | > 1$, we have $s(\bu_{-i}, \bv_{-i} ) = s(\bu, \bv ) $  for all $i$. 
	Hence for $\hat{ \by }  \in \cF$ (we drop the dependence of $\hat{ \by }$ on $\bX$),
	\begin{align*}
	& \EE \cM ( \hat{ \by }, \by )
	 \geq  \EE \left( \frac{1}{n} \sum_{i=1}^{n} \mathbf{1}_{ \{  s(\hat{ \by }, \by ) \hat y_i  \neq  y_i \} }
	\mathbf{1}_{
		\{ |\langle \hat{ \by }, \by \rangle| > 1  \}
	}
	\right) \\
	& = \EE \left( \frac{1}{n} \sum_{i=1}^{n} \mathbf{1}_{ \{  s(\hat{ \by }_{-i} , \by_{-i} )  \hat y_i   \neq  y_i \} }
	\mathbf{1}_{
		\{ |\langle \hat{ \by }, \by \rangle| > 1  \}
	}
	\right) \\
	& = \EE \left( \frac{1}{n} \sum_{i=1}^{n} \mathbf{1}_{ \{  s(\hat{ \by }_{-i}, \by_{-i} ) \hat y_i  \neq  y_i \} }
	\right) -
	\EE \left( \frac{1}{n} \sum_{i=1}^{n} \mathbf{1}_{ \{  s(\hat{ \by }_{-i} , \by_{-i} ) \hat y_i  \neq  y_i \} }
	\mathbf{1}_{
		\{ |\langle \hat{ \by }, \by \rangle| \leq 1  \}
	}
	\right)
	\\
	& \geq \frac{1}{n} \sum_{i=1}^{n}  \PP  \left(
	s(\hat{ \by }_{-i}, \by_{-i} ) \hat y_i  \neq  y_i 
	\right) - \PP ( |\langle \hat{ \by }, \by \rangle| \leq 1 ).
	\end{align*}
	
	Define $\cF_{\varepsilon} = \{ 
	\hat{ \by } \in \cF:~ \PP (
	|\langle \hat{ \by }, \by \rangle| \leq 1
	) \leq \varepsilon
	\}$ for $\varepsilon \in [0,1]$.
	If $\cF_{\varepsilon} \neq \varnothing$, then
	\begin{align*}
	\inf_{\hat{ \by } \in \cF_{\varepsilon} }
	\EE \cM ( \hat{ \by }, \by )
	& \geq \frac{1}{n} \sum_{i=1}^{n} \inf_{\hat{ \by } \in \cF } \PP  \left(
	s(\hat{ \by }_{-i}, \by_{-i} ) \hat y_i  \neq  y_i 
	\right) - \varepsilon.
	\end{align*}
	Define $\cG$ be the family of Borel mappings from $\cS \times \{ \pm 1 \}^{n-1} \to \{ \pm 1 \}$. 
	For any fixed $\hat{\by} \in \cF$, the mapping $(\bX, \by_{-i} ) \mapsto s(\hat{ \by }_{-i}, \by_{-i} ) \hat y_i$ belongs to $\cG$. Then
	\begin{align*}
	\inf_{\hat{ \by } \in \cF } \PP  \left(
	s(\hat{ \by }_{-i}, \by_{-i} ) \hat y_i \neq  y_i 
	\right)
	\geq \inf_{\hat{ \bm\ell } \in \cG } \PP  \left(
	\hat{ \bm\ell } ( \bX , \by_{-i} ) \neq  y_i 
	\right)
	\geq \PP  \left[
	f (  y_i | \bX, \by_{-i} )
	< f ( -  y_i | \bX, \by_{-i} )
	\right],
	\end{align*}
	where the last inequality follows from the Neyman-Pearson lemma \citep{NPe33}. Let $\delta = \frac{1}{n} \sum_{i=1}^{n}  \PP  [
	f (  y_i | \bX, \by_{-i} )
	< f ( -  y_i | \bX, \by_{-i} )
	]$. We have $\inf_{\hat{ \by } \in \cF_{\varepsilon} }
	\EE \cM ( \hat{ \by }, \by )
	\geq \delta - \varepsilon$ provided that $\cF_{\varepsilon} \neq \varnothing$.

	On the other hand, when $
	|\langle \hat{ \by }, \by \rangle| \leq 1
	$, we have
	\begin{align*}
	\cM ( \hat{ \by }, \by ) &= (4n)^{-1} \min_{c = \pm 1} \| c \hat{ \by } - \by \|_2^2 \\
	& = (4n)^{-1} \min_{c = \pm 1} \{
	\| \hat{ \by }  \|_2^2 - 2 c \langle \hat{ \by }, \by \rangle
	+ \| \by \|_2^2 \}
	\geq \frac{n - 1}{2n}.
	\end{align*}
	Hence if $\cF \backslash \cF_{\varepsilon} \neq \varnothing$,
	\begin{align*}
	\inf_{\hat{ \by } \in \cF \backslash \cF_{\varepsilon} }
	\EE \cM ( \hat{ \by }, \by ) \geq \frac{n-1}{2n}
	\inf_{\hat{ \by } \in \cF \backslash \cF_{\varepsilon} } \PP (|
	\langle
	\hat{ \by } (\bX), \by
	\rangle
	| \leq 1) \geq  \frac{n-1}{2n} \cdot \varepsilon = \frac{\varepsilon}{2} \bigg( 1 - \frac{1}{n} \bigg).
	\end{align*}
	
	Based on the deduction above, we have the followings for all $\varepsilon \in [0,1]$:
	\begin{enumerate}
		\item If $\cF_{\varepsilon} \neq \varnothing$ and $\cF \backslash \cF_{\varepsilon} \neq \varnothing$, then
		$\inf_{\hat{ \by } \in \cF }
		\EE \cM ( \hat{ \by }, \by )
		\geq \min \{ \delta - \varepsilon,~ \varepsilon (1 - n^{-1}) / 2 \}$;
		\item If $\cF_{\varepsilon} = \varnothing$, then $\inf_{\hat{ \by } \in \cF }
		\EE \cM ( \hat{ \by }, \by )
		\geq \varepsilon (1 - n^{-1}) / 2$.
		\item If $\cF \backslash \cF_{\varepsilon} = \varnothing$, then $\inf_{\hat{ \by } \in \cF }
		\EE \cM ( \hat{ \by }, \by )
		\geq \delta - \varepsilon$.
	\end{enumerate}
	As a result, $\inf_{\hat{ \by } \in \cF }
	\EE \cM ( \hat{ \by }, \by )
	\geq \sup_{\varepsilon \in [0,1]} \min \{ \delta - \varepsilon,~ \varepsilon (1 - n^{-1}) / 2 \}
	= \frac{n-1}{3n-1} \delta$.
\end{proof}

\subsection{Proof of Lemma \ref{KPCA-lemma-CSBM-linearization}}\label{KPCA-lemma-CSBM-linearization-proof}

The proof directly follows the Lemmas \ref{KPCA-lemma-GMM-linearization} and \ref{KPCA-lemma-SBM-linearization}, plus the conditional independence between $\bA$ and $\bX$ as well as the Bayes formula. See Appendices \ref{KPCA-lemma-GMM-linearization-proof} and \ref{KPCA-lemma-SBM-linearization-proof} for proofs of lemmas.

\begin{lemma}\label{KPCA-lemma-GMM-linearization}
	Denote by $p_X(\cdot | \tilde{\ell}_i, \tilde{\by}_{-i} )$ the conditional density function of $\bX$ given $y_i = \tilde{\ell}_i \in \{ \pm 1 \}$ and $\by_{-i} = \tilde{\by}_{-i} \in \{ \pm 1 \}^{n-1}$.
	Under Assumption \ref{KPCA-assumption-CSBM-q},
	\begin{align*}
	\bigg| y_i \log 
	\bigg(
	\frac{ p_X ( \bX |  y_i, \by_{-i}  ) }{ p_X ( \bX | - y_i, \by_{-i}  ) }
	\bigg)
	-  \frac{2 R^2}{n R^2 + d } \sum_{j \neq i} \langle \bx_i , \bx_j \rangle y_j  \bigg| 
	= o_{\PP} (q_n;~q_n),\qquad \forall i.
	\end{align*}
\end{lemma}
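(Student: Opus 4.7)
The plan is to (i) integrate out the sphere prior on $\bmu$ to express the log-likelihood ratio as a scalar function of $\|\bs\|$ and $\|\bs'\|$, where $\bs = \sum_k y_k \bx_k$ and $\bs' = \bs - 2 y_i \bx_i$; (ii) linearize the resulting difference via the fundamental theorem of calculus; and (iii) identify the coefficient at typical arguments using a Bessel function identity. Starting from $p_X(\bX \mid \by) = \int (2\pi)^{-nd/2} \exp(-\tfrac{1}{2}\sum_k \|\bx_k - y_k \bmu\|^2)\, d\pi(\bmu)$, expanding the squares and using $y_k^2 = 1$ and $\|\bmu\|^2 = R^2$ yields $p_X(\bX \mid \by) = C(\bX) \int_{\|\bmu\| = R} e^{\langle \bs, \bmu \rangle}\, d\pi(\bmu)$, where $C(\bX)$ does not depend on $\by$ and cancels in the log-ratio. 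By rotational invariance of $\pi$, the integral depends only on $\|\bs\|$ and equals $e^{\phi_d(R\|\bs\|)}$ for $\phi_d(t) := \log \EE_{\bw \sim \mathrm{Unif}(\SSS^{d-1})} [e^{t \langle \bw, \be_1\rangle}]$. The log-likelihood ratio thus equals $\phi_d(R\|\bs\|) - \phi_d(R\|\bs'\|)$.

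Setting $f(\bu) := \phi_d(R\|\bu\|)$ and $\bs_r := \bs' + r(\bs - \bs')$ and applying the fundamental theorem of calculus with $\bs - \bs' = 2 y_i \bx_i$ and $y_i^2 = 1$ gives
\[
y_i \big[\phi_d(R\|\bs\|) - \phi_d(R\|\bs'\|)\big] = \int_0^1 \frac{2 R\, \phi_d'(R \|\bs_r\|)}{\|\bs_r\|}\, \langle \bs_r, \bx_i \rangle\, dr.
\]
Along this segment $\bs_r \approx \bar\bs := \sum_{j \neq i} y_j \bx_j$, so $\langle \bs_r, \bx_i \rangle \approx \sum_{j \neq i} y_j \langle \bx_j, \bx_i \rangle$, and the task reduces to showing that $R \phi_d'(R\|\bs_r\|) / \|\bs_r\|$ is well-approximated by $R^2/(nR^2 + d)$. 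Using the identity $\phi_d'(t) = I_{d/2}(t) / I_{(d-2)/2}(t)$ together with an Amos-type bound of the form $\phi_d'(t) = t / [(d-1)/2 + \sqrt{(d-1)^2/4 + t^2}]\, (1 + o(1))$ valid uniformly in $t$, evaluation at the typical $T_* := R\sqrt{n(nR^2 + d)}$ gives $\phi_d'(T_*) \approx T_*/(nR^2 + d)$. This hinges on the algebraic identity $T_*^2 + (d/2)^2 \approx (nR^2 + d/2)^2$, which follows directly from $T_*^2 = n^2 R^4 + nR^2 d$, and it yields $R \phi_d'(T_*)/\|\bs_r\| \approx R^2/(nR^2 + d)$ for $\|\bs_r\| \approx T_*/R$.

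The error analysis is then routine. Writing $\bar\bs = (n-1)\bmu + \sum_{j \neq i} y_j \bz_j$ and using $\chi^2$ concentration (as in \Cref{KPCA-lemma-GMM-norms}) yields $\|\bar\bs\|^2 = (n-1)(nR^2 + d)\,[1 + o_\PP(1;~q_n)]$, and the perturbation $\|\bs_r - \bar\bs\| \leq \|\bx_i\| = O_\PP(\sqrt{R^2 + d};~q_n)$ is negligible within the multiplicative neighborhood of $T_*$ controlling the integrand. Smoothness of $\phi_d'$ propagates these estimates through the integral, and when combined with $|\langle \bar\bs, \bx_i\rangle| = O_\PP(\sqrt{q_n (nR^2 + d)};~q_n)$ (Cauchy--Schwarz plus concentration), the multiplicative coefficient error is pushed into the $o_\PP(q_n;~q_n)$ target. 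The main obstacle is the derivative estimate on $\phi_d'$: under Assumption \ref{KPCA-assumption-CSBM-q} the ratio $T_*/d$ can take any value, so neither the small-$t$ expansion $\phi_d'(t) \approx t/d$ nor the large-$t$ expansion $\phi_d'(t) \approx 1 - (d-1)/(2t)$ is sufficient on its own; it is the uniform Amos-type bound together with the fortuitous algebraic identity above that produces the single coefficient $R^2/(nR^2 + d)$ across all regimes of $d/(nR^2)$.
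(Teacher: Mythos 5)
Your strategy matches the paper's quite closely in its first two steps — integrate out the spherical prior on $\bmu$ to reduce the log-odds to a scalar function of $\|\sum_j y_j\bx_j\|$, then identify the leading coefficient from the same algebraic identity $T_*^2+(d/2)^2=(nR^2+d/2)^2$. Where you diverge is in \emph{how} the scalar asymptotics are established and \emph{where} the fundamental theorem of calculus is applied, and the second difference opens a genuine gap.

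On the scalar asymptotics: the paper proves its Lemma \ref{KPCA-lemma-integral} from scratch by a uniform two-parameter Laplace analysis of $P(t,s)=\int_0^\pi e^{t\cos\theta}(\sin\theta)^{s-2}\,d\theta$, and in fact its quantity $g(t,s)=\tfrac{\sqrt{(s-2)^2+4t^2}-(s-2)}{2t}$ is \emph{exactly} the Amos expression $\tfrac{t}{\nu+\sqrt{\nu^2+t^2}}$ with $\nu=(s-2)/2$, so your proposal to cite the Bessel-ratio bound in place of re-deriving it is a legitimate shortcut — provided you invoke a version with two-sided, nonasymptotic control whose multiplicative error vanishes uniformly over the full range of $(d,t)$ allowed by Assumption \ref{KPCA-assumption-CSBM-q}. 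The paper makes this precise via an explicit modulus $w(\max\{1/t,\,s^2/t^3\})$; a bare ``$(1+o(1))$ uniformly in $t$'' is not enough, since the error of the Amos sandwich scales like $\nu/(\nu^2+t^2)$ and its smallness must be tracked through the probabilistic neighborhood of $t_0$.

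The more substantive issue is the linearization. You apply FTC along the straight line $\bs_r=\bs'+2ry_i\bx_i$ in $\RR^d$, and then replace $\langle\bs_r,\bx_i\rangle$ by $\langle\bar\bs,\bx_i\rangle$ with $\bar\bs=\sum_{j\neq i}y_j\bx_j$. But $\langle\bs_r,\bx_i\rangle=\langle\bar\bs,\bx_i\rangle+(2r-1)y_i\|\bx_i\|_2^2$, and the extra term does not vanish pointwise; it only integrates to zero against a \emph{constant} kernel. Its contribution to your integral is
$\int_0^1 c(r)(2r-1)\,dr\cdot y_i\|\bx_i\|_2^2$ with $c(r)=2R\phi_d'(R\|\bs_r\|)/\|\bs_r\|$, which is bounded only through the variation of $c$ across the segment. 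Since $\|\bx_i\|_2^2\asymp R^2+d$ can substantially exceed $q_n$ when $d\gg nR^2$, controlling this residual requires a second-order (derivative-of-coefficient) estimate that you omit. The paper sidesteps the issue by working directly with the scalars $t_1=R\|(n-1)\hat\bmu^{(-i)}+y_i\bx_i\|_2$ and $t_2=R\|(n-1)\hat\bmu^{(-i)}-y_i\bx_i\|_2$ and using $t_1^2-t_2^2=4R^2(n-1)\langle\hat\bmu^{(-i)},\bx_i\rangle y_i$, in which $\|\bx_i\|_2^2$ cancels exactly. You can repair your argument the same way: apply FTC to $\psi(u):=\phi_d(R\sqrt{u})$ in the variable $u=\|\bs_r\|^2$, noting $\|\bs\|_2^2-\|\bs'\|_2^2=4y_i\langle\bar\bs,\bx_i\rangle$ exactly; then no spurious $\|\bx_i\|_2^2$ term ever appears.
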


\begin{lemma}\label{KPCA-lemma-SBM-linearization}
	Denote by $p_A (\cdot | \tilde y_i, \tilde{\by}_{-i} )$ the conditional probability mass function of $\bA$ given $y_i = \tilde{\ell}_i$ and $\by_{-i} = \tilde{\by}_{-i}$.
	Under Assumption \ref{KPCA-assumption-CSBM-q},
	\begin{align*}
	\bigg| 	y_i \log \bigg( \frac{ p_A ( \bA |  y_i, \by_{-i}  ) }{ p_A ( \bA | - y_i, \by_{-i}  ) } \bigg) -
 \log \bigg( \frac{a}{b} \bigg) 
	\sum_{j \neq i} A_{ij} y_j
	\bigg| = o_{\PP} ( q_n;~q_n ) , \qquad \forall i.
	\end{align*}
\end{lemma}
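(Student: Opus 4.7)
\textbf{Proof proposal for Lemma \ref{KPCA-lemma-SBM-linearization}.} The plan is to evaluate the log-likelihood ratio exactly, peel off the term that matches $\log(a/b)\sum_{j\neq i} A_{ij} y_j$, and control the remainder via concentration. First I would use the conditional independence of the edges $\{A_{ij}\}_{j \neq i}$ given $\by$, together with the fact that only the $i$-th row of $\bA$ depends on $y_i$, to write
\begin{align*}
\log\frac{p_A(\bA \mid y_i = +1, \by_{-i})}{p_A(\bA \mid y_i = -1, \by_{-i})}
= \sum_{j \neq i} y_j\Big[ A_{ij}\log(\alpha/\beta) + (1-A_{ij})\log\tfrac{1-\alpha}{1-\beta}\Big].
\end{align*}
Since the right-hand side is an odd function of $y_i$ (it flips sign under $y_i\mapsto -y_i$ because each $y_j$ is paired with an $\mathbf{1}\{y_i = y_j\}$), multiplying by $y_i$ yields exactly the left-hand side when $y_i=+1$ and the same expression when $y_i=-1$. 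Combined with $\log(\alpha/\beta) = \log(a/b)$ (the $q_n/n$ cancels), the quantity to bound becomes
\begin{align*}
\Big|\log\tfrac{1-\alpha}{1-\beta}\Big|\cdot \Big|\sum_{j\neq i}(1-A_{ij})\,y_j\Big|.
\end{align*}

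For the scalar factor, a one-term Taylor expansion of $\log(1+x)$ together with $\alpha = aq_n/n$, $\beta = bq_n/n$ gives $|\log((1-\alpha)/(1-\beta))| = |a-b|q_n/n \cdot (1+o(1))$. For the sum, I would decompose $\sum_{j\neq i}(1-A_{ij})y_j = \sum_{j\neq i} y_j - \sum_{j\neq i} A_{ij}y_j$ and bound the two pieces separately. Since $\by$ is uniform on $\{\pm 1\}^n$, Hoeffding's inequality gives $|\sum_{j \neq i} y_j| = O_{\PP}(\sqrt{n q_n};~q_n)$. Conditional on $\by$, the variables $\{A_{ij}y_j\}_{j\neq i}$ are independent and bounded by one with variance at most $\alpha \vee \beta = O(q_n/n)$, so Bernstein's inequality yields $|\sum_{j\neq i} A_{ij}y_j - \EE(\cdot\mid\by)| = O_{\PP}(q_n;~q_n)$, while the conditional mean is deterministically $O(q_n)$; integrating out $\by$ preserves the tail. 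Multiplying by $|a-b|q_n/n$ gives a contribution of order $q_n\sqrt{q_n/n} + q_n^2/n$, which is $o_{\PP}(q_n;~q_n)$ since $q_n \leq \log n$, completing the proof.

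The main obstacle I anticipate is ensuring that the tail bounds are compatible with the notation $o_{\PP}(\cdot;~q_n)$ of Definition \ref{KPCA-defn-0}, i.e.\ that the probability of exceeding any constant multiple of $w_n q_n$ decays faster than $e^{-C q_n}$ for every $C>0$. The Hoeffding bound with $t = C'\sqrt{n q_n}$ has tail $2e^{-(C')^2 q_n/2}$ and the Bernstein bound with $t = C' q_n$ has tail $e^{-c(C') q_n}$, both of which can be made to decay as fast as required by choosing $C'$ large; one must then carefully propagate the two separate exceptional events through the decomposition and across the conditioning on $\by$, and verify that the $O(q_n^2/n^2)$ error from the Taylor expansion of $\log((1-\alpha)/(1-\beta))$ does not degrade the rate. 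This is the only nontrivial bookkeeping step; everything else is a direct computation on the SBM likelihood.
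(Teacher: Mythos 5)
Your proof is correct and follows essentially the same route as the paper: compute the exact log-likelihood ratio, observe that the $\log(\alpha/\beta)$ coefficient is exactly $\log(a/b)\sum_{j\neq i}A_{ij}y_j$, and then show the remaining term $|\log\frac{1-\alpha}{1-\beta}|\cdot|\sum_{j\neq i}(1-A_{ij})y_j| = o_{\PP}(q_n;q_n)$ by Hoeffding on $\sum_{j\neq i}y_j$ and concentration on $\sum_{j\neq i}A_{ij}y_j$, together with $|\log\frac{1-\alpha}{1-\beta}| \asymp q_n/n$. The paper phrases the same computation in terms of the set cardinalities $|T_i|$, $|S_i|$ and uses Hoeffding for both pieces; your per-edge formulation and the use of Bernstein (yielding the sharper but not needed $O_{\PP}(q_n;q_n)$ for the $\sum A_{ij}y_j$ piece) are cosmetic variations, not a different argument.
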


\subsection{Proof of Lemma \ref{KPCA-lemma-GMM-linearization}}\label{KPCA-lemma-GMM-linearization-proof}
Let $p = p_n = R^4 / (R^2 + d / n)$. We have $p_n \asymp q_n$.
First of all, from the data generating model, we have
$$
p_X( \bX | \by  ) \propto \EE_{\bmu} \exp \Big( - \frac{1}{2} \sum_{j=1}^n \| \bx_j - y_j \bmu\|^2 \Big)  \propto \EE_{\bmu} \exp \Big( \Big\langle \sum_{j=1}^n  \bx_j y_j, \bmu \Big\rangle \Big),
$$
where $\propto$ hide quantities that do not depend on $\by$.
By defining
\begin{align*}
I(\balpha) = R^{d-1} \int_{ \SSS^{d-1} } e^{ R\langle \balpha, \tilde\bmu \rangle } \rho( \mathrm{d} \tilde\bmu ), 
\qquad \forall  \balpha \in \RR^{d},
\end{align*}
and using the uniform distribution of $\bmu$ on the sphere with radius $R$, we get
\begin{align}
\frac{ p_X( \bX | y_i =  s, \by_{-i}  ) }{ p_X ( \bX | y_i = - s, \by_{-i}  ) } 
= \frac{ I \left( (n-1)\hat\bmu^{(-i)} + \bx_i s \right) }{ I \left( (n-1)\hat\bmu^{(-i)} - \bx_i s \right) }.
\label{KPCA-eqn-genie-aided-1}
\end{align}

Let $
P (t, s) = \int_{0}^{\pi} e^{ t \cos \theta } (\sin \theta)^{s - 2} \rd \theta$ for
$ t \geq 0,~ s \geq 2$.  Then,
\begin{align*}
I(\balpha) 
\propto \int_{0}^{\pi} e^{ R \| \balpha \|_2 \cos \theta } (\sin \theta)^{d - 2} \rd \theta
= P ( R \| \balpha \|_2, d ),
\end{align*}
where $\propto $ only hides some factor that does not depend on $\balpha$.
Hence by (\ref{KPCA-eqn-genie-aided-1}) and $\hat\bmu^{(-i)} = \frac{1}{n - 1} \sum_{j \neq i} y_j \bx_j$,
\begin{align*}
& \log \bigg( \frac{ p_X ( \bX |   y_i, \by_{-i}  ) }{ p_X ( \bX | -  y_i, \by_{-i}  ) } \bigg) \\
& = \log P \left( R \| (n-1) \hat\bmu^{(-i)} + \bx_i  y_i  \|_2 , d \right) - \log P \left( R \| (n-1) \hat\bmu^{(-i)} - \bx_i  y_i  \|_2 , d \right).
\end{align*}

We will linearize the functional above, and invoke Lemma \ref{KPCA-lemma-integral-derivative} to control the approximation error. Take $t_0 = (n-1) R \sqrt{  R^2 + d / (n-1) }$, $t_1 = R \| (n-1) \hat\bmu^{(-i)} + \bx_i  y_i  \|_2 $, $t_2 = R \| (n-1) \hat\bmu^{(-i)} - \bx_i  y_i  \|_2 $. We first claim that
\begin{align}
&t_0 = nR \sqrt{R^2 + d / n } [ 1 + o(1) ] =[1+o(1)] nR^3 / \sqrt{p} \asymp n R (R \vee \sqrt{d / n}) , \label{KPCA-eqn-genie-aided-2.5}\\
&\max\{ 1/t_0,~ d^2 / t_0^3,~ |t_2 - t_0| /t_0,~ |t_1 - t_0| /t_0 \} = o_{\PP} (1;~p).
\label{KPCA-eqn-genie-aided-3}
\end{align}
Equation (\ref{KPCA-eqn-genie-aided-2.5}) is obvious and it leads to $1 / t_0 = o(1)$. From $t_0 \gtrsim R \sqrt{nd}$ and the assumption $R \gg 1 \vee (d/n)^{1/4}$ we get
\begin{align*}
\frac{d^2}{ t_0^3 } \lesssim \frac{ d^2 }{ (R \sqrt{nd})^3 } = \bigg(
\frac{d^4}{(R^2 nd)^3}
\bigg)^{1/2}
=\bigg(
\frac{d}{nR^4} \cdot \frac{1}{n^2 R^2}
\bigg)^{1/2} = o(1).
\end{align*}
By the triangle's inequality and $\| \bx_i \|_2 = O_{\PP} ( R \vee \sqrt{d};~p )$ in Lemma \ref{KPCA-lemma-GMM-norms},
\begin{align*}
\left| 
|t_1 - t_0| - \left| R \| (n-1) \hat\bmu^{(-i)} \|_2 - t_0\right| 
\right|
& \leq R \| \bx_i  y_i  \|_2 \leq R (R \vee \sqrt{d}) O_{\PP} ( 1 ;~p ).
\end{align*}
By $\left| 
\| \hat\bmu^{(-i)} \|_2 - \sqrt{R^2 + d / (n-1)} \right|
= O_{\PP} ( \sqrt{ p / n } ;~p )$ in Lemma \ref{KPCA-lemma-GMM-norms},
$\left| R \| (n-1) \hat\bmu^{(-i)} \|_2 - t_0\right| 
= O_{\PP} ( R \sqrt{np} ;~p )$. Hence
$|t_1 - t_0| / R = O_{\PP} ( R \vee \sqrt{d} \vee \sqrt{np} ;~p ) = O_{\PP} ( \sqrt{n} R \vee \sqrt{d} ;~p )$ as $\sqrt{p} \leq R$. Then $t_0 \asymp n R (R \vee \sqrt{d / n}) $ forces 
\begin{align*}
|t_1 - t_0| / t_0 = \frac{|t_1 - t_0| / R}{|t_0| / R} = \frac{
	O_{\PP} ( \sqrt{n} R \vee \sqrt{d} ;~p )
}{
	nR \vee \sqrt{nd } 
}= o_{\PP} (1;~p).
\end{align*}
Similarly, $|t_2 - t_0| / t_0 = o_{\PP} (1;~p)$.

Now that (\ref{KPCA-eqn-genie-aided-3}) has been justified, Lemma \ref{KPCA-lemma-integral-derivative} and Fact \ref{KPCA-fact-transform} assert that
\begin{align}
& \bigg| \frac{ \log p_X ( \bX |   y_i, \by_{-i}  ) - \log p_X ( \bX | -  y_i, \by_{-i}  )    }{ g(t_0, d) (t_2 - t_1) } - 1 \bigg| \notag \\
& =
\bigg| \frac{ \log P(t_2, d) - \log P(t_1, d) }{ g(t_0, d) (t_2 - t_1) } - 1 \bigg|  = o_{\PP} (1;~p),
\label{KPCA-eqn-genie-aided-4}
\end{align}
where
\begin{align*}
g (t_0 , d) & = \frac{\sqrt{ [ (d-2) / t_0 ]^2 + 4} - (d-2) / t_0 }{2} = \frac{ \sqrt{ (d-2)^2 + 4 t_0^2 } - (d-2) }{2 t_0}.
\end{align*}
By (\ref{KPCA-eqn-genie-aided-2.5}), we have $t_0 = [1+o(1)] nR^3 / \sqrt{p}$ and
\begin{align*}
g (t_0 , d) & = \frac{ \sqrt{p} [ 1 + o(1) ] }{ 2 n R^3 } [ \sqrt{ (d-2)^2 + 4 n^2 R^6 / p } - (d-2) ] \\
& = \frac{\sqrt{p}  [ 1 + o(1) ]  }{ 2 R } \cdot 
\bigg[
\sqrt{ \bigg( \frac{d-2}{ n R^2 } \bigg)^2 + \frac{4 R^2}{p}
}
-
\frac{d-2}{ n R^2 }
\bigg].
\end{align*}
Since $p = R^4 / ( R^2 + d / n )$ and
\begin{align*}
\bigg( \frac{d-2}{ n R^2 } \bigg)^2 + \frac{4 R^2}{p}
= \bigg( \frac{d-2}{ n R^2 } \bigg)^2 + 4 \bigg( 1 + \frac{d}{nR^2} \bigg)
= \bigg( \frac{d-2}{ n R^2 } + 2 \bigg)^2 + \frac{8}{nR^2},
\end{align*}
we have 
\begin{align*}
\frac{d-2}{ n R^2 } + 2
\leq
\sqrt{ \bigg( \frac{d-2}{ n R^2 } \bigg)^2 + \frac{4 R^2}{p} }
\leq \frac{d-2}{ n R^2 } + 2 + \sqrt{\frac{8}{nR^2}}
\end{align*}
and $g (t_0 , d) =  [ 1 + o(1)] \sqrt{p} /  R $.

To further simplify (\ref{KPCA-eqn-genie-aided-4}), we first note that
\begin{align*}
t_1 - t_2 & = \frac{t_1^2 - t_2^2}{t_1 + t_2}
= \frac{ 4 R^2 (n-1) \langle \hat\bmu^{(-i)} , \bx_i \rangle  y_i }{ t_1 + t_2 }
=  \frac{4 R^2 (n-1) \langle \hat\bmu^{(-i)} , \bx_i \rangle  y_i  [1 + o_{\PP} (1;~p) ] }{2 t_0}
\\&
= \frac{ 4 R^2 (n-1) \langle \hat\bmu^{(-i)} , \bx_i \rangle  y_i [1 + o_{\PP} (1;~p) ] }{ 2 n R^3 / \sqrt{p} }
= \bigg( \frac{2 \sqrt{p}}{nR} \sum_{j \neq i} \langle \bx_i , \bx_j \rangle y_j \bigg) y_i [1 + o_{\PP} (1;~p) ],
\end{align*}
where we used $t_0 = [1+o(1)] nR^3 / \sqrt{p}$ in (\ref{KPCA-eqn-genie-aided-2.5}). Then
\[
g(t_0, d) (t_1 - t_2) = \bigg( \frac{ 2 p}{n R^2} \sum_{j \neq i} \langle \bx_i , \bx_j  \rangle y_j \bigg) y_i [1 + o_{\PP} (1;~p) ].
\]
By $\langle \hat\bmu^{(-i)} , \bx_i \rangle = O_{\PP} (R^2;~ p)$ in Lemma \ref{KPCA-lemma-GMM-norms}, $g(t_0, d)
( t_1 - t_2 ) = O_{\PP} ( p;~p )$.
The proof is completed by plugging these estimates into (\ref{KPCA-eqn-genie-aided-4}).

\subsection{Proof of Lemma \ref{KPCA-lemma-SBM-linearization}}\label{KPCA-lemma-SBM-linearization-proof} 

Define $T_i = \{ j \in [n] \backslash \{i\} :~ y_i  y_j = 1 \}$ and $S_i = \{ j \in [n]  \backslash \{i\} :~A_{ij} = 1 \}$ for $i \in [n]$. By definition,  
\begin{align*}
&p_A ( \bA |  y_i, \by_{-i} ) \propto \alpha^{ |T_i \cap S_i| }
(1-\alpha)^{ | T_i \backslash S_i | }
\beta^{ | S_i \backslash T_i | }
(1-\beta)^{ [n] \cap \{ i \}^c \cap T_i^c \cap S_i^c }, \\
&p_A ( \bA | - y_i, \by_{-i} ) \propto \alpha^{ | S_i \backslash T_i | }
(1-\alpha)^{ [n] \cap \{ i \}^c \cap T_i^c \cap S_i^c }
\beta^{|T_i \cap S_i|  }
(1-\beta)^{  | T_i \backslash S_i | },
\end{align*}
where both $\propto$'s hide the same factor that does not involve $\{ A_{ij} \}_{j = 1}^n$ or $ y_i$. Hence
\begin{align}
\log \bigg(
\frac{
	p_A ( \bA |  y_i, \by_{-i} )
}{
	p_A ( \bA | - y_i, \by_{-i} )
}
\bigg)
& = (|T_i \cap S_i| - | S_i \backslash T_i | ) \log (\alpha/\beta) \notag\\
& + (  | T_i \backslash S_i | - |[n] \cap \{ i \}^c \cap T_i^c \cap S_i^c| ) \log \bigg(
\frac{1-\beta}{1-\alpha}
\bigg).
\label{KPCA-eqn-SBM-1}
\end{align}

The facts $|T_i| - |S_i| \leq | T_i \backslash S_i | \leq |T_i|$ and $n- 1 - |T_i| - |S_i| \leq |[n] \cap \{ i \}^c \cap T_i^c \cap S_i^c| \leq n - 1 - |T_i|$ yield
\begin{align*}
\left|
| T_i \backslash S_i | - |[n] \cap \{ i \}^c \cap T_i^c \cap S_i^c|
\right|
\leq \left| 2 |T_i| - (n-1) \right| + |S_i|
\end{align*}
For any independent random variables $\{ \xi_i \}_{i=1}^n$ taking values in $[-1, 1]$, Hoeffding's inequality \citep{Hoe63} asserts
$\PP (
| \sum_{i}^{n} \xi_i - \sum_{i}^{n} \EE  \xi_i | \geq n t 
) \leq 2 e^{- nt^2 / 2 }$, $\forall t \geq 0$. Hence $| \sum_{i}^{n} \xi_i - \sum_{i}^{n} \EE  \xi_i |  = O_{\PP} ( \sqrt{n q};~q )$. This elementary fact leads to
\begin{align*}
& \left| 2 |T_i| - (n-1) \right|  = O_{\PP} (
\sqrt{n q} ;~q ),  \\
& |S_i| \leq |\EE S_i|  + |S_i - \EE S_i| \leq O(q) + O_{\PP} (
\sqrt{n q} ;~q ) = O_{\PP} (
\sqrt{n q} ;~q ). \notag
\end{align*}
As a result, $ \left|
| T_i \backslash S_i | - |[n] \cap \{ i \}^c \cap T_i^c \cap S_i^c|
\right|
=
O_{\PP} (
\sqrt{n q} ;~q )$.
This bound, combined with
\begin{align*}
0 \leq \log \bigg(
\frac{1-\beta}{1-\alpha}
\bigg) = \log \bigg(
1 + 
\frac{\alpha-\beta}{1-\alpha}
\bigg)
\leq \frac{\alpha-\beta}{1-\alpha} = \frac{ (a-b) q / n }{1 - a q/ n } \lesssim \frac{q}{n},
\end{align*}
(\ref{KPCA-eqn-SBM-1}) and $\log (\alpha/\beta) = \log (a/b)$, implies that
\begin{align*}
\left| \log \bigg(
\frac{
	p_A ( \bA |  y_i, \by_{-i} )
}{
	p_A ( \bA | - y_i, \by_{-i} )
}
\bigg) - (|T_i \cap S_i| - | S_i \backslash T_i | ) \log (a/b) \right| 
= O_{\PP} ( \sqrt{ nq } \cdot q / n ; ~q) = o_{\PP} ( q ; ~ q).
\end{align*}
The proof is completed by $$|T_i \cap S_i| - | S_i \backslash T_i | = \sum_{j \in T_i} A_{ij} - \sum_{j \in [n] \cap \{ i \}^c \cap T_i^c } A_{ij} =  y_i \sum_{j \neq i} A_{ij}  y_j .$$

\subsection{Proof of Theorem \ref{KPCA-theorem-CSBM-eaxct}}\label{KPCA-theorem-CSBM-eaxct-proof}
Lemma \ref{KPCA-lemma-CSBM-inf} asserts the existence of some $\varepsilon_n \to 0$ and constant $C>0$ such that
\begin{align}
\PP ( \min_{c = \pm 1} \| c \hat\bu - \bw \|_{\infty} < \varepsilon_n n^{-1/2} \log n ) > 1 - C n^{-2}.
\label{KPCA-theorem-CSBM-eaxct-1}
\end{align}
Let $\hat{ c} =  \argmin_{c = \pm 1} \| c \hat\bu - \bw \|_{\infty}$ and $\bv = \hat{ c} \hat\bu$.
Hence
\begin{align}
\PP [ \cM(\sgn(\hat\bu) , \by ) = 0 ] & \geq \PP ( \sgn(\hat\bv) = \by ) \notag\\
& \geq \PP ( \min_{i \in [n]} w_i y_i > \varepsilon_n n^{-1/2} \log n ,~ \| \bv - \bw \|_{\infty} < \varepsilon_n n^{-1/2} )  \notag\\
& \geq  \PP ( \min_{i \in [n]} w_i y_i > \varepsilon_n n^{-1/2}\log n ) - \PP ( \| \bv - \bw \|_{\infty} < \varepsilon_n n^{-1/2} )\notag \\
& \geq 1 - \sum_{i=1}^{n} \PP (  w_i y_i \leq \varepsilon_n n^{-1/2}\log n ) - C n^{-2} \notag\\
& =  1 - n \PP (  w_i y_i \leq \varepsilon_n n^{-1/2}\log n ) - C n^{-2}.
\label{KPCA-theorem-CSBM-eaxct-2}
\end{align}
where we used (\ref{KPCA-theorem-CSBM-eaxct-1}), union bounds and symmetry.

Take any $0 < \varepsilon < \frac{a-b}{2} \log (a/b) + 2c$. By Lemma \ref{KPCA-lemma-CSBM-LDP}, for any $\delta > 0$ there exists a large $N$ such that when $n \geq N$, $\varepsilon_n < \varepsilon$ and
\begin{align*}
\PP ( w_i y_i \leq \varepsilon_n n^{-1/2} \log n) \leq n^{-  \sup_{ t \in \RR } \{ \varepsilon t + I(t,a,b,c) \} + \delta }.
\end{align*}
This and (\ref{KPCA-theorem-CSBM-eaxct-2}) lead to
\begin{align*}
\PP [ \cM(\sgn(\hat\bu) , \by ) = 0 ] \geq 1 - n^{1 - \sup_{ t \in \RR } \{ \varepsilon t + I(t,a,b,c) \} + \delta } - C n^{-2}, \qquad \forall n \geq N.
\end{align*}
When $I^*(a,b,c) = \sup_{ t \in \RR } I(t,a,b,c) > 1$, by choosing small $\varepsilon$ and $\delta$ we get $\PP [ \cM(\sgn(\hat\bu) , \by ) = 0 ] \to 1$.

The converse result for $I^*(a,b,c) = \sup_{ t \in \RR } I(t,a,b,c) < 1$ follows from the large deviation Lemma \ref{KPCA-lemma-CSBM-LDP} and the proof of Theorem 1 in \cite{ABH16}.

\subsection{Proof of Theorem \ref{KPCA-theorem-CSBM-error-rate}}\label{KPCA-theorem-CSBM-error-rate-proof}

Lemma \ref{KPCA-lemma-CSBM-inf} asserts the existence of some $\varepsilon_n \to 0$ and constant $C>0$ such that
\begin{align}
\PP ( \min_{c = \pm 1} \| c \hat\bu - \bw \|_{\infty} < \varepsilon_n n^{-1/2} \log n ) > 1 - C n^{-2}.
\label{KPCA-theorem-CSBM-error-rate-4}
\end{align}
Let $\hat{ c} =  \argmin_{c = \pm 1} \| c \hat\bu - \bw \|_{\infty}$ and $\bv = \hat{ c} \hat\bu$.

By definition, $\EE \cM ( \sgn(\hat\bu) , \by ) \leq \frac{1}{n} \sum_{i=1}^{n} \PP ( v_i y_i < 0)$.
By union bounds and (\ref{KPCA-theorem-CSBM-error-rate-4}),
\begin{align}
\PP ( v_i y_i < 0) & \leq \PP ( v_i y_i < 0, ~ \| \bv - \bw \|_{\infty} < \varepsilon_n n^{-1/2} \log n) + \PP (\| \bv - \bw \|_{\infty} \geq \varepsilon_n n^{-1/2} \log n) \notag \\
&\leq \PP ( w_i y_i < \varepsilon_n n^{-1/2} \log n) + C n^{-2}.
\label{KPCA-theorem-CSBM-error-rate-5}
\end{align}
Take any $0 < \varepsilon < \frac{a-b}{2} \log (a/b) + 2c$. By Lemma \ref{KPCA-lemma-CSBM-LDP}, for any $\delta > 0$ there exists a large $N$ such that when $n \geq N$, $\varepsilon_n < \varepsilon$ and
\begin{align}
\PP ( w_i y_i < \varepsilon_n n^{-1/2} \log n) \leq n^{-  \sup_{ t \in \RR } \{ \varepsilon t + I(t,a,b,c) \} + \delta }.
\label{KPCA-theorem-CSBM-error-rate-6}
\end{align}
From (\ref{KPCA-theorem-CSBM-error-rate-5}) and (\ref{KPCA-theorem-CSBM-error-rate-6}) we obtain that
\begin{align*}
\EE\cM( \sgn(\hat\bu) , \by ) \leq n^{-  \sup_{ t \in \RR } \{ \varepsilon t + I(t,a,b,c) \} + \delta } + C n^{-2}, \qquad \forall n \geq N.
\end{align*}
The proof is completed using $I^*(a,b,c) = \sup_{ t \in \RR } I(t,a,b,c) \leq 1 $ and letting $\varepsilon$, $\delta$ go to zero.

\subsection{Proof of Theorem \ref{KPCA-thm-CSBM-lower}}\label{KPCA-thm-CSBM-lower-proof}
Define $f(\cdot | \tilde\bA, \tilde\bX, \tilde{\by}_{-i} ) = \PP ( y_i =  \cdot | \bA = \tilde\bA, \bX = \tilde\bX, \by_{-i} = \tilde{\by}_{-i} )$. By Lemma \ref{KPCA-lemma-fundamental} and symmetries, for any estimator $\hat\by$ we have
\begin{align*}
\EE \cM ( \hat{ \by } , \by ) & \geq \frac{n-1}{3n - 1} \PP [ f(y_1 | \bA, \bX, \by_{-1} ) < f( -y_1 | \bA, \bX, \by_{-1} ) ] .
\end{align*}
Denote by $\cA$ the event on the right hand side. Let
\begin{align*}
&\cB_{\varepsilon} = \bigg\{ 
\bigg| \log \bigg( \frac{ f(y_1 | \bA, \bX, \by_{-1} ) }{ f(-y_1 | \bA, \bX, \by_{-1} ) } \bigg) -
\bigg( \log (a/b) (\bA \by)_1  + \frac{2R^2}{n R^2 + d } (\bG \by)_1 \bigg) y_1
\bigg| 
< \varepsilon q_n
\bigg\}
\\
& \cC_{\varepsilon} = \bigg\{ 
\bigg( \log (a/b) (\bA \by)_1  + \frac{2R^2}{n R^2 + d } (\bG \by)_1 \bigg) y_1 \leq - \varepsilon q_n
\bigg\}
\end{align*}
By the triangle's inequality, $\cC_{\varepsilon} \cap \cB_{\varepsilon} \subseteq \cA$. Hence
\begin{align}
\EE \cM ( \hat{ \by } , \by ) \gtrsim \PP( \cA) \geq \PP (\cC_{\varepsilon} \cap \cB_{\varepsilon}) \geq \PP (\cC_{\varepsilon}) - \PP(\cB_{\varepsilon}^c).
\label{KPCA-thm-CSBM-lower-1}
\end{align}

Since $\frac{a-b}{2} \log (a/b) + 2 c > 0$, Lemma \ref{KPCA-lemma-CSBM-LDP} asserts that
\begin{align*}
\lim_{n \to \infty} q_n^{-1} \log \PP ( \cC_{\varepsilon} )  = - \sup_{ t \in \RR } \{ -\varepsilon t + I(t, a, b, c) \}.
\end{align*}
By Lemma \ref{KPCA-lemma-CSBM-linearization} and the property of $o_{\PP}(\cdot;~\cdot)$,
\begin{align*}
\lim_{n \to \infty} q_n^{-1} \log \PP ( \cB_{\varepsilon}^c ) = - \infty.
\end{align*}
These limits and (\ref{KPCA-thm-CSBM-lower-1}) lead to
\begin{align*}
\liminf_{n \to \infty} q_n^{-1} \log \EE \cM ( \hat{ \by } , \by ) \geq - \sup_{ t \in \RR } \{ - \varepsilon t + I(t, a, b, c) \}.
\end{align*}
Taking $\varepsilon \to 0$ finishes the proof.

\subsection{Proof of \Cref{thm-CSBM-q}}\label{sec-proof-thm-CSBM-q}

We need the following lemma, whose proof can be found in \Cref{sec-KPCA-lemma-CSBM-q-proof}.

\begin{lemma}\label{KPCA-lemma-CSBM-q}
	Let Assumption \ref{KPCA-assumption-CSBM-q} hold. Define $\bar\bu = \by / \sqrt{n}$ and
\begin{align}
	\bw = \log(a/b) \bA \bar\bu + \frac{2 R^2}{n R^2 + d } \bG \bar\bu.
\label{eqn-KPCA-lemma-CSBM-q}
\end{align}
	For $\tilde\bu$ defined by \eqref{eqn-CSBM-aggregation-2}, we have
	\[
	 \min_{c = \pm 1}  \| c \tilde\bu - \bw \|_{q_n} = o_{\PP} (n^{-1/2 + 1/q_n} q_n ;~ q_n).
	\]
\end{lemma}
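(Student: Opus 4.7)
I decompose $\tilde{\bu} - \bw = T_1 + T_2$, where
\[
T_1 = \frac{2\lambda_1^2(\bG)}{n\lambda_1(\bG) + nd} \bu_1(\bG) - \frac{2R^2}{nR^2+d} \bG\bar\bu
\]
is the attribute-linearisation error and
\[
T_2 = \frac{1}{\sqrt{n}}\log\bigl(\tfrac{\mathbf{1}^\top \bA \mathbf{1} + \hat\by_G^{\top} \bA \hat\by_G}{\mathbf{1}^\top \bA \mathbf{1} - \hat\by_G^{\top} \bA \hat\by_G}\bigr) \bA \hat\by_G - \log(a/b)\bA\bar\bu
\]
is the network-linearisation error. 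I choose the sign of $\bu_1(\bG)$ so that $\hat\by_G = \sgn(\bu_1(\bG))$ agrees with $\by$ on all but a small set of coordinates $S$, absorbing the global sign into the $\min_{c=\pm 1}$. Both pieces will be shown to have $\ell_{q_n}$-norm $o_{\PP}(n^{1/q_n-1/2} q_n; q_n)$, which matches the order of $\|\bw\|_{q_n}$.

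\textbf{Step 1 (attribute part $T_1$).} The Gaussian mixture model satisfies Assumptions~\ref{KPCA-assumption-incoherence}--\ref{KPCA-assumption-concentration} with $\kappa = \mu = r = 1$, $\bar\lambda_1 = nR^2$, $\bar\bu_1 = \by/\sqrt{n}$, $\bSigma = \bI_d$ and $\gamma^{-2} \asymp \mathrm{SNR} = c q_n$, so Theorem~\ref{KPCA-corollary-main} at $p = q_n$ yields both $\|\bu_1(\bG) - \bG\bar\bu/(nR^2)\|_{q_n} = o_{\PP}(n^{1/q_n-1/2}; q_n)$ and $\|\bu_1(\bG)\|_{q_n} = O_{\PP}(n^{1/q_n-1/2}; q_n)$. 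Combined with $\lambda_1(\bG)/(nR^2) = 1 + o_{\PP}(1;n)$ from Lemma~\ref{KPCA-lemma-L2}, a direct Taylor expansion gives $\bigl|\tfrac{2\lambda_1^2(\bG)}{n\lambda_1(\bG)+nd} - \tfrac{2nR^4}{nR^2+d}\bigr| = o_{\PP}(q_n;n)$. Writing $T_1 = (a_1 - a_2 nR^2)\bu_1(\bG) + a_2 nR^2\bigl[\bu_1(\bG) - \bG\bar\bu/(nR^2)\bigr]$ with $a_1 = \tfrac{2\lambda_1^2(\bG)}{n\lambda_1(\bG)+nd}$, $a_2 = \tfrac{2R^2}{nR^2+d}$ and $a_2 nR^2 \asymp q_n$, both terms have $\ell_{q_n}$-norm $o_{\PP}(q_n \, n^{1/q_n-1/2}; q_n)$.

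\textbf{Step 2 (network part $T_2$).} Let $L$ denote the scalar log-ratio in the definition of $\tilde\bu$. I split $T_2 = \tfrac{L - \log(a/b)}{\sqrt{n}}\bA\by + \tfrac{L}{\sqrt{n}} \bA(\hat\by_G - \by)$. Since $\hat\by_G$ is a function of $\bX$ alone, it is conditionally independent of $\bA$ given $(\by, \bmu)$; Bernstein's inequality for sums of Bernoullis delivers $\mathbf{1}^\top \bA \mathbf{1} = [1+o_{\PP}(1;q_n)] n^2(\alpha+\beta)/2$ and $\by^\top \bA \by = [1+o_{\PP}(1;q_n)] n^2(\alpha-\beta)/2$, and the identity $\hat\by_G^\top \bA \hat\by_G - \by^\top \bA \by = -4\sum_{i\in S, j\notin S}A_{ij}y_iy_j$ together with $|S|/n \to 0$ (from Theorem~\ref{KPCA-thm-GMM}) upgrades this to $L - \log(a/b) = o_{\PP}(1; q_n)$. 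A per-row Bennett moment bound for the degrees $\sum_j A_{ij}$ yields $\|\bA\by\|_{q_n} = O_{\PP}(n^{1/q_n} q_n; q_n)$, so the first sub-piece is $o_{\PP}(n^{1/q_n-1/2} q_n; q_n)$. For the second sub-piece, $(\bA(\hat\by_G-\by))_i = -2\sum_{j\in S} A_{ij} y_j$; conditional on $(\by,\bmu,\hat\by_G)$ the rows are independent Bernoulli sums with total mean $\lambda \le |S|\, aq_n/n \le a q_n e^{-c q_n /[2+o(1)]}$, and Bennett's moment inequality (with $\log(q_n/\lambda) \gtrsim q_n$) gives a per-row $q_n$-th moment of $O(1)^{q_n}$; summing over $i$ and taking the $q_n$-th root gives $\|\bA(\hat\by_G - \by)\|_{q_n} = O_{\PP}(n^{1/q_n}; q_n)$, which after dividing by $\sqrt{n}$ is $o_{\PP}(q_n\, n^{1/q_n-1/2}; q_n)$ since $q_n \to \infty$. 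Combining Steps~1 and~2 yields the lemma.

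\textbf{Main obstacle.} The delicate estimate is the $\ell_{q_n}$ moment bound on $\bA(\hat\by_G-\by)$: in the sparse regime $q_n \ll \log n$, crude deterministic bounds such as $\|\bA\|_2 \sqrt{|S|}$ or $\|\bA\|_\infty$ leak an additional $n^{\Omega(1)}$ factor that swamps the target. Two ingredients must be used simultaneously: (i) the conditional independence of $\bA$ and $\hat\by_G$, which makes row moments factorise; and (ii) the exponentially small fraction of mismatched coordinates, which shrinks the per-row Poisson parameter enough that the Bennett moment exponent $q_n/\log(q_n/\lambda)$ remains a bounded constant. Upgrading $\EE|S|/n \le e^{-c q_n/[2+o(1)]}$ (from Theorem~\ref{KPCA-thm-GMM}) to the tail bound $\PP(|S| > n e^{-c' q_n}) \le e^{-q_n}$ required to control the $O_{\PP}(\cdot; q_n)$ type statements is done by invoking the $\ell_p$ linearisation of Theorem~\ref{KPCA-thm-GMM-Lp} and running the Markov-type argument of~\eqref{eqn-intro-Lp-outliers} at $p = \mathrm{SNR} \asymp q_n$.
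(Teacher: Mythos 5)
Your top-level decomposition and your Step 1 coincide with the paper's proof: the attribute part is handled exactly as in the paper (Theorem \ref{KPCA-corollary-main} at $p=q_n$ together with Lemma \ref{KPCA-lemma-L2}), and your splitting of the network part into a coefficient error times $\bA\by$ plus a term $\bA(\hat\by_G-\by)$ is equivalent to the paper's reduction to the four bounds (\ref{KPCA-theorem-CSBM-error-rate-3-q})--(\ref{KPCA-theorem-CSBM-error-rate-6-q}). The genuine gap is in your treatment of $\bA(\hat\by_G-\by)$. You want the per-row Bernoulli parameter to be exponentially small, which forces you to control the mismatch set $S$ at scale $|S|\le n e^{-c'q_n}$, and you propose to obtain $\PP(|S|>ne^{-c'q_n})\le e^{-q_n}$ from Theorem \ref{KPCA-thm-GMM} / Theorem \ref{KPCA-thm-GMM-Lp} plus a Markov argument. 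This does not deliver the required strength. Markov applied to $\EE|S|/n\le e^{-cq_n[1+o(1)]}$ (or to the count of coordinates where the oracle statistic $\langle\bx_i,\hat\bmu^{(-i)}\rangle$ fails, whose indicators are dependent across $i$, so no easy concentration is available) only yields an exceptional probability $e^{-c''q_n}$ with a \emph{fixed} constant $c''$ in the exponent. But the conclusion $o_{\PP}(n^{-1/2+1/q_n}q_n;\,q_n)$, by Definition \ref{KPCA-defn-0}, requires the exceptional probability to be at most $C_1e^{-Cq_n}$ for \emph{every} constant $C$; a fixed-exponent bound such as $e^{-q_n}$ is not enough, and this strictness is actually consumed downstream (the exact-recovery claim of Theorem \ref{thm-CSBM-q} when $I^*(a,b,c)>1$ would be lost if the approximation only held up to an additive $e^{-q_n}$ failure probability). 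As written, the second sub-piece of your Step 2 (and your bound on $\hat\by_G^{\top}\bA\hat\by_G-\by^{\top}\bA\by$, which uses the same $|S|$ control) is therefore not established.

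The fix is to abandon the exponentially small $|S|$, which is overkill. The paper bounds $|S|$ crudely but with overwhelming probability: $|S|\le \|\bu_1(\bG)-\bar\bu\|_2^2/(1/\sqrt{n})^2=O_{\PP}(n/q_n;\,n)$ by the $\ell_2$ bound of Lemma \ref{KPCA-lemma-L2}, so $\PP(|S|\ge Cn/q_n)\le e^{-n}$, which beats $e^{-Cq_n}$ for every $C$ since $q_n\le\log n$. Conditionally on $\{|S|<Cn/q_n\}$ (using the independence of $\bA$ and $\hat\by_G$, as you correctly note), each row obeys $|\bA_i(\hat\by_G-\by)|\le 2\sum_{j\in S}A_{ij}$, a Bernoulli sum with mean $O(1)$, and Lata\l{}a's moment inequality gives $\EE^{1/q_n}\bigl[\bigl(\sum_{j\in S}A_{ij}\bigr)^{q_n}\bigr]\lesssim q_n/\log q_n$. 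Summing over rows and taking the $q_n$-th root yields $\|\bA(\hat\by_G-\by)\|_{q_n}=o(n^{1/q_n}q_n)$ in conditional moment, hence $o_{\PP}(n^{1/q_n}q_n;\,q_n)$ by Facts \ref{KPCA-fact-moment-tail} and \ref{KPCA-fact-truncation}: the gain over $q_n$ comes from the $1/\log q_n$ factor, not from a vanishing Poisson parameter. With this replacement your argument goes through and essentially becomes the paper's proof.
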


Let $\bw$ be the vector defined in (\ref{eqn-KPCA-lemma-CSBM-q}). We invoke \Cref{KPCA-lemma-GMM-error-rate} to control the misclassification error. Set the quantities $\bv$, $\bw $, $\bar\bv$ and $p$ therein be our $\tilde{\bu}$, $\bw - q_n n^{-1/2} \by$, $q_n n^{-1/2} \by$ and $q_n$. We have $\delta_n = q_n n^{-1/2}$. According to \Cref{KPCA-lemma-CSBM-q}, the assumption in \Cref{KPCA-lemma-GMM-error-rate} holds. Then
	\begin{align*}
& \limsup_{n\to\infty} q_n^{-1} \log 
\EE \cM ( \sgn(\tilde \bu ) , \by )
\\
& = \limsup_{n\to\infty} q_n^{-1} \log \bigg(
\frac{1}{n}
\EE \min_{c = \pm 1} \left|  \{ i \in [n]:~ c \sgn(\tilde u_i) \neq y_i \} \right| 
\bigg)
\\ &
\leq
\limsup_{\varepsilon \to 0}
\limsup_{n\to\infty} q_n^{-1} \log \bigg( \frac{1}{n} \sum_{i=1}^{n}  \PP  \left(
- ( w_i  - q_n n^{-1/2} y_i )
 y_i \geq  (1-\varepsilon) q_n n^{-1/2}
\right)
\bigg) 
\\ &
\leq
\limsup_{\varepsilon \to 0}
\limsup_{n\to\infty} q_n^{-1} \log  \PP   (
   w_i  y_i \leq  \varepsilon q_n n^{-1/2}
 ).
\end{align*}
By \Cref{KPCA-lemma-CSBM-LDP},
	\begin{align*}
\lim_{n \to \infty} q_n^{-1} \log \PP ( w_i y_i \leq \varepsilon q_n n^{-1/2})  = - \sup_{ t \in \RR } \{ \varepsilon t + I(t, a, b, c) \},
\qquad \forall \varepsilon < \frac{a-b}{2} \log (a/b) + 2 c.
\end{align*}
Then, we immediately get
	\begin{align*}
\limsup_{n \to q_n} q_n^{-1} \log \EE \cM ( \sgn( \tilde\bu ) , \by ) \leq
 - \sup_{ t \in \RR } I(t, a, b, c)  =  - I^*( a, b, c ) .
\end{align*}
If $ I^*( a, b, c ) > 1$ and $q_n = \log n$, then
\[
\PP [\cM ( \sgn( \tilde\bu ) , \by ) > 0] = \PP [\cM ( \sgn( \tilde\bu ) , \by ) \geq 1/n] \leq \frac{\EE \cM ( \sgn( \tilde\bu ) , \by )}{ 1/n } = n^{1- I^*( a, b, c ) +o(1)} \to 0.
\]

\subsection{Proof of Lemma \ref{KPCA-lemma-CSBM-q}}\label{sec-KPCA-lemma-CSBM-q-proof}

	Define an auxiliary quantity
	\begin{align*}
	& \bv = \frac{1}{\sqrt{n}} \log \bigg( \frac{\alpha}{\beta} \bigg) \bA \hat\by_G   + \frac{2n R^4}{n R^2 + d } \bu_1(\bG).
	\end{align*}
	Then $\| \tilde\bu - \bw \|_{q_n} \leq \| \tilde\bu - \bv \|_{q_n}  + \| \bv - \bw \|_{q_n}$,
	\begin{align}
	\| \bv - \bw \|_{q_n} & \leq \log(a/b) \|  \bA (\hat\by_G - \by  )  \|_{q_n}  / \sqrt{n}  + \frac{2 R^2}{n R^2 + d } \| (nR^2) \bu_1(\bG) - \bG \bar\bu  \|_{q_n},
	\label{KPCA-theorem-CSBM-error-rate-0-q} \\
	\| \tilde\bu - \bv \|_{q_n} & \leq \frac{1}{\sqrt{n}} \bigg|
	\log \bigg( \frac{
		\bm{1}^{\top} \bA \bm{1} + \hat{\by}_G^{\top} \bA \hat{\by}_G
	}{
		\bm{1}^{\top} \bA \bm{1} - \hat{\by}_G^{\top} \bA \hat{\by}_G
	} \bigg) -  \log \bigg( \frac{\alpha}{\beta} \bigg)
	\bigg| \| \bA \hat\by_G \|_{q_n} \notag\\
	& + \bigg|
	\frac{2 \lambda_1^2(\bG) }{n \lambda_1(\bG) + n d } - \frac{2n R^4}{n R^2 + d }
	\bigg| \| \bu_1(\bG) \|_{q_n} . \label{KPCA-theorem-CSBM-error-rate-1-q}
	\end{align}
	
	For simplicity, suppose that $\langle \bu_1(\bG) , \bar\bu \rangle \geq 0 $. By Lemma \ref{KPCA-lemma-L2} and Theorem \ref{KPCA-corollary-main-inf}, we have
	\begin{align*}
	& | \lambda_1(\bG) - nR^2 | = o_{\PP} (1;~ n), \\
	& \| \bu_1(\bG) - \bG \bar\bu / (n R^2) \|_{q_n} = o_{\PP} ( n^{-1/2+1/q_n} ;~q_n ), \\
	& \| \bu_1(\bG) \|_{q_n} = O_{\PP} ( n^{-1/2+1/q_n} ;~q_n ).
	\end{align*}
	Hence,
	\begin{align*}
&	 \frac{2 R^2}{n R^2 + d } \| (nR^2) \bu_1(\bG) - \bG \bar\bu  \|_{q_n}  = o_{\PP} (n^{-1/2+1/q_n} q_n ;~ q_n) , \\
& \bigg|
	\frac{2 \lambda_1^2(\bG) }{n \lambda_1(\bG) + n d } - \frac{2n R^4}{n R^2 + d }
	\bigg| \| \bu_1(\bG) \|_{q_n} = o_{\PP} (n^{-1/2+1/q_n} q_n ;~ q_n) . 
	\end{align*}
	According to (\ref{KPCA-theorem-CSBM-error-rate-0-q}) and (\ref{KPCA-theorem-CSBM-error-rate-1-q}), it remains to show that
		\begin{align*}
&  \log(a/b) \|  \bA (\hat\by_G - \by  )  \|_{q_n}  / \sqrt{n} = o_{\PP} (n^{-1/2+1/q_n} q_n ;~ q_n) ,
\\
& \frac{1}{\sqrt{n}} \bigg|
	\log \bigg( \frac{
		\bm{1}^{\top} \bA \bm{1} + \hat{\by}_G^{\top} \bA \hat{\by}_G
	}{
		\bm{1}^{\top} \bA \bm{1} - \hat{\by}_G^{\top} \bA \hat{\by}_G
	} \bigg) -  \log \bigg( \frac{\alpha}{\beta} \bigg)
	\bigg| \| \bA \hat\by_G \|_{q_n} = o_{\PP} (n^{-1/2+1/q_n} q_n ;~ q_n).
	\end{align*}
They are immediately implied by the followings:
		\begin{align}
&  \|  \bA (\hat\by_G - \by  )  \|_{q_n}   = o_{\PP} (n^{1/q_n} q_n ;~ q_n) ,
	\label{KPCA-theorem-CSBM-error-rate-3-q} \\
&\bigg|  \bm{1}^{\top} \bA \bm{1} / n - \frac{(a+b) q_n}{2} \bigg| = o_{\PP} (q_n;~q_n),
	\label{KPCA-theorem-CSBM-error-rate-4-q}\\
&\bigg|   \hat{\by}_G^{\top} \bA \hat{\by}_G / n - \frac{(a-b) q_n}{2} \bigg| = o_{\PP} (q_n;~q_n),
	\label{KPCA-theorem-CSBM-error-rate-5-q} \\
&\| \bA \hat\by_G \|_{q_n} = O_{\PP} (n^{1/q_n} q_n ;~ q_n) . 	\label{KPCA-theorem-CSBM-error-rate-6-q}
	\end{align}
We will tackle them one by one.

\noindent{\bf Proof of (\ref{KPCA-theorem-CSBM-error-rate-3-q}).} Let $S = \{ i:~ (\hat\by_G)_i \neq y_i \} =  \{ i:~ \sgn[ \bu_1(\bG) ]_i \neq \sgn(y_i) \}$. Then
\begin{align*}
|S| \leq \Big| \{ i:~ | [ \bu_1(\bG) -  \bar\bu ]_i | \geq 1/\sqrt{n} \} \Big|
\leq 
\frac{ \| \bu_1(\bG) -  \bar\bu\|_2^2  }{ (1/\sqrt{n})^2 } = O_{\PP} ( n / q_n ;~ n ).
\end{align*}
where we used the assumption $\langle \bu_1(\bG) , \bar\bu \rangle \geq 0 $ and \Cref{KPCA-lemma-L2}. Hence, there exists a constant $C$ such that $\PP ( |S| \geq C n / q_n ) \leq e^{-n}$ for large $n$. Thanks to Fact \ref{KPCA-fact-truncation}, it suffices to show that
\begin{align*}
\|  \bA (\hat\by_G - \by  )  \|_{q_n} \bm{1}_{
\{
|S| < C n / q_n
\}
}  = o_{\PP} (n^{1/q_n} q_n ;~ q_n)
\end{align*}
By Fact \ref{KPCA-fact-moment-tail}, this can be implied by
\begin{align*}
\EE^{1/q_n} \Big( \|  \bA (\hat\by_G - \by  )  \|_{q_n}^{q_n} \bm{1}_{
	\{
	|S| < C n / q_n
	\}
} \Big)  = o (n^{1/q_n} q_n ).
\end{align*}
Below we prove a stronger result
\begin{align}
\EE^{1/q_n} \Big( \|  \bA (\hat\by_G - \by  )  \|_{q_n}^{q_n}  \Big| 
	|S| < C n / q_n 
 \Big)  = o (n^{1/q_n} q_n ).
 \label{eqn-CSBM-moment}
\end{align}

From the fact
\[
| \bA_i (\hat\by_G - \by  ) | = \bigg| \sum_{j=1}^{n} A_{ij} (\hat\by_G - \by  )_j \bigg| \leq
2 \sum_{j \in S} A_{ij}
\]
we obtain that
\begin{align}
\EE \bigg(
\| \bA (\hat\by_G - \by  ) \|_{q_n}^{q_n}
\bigg|
|S| < C n / q_n
\bigg)
\leq 2^{q_n} \sum_{i=1}^{n} \EE \bigg[
\bigg( \sum_{j \in S} A_{ij} \bigg)^{q_n}
\bigg|
|S| < C n / q_n
\bigg].
\label{eqn-eqn-CSBM-moment-1}
\end{align}

Note that $\bA$ and $S$ are independent. By Corollary 3 in \cite{Lat97},
\begin{align*}
\EE^{1/q_n} \bigg[
\bigg( \sum_{j \in S} A_{ij} \bigg)^{q_n}
\bigg|
\by,~|S|
\bigg] \lesssim \frac{ q_n}{\log q_n} \sum_{j \in S} \EE (A_{ij} | \by)
\leq \frac{ q_n}{\log q_n} \cdot |S| \cdot \frac{(a+b) q_n}{n} .
\end{align*}
For any constant $C > 0$,
\begin{align*}
\EE^{1/q_n} \bigg[
\bigg( \sum_{j \in S} A_{ij} \bigg)^{q_n}
\bigg|
|S| \leq C n / q_n
\bigg]
\lesssim \frac{ q_n }{\log q_n}   .
\end{align*}
From this and (\ref{eqn-eqn-CSBM-moment-1}) we get
\begin{align}
\EE^{1/q_n} \bigg(
\| \bA (\hat\by_G - \by  ) \|_{q_n}^{q_n}
\bigg|
|S| < C n / q_n
\bigg)
\lesssim n^{1/q_n} \cdot \frac{ q_n }{\log q_n} = o (n^{1/q_n} q_n). 
\end{align}
and then derive (\ref{eqn-CSBM-moment}).

\noindent{\bf Proof of (\ref{KPCA-theorem-CSBM-error-rate-4-q}).} 
Let $\bar\bA = \EE (\bA | \by)$. On the one hand, we use Hoeffding's inequality \citep{Hoe63} to get
\begin{align*}
\PP \Big( |\bm{1}^{\top} \bA \bm{1} - \bm{1}^{\top} \bar\bA \bm{1} | /n \geq t \Big| \by \Big)
= \PP \bigg( \bigg| \sum_{1 \leq i,j \leq n} (A_{ij} - \bar{A}_{ij}) \bigg| \geq n t \bigg| \by \bigg) \leq 2 e^{-2 t^2} , \qquad\forall t \geq 0
\end{align*}
and thus
\begin{align}
 \bm{1}^{\top} \bA \bm{1} / n - \bm{1}^{\top} \bar\bA \bm{1}  /n  = O_{\PP} (\sqrt{q_n} ;~ q_n).
\label{eqn-CSBM-A-1}
\end{align}
On the other hand, we obtain from
\begin{align*}
\bar\bA = \frac{\alpha + \beta}{2} \bm{1} \bm{1}^{\top} + \frac{\alpha - \beta}{2} \by \by^{\top} .
\end{align*}
that $ \bm{1}^{\top} \bar\bA \bm{1} / n =  \frac{ a+b }{2} q_n  + \frac{a - b}{2} q_n (\by^{\top} \bm{1} )^2 / n^2$. Hoeffding's inequality yields
\begin{align*}
& \PP \bigg( \bigg| \bm{1}^{\top} \bar\bA \bm{1} /n - \frac{ a+b }{2} q_n \bigg| \geq t  \bigg)  = 
\PP \bigg( \frac{|a - b|}{2}  \cdot \frac{ q_n (\by^{\top} \bm{1} )^2 }{n^2} \geq t  \bigg) \\
&= \PP \bigg( \bigg|
\sum_{i=1}^{n} y_i
\bigg|  \geq n \sqrt{ \frac{2t}{|a - b| q_n} }  \bigg) \leq 2 \exp \bigg[
- \frac{2}{n} 
 \bigg(n \sqrt{ \frac{2t}{|a - b| q_n} } \bigg)^2 
\bigg]
 = 2 \exp \bigg(
- \frac{4nt}{|a - b| q_n} 
\bigg)
\end{align*}
and 
\begin{align}
\bm{1}^{\top} \bar\bA \bm{1} /n - \frac{ a+b }{2} q_n = O_{\PP} (q_n^2/n;~q_n) .
\label{eqn-CSBM-A-2}
\end{align}
The desired bound (\ref{KPCA-theorem-CSBM-error-rate-4-q}) follows from (\ref{eqn-CSBM-A-1}) and (\ref{eqn-CSBM-A-2}).

\noindent{\bf Proof of (\ref{KPCA-theorem-CSBM-error-rate-5-q}).}
Note that
\begin{align*}
& \frac{\hat\by_G^{\top} \bA \hat\by_G }{n} - \frac{a-b}{2} q_n 
 = \frac{ \hat\by_G^{\top} \bA \hat\by_G
- \by^{\top} \bA \by }{n} + \frac{ \by^{\top} \bA \by - \by^{\top} \bar\bA \by }{n} + \bigg(  \frac{ \by^{\top} \bar\bA \by }{n}  - \frac{a-b}{2} q_n \bigg) .
\end{align*}
Similar to (\ref{eqn-CSBM-A-1}) and (\ref{eqn-CSBM-A-2}), it is easy to show that
\begin{align*}
&  ( \by^{\top} \bA \by - \by^{\top} \bar\bA \by ) / n = o_{\PP} ( q_n;~q_n )
\qquad\text{and}\qquad
 \by^{\top} \bar\bA \by / n - \frac{a-b}{2} q_n = o_{\PP} ( q_n;~q_n ).
\end{align*}
By direct calculation and (\ref{KPCA-theorem-CSBM-error-rate-3-q}),
\begin{align*}
& (  \hat\by_G^{\top} \bA \hat\by_G
- \by^{\top} \bA \by )/n  = n^{-1}  ( \hat\by_G + \by )^{\top} \bA ( \hat\by_G - \by )
\leq n^{-1} ( \| \hat\by_G \|_2 + \| \by \|_2 ) \| \bA ( \hat\by_G - \by ) \|_2 \notag\\
& \leq n^{-1} \cdot 2 \sqrt{n} \cdot n^{1/2-1/q_n} \| \bA ( \hat\by_G - \by ) \|_{q_n}
= 2 n^{-1/q_n} o_{\PP} (n^{1/q_n} q_n ;~ q_n) =
o_{\PP} ( q_n ;~ q_n)  .
\end{align*}
Then (\ref{KPCA-theorem-CSBM-error-rate-5-q}) becomes obvious.

\noindent{\bf Proof of (\ref{KPCA-theorem-CSBM-error-rate-6-q}).}
By Theorem 1 in \cite{Lat97} and Assumption \ref{KPCA-assumption-CSBM-q}, $\EE^{1/q_n} (  \sum_{j=1}^{n} A_{ij}  )^{q_n} \lesssim q_n$. Hence
\begin{align*}
\EE^{1/q_n} \| \bA \by \|_{q_n}^{q_n}
& = \EE^{1/q_n} \bigg( \sum_{i=1}^{n} |\bA_i \by|^{q_n} \bigg)  \leq 
\EE^{1/q_n} \bigg( \sum_{i=1}^{n} \bigg| \sum_{j=1}^{n} A_{ij}  \bigg|^{q_n} \bigg) 
\lesssim n^{1/q_n} q_n.
\end{align*}
Fact \ref{KPCA-fact-moment-tail} leads to $\| \bA \by \|_{q_n} = O_{\PP} (  n^{1/q_n} q_n ;~ q_n )$. Then, (\ref{KPCA-theorem-CSBM-error-rate-6-q}) follows from the above and (\ref{KPCA-theorem-CSBM-error-rate-3-q}).

\section{Proofs of Section \ref{KPCA-sec-outlines}}

\subsection{Proof of Lemma \ref{thm-hollowing}}\label{thm-hollowing-proof}

Note that $s = 0$, $r = 1$, $\bar\Delta = \bar\lambda = n \| \bmu \|_2^2$ and $\kappa = 1$. Assumption \ref{KPCA-assumption-spectral} holds if $1 / \sqrt{n} \leq \gamma \ll 1$.
Assumption \ref{KPCA-assumption-noise} holds with $\bSigma = 2 \bI_d$ and in that case, Assumption \ref{KPCA-assumption-concentration} holds with
\begin{align*}
\gamma \geq 2 \max \bigg\{ 
\frac{1}{\| \bmu \|_2} ,~ \frac{\sqrt{d/n}}{ \| \bmu \|_2^2 }
\bigg\}.
\end{align*}
The right hand side goes to zero as $d / n \to \infty$ and $(n/d)^{1/4} \| \bmu \|_2  \to \infty$. Hence we can take
\begin{align*}
\gamma = 2 \max \bigg\{
\frac{1}{\sqrt{n}} , ~
\frac{1}{\| \bmu \|_2} ,~ \frac{\sqrt{d/n}}{ \| \bmu \|_2^2 }
\bigg \}
\end{align*}
to satisfy all the assumptions above. Then Lemma \ref{KPCA-lemma-L2} yields $| \langle \bu, \bar\bu \rangle | \overset{\PP}{\to} 1$.

To study $\hat\bu$, we first define $\tilde\bG = \EE (\bX \bX^{\top}) = d \bI_n + d \be_1 \be_1^{\top}$. Hence its leading eigenvector and the associated eigengap are $\tilde\bu = \be_1$ and $\tilde\Delta = d$. Observe that $\bG = \cH ( \bX \bX^{\top} )$ and
\begin{align}
& \| \bX \bX^{\top} - \tilde\bG \|_2 \leq \| \cH ( \bX \bX^{\top}  - \tilde\bG ) \|_2 + \max_{i\in[n]}
\left|
( \bX \bX^{\top} - \tilde\bG )_{ii}
\right| \notag \\
& \leq \| \cH( \bX \bX^{\top} ) - \bar\bG \|_2 + \| \bar\bG - \cH ( \tilde\bG ) \|_2 + \max_{i\in[n]} 
\left|
\| \bx_i \|_2^2 - \EE \| \bx_i \|_2^2
\right|
\label{thm-hollowing-1}
\end{align}
By Lemma \ref{KPCA-lemma-L2},
\begin{align}
\| \cH( \bX \bX^{\top} ) - \bar\bG \|_2 = o_{\PP} (\bar\Delta;~n) = o_{\PP} ( n \| \bmu \|_2^2 ;~n).
\label{thm-hollowing-2}
\end{align}
When $i \neq j$,
\begin{align*}
& \tilde\bG_{ij} = \EE \langle \bx_i , \bx_j \rangle = \EE \langle \bar\bx_i + \bz_i,  \bar\bx_j + \bz_j \rangle
= \EE  \langle \bar\bx_i ,  \bar\bx_j  \rangle = \bar\bG_{ij}.
\end{align*}
Hence $\cH( \bar\bG ) = \cH ( \tilde\bG )$, and
\begin{align}
\| \bar\bG - \cH ( \tilde\bG ) \|_2 = \max_{i \in [n]} |\bar{G}_{ii}| = \max_{i \in [n]} \| \bar\bx_i \|_2^2 = \| \bmu \|_2^2.
\label{thm-hollowing-3}
\end{align}
For the last term in (\ref{thm-hollowing}), we have
\begin{align*}
\| \bx_i \|_2^2 - \EE \| \bx_i \|_2^2 = \| \bar\bx_i + \bz_i \|_2^2 - ( \| \bar\bx_i \|_2^2 + \EE \| \bz_i \|_2^2 ) = 2 \langle \bar\bx_i , \bz_i \rangle + (\| \bz_i \|_2^2 - \EE \| \bz_i \|_2^2).
\end{align*}
From $\| \langle \bar\bx_i , \bz_i \rangle \|_{\psi_2} \lesssim \| \bar\bx_i \|_2 = \| \bmu \|_2$, Fact \ref{KPCA-fact-p-log} and Lemma \ref{KPCA-lemma-Lp-gaussian} we obtain that
\begin{align}
\max_{i \in [n]} | \langle \bar\bx_i , \bz_i \rangle| \lesssim \|
( \langle \bar\bx_1 , \bz_1 \rangle , \cdots, \langle \bar\bx_n , \bz_n \rangle )
\|_{\log n} = O_{\PP} ( \sqrt{\log n} \| \bmu \|_2 ;~ \log n )
\label{thm-hollowing-4}
\end{align}
For any $i \geq 2$, $\| \bx_i \|_2^2 \sim \chi^2_d$. Lemma \ref{KPCA-lemma-chi-square} forces
\begin{align*}
\PP( |  \| \bx_i \|_2^2  - d | \geq 2\sqrt{dt} + 2t ) \leq 2e^{-t},\qquad  \forall t \geq 0, ~~ i \geq 2.
\end{align*}
By the $\chi^2$-concentration above and union bounds, $\max_{2 \leq i \leq n} |  \| \bx_i \|_2^2  - \EE \| \bx_i \|_2^2 | = O_{\PP} ( \sqrt{dn} \vee n ;~ n ) = O_{\PP} ( \sqrt{dn} ;~ n )$. Since $\| \bx_1 \|_2^2 / 2 \sim \chi^2_d$, we get $\max_{i \in [n]} |  \| \bx_i \|_2^2  - \EE \| \bx_i \|_2^2 | = O_{\PP} ( \sqrt{dn} ;~ n )$.

Plugging this and (\ref{thm-hollowing-2}), (\ref{thm-hollowing-3}), (\ref{thm-hollowing-4}) into (\ref{thm-hollowing-1}), we get
\begin{align*}
\| \bX \bX^{\top} - \tilde\bG \|_2 
= O_{\PP} ( n \| \bmu \|_2^2 + \| \bmu \|_2^2 + \sqrt{\log n} \| \bmu \|_2 + \sqrt{dn} ;~ \log n )
= O_{\PP} ( n \| \bmu \|_2^2  ;~ \log n ).
\end{align*}
Here we used $\| \bmu \|_2 \gg (d/n)^{1/4} \gg 1$. The Davis-Kahan Theorem \citep{DKa70} then yields
\begin{align*}
\min_{c = \pm 1} \| s \hat\bu - \tilde\bu \|_2 \lesssim \| \bX \bX^{\top} - \tilde\bG \|_2 / \tilde\Delta
= O_{\PP} ( n \| \bmu \|_2^2  ;~ \log n ) / d
= o_{\PP} (1;~ \log n),
\end{align*}
since $\| \bmu \|_2 \ll  \sqrt{d/n}$. From $\tilde{\bu} = \be_1$ and $\langle \tilde{\bu} , \bar\bu \rangle = 1/\sqrt{n} \to 0$ we get $|\langle \hat{\bu} , \bar\bu \rangle | \overset{\PP}{\to} 0$.

\subsection{Proof of Lemma \ref{lemma-hollowing}}\label{lemma-hollowing-proof}

Lemma \ref{lemma-hollowing} directly follows from Lemma \ref{KPCA-lemma-L2} and thus we omit its proof.

\section{Technical lemmas}

\subsection{Lemmas for probabilistic analysis}

\begin{lemma}\label{KPCA-lem-concentration-gram}
Under Assumption \ref{KPCA-assumption-noise}, we have
\begin{align*}
&\| \cH ( \bZ \bZ^{\top} ) \|_2 = O_{\PP} \left(
\max\{ \sqrt{n} \| \bSigma \|_{\mathrm{HS}},~n \| \bSigma \|_{\mathrm{op}} \};~ n \right),\\
&\max_{i\in[n]} \| \bz_i \|^2 = O_{\PP} \left(
\max\{ \Tr(\bSigma),~n \| \bSigma \|_{\mathrm{op}} \};~n
\right),\\
& \| \bZ  \bZ^{\top} \|_2 = O_{\PP} \left(
\max\{ \Tr(\bSigma),~n \| \bSigma \|_{\mathrm{op}} \};~n
\right).
\end{align*}
\end{lemma}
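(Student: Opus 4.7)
The plan is to establish the three bounds in order, handling the diagonal and off-diagonal parts of $\bZ\bZ^\top$ separately, and then combining. Throughout we make essential use of the sub-Gaussian assumption, which provides Hanson--Wright type concentration for quadratic forms $\langle \bZ \bA, \bZ \rangle$ with $\bA \in \cL(\HH)$ (as recorded, e.g., in the Hilbert-space quadratic form inequalities of \cite{CYa18}).

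First I would handle $\max_{i\in[n]} \|\bz_i\|^2$. Since $\{\bz_i\}$ are independent sub-Gaussian vectors dominated by $\bSigma$, a Hanson--Wright inequality for sub-Gaussian Hilbert-space vectors gives, for each fixed $i$,
\begin{align*}
\PP\bigl( \bigl|\|\bz_i\|^2 - \EE\|\bz_i\|^2\bigr| \geq t \bigr) \leq 2 \exp\Bigl( - c \min\bigl\{ t^2/\|\bSigma\|_{\mathrm{HS}}^2,\, t/\|\bSigma\|_{\mathrm{op}} \bigr\} \Bigr),
\end{align*}
with $\EE\|\bz_i\|^2 \lesssim \Tr(\bSigma)$. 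Choosing $t \asymp \max\{\sqrt{n}\|\bSigma\|_{\mathrm{HS}}, n\|\bSigma\|_{\mathrm{op}}\}$ makes both exponents of order $n$; a union bound over $i \in [n]$ then yields the desired tail bound. Finally one swallows $\sqrt{n}\|\bSigma\|_{\mathrm{HS}}$ into $\max\{\Tr(\bSigma), n\|\bSigma\|_{\mathrm{op}}\}$ via the AM--GM style inequality $\sqrt{n}\|\bSigma\|_{\mathrm{HS}} \leq \sqrt{n \|\bSigma\|_{\mathrm{op}} \Tr(\bSigma)} \leq \max\{\Tr(\bSigma), n\|\bSigma\|_{\mathrm{op}}\}$.

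Next I would bound $\|\cH(\bZ\bZ^\top)\|_2$ by a net argument. For any $\bu,\bv \in \SSS^{n-1}$,
\begin{align*}
\bu^\top \cH(\bZ\bZ^\top) \bv = \sum_{i\neq j} u_i v_j \langle \bz_i,\bz_j\rangle
\end{align*}
is a decoupled bilinear form in independent sub-Gaussian vectors. A Hanson--Wright-style concentration inequality (Lemma of \cite{CYa18}) gives, for fixed $\bu,\bv$,
\begin{align*}
\PP\Bigl(|\bu^\top \cH(\bZ\bZ^\top)\bv| \geq t \Bigr) \leq 2\exp\Bigl( -c \min\bigl\{ t^2/\|\bSigma\|_{\mathrm{HS}}^2,\, t/\|\bSigma\|_{\mathrm{op}} \bigr\} \Bigr).
\end{align*}
Taking $t$ of order $\max\{\sqrt{n}\|\bSigma\|_{\mathrm{HS}}, n\|\bSigma\|_{\mathrm{op}}\}$ makes the exponent of order $n$; covering $\SSS^{n-1}\times\SSS^{n-1}$ by a $1/4$-net of size $e^{Cn}$ and absorbing the net cardinality by enlarging the constant in $t$ yields the first bound by a standard approximation argument ($\|\bA\|_2 \leq 2 \sup_{\bu,\bv \in \cN} \bu^\top \bA \bv$).

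The last bound then follows trivially: $\bZ\bZ^\top = \cH(\bZ\bZ^\top) + \diag(\|\bz_i\|^2)$ so
\begin{align*}
\|\bZ\bZ^\top\|_2 \leq \|\cH(\bZ\bZ^\top)\|_2 + \max_{i\in[n]} \|\bz_i\|^2,
\end{align*}
and combining the first two bounds gives the stated rate after noting (again by AM--GM) that $\sqrt{n}\|\bSigma\|_{\mathrm{HS}} \leq \max\{\Tr(\bSigma), n\|\bSigma\|_{\mathrm{op}}\}$. The only delicate step is the quadratic-form concentration for sub-Gaussian vectors in a general Hilbert space, which is why the proof depends on the Hanson--Wright extension in \cite{CYa18} rather than on the elementary $\chi^2$-concentration bounds.
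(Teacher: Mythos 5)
Your proposal is correct and follows essentially the same route as the paper: Hanson--Wright type concentration for Hilbert-space sub-Gaussian quadratic forms from \cite{CYa18}, a covering argument for $\|\cH(\bZ\bZ^\top)\|_2$, a union bound for $\max_i\|\bz_i\|^2$, the Cauchy--Schwarz absorption $\sqrt{n}\|\bSigma\|_{\mathrm{HS}}\leq\max\{\Tr(\bSigma),n\|\bSigma\|_{\mathrm{op}}\}$, and a triangle inequality for the third bound. The only cosmetic difference is that the paper works with the symmetric quadratic form $\sup_{\bu\in\SSS^{n-1}}|\bu^\top\cH(\bZ\bZ^\top)\bu|$ over a single net rather than your bilinear form over a product of nets, which is an immaterial variant; also your phrase ``decoupled bilinear form'' is loose since the same $\{\bz_i\}$ appear on both sides, but Hanson--Wright applies regardless.
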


\begin{proof}[\bf Proof of Lemma \ref{KPCA-lem-concentration-gram}]
By definition,
\begin{align*}
\| \cH ( \bZ  \bZ^{\top} ) \|_2 = \sup_{ \bu \in  \SSS^{n-1} } | \bu^{\top} \cH ( \bZ \bZ^{\top} ) \bu |
= \sup_{ \bu \in  \SSS^{n-1} } \bigg|
\sum_{i\neq j} u_i u_j \langle \bz_i, \bz_j \rangle \bigg|.
\end{align*}

Fix $\bu \in \SSS^{n-1}$, let $\bA= \bu \bu^{\top}$ and $S = \sum_{i\neq j} u_i u_j \langle \bz_i, \bz_j \rangle$. By Proposition 2.5 in \cite{CYa18}, there exists an absolute constant $C>0$ such that
\begin{align*}
\PP ( S \geq t ) \leq \exp \left(
-C \min \left\{
\frac{t^2}{\| \bSigma \|_{\mathrm{HS}}^2 },~ \frac{t}{\| \bSigma \|_{\mathrm{op}} }
\right\}
\right),\qquad \forall t > 0.
\end{align*}
When $t = \lambda \max\{ \sqrt{n} \| \bSigma \|_{\mathrm{HS}},~n \| \bSigma \|_{\mathrm{op}} \}$ for some $\lambda \geq 1$, we have $\min \{
t^2 / \| \bSigma \|_{\mathrm{HS}}^2,~ t / \| \bSigma \|_{\mathrm{op}}  \} \geq \lambda n$ and
$\PP ( S \geq t ) \leq e^{ -C \lambda n }$. Similarly, we get $\PP ( S \leq - t ) \leq e^{ -C \lambda n }$ and thus
\begin{align*}
\PP \bigg( 
\bigg|
\sum_{i\neq j} u_i u_j \langle \bz_i, \bz_j \rangle
\bigg|
\geq 
\lambda \max\{ \sqrt{n} \| \bSigma \|_{\mathrm{HS}},~n \| \bSigma \|_{\mathrm{op}} \}
 \bigg) \leq 2 e^{ -C \lambda n },
 \qquad \forall \lambda \geq 1.
\end{align*}
The bound on $\| \cH(\bZ \bZ^{\top}) \|_2$ then follows from a standard covering argument \citep[Section 5.2.2]{Ver10}.

Theorem 2.6 in \cite{CYa18} with $n = 1$ and $A = 1$ implies the existence of constants $C_1$ and $C_2$ such that for any $t \geq 0$,
\begin{align*}
\PP ( \| \bz_i \|^2 
\geq C_1 \Tr(\bSigma) + t ) 
\leq \exp\left(
- C_2 \min\left\{ \frac{t^2}{\| \bSigma \|_{\mathrm{HS}}^2}
,~\frac{t}{\| \bSigma \|_{\mathrm{op}}}
\right\}
\right).
\end{align*}
When $t = \lambda \max\{ \sqrt{n} \| \bSigma \|_{\mathrm{HS}},~n \| \bSigma \|_{\mathrm{op}} \}$ for some $\lambda \geq 1$, we have $\min \{
t^2 / \| \bSigma \|_{\mathrm{F}}^2,~ t / \| \bSigma \|_{\mathrm{op}}  \} \geq  \lambda n$. Hence
\begin{align*}
\PP ( \| \bz_i \|^2 
\geq C_1 \Tr(\bSigma) + \lambda \max\{ \sqrt{n} \| \bSigma \|_{\mathrm{HS}},~n \| \bSigma \|_{\mathrm{op}} \} ) 
\leq e^{- C_2 \lambda n},\qquad\forall \lambda \geq 1.
\end{align*}
Union bounds force
\begin{align*}
\max_{i\in[n]} \| \bz_i \|^2 = O_{\PP} \left(
\max\{ \Tr(\bSigma),~ \sqrt{n} \| \bSigma \|_{\mathrm{HS}},~n \| \bSigma \|_{\mathrm{op}} \};~n
\right).
\end{align*}
We can neglect the term $\sqrt{n} \| \bSigma \|_{\mathrm{HS}}$ above, since
\begin{align*}
\sqrt{n} \| \bSigma \|_{\mathrm{F}}  = \sqrt{ n  \| \bSigma \|_{\mathrm{F}}^2 }
\leq \sqrt{ ( n  \| \bSigma \|_{\mathrm{op}} ) \Tr(\bSigma) }
\leq \max\{ \Tr(\bSigma),~n \| \bSigma \|_{\mathrm{op}} \}.
\end{align*}
Finally, the bound on $\| \bZ \bZ^{\top} \|_2$ follows from $\| \bZ \bZ^{\top} \|_2 \leq  \| \cH(\bZ \bZ^{\top}) \|_2 + \max_{i\in[n]} \| \bz_i \|^2$.
\end{proof}

\begin{lemma}\label{KPCA-lemma-Z-product}
Let Assumption \ref{KPCA-assumption-noise} hold, $p \geq 2$ and $\{ \bV^{(m)} \}_{m=1}^n \subseteq \R^{n\times K}$ be random matrices such that $\bV^{(m)}$ is independent of $\bz_m$. Then,
\[
\bigg(
\sum_{m=1}^{n} \bigg\| \sum_{j\neq m}  \langle \bz_m,\bz_j \rangle \bV^{(m)}_j \bigg\|_2^p 
\bigg)^{1/p} = n^{1/p} \sqrt{Kp} \max_{m\in[n]} \| \bV^{(m)} \|_2 O_{\PP} \left(
 \max\{ \| \bSigma \|_{\mathrm{HS}},~\sqrt{n} \| \bSigma \|_{\mathrm{op}} \}
;~p \wedge n \right).
\]
\end{lemma}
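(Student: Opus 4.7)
Fix $m\in[n]$ and let $\cF^{(m)} = \sigma(\bV^{(m)},\{\bz_j: j\neq m\})$; by hypothesis $\bz_m$ is independent of $\cF^{(m)}$. Writing the inner sum as $\bA^{(m)}\bz_m$, where $\bA^{(m)} := (\bV^{(m)})^{\top}\bZ^{(-m)}:\HH\to\R^K$ and $\bZ^{(-m)}$ sends $\bh\in\HH$ to $(\langle\bz_j,\bh\rangle\mathbf{1}_{j\neq m})_{j=1}^n$, the operator $\bA^{(m)}$ is $\cF^{(m)}$-measurable. Each linear functional $\langle\bu,\bA^{(m)}\bz_m\rangle = \langle(\bA^{(m)})^{*}\bu,\bz_m\rangle$ has conditional $\psi_2$-norm at most $\alpha\|\bA^{(m)}\bSigma^{1/2}\|_{\mathrm{op}}\|\bu\|_2$. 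Combining per-coordinate sub-Gaussian moment bounds with the power-mean inequality $(\sum_k a_k^2)^{p/2}\leq K^{p/2-1}\sum_k |a_k|^p$ (valid for $p\geq 2$) produces the clean conditional estimate
\[
\EE^{1/p}\bigl[\|\bA^{(m)}\bz_m\|_2^p \bigm| \cF^{(m)}\bigr] \lesssim \sqrt{Kp}\,\|\bA^{(m)}\bSigma^{1/2}\|_{\mathrm{op}} \leq \sqrt{Kp}\,\|\bV^{(m)}\|_{\mathrm{op}}\,\|\bZ^{(-m)}\bSigma^{1/2}\|_{\mathrm{op}}.
\]

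The heart of the proof is controlling $\max_m\|\bZ^{(-m)}\bSigma^{1/2}\|_{\mathrm{op}}$ with the sharp scaling $M := \max\{\|\bSigma\|_{\mathrm{HS}},\sqrt{n}\|\bSigma\|_{\mathrm{op}}\}$. Observe that $\bZ^{(-m)}\bSigma(\bZ^{(-m)})^{*} = \tilde\bZ^{(-m)}(\tilde\bZ^{(-m)})^{\top}$ with the transformed noise $\tilde\bz_j := \bSigma^{1/2}\bz_j$; by Assumption~\ref{KPCA-assumption-noise}, $\EE e^{\langle\bu,\tilde\bz_j\rangle}\leq e^{\alpha^2\langle\bSigma^2\bu,\bu\rangle/2}$, i.e., $\tilde\bz_j$ is itself sub-Gaussian with parameter $\bSigma^2$. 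Applying Lemma~\ref{KPCA-lem-concentration-gram} to $\{\tilde\bz_j\}_{j\neq m}$ (with $\bSigma$ replaced by $\bSigma^2$) and using $\|\bSigma^2\|_{\mathrm{op}}=\|\bSigma\|_{\mathrm{op}}^2$ and $\Tr(\bSigma^2)=\|\bSigma\|_{\mathrm{HS}}^2$ yields $\|\bZ^{(-m)}\bSigma^{1/2}\|_{\mathrm{op}}^2 = \|\tilde\bZ^{(-m)}(\tilde\bZ^{(-m)})^{\top}\|_{\mathrm{op}} = O_{\PP}(M^2;\,n)$. Taking square roots via Fact~\ref{KPCA-fact-transform} and union-bounding over $m\in[n]$ preserves the $O_{\PP}(\cdot;n)$ decay, giving $\max_m\|\bZ^{(-m)}\bSigma^{1/2}\|_{\mathrm{op}} = O_{\PP}(M;\,n)$.

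For the assembly, normalize per-$m$: since $\tilde\bV^{(m)} := \bV^{(m)}/\|\bV^{(m)}\|_{\mathrm{op}}$ is still independent of $\bz_m$ (it is a measurable function of $\bV^{(m)}$), applying the conditional estimate above with $\tilde\bV^{(m)}$ in place of $\bV^{(m)}$ and then taking total expectation produces $\EE\|(\tilde\bV^{(m)})^{\top}\bZ^{(-m)}\bz_m\|_2^p \lesssim (C\sqrt{Kp})^p\,\EE\|\bZ^{(-m)}\bSigma^{1/2}\|_{\mathrm{op}}^p \lesssim (C\sqrt{Kp}\,M)^p$ for $p\lesssim n$ (here the moment bound on the operator norm follows from the Hanson--Wright-style tails underlying Lemma~\ref{KPCA-lem-concentration-gram}, whose deviation exponent grows linearly with $n$). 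Since $\|\bA^{(m)}\bz_m\|_2/V^{*}\leq\|(\tilde\bV^{(m)})^{\top}\bZ^{(-m)}\bz_m\|_2$ with $V^{*}:=\max_m\|\bV^{(m)}\|_{\mathrm{op}}$, summing over $m$ and applying Markov's inequality to $\sum_m(\|\bA^{(m)}\bz_m\|_2/V^{*})^p$ bounds it by $n(C\sqrt{Kp}\,M)^p$ in expectation, hence $\PP\bigl((\sum_m\|\bA^{(m)}\bz_m\|_2^p)^{1/p}\geq C_0\,n^{1/p}\sqrt{Kp}\,V^{*}M\bigr)\leq (C/C_0)^p$; tuning $C_0$ makes this arbitrarily small like $e^{-\lambda p}$. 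Merging with the complementary tail $O(e^{-cn})$ from the spectral bound through Fact~\ref{KPCA-fact-truncation} yields the claimed $O_{\PP}(\cdot;\,p\wedge n)$ scaling.

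The principal technical obstacle lies in the second step: the naive estimate $\|\bZ^{(-m)}\bSigma^{1/2}\|_{\mathrm{op}}^2\leq\|\bSigma\|_{\mathrm{op}}\|\bZ^{(-m)}\|_{\mathrm{op}}^2$ produces only $\sqrt{\|\bSigma\|_{\mathrm{op}}\Tr(\bSigma)}$ in place of $\|\bSigma\|_{\mathrm{HS}}$, and since $\|\bSigma\|_{\mathrm{HS}}^2\leq\|\bSigma\|_{\mathrm{op}}\Tr(\bSigma)$ this factor is strictly larger; it would forfeit the scaling required by the downstream use in Lemma~\ref{KPCA-lemma-Lp_prelim}. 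Passing through the transformed noise $\tilde\bz_j = \bSigma^{1/2}\bz_j$ before invoking Lemma~\ref{KPCA-lem-concentration-gram}, rather than factoring out $\|\bSigma\|_{\mathrm{op}}$ in the operator-norm inequality, is what restores the tight dependence on $\bSigma$.
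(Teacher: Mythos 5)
Your proposal is correct and follows essentially the same route as the paper's proof: the power-mean reduction to $K$ coordinates, conditional sub-Gaussianity of $\bz_m$ given the rest, the transformed-noise trick $\tilde\bz_j=\bSigma^{1/2}\bz_j$ feeding into Lemma \ref{KPCA-lem-concentration-gram} to get the sharp $\max\{\|\bSigma\|_{\mathrm{HS}},\sqrt{n}\|\bSigma\|_{\mathrm{op}}\}$ scaling, and the combination of a $p$-tail with an $n$-tail to produce $O_{\PP}(\cdot;\,p\wedge n)$. The only (cosmetic) difference is in the assembly: the paper normalizes the inner products by the random $\|\bSigma^{1/2}\bw_k^{(m)}\|$ so the normalized variables have unconditionally bounded $p$-th moments and the product rule (Fact \ref{KPCA-fact-arithmetic}) finishes directly, whereas you take total expectations and patch the $p\gg n$ regime by truncation.
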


\begin{proof}[\bf Proof of Lemma \ref{KPCA-lemma-Z-product}]
By Minkowski's inequality,
\begin{align*}
\bigg\| \sum_{j\neq m}  \langle \bz_m,\bz_j \rangle \bV^{(m)}_j \bigg\|_2^p 
& = \bigg( \sum_{k=1}^{K} \bigg| \sum_{j \neq m}  \langle \bz_m,\bz_j \rangle V^{(m)}_{jk} \bigg|^2  \bigg)^{p/2} \\
& \leq \bigg[ \bigg( \sum_{k=1}^{K} \bigg| \sum_{j \neq m}  \langle \bz_m,\bz_j \rangle V^{(m)}_{jk} \bigg|^p  \bigg)^{2/p} K^{1-2/p} \bigg]^{p/2} \\
& = K^{p/2 - 1} \sum_{k=1}^{K} \bigg| \sum_{j \neq m}  \langle \bz_m,\bz_j \rangle V^{(m)}_{jk} \bigg|^p
= K^{p/2 - 1} \sum_{k=1}^{K} |  \langle \bz_m,\bw_k^{(m)} \rangle |^p,
\end{align*}
where we define $\bw_k^{(m)} = \sum_{j\neq m} V^{(m)}_{jk} \bz_j  = \bZ^{\top} (\bI - \be_m \be_m^{\top}) \bv_k^{(m)}$, $\forall m \in [n]$, $k\in[K]$.
Observe that
\begin{align*}
\| \bSigma^{1/2} \bw_k^{(m)} \|^2 & = ( \bv_k^{(m)})^{\top} (\bI - \be_m \be_m^{\top}) \bZ \bSigma \bZ^{\top} (\bI - \be_m \be_m^{\top}) \bv_k^{(m)} \\
& \leq \| \bv_k^{(m)} \|_2^2 \| \bZ \bSigma \bZ^{\top} \|_2
\leq \| \bV^{(m)} \|_2^2 \| \bZ \bSigma \bZ^{\top} \|_2.
\end{align*}
As a result,
\begin{align*}
& \bigg\| \sum_{j\neq m}  \langle \bz_m,\bz_j \rangle \bV^{(m)}_j \bigg\|_2^p 
\leq K^{p/2 - 1}
\bigg( 
 \sum_{k=1}^{K} |  \langle \bz_m,\bw_k^{(m)} / \| \bSigma^{1/2} \bw_k^{(m)} \| \rangle |^p
 \bigg)
 \Big( 
\max_{m\in[n]} \| \bV^{(m)} \|_2 \cdot  \| \bZ \bSigma \bZ^{\top} \|_2^{1/2} \Big)^p .
\end{align*}
and
\begin{align}
\bigg(  \sum_{m=1}^{n} \bigg\| \sum_{j\neq m}  \langle \bz_m,\bz_j \rangle \bV^{(m)}_j \bigg\|_2^p \bigg)^{1/p}
& \leq \sqrt{K  \| \bZ \bSigma \bZ^{\top} \|_2 } \max_{m\in[n]} \| \bV^{(m)} \|_2 
\notag \\ & 
\cdot \bigg( K^{-1}
\sum_{m=1}^{n} \sum_{k=1}^{K} |  \langle \bz_m,\bw_k^{(m)} / \| \bSigma^{1/2} \bw_k^{(m)} \| \rangle |^p
\bigg)^{1/p}.
\label{KPCA-lemma-Z-product-1}
\end{align}

On the one hand, let $\tilde\bz_i = \bSigma^{1/2} \bz_i$, $\forall i \in [n]$ and $\tilde{\bZ} = (\tilde\bz_1,\cdots,\tilde{\bz}_n)^{\top}$. Note that $\{ \tilde\bz_i \}_{i=1}^n$ satisfy Assumption \ref{KPCA-assumption-noise} with $\bSigma$ replaced by $\bSigma^2$, because
\begin{align*}
\EE e^{ \langle \bu,\tilde\bz_i \rangle } = \EE e^{ \langle \bSigma^{1/2} \bu, \bz_i \rangle }
\leq e^{ \alpha^2 \langle \bSigma  \bSigma^{1/2} \bu,  \bSigma^{1/2} \bu\rangle }
= e^{ \alpha^2 \langle \bSigma^2 \bu, \bu\rangle },\qquad \forall \bu\in\HH,~~i \in [n].
\end{align*}
It is easily seen from $\bSigma \in \cT  (\HH) $ that $\bSigma^2 \in \cT  (\HH) $. Then Lemma \ref{KPCA-lem-concentration-gram} asserts that
\begin{align}
\| \bZ \bSigma \bZ^{\top} \|_2 & = \| \tilde\bZ \tilde\bZ^{\top} \|_2 = O_{\PP} \left(
\max\{ \Tr(\bSigma^2),~n \| \bSigma^2 \|_{\mathrm{op}} \};~n
\right) \notag \\
& = O_{\PP} \left(
\max\{ \| \bSigma \|_{\mathrm{HS}}^2,~n \| \bSigma \|_{\mathrm{op}}^2 \};~n
\right).
\label{KPCA-lemma-Z-product-3}
\end{align}

On the other hand, note that $\bz_m$ and $\bw_k^{(m)}$ are independent. According to Assumption \ref{KPCA-assumption-noise} on sub-Gaussianity of $\bz_m$, we have
\begin{align*}
& \EE \Big(  \langle \bz_m, \bw_k^{(m)} / \| \bSigma^{1/2} \bw_k^{(m)} \|  \rangle \Big| \bw_k^{(m)}  \Big)= 0, \\
& p^{-1/2} \EE^{1/p} \Big( | \langle \bz_m, \bw_k^{(m)} / \| \bSigma^{1/2} \bw_k^{(m)} \| \rangle |^p 
\Big| \bw_k^{(m)}
\Big) 
 \leq C
\end{align*}
for some absolute constant $C$. Then $ \EE | \langle \bz_m, \bw_k^{(m)} / \| \bSigma^{1/2} \bw_k^{(m)} \| \rangle |^p  \leq ( C \sqrt{p} )^p$.
We have
\begin{align*}
\sum_{m=1}^{n} \sum_{k=1}^{K} \EE |  \langle \bz_m,\bw_k^{(m)} / \| \bSigma^{1/2} \bw_k^{(m)} \| \rangle |^p
\leq n K ( C \sqrt{p} )^p = ( n^{1/p} K^{1/p} C \sqrt{p} )^p.
\end{align*}
By Fact \ref{KPCA-fact-moment-tail},
\begin{align}
\bigg(
\sum_{m=1}^{n} \sum_{k=1}^{K} |  \langle \bz_m,\bw_k^{(m)} / \| \bSigma^{1/2} \bw_k^{(m)} \| \rangle |^p
\bigg)^{1/p}
= O_{\PP} \left( 
n^{1/p} K^{1/p} C \sqrt{p} ;~p \right).
\label{KPCA-lemma-Z-product-2}
\end{align}
The final result follows from (\ref{KPCA-lemma-Z-product-1}), (\ref{KPCA-lemma-Z-product-3}) and (\ref{KPCA-lemma-Z-product-2}).
\end{proof}

\begin{lemma}\label{KPCA-lemma-Lp-gaussian}
Let $\bX \in \R^{n\times m}$ be a random matrix with sub-Gaussian entries, and define $\bM \in \R^{n\times m}$ through $M_{ij} = \| X_{ij} \|_{\psi_2}$. For any $p \geq q \geq 1$, we have $\| \bX \|_{q, p} = O_{\PP} ( \sqrt{p} \| \bM \|_{q, p} ;~p )$.
\end{lemma}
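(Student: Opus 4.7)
The plan is to bound the $p$-th moment $\EE \| \bX \|_{q,p}^p$ entrywise and then invoke \Cref{KPCA-fact-moment-tail} to convert the moment bound into a tail bound of the form $O_{\PP}(\sqrt{p}\| \bM \|_{q,p};~p)$. By definition,
\[
\EE \| \bX \|_{q,p}^p = \sum_{i=1}^{n} \EE \bigg( \sum_{j=1}^{m} |X_{ij}|^q \bigg)^{p/q}.
\]
The key observation is that $p/q \geq 1$, so I can apply Minkowski's inequality in $L^{p/q}$ to the nonnegative summands $Y_j = |X_{ij}|^q$:
\[
\EE^{q/p} \bigg( \sum_{j=1}^{m} |X_{ij}|^q \bigg)^{p/q}
\leq \sum_{j=1}^{m} \EE^{q/p} |X_{ij}|^p.
\]

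Next I would invoke the standard sub-Gaussian moment bound $\EE^{1/p} |X_{ij}|^p \lesssim \sqrt{p}\, \| X_{ij} \|_{\psi_2} = \sqrt{p}\, M_{ij}$ (valid since $p \geq 1$), which gives $\EE^{q/p} |X_{ij}|^p \lesssim p^{q/2} M_{ij}^q$. Plugging in and raising to the $p/q$ power,
\[
\EE \bigg( \sum_{j=1}^{m} |X_{ij}|^q \bigg)^{p/q}
\lesssim p^{p/2} \bigg( \sum_{j=1}^{m} M_{ij}^q \bigg)^{p/q}.
\]
Summing over $i$ and taking $p$-th roots yields
\[
\EE^{1/p} \| \bX \|_{q,p}^p \lesssim \sqrt{p}\, \bigg[ \sum_{i=1}^n \bigg( \sum_{j=1}^m M_{ij}^q \bigg)^{p/q} \bigg]^{1/p} = \sqrt{p}\, \| \bM \|_{q,p}.
\]
Finally, \Cref{KPCA-fact-moment-tail} (applied with $r_n = p$ and deterministic $Y_n = \sqrt{p}\, \| \bM \|_{q,p}$) converts this into $\| \bX \|_{q,p} = O_{\PP}(\sqrt{p}\, \| \bM \|_{q,p};~p)$, which is the claim.

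There is no real obstacle here; the only subtle point is checking that the exponent $p/q$ in Minkowski is at least $1$, which is exactly the hypothesis $p \geq q$. The sub-Gaussian moment control and \Cref{KPCA-fact-moment-tail} then do the rest, and no leave-one-out or concentration machinery beyond entrywise moment estimates is needed.
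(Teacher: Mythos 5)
Your proposal is correct and follows essentially the same route as the paper's proof: bound $\EE\|\bX\|_{q,p}^p$ by Minkowski's inequality in $L^{p/q}$ (using $p/q \ge 1$), plug in the sub-Gaussian moment control $\EE^{1/p}|X_{ij}|^p \le \sqrt{p}\,M_{ij}$ entrywise, and convert the resulting moment bound to a tail bound via Fact~\ref{KPCA-fact-moment-tail}. The only cosmetic difference is that you write $\EE^{1/p}|X_{ij}|^p \lesssim \sqrt{p}\,M_{ij}$ with an implicit constant, whereas the paper's definition $\|X\|_{\psi_2}=\sup_{p\ge1}\{p^{-1/2}\EE^{1/p}|X|^p\}$ gives this directly with constant $1$; either way Fact~\ref{KPCA-fact-moment-tail} absorbs the constant.
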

\begin{proof}[\bf Proof of Lemma \ref{KPCA-lemma-Lp-gaussian}]
By Minkowski's inequality,
\begin{align*}
\EE \| \bX \|_{q, p}^p = 
\sum_{i=1}^{n} \EE \bigg(  \sum_{j=1}^{n} | X_{ij} |^q \bigg)^{p/q}
\leq \sum_{i=1}^{n}
\bigg(
\sum_{j=1}^{n} \EE^{q/p} ( | X_{ij} |^q )^{p/q}
\bigg)^{p/q}
= \sum_{i=1}^{n}
\bigg(
\sum_{j=1}^{n} [ \EE^{1/p}  | X_{ij} |^p ]^q
\bigg)^{p/q}.
\end{align*}
Since $p^{-1/2} \EE^{1/p}  | X_{ij} |^p \leq \| X_{ij} \|_{\psi_2} = M_{ij}$, we have
\begin{align*}
\EE \| \bX \|_{q,p}^p \leq \sum_{i=1}^{n}
\bigg(
\sum_{j=1}^{n} (\sqrt{p} M_{ij} )^q
\bigg)^{p/q}
= p^{p/2} \sum_{i=1}^{n}\bigg(
\sum_{j=1}^{n} M_{ij}^q
\bigg)^{p/q}
= ( \sqrt{p} \| \bM \|_{q,p} )^p.
\end{align*}
By Fact \ref{KPCA-fact-moment-tail}, $\| \bX \|_{q,p} = O_{\PP} ( \sqrt{p} \| \bM \|_{q,p} ;~p )$.
\end{proof}

\begin{lemma}\label{KPCA-lemma-chi-square}
For independent random vectors $\bX \sim N(\bmu, \bI_d)$ and $\bY \sim N( \bnu, \bI_d )$, we have the followings:
\begin{enumerate}
	\item If $\bmu = \mathbf{0}$, then
	\begin{align*}
&\PP( |  \| \bX \|_2^2  - d | \geq 2\sqrt{dt} + 2t ) \leq 2e^{-t},\qquad  \forall t \geq 0, \\
& \log \EE e^{\alpha \| \bX \|_2^2 + \langle \bbeta, \bX \rangle } = - \frac{d}{2} \log ( 1 - 2 \alpha) + \frac{ \| \bbeta \|_2^2 }{ 2 (1 - 2 \alpha) } \qquad \forall \alpha < \frac{1}{2},~\bbeta \in \RR^d;
	\end{align*}
	\item For any $t \in (-1, 1)$,
	\begin{align*}
\log \EE e^{t \langle \bX, \bY \rangle } =  \frac{t^2}{2 (1-t^2)} ( \| \bmu \|_2^2 + \| \bnu \|_2^2 ) + \frac{t}{1 - t^2} \langle \bmu , \bnu \rangle - \frac{d}{2} \log (1-t^2).
	\end{align*}
\end{enumerate}
\end{lemma}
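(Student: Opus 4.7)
Part 1 breaks into two independent claims. The tail bound for $\|\bX\|_2^2 \sim \chi^2_d$ is the classical two-sided deviation inequality of Laurent and Massart; no new work is needed beyond citing it. For the joint MGF, I would complete the square entrywise. Writing $\bX = (X_1,\ldots,X_d)$ and $\bbeta = (\beta_1,\ldots,\beta_d)$, independence gives
\begin{align*}
\EE e^{\alpha \|\bX\|_2^2 + \langle \bbeta, \bX\rangle}
= \prod_{j=1}^{d} \EE e^{\alpha X_j^2 + \beta_j X_j}.
\end{align*}
Each factor is a Gaussian integral: for $\alpha < 1/2$, completing the square in the exponent and normalizing the resulting Gaussian density yields
\begin{align*}
\EE e^{\alpha X_j^2 + \beta_j X_j}
= (1 - 2\alpha)^{-1/2} \exp\!\Big( \frac{\beta_j^2}{2(1-2\alpha)} \Big).
\end{align*}
Taking the product over $j$ and then the logarithm gives the stated identity.

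Part 2 I would prove by conditioning on $\bY$ and then applying the formula from Part 1. Given $\bY$, the inner product $\langle \bX, \bY\rangle$ is $N(\langle \bmu, \bY\rangle, \|\bY\|_2^2)$, so
\begin{align*}
\EE\big(e^{t \langle \bX,\bY\rangle} \,\big|\, \bY\big)
= \exp\!\Big( t \langle \bmu,\bY\rangle + \tfrac{t^2}{2} \|\bY\|_2^2 \Big).
\end{align*}
Writing $\bY = \bnu + \bZ$ with $\bZ \sim N(\mathbf{0},\bI_d)$ and expanding,
\begin{align*}
t\langle \bmu,\bY\rangle + \tfrac{t^2}{2}\|\bY\|_2^2
= t\langle \bmu,\bnu\rangle + \tfrac{t^2}{2}\|\bnu\|_2^2
+ \langle t\bmu + t^2\bnu, \bZ\rangle + \tfrac{t^2}{2}\|\bZ\|_2^2.
\end{align*}
Now apply the Part 1 MGF with $\alpha = t^2/2$ and $\bbeta = t\bmu + t^2\bnu$, valid for $|t| < 1$:
\begin{align*}
\log \EE e^{\alpha\|\bZ\|_2^2 + \langle \bbeta,\bZ\rangle}
= -\tfrac{d}{2}\log(1-t^2) + \frac{\|t\bmu + t^2\bnu\|_2^2}{2(1-t^2)}.
\end{align*}
Expanding $\|t\bmu+t^2\bnu\|_2^2 = t^2\|\bmu\|_2^2 + 2t^3\langle\bmu,\bnu\rangle + t^4\|\bnu\|_2^2$, combining with the deterministic piece $t\langle\bmu,\bnu\rangle + (t^2/2)\|\bnu\|_2^2$, and simplifying using $t^2(1-t^2)+t^4 = t^2$ and $t(1-t^2)+t^3 = t$ gives exactly the claimed formula.

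The only step with any potential for a slip is the final algebraic consolidation of the coefficients of $\|\bmu\|_2^2$, $\|\bnu\|_2^2$, and $\langle\bmu,\bnu\rangle$, which requires carefully tracking factors of $1-t^2$; everything else is a direct one-dimensional Gaussian computation or a standard concentration citation. No further tools are needed.
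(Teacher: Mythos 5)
Your proposal is correct and follows essentially the same route as the paper: the same standard $\chi^2$ tail bound for the first claim, a Gaussian-integral computation for the joint MGF (you factor coordinatewise while the paper does a single multivariate change of variables $\by = \sqrt{1-2\alpha}\,\bx$, but these are interchangeable), and for Part 2 the identical conditioning on $\bY$, shift $\bZ = \bY - \bnu$, and application of the Part 1 MGF with $\alpha = t^2/2$, $\bbeta = t\bmu + t^2\bnu$. The algebraic consolidation you flag does check out: $\frac{t^2}{2} + \frac{t^4}{2(1-t^2)} = \frac{t^2}{2(1-t^2)}$ and $t + \frac{t^3}{1-t^2} = \frac{t}{1-t^2}$.
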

\begin{proof}[\bf Proof of Lemma \ref{KPCA-lemma-chi-square}]
When $\bmu = \mathbf{0}$, $\| \bX \|_2^2 \sim \chi^2_d$. The concentration inequality in the claim is standard, see Remark 2.11 in \cite{BLM13}. Note that $p(\bx) = (2\pi)^{-d/2} e^{ - \| \bx \|_2^2 / 2 }$ is the probability density function of $\bX$. With a new variable $\by = \sqrt{1 - 2 \alpha} \bx$, we have
\begin{align*}
& \alpha \| \bx \|_2^2 + \langle \bbeta, \bx \rangle - \frac{1}{2} \| \bx \|_2^2
=- \frac{ \| \by \|_2^2 }{2} + \langle \bbeta / \sqrt{1 - 2\alpha} , \by \rangle
= - \frac{1}{2}
\| \by - \bbeta / \sqrt{1-2 \alpha} \|_2^2  + \frac{ \| \bbeta \|_2^2 }{ 2 (1 - 2 \alpha) }
\end{align*}
and
\begin{align*}
& \EE e^{\alpha \| \bX \|_2^2 + \langle \bbeta, \bX \rangle } 
 = (2\pi)^{-d/2} \int_{\RR^d} \exp \bigg(
\alpha \| \bx \|_2^2 + \langle \bbeta, \bx \rangle - \frac{1}{2} \| \bx \|_2^2
\bigg) \rd \bx \\
& = (2\pi)^{-d/2} \int_{\RR^d} \exp \bigg(
- \frac{1}{2}
\| \by - \bbeta / \sqrt{1-2 \alpha} \|_2^2  + \frac{ \| \bbeta \|_2^2 }{ 2 (1 - 2 \alpha) }
\bigg) (1 - 2 \alpha)^{-d / 2} \rd \by  \\
&  = (1 - 2 \alpha)^{-d / 2} \exp \bigg( \frac{ \| \bbeta \|_2^2 }{ 2 (1 - 2 \alpha) } \bigg).
\end{align*}

Now we come to the second part. Given $\bY$, $\langle \bX, \bY \rangle \sim N( \langle \bmu , \bY \rangle, \| \bY\|_2^2 )$. Hence
$\EE ( e^{t \langle \bX, \bY \rangle } | \bY ) = e^{ \langle \bmu , \bY \rangle t + \| \bY\|_2^2 t^2 / 2 }$. Define $\bZ = \bY - \bnu$. From $\langle \bmu , \bY \rangle = \langle \bmu , \bnu \rangle + \langle \bmu , \bZ \rangle $ and $\| \bY\|_2^2 = \| \bnu \|_2^2 + 2 \langle \bnu, \bZ \rangle + \| \bZ \|_2^2$ we obtain that
\begin{align*}
 \log \EE e^{t \langle \bX, \bY \rangle } & =
\log \EE [ \EE ( e^{t \langle \bX, \bY \rangle } | \bY ) ]  \\
&= \log  \EE \exp \left[
(  \langle \bmu , \bnu \rangle + \langle \bmu , \bZ \rangle  ) t + ( \| \bnu \|_2^2 + 2 \langle \bnu, \bZ \rangle + \| \bZ \|_2^2 ) t^2 / 2
\right] \\
& =  \langle \bmu , \bnu \rangle  t + \| \bnu \|_2^2 t ^2 / 2 + \log \EE \exp \left(
\langle t \bmu + t^2 \bnu , \bZ \rangle + \| \bZ \|_2^2 t^2 / 2
\right) \\
&  =  \langle \bmu , \bnu \rangle  t + \| \bnu \|_2^2 t ^2 / 2 - \frac{d}{2} \log \bigg(
1 - 2 \cdot \frac{t^2}{2} 
\bigg) + \frac{ \| t \bmu + t^2 \bnu \|_2^2 }{ 2( 1 - 2 \cdot t^2 / 2 ) } \\
& =  \langle \bmu , \bnu \rangle  t + \frac{ \| \bnu \|_2^2 t ^2 }{2} - \frac{d}{2} \log (1-t^2) + \frac{ t^2 \| \bmu + t \bnu \|_2^2 }{ 2( 1 - t^2 ) } \\
& = \frac{t^2}{2 (1-t^2)} ( \| \bmu \|_2^2 + \| \bnu \|_2^2 ) + \frac{t}{1 - t^2} \langle \bmu , \bnu \rangle - \frac{d}{2} \log (1-t^2).
\end{align*}
\end{proof}

\begin{lemma}\label{KPCA-lemma-LDP}
	Let $\{ S_{n} \}_{n=1}^{\infty}$ be random variables such that $\Lambda_n (t) = \log \EE e^{ t S_{n} }$ exists for all $t \in [ -R_n, R_n ]$, where $\{ R_n \}_{n=1}^{\infty}$ is a positive sequence tending to infinity. Suppose there is a convex function $\Lambda: \RR \to \RR$ and a positive sequence $\{ a_n \}_{n=1}^{\infty}$ tending to infinity such that $ \lim_{n \to \infty} \Lambda_n (t) / a_n = \Lambda (t)$ for all $t \in \RR$. We have
	\begin{align*}
	\lim_{n\to\infty} a_n^{-1} \log \PP ( S_n \leq c a_n ) = - \sup_{ t \in \RR } \{ c t - \Lambda (t) \},\qquad \forall c < \Lambda' (0).
	\end{align*}
\end{lemma}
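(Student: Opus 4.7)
\medskip

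\noindent\textbf{Proof proposal.} The plan is a Gärtner--Ellis-type argument: combine a Chernoff upper bound with a lower bound obtained by exponential tilting and a self-consistent concentration statement under the tilted law. Throughout, set $I(c) = \sup_{t \in \RR} \{ct - \Lambda(t)\}$; since $\Lambda$ is finite everywhere it is continuous, convex, with $\Lambda(0)=0$, and the assumption $c < \Lambda'(0)$ ensures $t \mapsto tc - \Lambda(t)$ has a nonnegative supremum attained on $(-\infty,0]$ (or in the limit $t\to -\infty$ if the conjugate value is $+\infty$, in which case the target identity just says $\PP(S_n \le c a_n)$ decays super-exponentially).

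\smallskip

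\noindent\emph{Upper bound.} For any fixed $t < 0$ and $n$ large enough that $t \in [-R_n, R_n]$, Markov's inequality gives
\[
\PP(S_n \le c a_n) \;=\; \PP\!\left(e^{t S_n} \ge e^{t c a_n}\right) \;\le\; e^{-t c a_n + \Lambda_n(t)}.
\]
Dividing the logarithm by $a_n$ and using $\Lambda_n(t)/a_n \to \Lambda(t)$ yields $\limsup a_n^{-1}\log \PP(S_n \le c a_n) \le -(tc - \Lambda(t))$. Optimizing over $t < 0$ and noting that for $t \ge 0$ the concave function $tc - \Lambda(t)$ has derivative $c - \Lambda'(t) \le c - \Lambda'(0) < 0$ at $0$ (so it is nonpositive on $[0,\infty)$), we obtain $\sup_{t<0}\{tc - \Lambda(t)\} = I(c)$, which is the matching upper bound.

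\smallskip

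\noindent\emph{Lower bound via tilting.} Fix small $\delta > 0$. By convexity, $\Lambda$ is differentiable off a countable set, so we can pick $t_\delta < 0$ of differentiability with $\Lambda'(t_\delta) \in (c-\delta, c)$ (such $t_\delta$ exists whenever $c > \lim_{t\to -\infty} \Lambda'(t)$; the opposite case is handled separately by letting $t_\delta \to -\infty$). Define the tilted law $\PP_n^{(\delta)}$ by $d\PP_n^{(\delta)}/d\PP = \exp(t_\delta S_n - \Lambda_n(t_\delta))$, valid for large $n$. The log-MGF of $S_n$ under $\PP_n^{(\delta)}$ is $s \mapsto \Lambda_n(s+t_\delta) - \Lambda_n(t_\delta)$, and dividing by $a_n$ converges pointwise to $\Lambda(s+t_\delta) - \Lambda(t_\delta)$, which has derivative $\Lambda'(t_\delta) \in (c-\delta, c)$ at $s=0$. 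Applying the upper-bound Chernoff argument already proved to $\PP_n^{(\delta)}$ on both sides of this derivative gives $\PP_n^{(\delta)}(S_n / a_n \notin (c-\delta, c)) \to 0$ exponentially fast.

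\smallskip

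\noindent\emph{Change of measure and conclusion.} Writing $A_n = \{(c-\delta)a_n \le S_n \le c a_n\}$ and using $t_\delta < 0$ to get the pointwise bound $e^{-t_\delta S_n} \ge e^{-t_\delta(c-\delta)a_n}$ on $A_n$,
\[
\PP(S_n \le c a_n) \;\ge\; \PP(A_n) \;=\; \EE_n^{(\delta)}\!\left[e^{-t_\delta S_n + \Lambda_n(t_\delta)} \mathbf{1}_{A_n}\right] \;\ge\; e^{-t_\delta (c-\delta) a_n + \Lambda_n(t_\delta)} \, \PP_n^{(\delta)}(A_n).
\]
Taking $a_n^{-1} \log$, letting $n \to \infty$, and using the concentration $\PP_n^{(\delta)}(A_n) \to 1$, we get $\liminf a_n^{-1}\log \PP(S_n \le c a_n) \ge -[t_\delta (c-\delta) - \Lambda(t_\delta)]$. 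Letting $\delta \to 0$ drives $t_\delta$ to any cluster point $t^*$ with $\Lambda'(t^*) = c$, and the right-hand side converges to $-(t^* c - \Lambda(t^*)) = -I(c)$, completing the lower bound.

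\smallskip

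\noindent\emph{Main obstacle.} The only delicate step is the self-referential concentration of $S_n/a_n$ around $\Lambda'(t_\delta)$ under the tilted law $\PP_n^{(\delta)}$, since we are given no law-of-large-numbers hypothesis and must re-derive one from scratch using only pointwise convergence of a sequence of log-MGFs whose normalization is $a_n \to \infty$. This is handled by replaying the Chernoff argument on the tilted cgf. A secondary technicality is to select $t_\delta$ at a point of differentiability (or pass to a subgradient) and to handle the boundary case $c \le \lim_{t\to -\infty} \Lambda'(t)$ where $I(c) = +\infty$ and the lower bound is vacuous.
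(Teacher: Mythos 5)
Your argument is correct and amounts to a self-contained re-derivation of the G\"artner--Ellis bound specialized to the one-sided event $\{S_n \le c a_n\}$: Chernoff for the upper bound, exponential tilting plus change of measure for the lower bound, and the conjugate-duality bookkeeping that identifies $\sup_{t<0}\{tc-\Lambda(t)\}$ with $\sup_{t\in\RR}\{tc-\Lambda(t)\}$ under $c<\Lambda'(0)$. The paper disposes of this lemma with a one-line citation to G\"artner--Ellis, so what you have supplied is the content of that citation rather than a genuinely different route. The one subtlety worth spelling out is exactly the one you flag at the end: the tilting step requires a differentiability point $t_\delta$ of $\Lambda$ with $\Lambda'(t_\delta)\in(c-\delta,c)$, and if the subdifferential of $\Lambda$ jumps across $c$ at some $t_0$ (i.e.\ $\Lambda'_-(t_0)<c<\Lambda'_+(t_0)$), no such $t_\delta$ exists for small $\delta$. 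In that case the tilted measure only concentrates $S_n/a_n$ inside $[\Lambda'_-(t_0),\Lambda'_+(t_0)]$, which contains $c$ but is not strictly below it, and the change-of-measure inequality $-t_\delta S_n \ge -t_\delta(c-\delta)a_n$ on $A_n$ no longer closes the gap. This is precisely the essential-smoothness hypothesis that the standard G\"artner--Ellis lower bound also needs; the lemma as written tacitly assumes it, and it holds in the paper's sole application (Lemma~\ref{KPCA-lemma-CSBM-LDP}, where the limit cgf is real-analytic), so your proof has the same scope of validity as the result it is proving.
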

\begin{proof}[\bf Proof of Lemma \ref{KPCA-lemma-LDP}]
This result follows directly from the G{\"a}rtner-Ellis theorem \citep{Gar77,Ell84} for large deviation principles.
\end{proof}

%
%

\subsection{Other lemmas}

\begin{lemma}\label{KPCA-lemma-trig}
Let $x \in (0, \pi / 2)$, $\varepsilon \in (0,1)$ and $\delta = \frac{\varepsilon}{\pi} (  \frac{\pi}{2} - x )$. We have $\max_{|y| \leq 2 \delta}
|
\frac{\cos (x + y)}{\cos x} - 1
| \leq \varepsilon$. Moreover, if $x > 2 \delta$, then $\max_{|y| \leq \delta / 3 }
| \frac{ \sin^2 x }{ \sin^2 (x + y) } - 1 | \leq \frac{9}{16}$.
\end{lemma}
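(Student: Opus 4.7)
The plan is to treat the two inequalities separately; each reduces to a short elementary trigonometric calculation combined with a careful choice of one-line estimate.

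For the first bound, I would use the sum-to-product identity
\[
\cos(x+y)-\cos x = -2\sin(x+y/2)\sin(y/2),
\]
which immediately gives $|\cos(x+y)-\cos x|\le 2|\sin(y/2)|\le|y|\le 2\delta$ whenever $|y|\le 2\delta$. For the denominator, since $\pi/2-x\in(0,\pi/2)$, Jordan's inequality $\sin\theta\ge(2/\pi)\theta$ on $[0,\pi/2]$ applied to $\cos x=\sin(\pi/2-x)$ yields $\cos x\ge(2/\pi)(\pi/2-x)$. Dividing,
\[
\left|\frac{\cos(x+y)}{\cos x}-1\right|\le\frac{2\delta}{(2/\pi)(\pi/2-x)}=\frac{\pi\delta}{\pi/2-x}=\varepsilon,
\]
by the definition of $\delta$.

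For the second bound I would start from $\sin(x+y)/\sin x=\cos y+\cot x\sin y$. The hypothesis $\varepsilon<1$ forces $\delta<\varepsilon/2<1/2$, so $|y|\le\delta/3<1/6$; hence $|\cos y-1|\le y^2/2\le\delta^2/18\le 1/72$. For the second term I would use the sharper estimate $\cot\theta\le 1/\theta$ on $(0,\pi/2)$ (which follows from $\tan\theta\ge\theta$) together with the monotonicity of $\cot$: since $x>2\delta$, we get $\cot x\le\cot(2\delta)\le 1/(2\delta)$, so $|\cot x\sin y|\le|y|/(2\delta)\le 1/6$. Combining, $\sin(x+y)/\sin x\in[1-1/72-1/6,\,1+1/6]=[59/72,\,7/6]$, hence $\sin^2 x/\sin^2(x+y)\in[36/49,\,5184/3481]$. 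The worst deviation from $1$ is $5184/3481-1=1703/3481$, and $1703\cdot 16=27248<31329=9\cdot 3481$ confirms $1703/3481<9/16$.

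The only subtle point is the choice of bound on $\cot(2\delta)$: invoking Jordan's inequality would only give $\cot(2\delta)\le\pi/(4\delta)$, which inflates the second-term estimate to $\pi/12$ and causes the final constant to exceed $9/16$. Using instead $\cot\theta\le 1/\theta$ (via $\tan\theta\ge\theta$) tightens the estimate just enough to close the numerical gap. Everything else is direct bookkeeping.
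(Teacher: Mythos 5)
Your proof is correct. For the second inequality you take essentially the same route as the paper: expand $\sin(x+y)/\sin x=\cos y+\cot x\sin y$, bound the cross term by $1/6$ using $x>2\delta$ together with $\tan\theta\ge\theta$ (the paper writes this as $|\sin y|/\tan x\le(\delta/3)/x\le 1/6$, which is your $\cot x\le 1/(2\delta)$ in disguise), bound $1-\cos y\le 1/72$, and square; only the final interval arithmetic differs ($[59/72,7/6]$ versus the paper's symmetrized $[4/5,6/5]$), and both close the $9/16$ gap. For the first inequality your route is genuinely different and, as it happens, cleaner: the sum-to-product identity gives $|\cos(x+y)-\cos x|\le|y|\le 2\delta$ and Jordan's inequality $\cos x=\sin(\pi/2-x)\ge(2/\pi)(\pi/2-x)$ turns the ratio into exactly $\varepsilon$ with no slack. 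The paper instead splits the error as $|\cos(x+y)/\cos x-\cos y|+|\cos y-1|$, bounding the first piece via $\tan(\pi/2-x)\ge\pi/2-x$ and the second by $\varepsilon^2/2$; that decomposition only yields $2\varepsilon/\pi$ for the first piece (the paper's displayed $\varepsilon/(2\pi)$ looks like an arithmetic slip), and $2\varepsilon/\pi+\varepsilon^2/2$ exceeds $\varepsilon$ as $\varepsilon\to 1$, so your tight one-step estimate is actually the more robust argument here.
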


\begin{proof}[Proof of Lemma \ref{KPCA-lemma-trig}]
Recall the elementary identity $\cos ( x + y ) = \cos x \cos y - \sin x \sin y$. If $|y| \leq 2 \delta $, then
$| \sin y | \leq |y| \leq 2 \delta = \frac{2 \varepsilon}{\pi} ( \frac{\pi}{2} - x ) \leq \tan ( \frac{\pi}{2} - x )$ and
\begin{align*}
&\bigg|
\frac{ \cos (x+y) }{ \cos x }  - \cos y
\bigg|
\leq \frac{ \sin x  | \sin y | }{ \cos x }
= \frac{  | \sin y | }{ \tan (\frac{\pi}{2} - x ) }
\leq  \frac{ \frac{2 \varepsilon}{\pi} ( \frac{\pi}{2} - x ) }{ \tan (\frac{\pi}{2} - x ) }
\leq \frac{\varepsilon}{2 \pi}, \notag \\
&0 \leq 1 -  \cos y \leq \frac{ y^2 }{2} \leq \frac{ (2 \delta)^2 }{2}
= \frac{ [ \varepsilon ( 1 - 2 x / \pi  ) ]^2 }{ 2 } \leq \frac{\varepsilon^2}{2}.
\end{align*}
The result on $\max_{|y| \leq 2 \delta}
|
\frac{\cos (x + y)}{\cos x} - 1
|$ follows from the estimates above and $\frac{\varepsilon}{2 \pi} + \frac{\varepsilon^2}{2} = \frac{\varepsilon}{2} ( 1 / \pi + \varepsilon ) \leq \varepsilon$.

The identity $\sin ( x + y ) = \sin x \cos y + \cos x \sin y$ imply that if $ 2 \delta < x \leq \tan x$ and $|y| \leq \delta / 3$, then
\begin{align*}
&\bigg|
\frac{ \sin (x+y) }{ \sin x }  - \cos y
\bigg|
\leq \frac{ \cos x  | \sin y | }{ \sin x }
= \frac{  | \sin y | }{ \tan x }
\leq  \frac{ \delta / 3 }{ \tan x }
\leq \frac{ \delta / 3 }{x} \leq \frac{1}{6}, \notag \\
&0 \leq 1 -  \cos y \leq \frac{ y^2 }{2} \leq \frac{ (\delta / 3 )^2 }{2}
= \frac{ [ \frac{\varepsilon}{6} ( 1 - 2 x / \pi  ) ]^2 }{ 2 } \leq \frac{\varepsilon^2}{72} \leq \frac{1}{72}.
\end{align*}
Hence for $|y| \leq \delta / 3$, we have $ |
\frac{ \sin (x+y) }{ \sin x }  - 1
| \leq \frac{1}{6} + \frac{1}{72} = \frac{13}{72} < \frac{1}{5}$. Direct calculation yields $\frac{4}{5} \leq \frac{ \sin (x+y) }{ \sin x }  \leq \frac{6}{5}$, $\frac{25}{36} \leq \frac{ \sin^2 x }{ \sin^2 (x + y) }  \leq \frac{25}{16}$ and
$| \frac{ \sin^2 x }{ \sin^2 (x + y) } - 1 | \leq \frac{9}{16}$.
\end{proof}

\begin{lemma}\label{KPCA-lemma-integral}
For $t\geq 0$ and $s \geq 2$, define $P (t, s) = \int_{0}^{\pi} e^{ t \cos x } (\sin x)^{s - 2} \rd x$ and $a = (s-2) / t$. There exists a constant $c>0$ and a continuous, non-decreasing function $w:~[0, c] \mapsto [0,1)$ with $w(0) = 0$ such that when $\max\{ 1/t, s^2 / t^3 \} \leq c$,
\begin{align*}
\bigg| 
\frac{ \frac{\partial }{\partial t} [\log P(t, s)] }{
( \sqrt{a^2 + 4} - a ) / 2
} - 1
\bigg| \leq w ( \max\{ 1/t, s^2 / t^3 \} ).
\end{align*}
\end{lemma}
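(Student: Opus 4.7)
The plan is to apply Laplace's method to the ratio $\partial_t\log P = (\partial_t P)/P$, using Lemma \ref{KPCA-lemma-trig} as the quantitative tool for comparing the integrand to its value at the mode. First, observe that $\partial_t\log P(t,s) = \EE_t[\cos X]$ where $X\in(0,\pi)$ has density proportional to $e^{f(x)}$ with $f(x) = t\cos x + (s-2)\log\sin x$. The first-order condition $t\sin^2 x = (s-2)\cos x$ is a quadratic in $y = \cos x$ whose positive root is $y^* = (\sqrt{a^2+4}-a)/2$ with $a = (s-2)/t$, matching the denominator in the statement. Setting $x^* = \arccos y^* \in (0,\pi/2)$, a direct computation using the first-order relation gives $f''(x^*) = -t(1+y^{*2})/y^*$, so the log-density has peak curvature $\kappa := t(1+y^{*2})/y^*$.

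The target is to bound the relative error $\EE_t[\cos X/y^* - 1]$. Fix a small $\varepsilon \in (0,1)$, set $\delta = \varepsilon(\pi/2-x^*)/\pi$ as in Lemma \ref{KPCA-lemma-trig}, and split the expectation at $\{|X-x^*|\leq 2\delta\}$. By the first half of Lemma \ref{KPCA-lemma-trig}, the integrand satisfies $|\cos X/y^* - 1| \leq \varepsilon$ pointwise on the inner event, so that part contributes at most $\varepsilon$. On the tail I use the crude bound $|\cos X/y^*| \leq 1/y^*$ and reduce matters to showing $\PP_t(|X-x^*| > 2\delta)/y^* \to 0$ as $r := \max\{1/t, s^2/t^3\} \to 0$.

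For the tail probability I would prove a two-sided Laplace estimate. On the inner subinterval $|y| \leq \delta/3$, the second half of Lemma \ref{KPCA-lemma-trig} controls $\sin^2 x^*/\sin^2(x^*+y)$, and combined with a Taylor expansion of $t\cos(x^*+y)$ this sandwiches $f(x^*+y)-f(x^*)$ between constant multiples of $-\kappa y^2$, yielding a matching Gaussian lower bound $P(t,s) \gtrsim e^{f(x^*)}/\sqrt{\kappa}$. For $|y| \geq 2\delta$, the strict concavity of $\cos$ on $(0,\pi/2)$ gives $\cos(x^*+y) \leq y^* - c\delta^2$ uniformly, and the sine ratio is still under control, so $f(x^*+y) - f(x^*) \leq -c\kappa\delta^2$ and hence $\PP_t(|X-x^*| \geq 2\delta) \leq C\sqrt{\kappa}\,e^{-c\kappa\delta^2}$. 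The key verification is $\kappa\delta^2 \gg \log(1/y^*)$: when $y^*$ is bounded below, $\kappa\delta^2 \asymp t \to \infty$, and when $y^* \to 0$ (i.e., $a\to\infty$), one has $\delta \asymp y^*$ and $\kappa \asymp t/y^*$, giving $\kappa\delta^2 \asymp t y^* \gtrsim t^2/s$; the hypothesis $s^2/t^3 \to 0$ forces $t^2/s \geq \sqrt{t} \to \infty$, which dominates $\log(1/y^*) \lesssim \log(s/t)$. Combining the inner bound $\varepsilon$ with the tail bound $(C/y^*)\sqrt{\kappa}\,e^{-c\kappa\delta^2}$ and taking $w(r)$ to be the monotone envelope of twice the optimal $\varepsilon(r)$ gives the modulus required by the statement.

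The main obstacle I anticipate is the small-$y^*$ regime ($s \gg t$), where the peak width $\delta$ shrinks with $y^*$ and the density carries a singular $(\sin x)^{s-2}$ weighting: Lemma \ref{KPCA-lemma-trig}'s bound on $\sin^2 x^*/\sin^2(x^*+y)$ is essential, since a naive Taylor expansion of $\log\sin$ would lose a factor of $s$ in the exponent and spoil the tail bound. A secondary technicality is the boundary case $y^* = 1$ ($s$ close to $2$ with $x^* \to 0$), where Lemma \ref{KPCA-lemma-trig}'s second inequality requires $x^* > 2\delta$; I would handle this by running a one-sided Laplace estimate on $(0,\pi/2)$ with a boundary maximum at $x=0$, or equivalently by noting that $P(t,2) = \pi I_0(t)$ has the explicit asymptotic $I_1(t)/I_0(t) = 1 - 1/(2t) + O(1/t^2)$ consistent with $y^* = 1$.
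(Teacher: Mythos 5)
Your proposal is the same Laplace-method strategy as the paper: write $\partial_t\log P(t,s) = \EE_t[\cos X]$ for $X$ with density proportional to $e^{t\cos x}(\sin x)^{s-2}$, locate the unique interior maximizer $x^*$ of the log-density, with $\cos x^* = (\sqrt{a^2+4}-a)/2$ and curvature $\kappa := -\partial_x^2[t\cos x + (s-2)\log\sin x]\big|_{x^*} = t(1+\cos^2 x^*)/\cos x^*$, localize at scale $\delta = \frac{\varepsilon}{\pi}(\frac{\pi}{2}-x^*)$ and apply Lemma \ref{KPCA-lemma-trig} for the inner estimate $|\cos X/\cos x^* - 1|\le\varepsilon$, then show the tail contribution vanishes because $\kappa\delta^2$ dominates all logarithmic corrections whenever $\max\{1/t,\,s^2/t^3\}\to 0$. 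The case split on whether $x^*>2\delta$ (governing applicability of the second half of Lemma \ref{KPCA-lemma-trig}) and the verification $\kappa\delta^2 \asymp t y^* \asymp t^2/s \gtrsim \sqrt{t}$ as $a\to\infty$ are exactly the computations in the paper's proof.

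One step of the tail estimate would not survive as written: the claim that strict concavity of $\cos$ yields $\cos(x^*+y)\le y^*-c\delta^2$ uniformly for $|y|\ge 2\delta$. For $y<0$ this is false: $\cos$ is decreasing on $(0,\pi)$, so $\cos(x^*-2\delta)>\cos x^* = y^*$, and the cosine alone moves the \emph{wrong} way to the left of the mode. The quadratic drop at $x^*\pm 2\delta$ exists only for the full log-density $t\cos x + (s-2)\log\sin x$, whose first derivative vanishes at $x^*$; it must be obtained by reducing the tail supremum to the two boundary points via unimodality and then Taylor-expanding the \emph{full} log-density on a short interval where Lemma \ref{KPCA-lemma-trig} controls the second derivative --- precisely the tool you already invoke for the Gaussian lower bound on $P$. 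Replacing the cosine-only bound with that Taylor argument recovers the paper's proof. A smaller slip: your fallback for the regime $x^*\le 2\delta$, ``a boundary maximum at $x=0$,'' is not what happens, since the mode is interior for every $a>0$; the paper instead uses the crude curvature bound $-\partial_x^2\log f(x,a) \ge \cos(x^*+2\delta)\ge\cos(1/16)$ together with the fact that $\delta$ is bounded below in this regime, and reruns the same Taylor estimate.
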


\begin{proof}[\bf Proof of Lemma \ref{KPCA-lemma-integral}]
It suffices to show that $\frac{ \frac{\partial}{\partial t } [\log P (t,s) ] }{ ( \sqrt{a^2 + 4} - a ) / 2} \to 1 $ as $t \to \infty$ and $t^3 / s^2 \to \infty$.
	
If $s = 2$, then $a = 0$, $P (t, s) = \int_{0}^{\pi} e^{ t \cos x } \rd x$ and $\frac{\partial}{\partial t} P (t, s) = \int_{0}^{\pi} \cos x e^{ t \cos x } \rd x$. A direct application of Laplace's method \citep{Lap86} yields $ \frac{\partial}{\partial t } [\log P (t,s) ] = [ \frac{\partial}{\partial t }  P (t,s) ] / P(t,s) \to 1$ as $t \to \infty$, proving the result. From now on we assume $s > 2$ and thus $a > 0$. Under our general setting, the proof is quite involved and existing results in asymptotic analysis, including the generalization of Laplace's method to two-parameter asymptotics \citep{Ful51} cannot be directly applied.

Define $f(x, a) = e^{\cos x} \sin^a x$ for $x \in [0,\pi]$. Then $P(t,s) = \int_{0}^{\pi} f^t (x, a) \rd x$ and $\frac{\partial}{\partial t} P (t, s) =  \int_{0}^{\pi} \cos x f^t (x, a) \rd x$. From $\log f (x,a) = \cos x + a \log \sin x$ we get
\begin{align}
\frac{\partial}{\partial x} [ \log f (x,a) ] = - \sin x + a \frac{\cos x}{\sin x}
\qquad \text{and} \qquad 
\frac{\partial^2 }{\partial x^2} [ \log f (x,a) ] = - \cos x - \frac{a}{\sin^2 x}.
\label{KPCA-eqn-lemma-integral-0}
\end{align}
Let $x^*$ be the solution to $\frac{\partial}{\partial x} [ \log f (x,a) ] = 0$ on $( 0,\pi )$. We have $x^* \in (0,\pi / 2)$,
\begin{align}
a = \frac{1}{\cos x^*} - \cos x^*,\qquad
\cos x^* =  \frac{ \sqrt{ a^2 + 4 } - a }{ 2 }
\qquad \text{and}
\qquad
\sin x^* =  \bigg( \frac{ a( \sqrt{ a^2 + 4 } - a ) }{ 2 } \bigg)^{1/2} 
.
\label{KPCA-eqn-lemma-integral-cos}
\end{align}
Moreover, $f (\cdot ,a)$ is strictly increasing in $[0, x^*)$ and strictly decreasing in $(x^*, \pi]$. Hence $x^*$ is its unique maximizer in $[0, \pi]$.

Fix any $\varepsilon \in (0, 1 / 32)$ and let $\delta = \frac{\varepsilon}{\pi} ( \frac{\pi}{2} - x^* )$. Define $I = [x^* - 2 \delta, x^* +2 \delta] \cap [0, \pi] $, $J = [x^* , x^* + \delta / 6 ] $ and $r(a) = \inf_{y \in J} f(y, a) /  \sup_{y \in [0, \pi] \backslash I} f(y, a)$. Then $J \subseteq I \subseteq [0,\pi/2)$ and $|J| = \delta / 6$. We have
\begin{align*}
\bigg|
\frac{P(t,s)}{ \int_I f^t (x,a) \rd x } - 1
\bigg|
& = \frac{ \int_{[0,\pi] \backslash I} f^t (x,a) \rd x  }{ \int_I f^t (x,a) \rd x }
\leq  \frac{ \int_{[0,\pi] \backslash I} f^t (x,a) \rd x  }{ \int_J f^t (x,a) \rd x }\\
& \leq  \frac{  \pi  [ \sup_{y \in [0, \pi] \backslash I} f(y, a) ]^t }{  (\delta / 6) [ \inf_{y \in J } f(y, a) ]^t }
= \frac{ 6 \pi }{  \delta  r^t(a)  }
\end{align*}
and
\begin{align*}
&\bigg|
\frac{ \frac{\partial}{\partial t} P(t,s)}{ \int_I \cos x f^t (x,a) \rd x } - 1
\bigg|
\leq \frac{ \int_{[0,\pi] \backslash I} | \cos x | f^t (x,a) \rd x  }{ \int_I \cos x f^t (x,a) \rd x }
\leq  \frac{ \int_{[0,\pi] \backslash I}  f^t (x,a) \rd x  }{  \int_J \cos x f^t (x,a) \rd x } \notag \\
& \leq  \frac{ \pi  [ \sup_{y \in [0, \pi] \backslash I} f(y, \alpha) ]^t }{ \cos (x^* + \delta) (\delta / 6) [ \inf_{y \in J } f(y, \alpha) ]^t } 
= \frac{6 \pi }{ \cos (x^* + \delta) \delta  r^t(a)  }
\leq  \frac{ 3 \pi^2 }{ \delta^2  r^t(a)  },
\end{align*}
where the last inequality follows from $x^* + 2 \delta < \pi / 2$ and $\cos (x^* + \delta) \geq \cos (\pi / 2 - \delta) = \sin \delta \geq 2 \delta / \pi$.
Consequently,
\begin{align*}
&\max\bigg\{
\bigg|
\frac{P(t,s)}{ \int_I f^t (x,a) \rd x } - 1
\bigg|,~
 \bigg|
\frac{ \frac{\partial}{\partial t} P(t,s)}{ \int_I \cos x f^t (x,a) \rd x } - 1
\bigg|
\bigg\}
\leq  \frac{ 3 \pi^2 }{ \delta^2  r^t(a)  }.
\end{align*}
Let $h(a, t)$ denote the right hand side. If $h(a, t) < 1$, the estimate above yields
\begin{align*}
& \frac{1 - h(a, t)}{1 + h(a, t)}
\leq 
\frac{ [ \frac{\partial}{\partial t }  P(t,s) ] / P(t,s) }{ \int_I \cos x f^t (x,a) \rd x /  \int_I f^t (x,a) \rd x } 
\leq \frac{1 + h(a, t)}{1 - h(a, t)}.
\end{align*}
According to Lemma \ref{KPCA-lemma-trig}, $| \cos x / \cos x^* - 1 | \leq \varepsilon$ holds for all $x \in I$. Hence
\begin{align*}
(1 - \varepsilon)
\frac{ 1 - h(a, t) }{1 + h(a, t)}
\leq 
\frac{ \frac{\partial}{\partial t } [\log P (t,s) ] }{ \cos x^* } \leq 
(1 + \varepsilon) \frac{  1 + h(a, t) }{1 - h(a, t)}.
\end{align*}
{\bf Note that our assumptions $t \to \infty$ and $t^3 / s^2 \to \infty$ imply that $t / (a \vee 1)^2 \to \infty$. Below we will prove $h(a, t) \to 0$ as $t / (a \vee 1)^2  \to \infty$ for any fixed $\varepsilon \in (0,1/32)$.} If that holds, then we get the desired result by letting $\varepsilon \to 0$.

The analysis of $h(a, t)$ hinges on that of $r(a) =  \inf_{y \in J} f(y, a) / \sup_{y \in [0, \pi] \backslash I} f(y, a)$.
The monotonicity of $f (\cdot ,a)$ in $[0, x^*)$ and $(x^*, \pi]$ yields $\inf_{y \in J } f(y, a) = f(x^* + \delta / 6, a)$,
\begin{align*}
\sup_{y \in [0, \pi] \backslash I} f(y, a) & = 
\max \{ f( x^* - 2\delta, a ) , ~ f(x^* + 2\delta, a) \}  \\
& \leq \max \{ f( x^* - \delta / 3, a ) , ~ f(x^* + \delta / 3, a) \} , \qquad \text{if } x^* > 2 \delta, \\
\sup_{y \in [0, \pi] \backslash I} f(y, a) & = f(x^* + 2\delta, a),
 \qquad \text{if } x^* \leq 2 \delta.
\end{align*}
The two cases $x^* > 2 \delta$ and $x^* \leq 2 \delta$ require different treatments. If we define $g(x) = 1/\cos x - \cos x$ for $x \in (0, \pi/2)$, then $a = g( x^*)$ and $\delta = \frac{\varepsilon}{\pi} ( \frac{\pi}{2} - x^* )$ yield the following simple fact.
\begin{fact}\label{KPCA-fact-lemma-integral-2}
If $x^* > 2 \delta$, then $x^* >  \frac{\varepsilon}{1 + 2 \varepsilon / \pi}$, $a > g( \frac{\varepsilon}{1 + 2 \varepsilon / \pi} )$ and $\delta < \frac{\pi \varepsilon}{2 \pi + 4 \varepsilon}$; if
$x^* \leq 2 \delta$, then $x^* \leq  \frac{\varepsilon}{1 + 2 \varepsilon / \pi}$, $a \leq g( \frac{\varepsilon}{1 + 2 \varepsilon / \pi} )$ and $\delta \geq \frac{\pi \varepsilon}{2 \pi + 4 \varepsilon}$.
\end{fact}

{\bf We first consider the case where $x^* > 2 \delta$, which is equivalent to $a > g( \frac{\varepsilon}{1 + 2 \varepsilon / \pi} )$.}
Let $ I' = [x^* - \delta / 3 , x^* + \delta / 3 ] $. For any $y \in I'$, there exists $\xi$ in the closed interval between $x^*$ and $y$ such that 
\begin{align*}
\log f (y ,a) = \log f (x^*,a) + \frac{\partial }{\partial x} [ \log f (x,a) ] |_{x = x^*} (y-x) + \frac{1}{2} \frac{\partial^2 }{\partial x^2} [ \log f (x,a) ] |_{x = \xi} (y-x)^2.
\end{align*}
By construction, $ \frac{\partial }{\partial x} [ \log f (x,a) ] |_{x = x^*} = 0$. From equation (\ref{KPCA-eqn-lemma-integral-0}) we get 
\begin{align*}
 \max_{y \in I'}
\bigg| 
\frac{ 
\frac{\partial^2 }{\partial x^2} [ \log f (x,a) ] |_{x = y}
 }{
\frac{\partial^2 }{\partial x^2} [ \log f (x,a) ] |_{x = x^*}
} - 1 \bigg| 
& \leq \max_{y \in I'}
\bigg| 
\frac{ 
\cos y
}{
\cos x^*
} - 1 \bigg|
+ \max_{y \in I' } \bigg| 
\frac{ 
	\sin^2 x^*
}{
	\sin^2 y
} - 1 \bigg| \\
& \leq \varepsilon + \frac{9}{16}
\leq \frac{1}{32} + \frac{9}{16} = \frac{19}{32},
\end{align*}
where we used Lemma \ref{KPCA-lemma-trig} and $\varepsilon \leq 1 / 32$. Therefore,
\begin{align*}
&\frac{ \inf_{y \in J } \log f(y, a) - \log f(x^*, a) }{
	\frac{1}{2}
\frac{\partial^2 }{\partial x^2} [ \log f (x,a) ] |_{x = x^*} 
} \leq \bigg(
1 + \frac{19}{32}
\bigg)   \bigg( \frac{\delta}{6} \bigg)^2 = \frac{51}{32} \cdot \frac{\delta^2}{36}, \\
&\frac{ \sup_{y \in [0 , \pi] \backslash I'} \log f(y, a) - \log f(x^*, a) }{
	\frac{1}{2}
	\frac{\partial^2 }{\partial x^2} [ \log f (x,a) ] |_{x = x^*} 
} 
 \geq  \bigg(
 1 - \frac{19}{32}
 \bigg)   \bigg( \frac{\delta}{3} \bigg)^2
  = \frac{13}{32} \cdot \frac{\delta^2}{9} = \frac{52}{32} \cdot \frac{\delta^2}{36}.
\end{align*}
Since $\frac{\partial^2 }{\partial x^2} [ \log f (x,a) ] |_{x = x^*} = - \cos x^* - a /  \sin^2 x^* < 0$,
\begin{align*}
\log r(a)
& = \inf_{y \in J } \log f(y, a) - \sup_{y \in [0, \pi] \backslash I} \log f(y, a)
\geq \inf_{y \in J } \log f(y, a) - \sup_{y \in [0,\pi] \backslash I'} \log f(y, a) \\
& \geq \frac{1}{2} \frac{\partial^2 }{\partial x^2} [ \log f (x,a) ] |_{x = x^*} 
\bigg(
 \frac{51}{32} \cdot \frac{\delta^2}{36} - \frac{52}{32} \cdot \frac{\delta^2}{36}
\bigg)
= \frac{ ( \cos x^* + a /  \sin^2 x^*) \delta^2 }{2 \times 32 \times 36} \\
& \gtrsim a \delta^2 / \sin^2 x^*.
\end{align*}
From this and $h(a, t) =  \frac{3  \pi^2 }{ \delta^2 r^t (a) }$ we get
\begin{align*}
- \log h(a, t) = - \log (3 \pi^2) + \log \delta^2 + t \log r(a)
\gtrsim - 1  + \log \delta^2 +  t a \delta^2 / \sin^2 x^*.
\end{align*}
From (\ref{KPCA-eqn-lemma-integral-cos}) we see that $\lim_{a \to \infty}\sin x^* = 1$, $\lim_{a \to \infty}a \cos x^* = 1$ and $\lim_{a \to \infty} a ( \frac{\pi}{2} - x^* ) = 1$. Since $\delta = \frac{\varepsilon}{\pi} ( \frac{\pi}{2} - x^* ) > 0$, we have $\lim_{a \to \infty} a \delta  = \frac{\varepsilon}{\pi}$. There exists $C_1 > 0$ determined by $\varepsilon$ such that for any $a > g( \frac{\varepsilon}{1 + 2 \varepsilon / \pi} )$, we have $\delta \geq C_1 / a$ and $a \delta^2 / \sin^2 x^* \geq C_1 / a$. As a result, for some $C_2$ determined by $\varepsilon$,
\begin{align}
- \log h(a, t) & \geq C_2 ( - 1  -  \log a +  t / a ) \notag\\
& \geq
C_2 [ - 1  -  \log (a \vee 1) +  t / (a \vee 1) ],\qquad \forall a > g \bigg( \frac{\varepsilon}{1 + 2 \varepsilon / \pi} \bigg).
\label{KPCA-eqn-lemma-integral-1}
\end{align}

{\bf We move on to the case where $x^* \leq 2 \delta$.} Recall that for $x \in (x^*,  x^* + 2 \delta) \subseteq (x^*, \pi / 2)$, we have $\frac{\partial }{\partial x } [\log f(x, a) ] < 0$ and
\begin{align*}
- \frac{\partial^2 }{\partial x^2 } [\log (x,a)] = \cos x + \frac{a}{ \sin^2 x } \geq  \cos x \geq \cos (x^* + 2 \delta)
\geq \cos ( 4 \delta ) \geq \cos (1/16),
\end{align*}
where we used $\delta \leq \varepsilon / 2 \leq 1 / 64$.
By Taylor expansion, there exists $\xi \in [ x^* + \delta / 6, x^* + 2\delta ]$ such that
\begin{align*}
\log r(a) & =  \inf_{y \in J } \log f(y, a) - \sup_{y \in [0, \pi] \backslash I} \log f(y, a) = 
 \log f(x^* + \delta / 6, a)- \log f(x^* + 2\delta, a)\\
&= - \bigg( \frac{\partial }{\partial x} [\log (x,a)] |_{ x = x^* + \delta / 6 } (2 \delta  - \delta / 6 )
+ \frac{1}{2} \frac{\partial ^2}{\partial x^2 } [\log (x,a)] |_{ x = \xi } (2 \delta  - \delta / 6 )^2 
\bigg)\\
& > \frac{1}{2} \inf_{x \in [x^*, x^* + 2 \delta]} \bigg( - \frac{\partial^2 }{\partial x^2 } [\log (x,a)] \bigg) ( 2 \delta - \delta / 6 )^2
\gtrsim \delta^2.
\end{align*}
Based on $h(a, t) =  \frac{3  \pi^2 }{ \delta^2 r^t (a) }$ and $\delta \geq \frac{\pi \varepsilon}{2 \pi + 4 \varepsilon}$ from Fact \ref{KPCA-fact-lemma-integral-2}, there exists some $C_3>0$ determined by $\varepsilon$ such that
$- \log h(a, t) \geq C_3 ( - 1 +  t ) \geq C_3 [ -1 + t / (a \vee 1) ]$ holds when $a \leq g ( \frac{\varepsilon}{1 + 2 \varepsilon / \pi} )$.

{\bf This bound,
(\ref{KPCA-eqn-lemma-integral-1}) and $\log (a \vee 1 ) \leq  a \vee 1 $ imply that}
\begin{align*}
- \log h(a, t) 
& \gtrsim -1 - \log ( a \vee 1 ) + \frac{t}{a \vee 1}
\geq -1 - ( a \vee 1 ) + \frac{t}{a \vee 1} \\
& =  -1 + (a \vee 1 ) \bigg(
\frac{t}{( a \vee 1 )^2} - 1
\bigg).
\end{align*}
As $t / (a \vee 1)^2 \to \infty$, we have $ - \log h(a, t) \to \infty$ and $ h(a, t) \to 0$.
\end{proof}

\begin{lemma}\label{KPCA-lemma-integral-derivative}
	For $t\geq 0$ and $s \geq 2$, define $a = (s-2) / t$ and $g(t, s) = ( \sqrt{a^2 + 4} - a ) / 2$.
	There exist a constant $c \in (0,1)$ and a function $w: [0,c] \to [0,1)$ such that when $\max\{ 1/t_0,~ d^2 / t_0^3,~ |t_2 - t_0| /t_0,~ |t_1 - t_0| /t_0 \} \leq c$,
	\begin{align*}
	& \bigg| \frac{ \log P(t_2, s) - \log P(t_1, s) }{ g(t_0, s) (t_2 - t_1) } - 1 \bigg| \leq w ( \max\{ 1/t_0,~ s^2 / t_0^3,~ |t_2 - t_0| /t_0,~ |t_1 - t_0| /t_0 \} ).
	\end{align*}
\end{lemma}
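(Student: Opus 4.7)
The plan is to deduce the lemma from Lemma~\ref{KPCA-lemma-integral} via a one-variable integration argument in $t$, combined with continuity of $g(\cdot,s)$. Write
\begin{align*}
\log P(t_2,s) - \log P(t_1,s) = \int_{t_1}^{t_2} \frac{\partial}{\partial t}[\log P(t,s)] \, \rd t,
\end{align*}
so that after dividing by $g(t_0,s)(t_2-t_1)$ the left-hand side of the target inequality becomes an average of the ratio $\frac{\partial_t [\log P(t,s)]}{g(t_0,s)}$ for $t$ between $t_1$ and $t_2$. Setting $\eta := \max\{1/t_0,\ s^2/t_0^3,\ |t_2-t_0|/t_0,\ |t_1-t_0|/t_0\}$ (for $\eta$ small enough), any $t$ lying between $t_1$ and $t_2$ satisfies $t = t_0(1+O(\eta))$, hence $1/t = (1/t_0)(1+O(\eta))$ and $s^2/t^3 = (s^2/t_0^3)(1+O(\eta))$, so $\max\{1/t, s^2/t^3\} \le C\eta$ for an absolute constant $C$.

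First I would apply Lemma~\ref{KPCA-lemma-integral} to every such $t$: it yields
\begin{align*}
\frac{\partial}{\partial t}[\log P(t,s)] = g(t,s)\bigl(1+\varepsilon_1(t,s)\bigr), \qquad |\varepsilon_1(t,s)| \le w(C\eta),
\end{align*}
where the $w$ coming from Lemma~\ref{KPCA-lemma-integral} tends to $0$ at $0$. Next I would compare $g(t,s)$ with $g(t_0,s)$. Since $g(t,s)=(\sqrt{a^2+4}-a)/2$ with $a=(s-2)/t$, it is smooth in $a$ with bounded derivative on $[0,1]$, and $a = a_0 (t_0/t)$ with $a_0=(s-2)/t_0$. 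Because $a_0 \le s/t_0 = \sqrt{(s^2/t_0^3)\cdot t_0} \cdot t_0^{-1/2}\cdot t_0 \lesssim \sqrt{\eta\cdot t_0}$ is bounded and $t_0/t = 1+O(\eta)$, one gets $|a-a_0| = O(a_0\eta)$ and consequently
\begin{align*}
\frac{g(t,s)}{g(t_0,s)} = 1 + \varepsilon_2(t,t_0,s), \qquad |\varepsilon_2(t,t_0,s)| \le \widetilde w(\eta)
\end{align*}
for some modulus $\widetilde w$ vanishing at $0$. Here it is convenient to use the explicit formula to verify the bound; the main point is that $g(\cdot,s)$ is locally Lipschitz and both $a$ and $a_0$ are uniformly controlled.

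Combining the two displays, the integrand equals $g(t_0,s)\bigl[1 + \varepsilon_1(t,s) + \varepsilon_2(t,t_0,s) + \varepsilon_1 \varepsilon_2\bigr]$ for $t$ between $t_1$ and $t_2$. Averaging over this interval (of length $|t_2-t_1|$, which need not be zero and whose sign cancels with the $(t_2-t_1)$ in the denominator) yields
\begin{align*}
\frac{\log P(t_2,s)-\log P(t_1,s)}{g(t_0,s)(t_2-t_1)} = 1 + O\bigl(w(C\eta)+\widetilde w(\eta)\bigr),
\end{align*}
which gives the stated bound with a new modulus $w(\eta) := C'[w(C\eta)+\widetilde w(\eta)]$ for some constant $C'$, valid for all $\eta$ below some threshold $c$.

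The main obstacle I anticipate is the bookkeeping for $\varepsilon_2$: verifying that $g(t,s)/g(t_0,s)-1$ is controlled by a modulus depending only on $\eta$, uniformly in how $s$ and $t_0$ relate. The regime where $a_0$ is bounded away from zero is handled by the Lipschitz property of $a\mapsto(\sqrt{a^2+4}-a)/2$; the regime $a_0 \to 0$ reduces to $g(t,s),g(t_0,s)\to 1$ and so the ratio is trivially close to $1$. Patching these two regimes into a single modulus is routine but is the only non-mechanical step of the argument.
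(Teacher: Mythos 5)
Your overall plan — write $\log P(t_2,s)-\log P(t_1,s)$ as $\int_{t_1}^{t_2}\partial_t[\log P(t,s)]\,\rd t$, apply Lemma \ref{KPCA-lemma-integral} pointwise, and control $g(t,s)/g(t_0,s)$ — is the same as the paper's. The gap is in your control of $g(t,s)/g(t_0,s)$, and it is a real one. You claim ``$a_0 \lesssim \sqrt{\eta\cdot t_0}$ is bounded,'' but this is false: under the lemma's hypotheses $t_0$ can be arbitrarily large while $\eta$ stays fixed, so $a_0=(s-2)/t_0$ can tend to infinity (e.g.\ $s \asymp t_0^{3/2}$). Your two-regime patching (``$a_0$ bounded away from $0$ handled by the Lipschitz property; $a_0\to 0$ trivial'') then fails in the regime $a_0\to\infty$: the Lipschitz bound gives $|g(a)-g(a_0)|\lesssim |a-a_0|\lesssim a_0\eta$, but $g(a_0)\asymp 1/a_0$ there, so dividing gives only the useless bound $a_0^2\eta$ on the relative error. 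An additive Lipschitz estimate on $g$ cannot yield a multiplicative estimate uniformly in $a_0$.

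The fix, which is what the paper does, is to control the \emph{logarithmic} variation of $g$ in $t$: one computes $\partial_t[\log g(t,s)] = a/(t\sqrt{a^2+4})$ and observes $0<\partial_t[\log g(t,s)]\le 1/t$ for all $s\ge 2$. The mean value theorem then gives $|\log(g(t_2,s)/g(t_1,s))|\le|t_2-t_1|/(t_1\wedge t_2)$, hence $|g(t,s)/g(t_0,s)-1|\le e^{|t-t_0|/(t\wedge t_0)}-1\lesssim\eta$ uniformly in $s,t_0$. (An equivalent repair within your framework: write $g(a)=2/(\sqrt{a^2+4}+a)$ and bound the relative change of the denominator $F(a)=\sqrt{a^2+4}+a$, using $|F(a)-F(a_0)|\le 2|a-a_0|$ together with $F(a)\ge\max\{2,a\}$ and $a\gtrsim a_0$; this gives $|g(a)/g(a_0)-1|\lesssim\eta$ for all $a_0\ge 0$. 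The point is that you must divide by a quantity comparable to $g$ itself, not just invoke a Lipschitz constant on the bounded interval $[0,1]$.) With this repair the rest of your averaging argument goes through as sketched.
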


\begin{proof}[\bf Proof of Lemma \ref{KPCA-lemma-integral-derivative}]
	Let $h(a) = ( \sqrt{a^2 + 4} - a ) / 2$. Observe that $\frac{ \partial a}{ \partial t } = - (s - 2) / t^2 = - a / t$ and $h'(a) = \frac{1}{2} ( \frac{a}{\sqrt{a^2 + 4}} - 1 )
	= - h(a) / \sqrt{a^2 + 4}$. By the chain rule,
	\begin{align*}
	\frac{ \partial }{ \partial t } [ \log g(t, s) ] = \frac{ \rd }{ \rd a } [\log h(a) ] \cdot \frac{ \partial a}{ \partial t } = \frac{ h'(a) }{ h(a) } \cdot \frac{ \partial a}{ \partial t } = \frac{a}{t \sqrt{a^2 + 4} }.
	\end{align*}
	Hence $0 < \frac{ \partial }{ \partial t } [ \log g(t, s) ] \leq 1/t$. For any $t_2 \geq t_1 > 0$ there exists $\xi \in [t_1, t_2]$ such that
	\begin{align*}
	0 \leq \log \bigg(
	\frac{g(t_2, s)}{ g(t_1, s) }
	\bigg) =
	\log g(t_2, s) - \log g(t_1, s) = \frac{ \partial }{ \partial t } [ \log g(t, s) ] |_{t = \xi} (t_2 - t_1) \leq \frac{t_2 - t_1}{\xi} \leq \frac{ t_2 - t_1 }{t_1}.
	\end{align*}
	This leads to $| g(t_2, s) /  g(t_1, s) - 1 | \leq e^{|t_2 - t_1| / (t_1 \wedge t_2) } - 1 $ for any $t_1, t_2 > 0$.
	
	Let $c$ and $w$ be those defined in the statement of Lemma \ref{KPCA-lemma-integral}. Suppose that $t_0 > 0$ and $s \geq 2$ satisfies $\max\{ 1/t_0, s^2 / t_0^3 \} < c / 2$. When $t \geq t_0 / 2^{1/3}$, $\max\{ 1/t, s^2 / t^3 \} \leq 2 \max\{ 1/t_0, s^2 / t_0^3 \} < c$. Lemma \ref{KPCA-lemma-integral} and the non-decreasing property of $w$ force
	\begin{align*}
	\bigg| \frac{ \frac{\partial }{\partial t} [\log P(t, s)] }{
		g(t,s)
	} - 1 \bigg| & \leq  w ( \max\{ 1/ (t_0 / 2^{1/3}), s^2 / (t_0 / 2^{1/3})^3 \} ) \\
	& \leq w ( 2 \max\{ 1/ t_0, s^2 / t_0^3 \} ), \qquad \forall t \geq  t_0 / 2^{1/3}.
	\end{align*}
	When $|t - t_0| \leq t_0/ 5$, we have $t \geq 0.8 t_0 \geq t_0 / 2^{1/3}$ and $2 |t - t_0| \leq 0.4 t_0 \leq  t_0 / 2^{1/3}$. Then $|t - t_0| / (t_0 \wedge t) \leq 1/2$ and
	\begin{align*}
	| g(t, s) /  g(t_0, s) - 1 | \leq e^{|t - t_0| / (t_0 \wedge t) } - 1 \leq e^{1/2} \frac{|  t - t_0| }{t_0 \wedge t}
	\leq \frac{ \sqrt{e} |t - t_0| }{t_0 / 2^{1/3}}
	\leq \frac{ 3 |t - t_0| }{t_0} < 1.
	\end{align*}
	Hence when $t \in  [4t_0/5, 6t_0 / 5]$,
	\begin{align*}
&	[1 - w ( 2 \max\{ 1/ t_0, s^2 / t_0^3 \} ) ] \bigg(
	1 - \frac{ 3 |t - t_0| }{t_0} 
	\bigg) \leq
	\frac{ \frac{\partial }{\partial t} [\log P(t, s)] }{
		g(t_0, s)
	} \\
& \leq [ 1 + w ( 2 \max\{ 1/ t_0, s^2 / t_0^3 \} ) ] \bigg(
	1 + \frac{ 3 |t - t_0| }{t_0} 
	\bigg).
	\end{align*}
	
	We can find a constant $\tilde{c} \in (0,1)$ and construct a new function $\tilde{w}: [0,\tilde{c}] \to [0,1)$ such that for any distinct $t_1, t_2 \in  [ (1 - \tilde{c} ) t_0, (1 + \tilde{c} ) t_0]$,
	\begin{align*}
	& \bigg| \frac{ \log P(t_2, s) - \log P(t_1, s) }{ g(t_0, s) (t_2 - t_1) } - 1 \bigg| \leq \tilde{w} ( \max\{ 1/t_0,~ s^2 / t_0^3,~ |t_2 - t_0| /t_0,~ |t_1 - t_0| /t_0 \} ).
	\end{align*}
	The proof is completed by re-defining $c$ and $w$ as $\tilde{c} $ and $\tilde{w}$, respectively.
\end{proof}

{
\bibliographystyle{ims}
\bibliography{bib}
}

\end{document}